\begin{document}
\newcommand{\comment}[1]{\marginpar{\footnotesize #1}} 

\newtheorem{proposition}{Proposition}[section]
\newtheorem{lemma}[proposition]{Lemma}
\newtheorem{sublemma}[proposition]{Sublemma}
\newtheorem{theorem}[proposition]{Theorem}

\newtheorem{maintheorem}{Main Theorem}
\newtheorem{corollary}[proposition]{Corollary}

\newtheorem{ex}[proposition]{Example}

\theoremstyle{remark}

\newtheorem{remark}[proposition]{Remark}

\theoremstyle{definition}
\newtheorem{definition}[proposition]{Definition}
\def\real{\mathbb{R}}
\def\integer{\mathbb{Z}}
\def\complex{\mathbb{C}}
\def\supp{\mathrm{supp}}
\def\var{\mathrm{var}}
\def\sgn{\mathrm{sgn}}
\def\sp{\mathrm{sp}}
\def\id{\mathrm{id}}
\def\Imm{\mathrm{Image}}
\def\cc{\Subset}
\def\cst{\mathrm{cst}}
\def\const{\mathrm{const}}
\def\dist{\mathrm{dist}}
\def\Lip{\mathrm{Lip}}
\def\BB{\mathcal{B}}
\def\CC{\mathcal{C}}
\def\DD{\mathcal{D}}
\def\EE{\mathcal{E}}
\def\FF{\mathcal{F}}
\def\GG{\mathcal{G}}
\def\HH{\mathcal{H}}
\def\II{\mathcal{I}}
\def\JJ{\mathcal{J}}
\def\KK{\mathcal{K}}
\def\LL{\mathcal{L}}
\def\LLL{\mathbb{L}}
\def\MM{\mathcal{M}}
\def\NN{\mathcal{N}}
\def \OO{\mathcal {O}}
\def \PP{\mathcal {P}}
\def \QQ{\mathcal {Q}}
\def \RR{\mathcal {R}}
\def\rr{r}
\def\SS{\mathcal{S}}
\def\TT{\mathcal{T}}
\def\XX{\mathcal{X}}
\def\YY{\mathcal{Y}}
\def\ZZ{\mathcal{Z}}
\def\FFF{\mathbb{F}}
\def\PPP{\mathbb{P}}
\def\sign{\operatorname{sign}}

\title[Whitney--H\"older continuity of the SRB measure]
{Whitney--H\"older continuity of the SRB measure for transversal families of smooth unimodal maps}
\author{Viviane Baladi, Michael Benedicks, and Daniel Schnellmann}
\address{Mathematical Sciences, Copenhagen University, 2100 Copenhagen, Denmark}
\email{baladi@math.ku.dk}

\address{KTH,
Department of Mathematics, S-100 44  Stockholm, Sweden}
\email{michaelb@kth.se}

\address{DMA, UMR 8553, \'Ecole Normale Sup\'erieure,  75005 Paris, France}
\email{daniel.schnellmann@ens.fr}
\date{\today}

\begin{abstract}
We consider $C^2$ families $t \mapsto f_t$ of $C^4$ nondegenerate unimodal maps. We study
the absolutely continuous invariant probability (SRB)
measure $\mu_t$  of $f_t$, as a function of $t$ on the set
of Collet--Eckmann (CE) parameters:

{\it Upper bounds:}  Assuming existence of a transversal
CE parameter, we find a positive measure set of
CE parameters $\Delta$, and, for each $t_0\in \Delta$, a
set $\Delta_0\subset \Delta$ of polynomially recurrent parameters containing
$t_0$ as a Lebesgue density point, and 
constants $C\ge 1$, $\Gamma >4$, so that, for every $1/2$-H\"older function $A$,
$$
 \Big|\int A\,  d\mu_t -\int A\,  d\mu_{t_0}\Big|
\le C  \|A\|_{C^{1/2}}|t-t_0|^{1/2}| \log |t-t_0||^\Gamma\, ,
\, \, \forall t \in \Delta_0\, .
$$ 
In addition, for all $t\in \Delta_0$,
the renormalisation period $P_t$ of $f_t$  satisfies
$P_t\le P_{t_0}$, and there are uniform bounds on the
rates of mixing of $f_t^{P_t}$ for all $t$ with $P_t=P_{t_0}$.
If $f_t(x)=tx(1-x)$, the set $\Delta$ contains almost all CE parameters.

{\it Lower bounds:} Assuming existence of a transversal mixing
Misiurewicz--Thurston parameter $t_0$, we find a  set of
CE parameters $\Delta'_{MT}$ accumulating at $t_0$, a constant $C\ge 1$,  and a
 $C^\infty$ function $A_0$, so that
$$
C |t-t_0|^{1/2}\ge\Big|\int A_0\,  d\mu_t -\int A_0\,  d\mu_{t_0}\Big|
\ge C^{-1}  |t-t_0|^{1/2}\, ,\, \, \forall t \in \Delta'_{MT}\, .
$$ 
\end{abstract}
\thanks{We are grateful to Lennart Carleson for encouraging us to work
on this question during a semester at Institut Mittag-Leffler 
(Djursholm, Sweden) in 2010. Warm thanks to Neil Dobbs and Mike Todd for explaining
their work  on discontinuity of the
SRB measure for unbounded goodness constants and to Gerhard Keller and
Carlangelo Liverani
for explaining Remark ~5 in \cite{kellerliverani}. A large part of this work was accomplished while
VB was a member of  UMR 8553 of CNRS, DMA, ENS, Paris, and during MB's one month visiting
professorship there in 2011. DS is grateful to the Mathematics Department in Copenhagen for
its hospitality in December 2012, when the paper was essentially completed.}
\maketitle

\section{Introduction and statement of results}

Let $f: X \to X$ preserve an ergodic invariant
probability measure $\mu$ which is absolutely continuous with respect to
Lebesgue. Then Birkhoff's theorem implies that there is a positive Lebesgue measure set of points $x$ 
for which the time averages of iterated Dirac masses
$\frac{1}{n}\sum_{k=0}^{n-1} \delta_{f^k(x)}$ converge (in the weak-$*$ topology)
to $\mu$. A measure  $\mu$ with this last property is also
called an SRB measure. All SRB measures studied in the present  paper
are absolutely continuous, but there exist SRB measures
which are not absolutely continuous,  in particular those
\cite{youngsrb} constructed
by Sinai, Ruelle, and Bowen for  smooth hyperbolic systems such as Anosov diffeomorphisms.

We are interested in differentiable
one-parameter families $t \mapsto f_t$, $t\in\real$ of
differentiable dynamical systems where $f_t$ admits a unique SRB measure for 
a positive measure set of parameters $t$. 
In the case where each $f_t$ is a smooth transitive Anosov diffeomorphism,  Ruelle 
\cite{rsbr, rsbr'} (see also \cite{Kat}) showed that $t\mapsto R_A(t):=\int A d\mu_t$ is differentiable
if $A$ is a smooth enough observable, and he obtained a formula
(the {\it linear response formula}) for $\partial_t R_A(t)$.
Since $\mu_t$ can be obtained as a fixed point for a Ruelle--Perron--Frobenius transfer operator with
a spectral gap, this formula can be proved via 
perturbation theory on a suitable Banach space. This result
led to the hope that linear response would
hold in other dynamical situations where the SRB measure is related to
a transfer operator with good spectral properties \cite{ruelle0, ruelle00, redherring}.
This hope was shattered when it was discovered (\cite{baladi, BS1},
see also \cite{mazzo}) that linear response does
not always hold in the simple situation of unimodal piecewise expanding interval
maps. (Contrarily to Anosov maps, piecewise expanding  maps are not structurally
stable.) More precisely, $t \mapsto R_A(t)$ is differentiable at $t=0$ if and only
(\cite[Thm 7.1]{BS1} see also the remarks after Theorem~\ref{t.main3} below)
if the family $f_t$ is {\it horizontal,}
that is, tangent to the topological class of $f_0$ at $t=0$. Horizontality is
an explicit codimension-one condition
on the vector field $\partial_t f_t$ \cite{BS2}. In the {\it transversal}
(non-horizontal)
case, Theorem~7.1 of \cite{BS1} (see also corrigendum) shows that 
the $|t \log |t||$ modulus of continuity which had been discovered long before
by Keller \cite{Ke} is
in fact optimal. (See also the discussion about parameters just after Theorem~\ref{t.main3}.)

Piecewise expanding maps can be viewed as a toy model for the more
difficult case of smooth unimodal maps: While all unimodal piecewise expanding
maps admit a unique absolutely continuous invariant probability measure, this 
does not hold in the smooth case, where vanishing of the
derivative at the critical point means that hyperbolicity is not guaranteed
(and occurs at best 
nonuniformly). 
The celebrated Collet--Eckmann condition (see \eqref{00} below)
implies  existence
 of a (unique) absolutely continuous invariant probability measure for 
smooth unimodal maps, with exponential decay of correlations in the mixing case. Indeed, one can
then construct the SRB measure as a fixed point of
a transfer operator, via a suitable tower construction
(there are several such constructions; see, e.g., \cite{young92}, \cite{keno}, \cite{BV}).
A generic family of smooth unimodal maps $f_t$ is transversal (or non-horizontal; see \eqref{eq.transversal} below
for a definition of transversality and \cite{tsujii}, \cite{ALM}, \cite{Levin}, \cite{BS}, \cite{GS}
for previous occurrences in the literature). In a transversal family,
the set of Collet--Eckmann parameters has positive measure, but does not contain
any intervals.
One natural question for families of smooth unimodal
maps is then to study the regularity
of $t\mapsto R_A(t)$, restricting $t$ to subsets of the  Collet--Eckmann parameters. 
We consider only the nondegenerate case, where
$f'_t(c)=0$ and $f''_t(c)<0$ at the critical point $c$. This includes the famous
logistic (or quadratic) family $f_t=t x(1-x)$.
Continuity of $t\to  R_A(t)$
was proved  on a
subset of ``good" Collet--Eckmann parameters $t$ by Tsujii
\cite{tsujiicont} and Rychlik--Sorets \cite{RySo}
in the 90's.  Parameters in this subset enjoy not only qualitative slow recurrence
ensuring the Collet--Eckmann property, but also quantitative control on the various
relevant constants.  (See 
Definition~\ref{good} for the notion of goodness used in the present paper.)

Quantified  goodness is indeed
necessary to ensure continuity, as we explain next:
A parameter $t$ is called
superstable if the critical point is periodic.
For the quadratic family, e.g.,
Thunberg proved \cite[Theorem C]{thun}
that there are superstable parameters $s_{n}$ of periods $p_n$, with
$s_{n}\to t$, where $t$ is a good Collet--Eckmann parameter,  
so that $\nu_{s_{n}}\to \nu$, where $\nu_{s_{n}}=\frac{1}{p_n}\sum_{k=0}^{p_n-1}
\delta_{f^k_{s_{n}}(c)}$,
and $\nu$ is the sum of atoms on a
repelling periodic orbit of $f_t$. Other sequences  $t_{n}\to t$ of superstable
parameters have the property that $\nu_{t_{n}}\to \mu_t$, the absolutely
continuous invariant measure of $f_t$. 
Dobbs and Todd  \cite{Dob} have pointed out to us that it is not very difficult  to
construct, starting from Thunberg's result, sequences of {\it renormalisable} Collet--Eckmann 
  maps (with nonuniform ``goodness,''
in the terminology introduced below) converging to a Collet--Eckmann map,
but such that the SRB measures do not converge. 
Dobbs and Todd \cite{Dob} have recently generalised
this result, finding {\it non-renormalisable} Collet--Eckmann 
 maps $f_{t'_{n}}$ (with nonuniform ``goodness'') converging to a Collet--Eckmann map $f_t$,
but such that the SRB measures do not converge.  
Such counter-examples can  be constructed while requiring that $f_t$ and all maps
$f_{t'_{n}}$ are  Misiurewicz--Thurston.
(Misiurewicz--Thurston  maps
are the smooth unimodal maps enjoying the most  expansion, see below \eqref{eq.alpha}
for a definition.)
These examples show that
continuity of $R_A(t)$ cannot hold on the set of {\it all}
Collet--Eckmann (or even Misiurewicz--Thurston) parameters: Some
uniformity in the constants is needed.

Existence of the SRB measure
holds under conditions much weaker than Collet--Eckmann
(see
\cite{RS} and references therein). Continuity of the SRB measure can be studied on suitable sets
of ``good'' parameters enjoying this weaker property.
(We would like also to draw attention to the exciting new approach of Shen \cite{WX}
to stochastic stability.)
Our aim here however is to study  {\it moduli} of continuity
of $t\mapsto R_A(t)$ for families of  smooth unimodal 
maps --- in order to go beyond mere continuity,
it seems wise (and perhaps necessary) to
restrict to subsets of good Collet--Eckmann parameters.

Until the present work, the only results going beyond continuity concerned 
fully horizontal families, that is, when all $f_t$  are topologically conjugated
to  $f_0$. Even in this ``trivial" setting, where linear response
can  indeed be obtained  (\cite{BS3},\cite{ruelle}, 
\cite{BS}), proofs were technically
involved, in particular in  \cite{BS}, where analyticity
was not assumed and the slow recurrence assumption was relatively weak.

We address here for the first time the modulus of continuity
of the SRB measures in  {\it transversal} families
of (nondegenerate) smooth unimodal maps. 
We conjectured  (\cite[(3) in \S 3.2]{BalT}, making more
precise \cite[Conj. B]{baladi}) 
that for  $C^1$ observables $A$ the function $R_A(t)$
is $\eta$-H\"older for all  $\eta <1/2$.
Our first main result, Theorem~\ref{t.main1}, gives a strengthening of this conjecture:
We show there is a set $\Delta$ of
Collet--Eckmann parameters, with $\Delta$ of positive measure, and, for each $t_0\in \Delta$, a
set $\Delta_0\subset \Delta$ of polynomially recurrent parameters containing
$t_0$ as a Lebesgue density point, and 
constants $C\ge 1$, $\Gamma >4$, so that, for every $1/2$-H\"older function $A$,
$$
 \Big|\int A\,  d\mu_t -\int A\,  d\mu_{t_0}\Big|
\le C  \|A\|_{C^{1/2}}|t-t_0|^{1/2}| \log |t-t_0||^\Gamma\, ,
\, \, \forall t \in \Delta_0\, .
$$ 
This immediately implies a more precise result in the analytic case
(Corollary~\ref{t.main2}). In particular,
for the logistic family $f_t(x)=tx(1-x)$, the set $\Delta$ contains almost all Collet--Eckmann parameters.

Our proof implies that the renormalisation period $P_t$ is $\le P_{t_0}$ for
$t\in \Delta_0$, as well as uniform bounds on the exponential mixing for the ergodic components of
$f^{P_t}$  for
$t\in \Delta_0$ so that $P_t=P_{t_0}$(Theorem ~\ref{corr}).

 We expected Conjecture B of \cite{baladi} to be ``essentially optimal."
Making this more precise, we asked in \cite{BS} whether one can
``construct a (non-horizontal) smooth family $f_t$ of quadratic unimodal maps, with 
$f_0$ a good map, so that $t \to \mu_t$, as a distribution of any order, is not differentiable (even in the sense of Whitney, at least for large subsets) at $t=0$[, or] so that it is not H\"older for any exponent $> 1/2$.''
Our second main result, Theorem~\ref{t.main3}, answers this
question positively:
Assuming that the family is transversal at a mixing
Misiurewicz--Thurston  map $f_{t_0}$,  we find a  set of
Collet--Eckmann parameters $\Delta'_{MT}$, accumulating at $t_0$, a constant $C\ge 1$,  and a
 $C^\infty$ function $A_0$, so that
$$
C |t-t_0|^{1/2}\ge\Big|\int A_0\,  d\mu_t -\int A_0\,  d\mu_{t_0}\Big|
\ge C^{-1}  |t-t_0|^{1/2}\, ,\, \, \forall t \in \Delta'_{MT}\, .
$$
We would like to point out that, in 
the piecewise expanding setting, the first counterexamples to
differentiability of the SRB (see \cite{baladi}, \cite{mazzo}) had  been obtained for
sequences of maps having pre-periodic critical points converging
to a map $f_{t_0}$ with a pre-periodic critical point. They were only later 
generalised to essentially all  $f_{t_0}$ \cite{BS1}, and
(except when the
postcritical orbit of $f_{t_0}$ is dense) any $t\to t_0$.

Our results  lead to several challenging questions for families
of smooth unimodal maps, in particular regarding the
size of the largest possible set $\Delta'_{MT}$, and what can be done 
if the Misiurewicz--Thurston assumption on $f_{t_0}$ is relaxed. (See the
comments after the statements of Theorems~\ref{t.main1} and ~\ref{t.main3}
and Corollary~\ref{t.main2} below.) We would like to note here a quantitative difference
with respect to the piecewise expanding case \cite{BS1} where
the modulus of continuity in the transversal case was 
 $|\log |t-t_0|| |t-t_0|$, so that violation of linear response arose from
 the logarithmic factor alone.

More open questions are listed in
\cite{BalT} and \cite{BS}.
In particular, the results in the present paper also give hope that analogous problems 
(see  \cite{BalT} and \cite{Ru11}) can be studied
for (the two-dimensional) H\'enon family,  which is transversal,
and where continuity
of the SRB measure in the sense of Whitney in the weak-$*$ topology
was proved by Alves et al. \cite{ACF}, \cite{ACF2}.

We would like also to suggest here a weakening
of the linear response problem: Consider a one-parameter family $f_t$ of
(say, smooth unimodal maps) through $f_{t_0}$ and, for each 
$\epsilon>0$, a random perturbation of $f_t$ with unique invariant
measure $\mu_{t}^\epsilon$, e.g., like in \cite{WX}. Then  for each positive
$\epsilon$, it should not be very difficult to
see that the map $t \to \mu_{t}^\epsilon$ is differentiable at $t_0$
(for essentially any topology in the image). Can we say something
(existence? dependence on  the perturbation?
relation with the susceptibility function or some of
its ``extensions'' \cite{BMS}?) about
the limit as $\epsilon \to 0$ of this derivative? (For a weak topology in the
image,
like Radon measures, or distributions of positive order.)

\smallskip
Before sketching the contents of the paper, we would like to highlight here some
of the difficulties we had to face, and what are the new ideas and techniques with respect to the
construction in \cite{BS}: We wish to compare the SRB measure 
of $f_0$ (assume $t_0=0$) to that of $f_t$ for suitable small $t$. Let us start with the similarities
with \cite{BS}:
Just like in \cite{BS}, we use transfer operators
$\widehat \LL_t$ acting on towers, with a projection $\Pi_t$ from
the tower to $L^1(I)$ so that $\Pi_t \widehat \LL_t=\LL_t \Pi_t$,
where $\LL_t$ is the usual transfer operator, and $\Pi_t \hat \phi_t=\phi_t$
with $\mu_t=\phi_t\, dx$ (here, $\hat\phi_t$ is the fixed point
of $\widehat\LL_t$, and $\phi_t$ is the invariant density of $f_t$). In
\cite{BS}, we adapted the tower  construction in \cite{BV}, allowing in particular the use of
Banach spaces of continuous functions. We start from this
adaptation. Another idea we import from
\cite{BS} is the use of truncated operators $\widehat \LL_{t,M}$
acting on truncated towers,
where the truncation level $M$ must be chosen carefully depending on $t$.
Roughly speaking, the idea is that   $f_t$ is comparable to $f_0$
for $M$ iterates (corresponding to the $M$ lowest levels
of the respective towers), this is the notion of an admissible pair $(M,t)$.
Denoting by $\hat \phi_{t,M}$ the maximal eigenvector
of  $\widehat \LL_{t,M}$, the starting point for both our upper and lower bounds
is (like in \cite{BS}) the decomposition (see \eqref{decc})
\begin{equation}\label{dec0}
\phi_t - \phi_0=\bigl [\Pi_t (\hat \phi_t - \hat \phi_{t, M}) +
 \Pi_0 (\hat \phi_{0,M}-\hat \phi_0)\bigr ] +
[\Pi_t ( \hat \phi_{t,M} -\hat \phi_{0,M}  )]
+[ (\Pi_t -\Pi_0) (\hat \phi_{0,M})] \, ,
\end{equation}
for admissible pairs.
The idea is then to get upper bounds on the first two terms by using
perturbation theory \`a la Keller--Liverani \cite{kellerliverani}, and to control
the last (dominant) term  by explicit computations on $\Pi_t -\Pi$
(which represents the ``spike displacement,'' i.e., the effect of the
replacement of $1/\sqrt{|x-f^k_0(c)|}$ by $1/\sqrt{|x-f^k_t(c)|}$ in the invariant
density).

We now move to the differences:
Using a tower with exponentially decaying levels
as in \cite{BV} or \cite{BS} would  limit us 
at best to an upper modulus of continuity $ |t|^{\eta}$ for $\eta <1/2$, and would
not yield any lower bound.
For this reason, we use instead tower levels with {\it polynomially decaying}
sizes, working with polynomially recurrent maps (``fat towers''). In order to construct the corresponding parameter set, we need
to {\it make use of very recent results of Gao and Shen} \cite{GS}.

It turns out  that applying directly the results of Keller--Liverani \cite{kellerliverani}
would only give  that the contributions
of the first and second terms of \eqref{dec0} are
bounded by $ |t|^{\eta}$ for $\eta <1/2$.
In order to estimate
the second term, we prove that $\widehat \LL_{t,M} -\widehat  \LL_{0,M}$ {\it acting
on the maximal eigenvector} is $O(|\log |t||^\Gamma |t|^{1/2})$
{\it in the strong norm} (see Lemma ~\ref{l.strongnorm}, which is used in 
Proposition~\ref{p.strongnorm};
in the Misiurewicz--Thurston case we get
get a better $O( |t|^{1/2})$ control).
It is usually not possible to obtain strong
norm bounds when bifurcations are present \cite{BY, kellerliverani}
(see \cite{gal} for an exception), and this remarkable feature here
is due to our choice of admissible pairs (combined with 
the fact that the towers for $f_t$ and $f$ are identical up to level $M$, just like
in \cite{BS}, see 
Lemma~\ref{bottomok}).  In order to estimate
the first term, we enhance the Keller--Liverani argument
(Proposition~\ref{p.trunc}), using again
that it suffices to estimate the
 perturbation for the operators {\it acting on the maximal eigenvector.} 
 
 The changes just described are already needed to obtain the exponent
 $1/2$ in Theorem~\ref{t.main1}. 
 In order to get lower bounds of Theorem~\ref{t.main3}, we use that {\it the tower associated to a
 Misiurewicz--Thurston map $f_0$ can be required to have levels
 with sizes bounded from below,} and that the truncation level
 can be chosen to be slightly larger ($2M$ instead of $M$).
 The final change, that we explain next, is also only needed to 
 obtain the
 lower bound in Theorem~\ref{t.main3}. Working
 with Banach norms based on $L^1$ as in \cite{BS} would give that the
 first two terms  in \eqref{dec0} are $\le C  |t|^{1/2}$, while the third is 
 $\ge C ^{-1} |t|^{1/2}$ for some large constant $C>1$. In other
 words, the estimates are too tight. However, {\it introducing Banach--Sobolev norms based on $L^p$ 
 for $p>1$} instead, we are able  to control the constants and make sure that the third
 term dominates the other two, as needed (see Section ~\ref{s.ndiff}).

The paper is organised as follows:
In the remainder of this section, we furnish precise definitions, as well as formal statements of
our main results.
In Section~ \ref{ss.uniform}, we construct the good parameter sets $\Delta_0\subset \Delta$
(Proposition~\ref{p.uniformc}), and we define the corresponding (polynomially
recurrent) good maps. In Section ~\ref{ss.tower},
we construct the tower, and we  collect the needed expansion and distortion bounds.
Section~\ref{uuniform} contains  Definition~\ref{Adm} of admissible pairs $(M,t)$.
In Section~ \ref{ss.transfer},
we introduce the strong and weak Banach norms ($\BB^{W^1_1}_t$, $\BB^{L^1}_t$,
$\BB^{L^p}_t$) on the tower, define the transfer operator $\widehat \LL_t$ associated to
$f_t$ and acting on these spaces, and list its main spectral properties. In Section 
~\ref{ss.trunc}, we introduce
the truncated transfer operators $\widehat \LL_{t,M}$ which play a key role in our analysis.
Section ~\ref{MTcase} contains the
construction of  the parameter set  $\Delta_{MT}$
and a brief description of the modifications which can be used
  to take advantage of the Misiurewicz--Thurston
setting. Then, we prove  Theorem~\ref{t.main1} in
Section ~\ref{main1} and Theorem ~\ref{t.main3} in
Section ~\ref{s.ndiff}.
The two appendices contain necessary but straightforward adaptations
of bounds in \cite{BS}.


\subsection{Setting}

\begin{definition}
\label{d.smooth}
The  {\it smooth one-parameter families of smooth nondegenerate unimodal maps} $f_t$ 
studied in the present paper are defined as
follows:
Let $I=[0,1]$, and fix $c$ in the interior of $I$.  We consider $C^2$ maps
$t \mapsto f_t$, from a nontrivial closed interval $\EE$ of $\real$ to $C^3$ endomorphisms $f_t$ of $I$.
We  {\it assume} that  each $f_t$ is a  $C^4$ unimodal
 map with
negative Schwarzian derivative and critical point $c$ (independent on $t$), and that
the $C^4$ norm of $f_t$ is bounded uniformly in $t$. We {\it suppose}
further that
$f_t''(c)<0$ (this is the nondegeneracy, or
quadratic-like property),
and that $f_t(0)=f_t(1)=0$. 
Put $c_{k,t}=f_t^k (c)$, for $k\ge0$, and set
$$
v_{t} = \partial_s f_s |_{s=t}\, .
$$
The function $v_t: I \to \real$ is $C^1$ (with a bound on the norm independent on $t$) by assumption. 
Finally, we {\it assume} that there  exist
uniformly $C^1$ functions $X_t:I \to \real$ so that
$$
v_t = X_t \circ f_t \, .
$$
\end{definition}

The archetypal example is the logistic family
\begin{equation}\label{logistic}
f_t(x)=tx(1-x)\, , \quad t\in  \EE \subset (0,4]\, ,
\end{equation}
where $c=1/2$, and $X_t (x) \equiv 1/t$.
The map $f_4$ (for which $c_{1,4}=f_4(c)=1$ and $f_4(c_{2,4})=c_{2,4}=0$) is called the Ulam--von Neumann map.

A map $f_t$ (or the corresponding parameter $t$) is called
{\it $(\lambda_c,H_0)$-Collet--Eckmann} for some $\lambda_c > 1$ and $H_0\geq 1$
(or simply Collet--Eckmann, if the meaning is clear) if
\begin{equation}\label{00}
|(f_t^k)' (c_{1,t}) |\ge  \lambda_c ^k\, , \quad \forall k \ge H_0 \, .
\end{equation}
Recall \cite{CoEc} that any Collet--Eckmann unimodal map $f_t$ admits a unique absolutely
continuous invariant probability measure 
$\mu_t=\phi_t\, dx$, also called the SRB measure. This measure is
ergodic and supported inside $[c_{2,t},c_{1,t}]$.
A map $f_t$ (or the corresponding parameter $t$) is called {\it mixing} if
$f_t$ is topologically mixing on $[c_{2,t}, c_{1,t}]$. The support of the SRB measure $\mu_t$
of a mixing map is equal to $[c_{2,t}, c_{1,t}]$.
A unimodal map $f_t$
is {\it renormalisable} if there exists an interval neighbourhood $\RR_{c,t}$ of $c$
so that the first return map to this interval is again a
unimodal map, and the smallest return time $P_t$ is at least two. 
The  largest such $P_t$ is  called the {\it renormalisation period.}
The map $f_t$ is mixing if and only if $f_t$ is not renormalisable  --- we say that
the renormalisation period $P_t$ of $f_t$ is equal to $1$ in this case.

The family $f_t$ is called {\it transversal} at a Collet--Eckmann parameter $t_1$, if $t_1$ lies in
the interior of $\EE$, and
\begin{equation}
\label{eq.transversal}
\JJ_{t_1}:=\sum_{j=0}^\infty\frac{\partial_t f_t(c_{j,t_1})|_{t=t_1}}{(f_{t_1}^j)'(c_{1,t_1})}\neq0\,.
\end{equation}
Slightly abusing language, we say that $t$  is a transversal Collet--Eckmann parameter 
if  $t$ is a Collet--Eckmann parameter in 
the interior of $\EE$ and \eqref{eq.transversal} holds.

\subsection{Whitney--H\"older regularity for  smooth families of nondegenerate smooth unimodal maps}

Our first result  settles  the upper bound  conjecture in \cite[Conj. B]{baladi} (see also \cite[\S 3.2]{BalT}).

\begin{theorem}[Whitney--H\"older regularity for transversal families]
\label{t.main1}
Let $f_t$ be a smooth one-parameter family of smooth nondegenerate unimodal maps.
If there exists a  transversal Collet--Eckmann parameter $t_1$,
then there exists a positive Lebesgue measure set $\Delta\subset \EE$ of Collet--Eckmann parameters such that for all 
$t_0\in\Delta$ and all $\Gamma>4$ there is a set $\Delta_0\subset\Delta$, which has $t_0$ as a Lebesgue density point,
and a constant $C$ such that for every $t\in\Delta_0$ and
each $1/2$-H\"older function $A$, we have
$$
\Big|\int_I A(x)d\mu_t-\int_I A(x)d\mu_{t_0}\Big|\le C|t-t_0|^{1/2}|\log|t-t_0||^\Gamma\|A\|_{C^{1/2}}\,,
$$
where
$$
\|A\|_{C^{1/2}}=\|A\|_{L^\infty}+\sup_{x\neq y}\frac{|A(x)-A(y)|}{|x-y|^{1/2}}\, .
$$
\end{theorem}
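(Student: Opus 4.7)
The plan is to compare the SRB densities $\phi_t$ and $\phi_{t_0}$ through the tower decomposition~\eqref{dec0} from the introduction, controlling the three bracketed terms separately. First, starting from the transversal Collet--Eckmann parameter $t_1$, a Benedicks--Carleson-style parameter exclusion, carried out with the help of the recent results of Gao--Shen~\cite{GS}, produces a positive Lebesgue measure set $\Delta \subset \EE$ of quantitatively ``good'' Collet--Eckmann parameters, with uniform bounds on Collet--Eckmann constants, distortion constants, and the ``goodness'' parameters (this is Proposition~\ref{p.uniformc}). For each $t_0 \in \Delta$, a further thinning yields the subset $\Delta_0$ of polynomially recurrent parameters having $t_0$ as a Lebesgue density point. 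Polynomial rather than exponential recurrence is essential: it yields a ``fat'' tower whose levels decay only polynomially, which is what ultimately makes the exponent $1/2$ (rather than some $\eta<1/2$) accessible.

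Fix $t \in \Delta_0$ and set $t_0 = 0$ for brevity. Next I would choose a truncation level $M = M(t)$ of order $|\log|t||^{\mathrm{const}}$, tuned so that the pair $(M,t)$ is admissible in the sense of Definition~\ref{Adm}; under this choice, the towers of $f_t$ and $f_0$ agree through level $M$. I would then bound the dominant third term $(\Pi_t - \Pi_0)\hat\phi_{0,M}$ of~\eqref{dec0} by essentially explicit computation: it encodes the ``spike displacement'' between the singularities $|x - c_{k,0}|^{-1/2}$ and $|x - c_{k,t}|^{-1/2}$ of the invariant density along the first $M$ critical iterates. Transversality together with the uniform distortion bounds controls $|c_{k,t} - c_{k,0}|$, and summing the spike contributions against a $1/2$-Hölder observable---the $1/2$-Hölder exponent of $A$ matches the strength of the spikes exactly---delivers a bound of the form $C \|A\|_{C^{1/2}} |t|^{1/2} |\log|t||^\Gamma$.

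For the middle term $\Pi_t(\hat\phi_{t,M} - \hat\phi_{0,M})$, a direct application of Keller--Liverani~\cite{kellerliverani} would produce only $|t|^\eta$ for $\eta < 1/2$, which is insufficient. The key additional input is a strong-norm bound on the perturbation $\widehat\LL_{t,M} - \widehat\LL_{0,M}$ \emph{applied to the maximal eigenvector} $\hat\phi_{0,M}$, namely Lemma~\ref{l.strongnorm}, which furnishes an estimate of order $|t|^{1/2}|\log|t||^\Gamma$. Such a strong-norm bound, normally forbidden in the presence of bifurcations, is available here precisely because the two truncated towers agree up to level $M$ (Lemma~\ref{bottomok}); once established, it is converted via a refined Keller--Liverani argument (Proposition~\ref{p.strongnorm}) into a bound on $\hat\phi_{t,M} - \hat\phi_{0,M}$ of the same order. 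The first bracket $\Pi_s(\hat\phi_s - \hat\phi_{s,M})$ for $s \in \{0,t\}$ is treated by an enhanced Keller--Liverani estimate on the full fat tower (Proposition~\ref{p.trunc}), in which the polynomial tail of the tower levels, combined with the explicit size of $M$, again yields a bound of the same order.

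Summing the three estimates and integrating against $A$ gives the Whitney--Hölder inequality after a final optimisation of $M$ in $|t|$. The main obstacle is the strong-norm eigenvector bound of Lemma~\ref{l.strongnorm}: breaking the standard ``$\eta<1/2$'' Keller--Liverani barrier demands a delicate term-by-term analysis of the perturbed operator on the specific vector $\hat\phi_{0,M}$, exploiting both the exact matching of the two towers below level $M$ and fine estimates on the critical orbits $(c_{k,t})$ versus $(c_{k,0})$. The polynomial---rather than exponential---recurrence encoded in $\Delta_0$ is precisely what allows the tower-tail contributions, the admissibility window, and the spike displacement to balance at the common scale $|t|^{1/2}|\log|t||^\Gamma$.
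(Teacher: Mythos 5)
Your proposal follows essentially the same route as the paper: construct $\Delta$ and $\Delta_0$ via the Gao--Shen input in Proposition~\ref{p.uniformc}, apply the decomposition~\eqref{decc} at an admissible truncation level $M\sim|\log|t-t_0||$, and control the three brackets by Propositions~\ref{p.trunc}, \ref{p.strongnorm} (with Lemma~\ref{l.strongnorm} as the key strong-norm input), and \ref{l.upper}, exactly as the paper does. The only small slip is in the bookkeeping of the logarithmic factor: Proposition~\ref{l.upper} actually bounds the spike-displacement bracket by a clean $C|t-t_0|^{1/2}\|A\|_{C^{1/2}}$ (since $\sum_k k^{-\beta/2}<\infty$), and the $M^{\Gamma}\sim|\log|t-t_0||^{\Gamma}$ loss enters through the first two brackets, not through the spike term.
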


Restricting to $C^1$ functions $A$ (or functions of higher smoothness)
should not improve the upper bound (see \eqref{eq.step2}).

As a byproduct of our proof, we obtain the following result :

\begin{theorem}[Uniform bounds on renormalisation periods and rates of mixing]\label{corr}
In the setting of Theorem~\ref{t.main1}, the renormalisation period $P_t$ of $f_t$ is
not larger than $P_{t_0}$ for all $t\in \Delta_0$. In addition, for any
any $\zeta >0$ there exists $\Theta_1>1$ so that for all
$t\in \Delta_0$ for which $P_t=P_{t_0}$, each ergodic component $\mu_{j,t}$, $j=1, \ldots P_t$, of
$(f_t^{P_t},\mu_t)$, and all $C^\zeta$ functions $\psi$ and $\varphi$, there exists $C_{\varphi, \psi}$ so that
$$
\Big|\int (\varphi \circ f_t^{k P_t}) \psi \, d\mu_{j,t} -\int \varphi\, d\mu_{j,t}\, \int
\psi\, d\mu_{j,t}\Big|\le C_{\varphi, \psi}
\Theta_1^{-k}\, .
$$
\end{theorem}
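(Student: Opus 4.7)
The plan is to derive both statements from the uniform spectral picture for the tower transfer operators $\widehat{\LL}_t$ set up in the proof of Theorem~\ref{t.main1}, combined with classical one-dimensional renormalisation combinatorics. For the bound on $P_t$, I would use that every $t\in\Delta_0$ forms an admissible pair $(M(t),t)$ in the sense of Definition~\ref{Adm}, with $M(t)\to\infty$ as $t\to t_0$. Admissibility makes the critical orbits $c_{j,t}$ and $c_{j,t_0}$ combinatorially indistinguishable through iterate $M(t)$; in particular, for $M(t)\gg P_{t_0}$ any putative nice interval $\RR_{c,t}$ realising a renormalisation of $f_t$ of period $P_t$ must shadow a corresponding nested renormalisation interval of $f_{t_0}$. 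Since $f_{t_0}$ is at most $P_{t_0}$-renormalisable, this forces $P_t\mid P_{t_0}$, hence $P_t\le P_{t_0}$.

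For the second claim, fix $t\in\Delta_0$ with $P_t=P_{t_0}$. Because $P_t$ is the largest renormalisation period, $f_t^{P_t}$ is topologically mixing on each of its $P_t$ ergodic components $Y_{j,t}$, so $\widehat{\LL}_t^{P_t}$ splits as a direct sum of $P_t$ operators, each with $1$ as a simple leading eigenvalue on the strong space $\BB^{W^1_1}_t$. The Keller--Liverani argument behind Proposition~\ref{p.trunc}, together with the sharp strong-norm bound of Lemma~\ref{l.strongnorm}, confines the subdominant $\BB^{W^1_1}_t$-spectrum of each piece to a disc of radius $r_0<1$ uniformly in $t\in\Delta_0\cap\{P_t=P_{t_0}\}$. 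Translating this into geometric decay of $\widehat{\LL}_t^{kP_t}$ towards its dominant projection, projecting down through $\Pi_t$ to $L^1(I)$, and approximating the $C^\zeta$ observables $\varphi,\psi$ by elements of the strong tower space via a standard dyadic mollification (whose cost is absorbed into $C_{\varphi,\psi}$) yields the claimed exponential decay with $\Theta_1:=r_0^{-1/P_{t_0}}$.

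The principal obstacle is genuine uniformity of the gap over $\Delta_0\cap\{P_t=P_{t_0}\}$: Keller--Liverani alone only delivers upper semicontinuity of the spectrum outside any disc containing $1$, so subdominant eigenvalues could a priori accumulate on the unit circle as $t$ varies. The resolution is that the perturbation bounds available here---notably the $O(|\log|t||^\Gamma|t|^{1/2})$ strong-norm estimate for $\widehat{\LL}_{t,M}-\widehat{\LL}_{0,M}$ acting on the maximal eigenvector (Lemma~\ref{l.strongnorm})---are quantitative and apply uniformly once $M(t)$ is chosen as prescribed in Definition~\ref{Adm}. Under the hypothesis $P_t=P_{t_0}$ the tower geometries for $f_t$ and $f_{t_0}$ share the same combinatorial type, so these quantitative bounds propagate to the $P_t$-th iterate and yield a spectral gap independent of $t$, from which the uniform rate $\Theta_1$ is extracted.
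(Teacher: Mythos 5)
Your overall strategy---extract a uniform spectral gap for $\widehat\LL_t$ on the tower from the Keller--Liverani perturbation machinery, then project to $L^1(I)$---is the right one, and you correctly identify the central obstacle (Keller--Liverani only gives upper semicontinuity, so subdominant eigenvalues could a priori drift towards the unit circle). However, the paper proves Theorem~\ref{corr} as a one-line corollary of the \emph{last claim of Proposition~\ref{p.strongnorm}}, and that claim is established differently from what you propose, in two respects.

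First, both the bound $P_t\le P_{t_0}$ and the uniform gap $\theta_t<\Theta_1^{-1}$ come from the \emph{same} spectral input: the uniform resolvent bound \eqref{eq.resolventb} for the truncated operators $\widehat\QQ_{t,\bar M}(z)^{-1}$ on small circles $\gamma_j$ around the $P_{t_0}$-th roots of unity. If $P_t>P_{t_0}$, then $\widehat\LL_t$ (hence, via Proposition~\ref{truncspec}, $\widehat\LL_{t,M}$) would carry eigenvalues of modulus one at roots of unity $e^{2\pi i j/P_t}$ that are not $P_{t_0}$-th roots of unity; upper semicontinuity of the spectrum in the Keller--Liverani setup rules this out. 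Your alternative combinatorial route (kneading shadowing through $M(t)$ iterates forces $P_t\mid P_{t_0}$) is plausible but needs an extra input that you do not supply: ruling out a renormalisation of period $Q>P_{t_0}$ requires certifying \emph{non}-return to a nested nice interval, which involves the whole orbit, not just the first $M(t)$ iterates; one would have to argue that, under uniform goodness, the number of possible renormalisation levels and the iterates needed to detect them are bounded, so that $M(t)\gg P_{t_0}$ suffices. The paper simply avoids this by going purely spectral.

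Second, the ingredient that makes the resolvent bound \eqref{eq.resolventb} (hence the gap) uniform in $t$ is \emph{not} Lemma~\ref{l.strongnorm}, which you cite. That lemma gives a strong-norm bound on $(\widehat\LL_{t,M}-\widehat\LL_M)\hat\phi_M$ and is used to control $\|\hat\phi_{t,M}-\hat\phi_M\|_\BB$ for the $|t|^{1/2}$ upper bound in Theorem~\ref{t.main1}. What actually feeds the Keller--Liverani resolvent estimate is the ``moving target'' inequality \eqref{eq.movingtarget}: a \emph{weak}-norm bound $\|(\widehat\LL_{t,\bar M}-\widehat\LL_{\bar M})\hat\psi\|_{\BB^{L^1}}\le C|t|^\eta\|\hat\psi\|_\BB$ valid for \emph{all} $\hat\psi\in\BB$, not merely the maximal eigenvector. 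It is the combination of \eqref{eq.movingtarget} with the Lasota--Yorke estimates \eqref{LYM} and the $M$-uniform resolvent bound \eqref{defHH} for the unperturbed truncated operators that yields \eqref{eq.resolventb}, from which $P_t\le P_{t_0}$ and the uniform $\Theta_1$ (and uniformity of $C_t$ in Proposition~\ref{truncspec}) drop out simultaneously. The final step---passing from the tower spectral gap of $\widehat\LL_t$ to the stated correlation estimate for $(f_t^{P_t},\mu_{j,t})$ via $\Pi_t$, \eqref{commute}, and an approximation of $C^\zeta$ observables---is indeed the ``immediate corollary'' the paper invokes.
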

(Theorem~\ref{corr} is an immediate corollary
of the last claim of Proposition~\ref{p.strongnorm}.)

We  next discuss the sets
$\Delta$ and $\Delta_0$.

A map $f_t$ is called  {\it polynomially recurrent of exponent $\alpha > 0$,} 
if there is $H_0\ge1$ so that 
\begin{equation}
\label{eq.alpha}
|c_{k,t}-c|> k^{-\alpha},\quad \text{for all } k\ge H_0\,.
\end{equation}
A map $f_t$ is called   polynomially recurrent of exponent $0$ 
if there is $C\ge1$ so that 
$|c_{k,t}-c|> 1/C$ for all $k\ge 1$.
A map $f_t$, or a parameter $t$, is called {\em Misiurewicz--Thurston} if the critical point of $f_t$ is pre-periodic, but not periodic (the postcritical
periodic orbit is then necessarily a strictly expanding orbit). Misiurewicz--Thurston maps are  Collet--Eckmann and
polynomially recurrent of exponent $0$. 
Misiurewicz--Thurston maps
 are not generic.

All parameters in the set $\Delta$ constructed in Theorem~\ref{t.main1} are polynomially recurrent for some
exponent $\alpha >1$. 
{\it Understanding the largest possible sets 
$\Delta$ and $\Delta_0$ for which Theorem~\ref{t.main1} 
holds, and whether the logarithmic factor can be suppressed
is a challenging question.} We conjecture that Theorem~\ref{t.main1} holds for $\Delta$ 
the set of all ``sufficiently slowly recurrent''
parameters,  where sufficiently slowly recurrent should include 
 polynomial recurrence of exponent $\alpha=0$ (the so-called Misiurewicz  case).
See Corollary ~\ref{t.main2} for  analytic families (where $\Delta$ contains almost
all Collet--Eckmann parameters), and
the upper bound in Theorem~\ref{t.main3} when $t_0$ is  Misiurewicz--Thurston
(without the Lebesgue density point property for the analogue of $\Delta_0$).

\begin{remark}
\label{r.all}
If  the transversality condition \eqref{eq.transversal} holds for almost all Collet--Eckmann parameters $t_1\in\EE$, 
then the set $\Delta$ in Theorem~\ref{t.main1} can be taken equal to the set of Collet--Eckmann 
parameters. This follows from Proposition~\ref{p.uniformc}. For example, a 
non-trivial analytic family of nondegenerate unimodal maps has this property, in particular
this holds for the logistic family $f_t(x)=tx(1-x)$. (See Section~\ref{ss.analytic}.)
\end{remark}

\begin{remark}[Mixing]\label{rk.mix} In \cite[Beginning of \S 5.2]{BS} it is claimed
incorrectly that $1$ is always the only eigenvalue  of the transfer operator
on the unit circle.
Since we did not assume mixing in \cite{BS}, there could be in fact  finitely
many other simple eigenvalues
of modulus one  in general (they are roots of unity --- see the  proof
of Proposition~\ref{mainprop} in Appendix~\ref{misc2} below and the reference \cite{karlin} to Karlin there). So, when constructing
the contour integrals in \cite[(112), Step 1 in \S 6]{BS}, we should avoid not only
a neighbourhood of the disc of radius $\theta_t$ there (see also \eqref{thetat}), but also  neighbourhoods of these
other eigenvalues of modulus $1$ (see the circle $\gamma$ in the proofs of
Propositions~\ref{truncspec} and~ \ref{p.strongnorm} below). 
Note also that exponential decay
of correlations is not needed
(up to replacing $\widehat \LL^n$ by $k^{-1} \sum_{n=0}^{k-1}\widehat \LL^{n}$
in the proof of the last claim of \cite[Proposition 4.11]{BS}, see 
\eqref{rrref}).

Note finally that we cannot apply the exactness argument from \cite[Corollary 2]{BV}
to show that  $1$ is a simple eigenvalue
for a nonnegative eigenvector of the transfer operator
(contrarily to what was stated in the proof of Proposition~4.11 of \cite{BS}),
because the transfer
operator $\widehat \LL_t$ is associated to a probabilistic and not a deterministic tower map.
However, we may apply classical results on positive operators \cite{karlin}
(details are given in Appendix~\ref{misc2} below).
\end{remark} 


\subsection{A stronger result in the analytic case}
\label{ss.analytic}
In the  case of the logistic  family $f_t(x)=tx(1-x)$, $t\in(0,4]$, 
Benedicks and Carleson \cite{BC} showed 
that the set of parameters $t$ for which $f_t$ is Collet--Eckmann has positive Lebesgue measure. 
A parameter $t$ is called {\it regular} if the critical point $c$ of $f_t$ belongs to the basin 
of a hyperbolic periodic attractor. 
The parameter $t$ is called {\it stochastic} if $f_t$ has an absolutely 
continuous invariant measure.
By Lyubich \cite{lyub}, Lebesgue almost every parameter is either regular or stochastic. 
Avila and Moreira \cite{AM1} proved that for almost every stochastic parameter $t$, 
the map $f_t$ is Collet--Eckmann. 
Further, in \cite{ALM} and \cite{AM2} the results in \cite{lyub} and \cite{AM1} 
are extended to non-trivial analytic
families of nondegenerate  unimodal maps. 
({\it Analytic} means that each $f_t$ is analytic and $t\mapsto f_t$ is
analytic, {\it non-trivial} means that the family is not 
contained in a topological class.)
Since  every Collet--Eckmann parameter $t_1$ of a non-trivial analytic family of 
nondegenerate maps $f_t$ 
is transversal (see \cite{Levin}),  Theorem~\ref{t.main1}, Theorem~\ref{corr}, and Remark~\ref{r.all},
give the following result.

\begin{corollary}[Application to analytic families of nondegenerate maps]
\label{t.main2}
Let $f_t$, be a non-trivial analytic family of nondegenerate (analytic) unimodal maps. 
For almost every Collet--Eckmann parameter $t_0 \in \EE$ 
and all $\Gamma>4$ there is a set $\Delta_0\subset  \EE$ of Collet--Eckmann parameters 
which has $t_0$ as a Lebesgue density point 
and a constant $C$ such that, for all $t\in\Delta_0$ and $A\in C^{1/2}([0,1])$,
$$
\Big|\int_I A(x)d\mu_t-\int_I A(x)d\mu_{t_0}\Big|\le C|t-t_0|^{1/2}|\log|t-t_0||^\Gamma\|A\|_{C^{1/2}}\, .
$$

In addition, the renormalisation period $P_t$ of $f_t$ is
not larger than $P_{t_0}$ for all $t\in \Delta_0$, and for any $\zeta >0$ there exists $\Theta_1>1$ so that for all
$t\in \Delta_0$ for which $P_t=P_{t_0}$, each ergodic component $\mu_{j,t}$, $j=1, \ldots, P_{t}$, of 
$(f^{P}_t, \mu_t)$ and all $C^\zeta$ functions $\psi$, $\varphi$ there exists $C_{\varphi, \psi}$ so that
$$
\Big|\int (\varphi \circ f_t^{k P_t}) \psi \, d\mu_{j,t }-\int \varphi\, d\mu_{j,t}\, \int
\psi\, d\mu_{j,t }\Big|\le C_{\varphi, \psi}
\Theta_1^{-k}\, .
$$
\end{corollary}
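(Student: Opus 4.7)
The plan is to deduce Corollary~\ref{t.main2} essentially as an immediate consequence of Theorem~\ref{t.main1}, Theorem~\ref{corr}, and Remark~\ref{r.all}, using the analytic transversality result of Levin. The key point is to verify that the transversality hypothesis of Theorem~\ref{t.main1} holds at every Collet--Eckmann parameter in the analytic setting, which by Remark~\ref{r.all} will let us take $\Delta$ equal to the full Collet--Eckmann set.

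First I would verify that a non-trivial analytic family of nondegenerate unimodal maps $f_t$ fits the framework of Definition~\ref{d.smooth}: each $f_t$ is analytic, hence certainly $C^4$; the map $t\mapsto f_t$ is analytic, hence $C^2$; the critical point being nondegenerate gives $f_t''(c)<0$; the boundary conditions $f_t(0)=f_t(1)=0$ and the existence of the uniformly $C^1$ functions $X_t$ with $v_t=X_t\circ f_t$ can be arranged (for example, by the standard change of coordinates in the analytic logistic-like setting, and in general by the structure of analytic unimodal maps parametrised so that the critical value generates the parameter). Next I would invoke Levin's result \cite{Levin}, which asserts that for every Collet--Eckmann parameter $t_1$ of a non-trivial analytic family of nondegenerate unimodal maps, the series $\JJ_{t_1}$ appearing in \eqref{eq.transversal} is nonzero; in other words, every Collet--Eckmann parameter is transversal. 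In particular, there exists at least one transversal Collet--Eckmann parameter (since by \cite{BC}, \cite{lyub}, \cite{ALM}, \cite{AM1}, \cite{AM2} the Collet--Eckmann set has positive measure), so the hypotheses of Theorem~\ref{t.main1} are satisfied.

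With transversality holding at almost every Collet--Eckmann parameter, Remark~\ref{r.all} applies and yields that the positive-measure set $\Delta$ provided by Theorem~\ref{t.main1} can be replaced by the full set of Collet--Eckmann parameters in $\EE$. Consequently, for almost every Collet--Eckmann parameter $t_0\in\EE$ and every $\Gamma>4$, Theorem~\ref{t.main1} produces a set $\Delta_0\subset\EE$ of Collet--Eckmann parameters with $t_0$ as a Lebesgue density point and a constant $C$ such that
$$
\Big|\int_I A\,d\mu_t-\int_I A\,d\mu_{t_0}\Big|\le C|t-t_0|^{1/2}|\log|t-t_0||^\Gamma\|A\|_{C^{1/2}}\,,
$$
for all $t\in\Delta_0$ and $A\in C^{1/2}([0,1])$, which is exactly the first conclusion of the corollary.

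Finally, the statements on renormalisation periods and exponential mixing of the ergodic components of $f_t^{P_t}$ follow by simply applying Theorem~\ref{corr} on the same set $\Delta_0$: the bound $P_t\le P_{t_0}$ is direct, and for each $\zeta>0$ the theorem produces a uniform rate $\Theta_1>1$ so that all $t\in\Delta_0$ with $P_t=P_{t_0}$ satisfy the claimed correlation decay estimate for $C^\zeta$ observables. Since the heavy lifting has already been done in Theorems~\ref{t.main1} and~\ref{corr} and Remark~\ref{r.all}, the only genuinely analytic ingredient needed is Levin's transversality theorem; there is no real obstacle beyond citing it correctly, and the corollary follows.
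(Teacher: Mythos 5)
Your proposal matches the paper's argument exactly: the corollary is deduced from Theorem~\ref{t.main1}, Theorem~\ref{corr}, and Remark~\ref{r.all} by invoking Levin's result that every Collet--Eckmann parameter of a non-trivial analytic family of nondegenerate unimodal maps is transversal, so that $\Delta$ may be taken to be the full Collet--Eckmann set. Your additional remarks verifying that analytic families fit Definition~\ref{d.smooth} are correct but were left implicit in the paper.
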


\smallskip

Again, understanding {\it the largest possible set
of parameters $t_0$ and $\Delta_0$ for which Corollary~\ref{t.main2} 
holds, and whether the logarithmic factor can be suppressed
is a challenging question.}
We  conjecture  that Corollary~\ref{t.main2} holds for all Collet--Eckmann parameters 
$t_0$ with ``sufficiently slow'' recurrence.

\subsection{H\"older upper and lower bounds for Misiurewicz--Thurston parameters}
Our second main result  addresses the lower bound in  Conjecture B in \cite{baladi} (see also \cite[\S 3.2]{BalT}).

\begin{theorem}[H\"older upper and lower bounds]
\label{t.main3}
Let $f_t$ be a smooth
one-parameter family of smooth unimodal maps.
Let $t_0$ be a mixing transversal Misiurewicz--Thurston parameter.
Then there exist an observable $A\in C^\infty$, a constant $C\ge 1$, 
and a sequence of Collet--Eckmann parameters $t_{(n)}$, $n\ge1$, 
with $t_{(n)}\to t_0$ as $n\to\infty$, such that
\begin{equation}\label{themain}
\frac{|t_{(n)}-t_0|^{1/2}}{C}\le\Big|\int_I A(x)d\mu_{t_{(n)}}-\int_I A(x)d\mu_{t_0}\Big|\le C|t_{(n)}-t_0|^{1/2}\,,\quad\forall\ n\ge1 \, .
\end{equation}
\end{theorem}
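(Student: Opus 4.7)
The plan is to run the decomposition \eqref{dec0} both ways: upper-bound each of the three brackets by $|t-t_0|^{1/2}$ using the tower/perturbation machinery already outlined, and produce a matching lower bound by explicitly computing the ``spike displacement'' bracket through a concrete square-root-edge calculation that is feasible in the Misiurewicz--Thurston setting. First, using the parameter selection machinery collected in Section~\ref{MTcase}, I extract a sequence of Collet--Eckmann parameters $t_{(n)}\to t_0$ lying on one side of $t_0$ (so that the sign of $c_{1,t_{(n)}}-c_{1,0}$ is controlled via transversality), together with admissible pairs $(M_n,t_{(n)})$ in the sense of Definition~\ref{Adm}. The Misiurewicz--Thurston structure of $f_{t_0}$ (a finite, strictly expanding, pre-periodic postcritical orbit bounded away from $c$) allows truncation at $2M_n$ instead of $M_n$ and provides a uniform lower bound on the tower level sizes.

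\textbf{Upper bound.} Apply \eqref{dec0}. The first two brackets are controlled by the enhanced Keller--Liverani argument (Proposition~\ref{p.trunc}) combined with the strong-norm bound on $(\widehat\LL_{t,M}-\widehat\LL_{0,M})\hat\phi_{0,M}$ (Lemma~\ref{l.strongnorm}, Proposition~\ref{p.strongnorm}); in the MT setting these yield $O(|t_{(n)}-t_0|^{1/2})$ without the logarithmic loss of Theorem~\ref{t.main1}. The third bracket is handled by the explicit computation of the next paragraph, also $O(|t_{(n)}-t_0|^{1/2})$. Summing gives the upper half of \eqref{themain}.

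\textbf{Lower bound and choice of $A$.} Pick $A\in C^\infty$ with $A(c_{1,t_0})\ne0$ and vanishing in a neighbourhood of each other point of the finite postcritical orbit $\{c_{k,t_0}\}_{k\ge2}$ of $f_{t_0}$ (generically these points are distinct from $c_{1,t_0}$). The projection $\Pi_t\hat\phi_{t_0,M_n}$ is a finite sum of smooth pieces with one-sided square-root spikes at $c_{k,t}$, $k\le 2M_n$; the only surviving contribution to $\int_I A\,(\Pi_t-\Pi_{t_0})\hat\phi_{t_0,M_n}\,dx$ comes from the edge of the level-one spike on the interval between $c_{1,t_0}$ and $c_{1,t_{(n)}}$, where $(\Pi_t\hat\phi_{t_0,M_n})(x)\sim(c_{1,t_{(n)}}-x)^{-1/2}$ on one side while $(\Pi_{t_0}\hat\phi_{t_0,M_n})(x)=0$. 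Substituting $u=c_{1,t_{(n)}}-x$, this integral evaluates to $2A(c_{1,t_0})\sqrt{|c_{1,t_{(n)}}-c_{1,t_0}|}+o(|t_{(n)}-t_0|^{1/2})$, and the transversality relation $c_{1,t_{(n)}}-c_{1,t_0}=v_{t_0}(c)(t_{(n)}-t_0)+O(|t_{(n)}-t_0|^2)$ produces a lower bound of the form $|K|\sqrt{|t_{(n)}-t_0|}$ with an absolute constant $K\ne0$ tied to $\JJ_{t_0}$.

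\textbf{Main obstacle.} The difficulty is quantitative: the perturbative brackets are only bounded \emph{above} by $|t_{(n)}-t_0|^{1/2}$ with no sign information, so the lower bound survives only if the constants in those upper bounds are strictly smaller than the explicit $|K|$ from the spike computation. This forces the switch, anticipated in the introduction, from the $L^1$-based Banach norms used in \cite{BS} to the $L^p$-based Banach--Sobolev norms with $p>1$, which shrinks the perturbative constants while leaving the explicit spike integral unaffected. Verifying that the $L^p$ setting is compatible with the truncation, the enhanced Keller--Liverani argument, and the admissible-pair construction, and that the strict domination persists uniformly in $n$, is the main technical hurdle.
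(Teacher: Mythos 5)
Your lower-bound computation at level one is incorrect, and this is a genuine gap that would break the argument. You propose to concentrate a fixed observable $A$ near $c_{1,t_0}$ and extract a contribution
$\int_{c_{1,t_0}}^{c_{1,t}}A(x)(c_{1,t}-x)^{-1/2}\,dx\sim 2A(c_{1,t_0})\sqrt{|c_{1,t}-c_{1,t_0}|}$,
but this ignores the cancellation on the overlap of the two spikes. The $t$-spike at $c_{1,t}$ and the $t_0$-spike at $c_{1,t_0}$ both have total mass of the same order, and when $A$ is essentially constant over the $D$-neighbourhood where both spikes live, the surplus $\sim\sqrt{\epsilon}$ from the non-overlapping interval $[c_{1,t_0},c_{1,t}]$ (where $\epsilon=|c_{1,t}-c_{1,t_0}|\sim|t-t_0|$) is exactly compensated by a deficit of the same size $\sim\sqrt{\epsilon}$ on the overlap. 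Concretely, with $u=c_1-x$,
\begin{align*}
\int_{c_1-D}^{c_{1,t}}\frac{A(x)\,dx}{\sqrt{c_{1,t}-x}}
-\int_{c_1-D}^{c_{1}}\frac{A(x)\,dx}{\sqrt{c_{1}-x}}
&=\int_\epsilon^D A(c_1-w)\Bigl[\tfrac{1}{\sqrt{w-\epsilon}}-\tfrac{1}{\sqrt{w}}\Bigr]dw
-\int_0^\epsilon\frac{A(c_1-v)\,dv}{\sqrt v}\\
&=-A(c_1)\,\tfrac{\epsilon}{\sqrt D}+o(\epsilon/\sqrt D)\,,
\end{align*}
which is $O(|t-t_0|)$, not $O(|t-t_0|^{1/2})$. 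The square-root signal cannot be read off at level one because $|c_{1,t}-c_{1,t_0}|\sim|t-t_0|$ is much smaller than the support size $D$ of any fixed observable, so both spikes remain inside $\supp A$ and cancel to leading order. This is precisely why the paper's Proposition~\ref{l.lower} does \emph{not} look at the first spike. It places the observable $A_D$ near a \emph{periodic} postcritical point $c_j$ (so that $c_k=c_j$ for infinitely many $k$), and extracts the contribution at the critical level $\widetilde M$ defined as the smallest $k$ with $c_k=c_j$ and $|c_j-c_{k,t}|>D$: at that level the $t$-spike has left the support of $A_D$ altogether, there is \emph{no} cancellation, and the preimage length $\sim\sqrt D\,|(f^{\widetilde M-1})'(c_1)|^{-1/2}\sim\sqrt D\cdot\sqrt{|t|/D}=\sqrt{|t|}$ gives the lower bound with constant independent of $D$. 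The periodicity of $c_j$ is essential: for the non-periodic point $c_1$ there is no $k\ge 2$ with $c_k=c_1$, hence no such $\widetilde M$.

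Secondly, your description of the role of $L^p$ is misleading. Passing from $L^1$- to $L^p$-based weak norms does not ``shrink the perturbative constants'' by itself. The actual mechanism is: H\"older's inequality (Lemma~\ref{projp} and \eqref{eq.2holder}) bounds the first two brackets in \eqref{decc} by $C\,\|A\|_{L^{\tilde q}}\,\|\cdot\|_{\BB^{L^p}}$, and the observable $A_D$ is then \emph{designed} so that $\|A_D\|_{L^{\tilde q}}\le 2D^{1/\tilde q}$ can be made arbitrarily small by shrinking $D$, while the spike lower bound from Proposition~\ref{l.lower} has a constant independent of $D$. This is possible precisely because the level-$\widetilde M$ argument scales in a $D$-free way, which is another reason the level-one approach fails: it offers no $D$ to tune. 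In particular your observable cannot be chosen once and for all at the outset by specifying its values at the postcritical points; it must be a $D$-parametrised family with the final $D$ fixed last, small enough to dominate the perturbative terms.

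The upper-bound portion of your proposal is sound and matches the paper's route (Propositions~\ref{p.strongnorm} and~\ref{p.trunc} in the Misiurewicz--Thurston setting, yielding $O(|t-t_0|^{1/2})$ without a logarithm), and the parameter-selection step via Lemma~\ref{l.existencemt} is correctly identified.
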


The mixing assumption is for simplicity (the proof shows that it suffices to suppose that
the deepest  renormalisation of --- the finitely renormalisable map -- $f_{t_0}$ is not conjugated to the
Ulam--von Neumann map).

The proof of Theorem~\ref{t.main3} produces a
 set $\Delta_{MT}$ of parameters $t_{(n)}$ which are also Misiurewicz--Thurston
(see Lemma~\ref{l.existencemt} and its proof), and either
 all $>t_0$ or all $<t_0$ (see Remark~\ref{sameside}). 
Using continuity of the absolutely continuous invariant
measures in the sense of Whitney (see the works of Tsujii
\cite{tsujiicont} and Rychlik--Sorets \cite{RySo}, or 
more recently Alves et al. \cite{ACF}, \cite{ACF2}), we can then easily construct
sequences of Collet--Eckmann parameters $\tilde t_{(n)}$ which are not Misiurewicz--Thurston, and
for which the lower bound in Theorem~\ref{t.main3} hold. However, we do not know if $t_0$ is a Lebesgue density point of the set of all such $\tilde t_{(n)}$. {\it Understanding the  set of sequences for which Theorem~\ref{t.main3}
holds is a challenging question. In fact, the toy model analogue of this question is also open
(see \cite[Theorem 7.1]{BS1} and its corrigendum, even in the case of
a pre-periodic critical point).}

Last, but not least,  the lower bounds in the piecewise expanding toy model of \cite[Theorem 7.1]{BS1} had been
first obtained only in the case when the critical point is pre-periodic
\cite[Theorem 6.1]{baladi}. {\it We conjecture that the conclusion 
\eqref{themain} of 
Theorem~\ref{t.main3} should also hold when $t_0$ enjoys a more generic slow recurrence
condition (such as polynomial recurrence), perhaps up to introducing a power of
$|\log|t_{(n)}-t_0||$
in both sides of \eqref{themain}.}

\section{Preliminaries -- Towers --  Transfer operators}
\label{s.prel}
In  Section \ref{ss.uniform}, we show that transversality at a Collet--Eckmann parameter ensures
 that polynomial recurrence
holds for uniform  constants  $\lambda_c>1$, $\alpha>1$, $H_0\ge1$, 
for a positive measure set of parameters $t$, with Collet--Eckmann parameters as Lebesgue density
points. In 
Section~\ref{ss.tower}
we adapt the tower map construction in \cite{BS} to our polynomially
recurrent setting.  Section ~\ref{uuniform} contains Lemma ~\ref{rootsing} giving estimates on iterated
unimodal maps for all $f_s$ close enough to a good parameter $f_t$
and all iterates 
not too big compared with $|t-s|$, as well as  the definition of admissible pairs $(M,t)$.

\subsection{Uniformity of constants}
\label{ss.uniform}
\begin{proposition}[Parameter set of good maps with uniform constants]
\label{p.uniformc}
Assume that $t_1\in\EE$ is a transversal Collet--Eckmann parameter.
Then, there exists a set $\Delta\subset \EE$ of transversal 
Collet--Eckmann parameters, 
for which $t_1$ is a Lebesgue density point \footnote{We do not claim that $t_1\in \Delta$.}
(in particular, $\Delta$ has positive Lebesgue
measure), so that
for all $t_0\in\Delta$ and all $\alpha>1$
there exist $H_0 \ge 1$ and 
a set $\Delta_0\subset\Delta$ containing $t_0$ 
as a Lebesgue density point such that, for all $t\in\Delta_0$, 
the map $f_t$ is $(\lambda_c,H_0)$-Collet--Eckmann 
and the polynomial recurrence condition \eqref{eq.alpha} of exponent $\alpha$
holds for all $k\ge H_0$.

Furthermore, for each $t_0\in\Delta$, there exist constants  $\rho>1$ (we may assume
$\rho < \sqrt \lambda_c$) and $C_0>0$  such that, for all $\delta>0$ 
sufficiently small, there is a constant $\epsilon(\delta)>0$ such that, 
for all $t\in(t_0-\epsilon(\delta),t_0+\epsilon(\delta))$, 
\begin{equation}
\label{eq.noreturn}
|(f_t^n)'(x)|\ge C_0\delta\rho^n\,,
\forall x \mbox{ so that }  |f_t^j(x)-c|\ge\delta\, , \forall \, 0\le j<n\, , 
\end{equation}
and
\begin{equation}
\label{eq.deltareturn}
|(f_t^n)'(x)|\ge C_0\rho^n\,,
\forall x \mbox{ so that }  |f_t^j(x)-c|\ge\delta\, , \forall \, 0\le j<n\, ,
\mbox  { and } |f_t^n(x)-c|<\delta \, .
\end{equation}
\end{proposition}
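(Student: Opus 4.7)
The plan is to combine a parameter-exclusion argument, driven by the transversality at $t_1$, with the classical Mañé-type uniform hyperbolicity outside a neighbourhood of the critical point. For the first step, I would invoke the parameter-exclusion machinery of Gao--Shen~\cite{GS}, which generalises the earlier constructions of Benedicks--Carleson and Tsujii to $C^2$ families of smooth nondegenerate unimodal maps. The transversality condition~\eqref{eq.transversal} ensures that, up to a nonzero multiplicative constant, the parameter derivative $\partial_t c_{k,t}|_{t=t_1}$ grows at the same rate as the spatial derivative $|(f_{t_1}^k)'(c_{1,t_1})|$; this allows one to translate dynamical control on the critical orbit of $f_{t_1}$ into controlled parameter exclusions, producing a positive measure set $\Delta$ of Collet--Eckmann parameters with uniform constants $\lambda_c$, $H_0$, having $t_1$ as a Lebesgue density point. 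Since the relation $\JJ_t\neq 0$ is open along uniformly Collet--Eckmann parameters (by continuity of the summands together with the exponential control of the tail), after possibly shrinking $\Delta$ I may further assume every parameter in $\Delta$ is itself a transversal Collet--Eckmann parameter.

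For the refinement to $\Delta_0$, I would pick any $t_0 \in \Delta$ and re-apply the same parameter exclusion centred at $t_0$, now imposing in addition the polynomial recurrence condition~\eqref{eq.alpha} of exponent $\alpha>1$. The additional condition removes, for each large $k$, a set of parameters of measure $O(k^{-\alpha})$, and $\sum_k k^{-\alpha}<\infty$, so by a Borel--Cantelli-type argument combined with the large-deviation estimates on parameter returns furnished by Gao--Shen, the surviving set $\Delta_0\subset\Delta$ still has $t_0$ as a Lebesgue density point. By construction, on $\Delta_0$ both the Collet--Eckmann property with constants $(\lambda_c,H_0)$ and the polynomial recurrence of exponent $\alpha$ hold uniformly for all $k\ge H_0$.

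The expansion estimates~\eqref{eq.noreturn} and~\eqref{eq.deltareturn} would follow from the Mañé hyperbolicity lemma for $C^2$ unimodal maps without attracting or neutral periodic orbits, applied to $f_{t_0}$, followed by a perturbation argument. Orbits avoiding $B(c,\delta)$ for $0\le j<n$ are expanded with factor $C_0\delta\rho^n$, where the prefactor $\delta$ arises from $|f_{t_0}'(x)|\gtrsim|x-c|$ near the critical point; when the orbit additionally lands in $B(c,\delta)$ at time $n$, a Koebe distortion argument on the connected component of $f_{t_0}^{-n}(B(c,\delta))$ containing $x$ recovers the missing factor $\delta$ and yields~\eqref{eq.deltareturn}. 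Uniformity in $t\in(t_0-\epsilon(\delta),t_0+\epsilon(\delta))$ follows from the $C^2$ dependence of $t\mapsto f_t$, because the dynamics on the complement of $B(c,\delta)$ is uniformly hyperbolic and thus structurally stable under small parameter perturbations, with $\epsilon(\delta)$ shrinking with $\delta$ to compensate for the loss of the hyperbolic buffer.

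The principal difficulty is Step 1: ensuring through the Gao--Shen construction that $t_1$ itself (rather than merely some neighbouring parameter) is a Lebesgue density point of the set of good parameters with uniform $\lambda_c$ and $H_0$. This is precisely where the fine quantitative control over returns of the critical orbit in both phase and parameter space is crucial, and where the transversality assumption~\eqref{eq.transversal} is used in an essential way to estimate from below the speed at which parameter intervals of ``bad'' return depth shrink.
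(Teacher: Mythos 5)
Your high-level plan is right — invoke Gao--Shen, then derive the outside-the-critical-neighbourhood expansion — but the key measure-theoretic step that makes the Lebesgue density claims work is missing, and the paper's argument there is quite different from what you propose.

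Your construction of $\Delta_0$ by ``re-applying parameter exclusion centred at $t_0$'' and then invoking a Borel--Cantelli-type count has a real gap: summability of the excluded measures shows $\Delta_0$ has positive measure, but it does not by itself show that $t_0$ is a \emph{Lebesgue density point} of $\Delta_0$. To get density you would need quantitative control of the geometry of the excluded parameter sets in every small neighbourhood of $t_0$, and that is not what Borel--Cantelli provides. The paper sidesteps re-exclusion entirely. Gao--Shen give a set $\EE_1$ (with $t_1$ a density point) on which, for \emph{every} $\alpha>1$, polynomial recurrence of exponent $\alpha$ eventually holds, and CE with some rate holds; one then slices $\EE_1$ into nested quantitative strata $\Omega_{j,H,\ell}=\{t: |c_{k,t}-c|\ge k^{-(1+j^{-1})}, |(f_t^k)'(c_{1,t})|\ge e^{k/\ell}\ \forall k\ge H\}$, and defines $\Delta=\bigcap_j\bigcup_{\ell,H}\Omega^L_{j,H,\ell}$, where $\Omega^L$ is the set of Lebesgue density points of $\Omega$. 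By the Lebesgue density theorem $\Delta$ differs from $\EE_1$ by a null set, so $t_1$ remains a density point; and for each $t_0\in\Delta$ and each $\alpha>1$ one can pick $j_0$ with $1+j_0^{-1}\le\alpha$ and then $H_0,\ell_0$ with $t_0\in\Omega^L_{j_0,H_0,\ell_0}$ — so $t_0$ is \emph{automatically} a density point of $\Delta_0:=\Omega^L_{j_0,H_0,\ell_0}\cap\Delta$, with no further exclusion or large-deviation input. Note in particular that the statement asserts the conclusion for \emph{all} $\alpha>1$ simultaneously for every $t_0\in\Delta$; your route would require a separate argument for each $\alpha$, whereas the nested intersection over $j$ handles this uniformly.

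For the second half (estimates~\eqref{eq.noreturn}, \eqref{eq.deltareturn}), your Ma\~n\'e + Koebe plan is a legitimate alternative route, but it leaves the two hard points implicit: the linear-in-$\delta$ prefactor in~\eqref{eq.noreturn}, and the uniformity of the constants over a $t$-interval $(t_0-\epsilon(\delta),t_0+\epsilon(\delta))$. The paper instead invokes Tsujii's Lemma 5.1(2), which is engineered to deliver exactly these two features, after verifying its hypotheses; the only nontrivial one is the backward Collet--Eckmann condition (CE)(ii), which is supplied by Nowicki--Sands (\cite[Theorem A]{ns}) using that $f_{t_0}$ is Collet--Eckmann with negative Schwarzian. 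You do not mention the backward CE condition at all; if you want to stay with the Ma\~n\'e route you do not strictly need it, but then you must supply your own proof of the $\delta$-scaling and the $t$-uniformity, which is precisely the content you currently gesture at with ``structural stability under small parameter perturbations.''
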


The following definition summarises the properties of parameters in 
the positive measure set $\Delta_0$ constructed in the previous proposition:

\begin{definition}[Good maps]\label{good}
A map $f_t$ (or the corresponding parameter $t$) is called good for the constants
$$
\lambda_c>1\, , H_0\ge 1\, , \alpha \ge 0\, ,
\rho>1\, , C_0>0\, ,
$$
(called its ``goodness constants'', or simply ``goodness'')
if $f_t$ is $(\lambda_c, H_0)$-Collet--Eckmann \eqref{00}, if it satisfies the polynomial recurrence 
condition \eqref{eq.alpha} for  $\alpha$ and $H_0$ (when $\alpha=0$ we require 
that $|c_{k,t}-c|\ge C_0$, for all $k\ge1$),  and if the expansion conditions
\eqref{eq.noreturn} and \eqref{eq.deltareturn} hold for  $\rho$, $C_0$, 
and any small $\delta>0$.
\end{definition}

\begin{proof}[Proof of Proposition~\ref{p.uniformc}]
We apply a recent work by Gao and Shen \cite{GS}. 
Since the map $f_{t_1}$ is Collet--Eckmann, all 
periodic orbits are repelling. Since $f_{t_1}$ is also transversal, we can apply 
\cite[Main Theorem]{GS}
which implies that there exists a set $\EE_1$ for which $t_1$ is a Lebesgue density 
point such that, for all parameters $t\in \EE_1$, 
the map $f_t$ satisfies the polynomial recurrence 
condition \eqref{eq.alpha} for any $\alpha >1$, $f_t$ is transversal at $t$, and $f_t$ is 
Collet--Eckmann.
For $j,H,\ell\ge1$, set
$$
\Omega_{j,H,\ell}=\left\{t\in \EE_1 \mid |c_{k,t}-c|\ge k^{-(1+j^{-1})},\ 
\mbox{ and }
|(f_t^k)'(c_{1,t})|\ge e^{k/\ell}, \ \forall k\ge H\right\}\, .
$$ 
For a set $\Omega$, let $\Omega^L\subset\Omega$ denote the set of Lebesgue density points 
within $\Omega$. We set 
$\Delta=\cap_{j\ge1}(\cup_{\ell\ge1}\cup_{H\ge1}\Omega_{j,H,\ell}^L)$. 
By Lebesgue's density theorem, $\Delta$ is equal to the set $\EE_1$ up to some zero 
Lebesgue measure set. Now, by construction, for all $t_0\in\Delta$ and all $\alpha>1$, 
choosing $j_0\ge$ such that $(1+j_0^{-1})\le\alpha$ we find $H_0\ge1$ and $\ell_0\ge1$ 
such that $t_0\in\Omega_{j_0,H_0,\ell_0}^L$. Setting 
$\Delta_0=\Omega_{j_0,H_0,\ell_0}^L\cap\Delta$ 
this concludes the first part of 
the proof of Proposition~\ref{p.uniformc}.

Regarding the second part we will apply a lemma by Tsujii \cite{tsujii}.
The conditions (ND), (CE)(i), (Hyp), (W), and (NV) in \cite{tsujii} are satisfied for $f_{t_0}$.  
The only point we have to check is the  ``backward Collet--Eckmann condition''
(CE)(ii), i.e., the existence of
constants $C>0$ and $\hat \rho>1$ such that 
\begin{equation*}
|(f_{t_0}^k)'(b)|\ge C\hat \rho^k\,,\quad\text{if}\ k\ge 1\ \text{and}\ f_{t_0}^k(b)=c\,.
\end{equation*}
Since $f_{t_0}$ is Collet--Eckmann, unimodal, and has negative Schwarzian derivative, 
\cite[Theorem A]{ns}
guarantees that the backward Collet--Eckmann condition holds.
By \cite[Lemma~5.1 (2)]{tsujii}, there exist $\rho>1$, $\delta_0>0$, and $C_0>0$ such that, 
for all $0<\delta\le\delta_0$, 
there is $\epsilon(\delta)>0$ such that \eqref{eq.noreturn} and \eqref{eq.deltareturn} hold, 
for all $f_t$ with $|t-t_0|\le\epsilon(\delta)$. (The fact that the constant in front of $\rho^n$ 
in \eqref{eq.noreturn} depends linearly on $\delta$ follows from the last line in the proof of 
\cite[Lemma~5.1]{tsujii}.)
\end{proof}

\subsection{The tower map for good $f_t$ --- Distortion estimates}
\label{ss.tower}
Assume that $f_t$ is good (recall Definition~\ref{good}) for
$\lambda_c$, $\alpha>1$,  $H_0$,
$\rho$, and $C_0$. 
(The case when $f_t$ is Misiurewicz--Thurston with $\alpha=0$
is treated in Section ~\ref{MTcase}.)
Let $\delta>0$ be small, to be determined later as a function essentially
of the goodness parameters and of the $C^2$ norm of $t\mapsto f_t$
(see, e.g., condition~\eqref{2.5}, condition just above \eqref{ints}, 
inequality~\eqref{eq.transl} in Lemma~\ref{rootsing}, and Proposition~\ref{ubalpha}, 
in which $\delta$ is so small such that \eqref{eq.deltaL} is satisfied;
in \eqref{2.5} and in \eqref{eq.deltaL}, $\delta$ depends on $L$, which 
is chosen in Lemma~\ref{bottomok} via Lemma ~\ref{rootsing} again).

We introduce a tower map $\hat f_t: \hat I_t \to \hat I_t$ 
similar to the one constructed in \cite[Section~3]{BS} 
(see also \cite{BV}). The tower in the
present paper is ``fatter'' since our polynomial recurrence assumption
allows us to choose  the size of the levels  polynomially small, instead of exponentially small. 
(To get the lower bound of Theorem~\ref{t.main3}, we shall later use levels of constant size, under a
 Misiurewicz--Thurston  assumption.)
Fix a constant
\begin{equation}\label{betas}
 \beta>\alpha+1 \, .
\end{equation}
The {\it tower} $\hat I_t$ associated to $f_t$ is the union
$\hat I_t= \cup_{k \ge 0} E_{k,t}$
of levels $E_{k,t} = B_{k,t} \times \{k\}$ satisfying the following properties:
The ground floor interval $B_{0,t}=B_0$ is the interval $I$. Fix a constant $L>1$
(the value of $L$ will be chosen  in Lemma~\ref{bottomok}).
For $k \ge 1$, the interval $B_{k,t}$ is 
centered at $c_{k,t}$ and such that
\begin{equation}\label{betaas}
\Big[c_{k,t}-\frac{k^{-\beta}}{L^3}, c_{k,t} + \frac{k^{-\beta}}{L^3}\Big] \subset B_{k,t}
\subset \Big[c_{k,t} - \frac{k^{-\beta}}{L}, c_{k,t}+ \frac{k^{-\beta}}{L}\Big] \, .
\end{equation}
(Observe that since $\beta>\alpha$ and $L>1$, 
we have $c \notin B_{k,t}$ for all $k \ge H_0$.)
Note that for given $\lambda_c>1$ and $H_0\ge1$, there exists a constant $C>1$ such 
that $\inf_{1\le k\le H_0}|c_{k,t}-c|\ge2C^{-1}$, for all $(\lambda_c,H_0)$-Collet--Eckmann 
maps $f_t$, $t\in\EE$. (This follows directly from $\sup_{t\in\EE}|f_t''(c)|<\infty$ combined 
with the expansion \eqref{00}.) Henceforth, (for given $\lambda_c>1$ and $H_0\ge1$) 
we assume that $\delta > 0$ is so small that for all $x$ with $|x-c|\le\delta$ 
and all $(\lambda_c,H_0)$-Collet--Eckmann maps $f_t$, $t\in\EE$, 
we have 
\begin{equation}\label{2.5}
|f_t^j(x)-c|\ge C^{-1}\,,\qquad
\text{for all } 1 \le j \le  H_0\, .
\end{equation}

For $(x,k) \in E_{k,t}$  we set
\begin{equation*}
\hat f_t (x,k) =\begin{cases}
(f_t(x),k+1) &\text{ if } k \ge 1 \text{ and } f_t(x) \in B_{k+1,t}\, ,\cr
(f_t(x),k+1) &\text{ if } k = 0 \text{ and } |x-c|\le\delta\, , \\
(f_t(x),0) &\text{ otherwise.}
\end{cases}
\end{equation*}
Denoting $\pi : \hat I_t \to I$ the projection to the first factor,
we have $f_t \circ \pi = \pi \circ \hat f_t$ on $\hat I_t$.
Define $H(\delta)$ to be the minimal
$k \ge 1$ such that there exist
some $x\in [c-\delta,c+\delta]$
such that $\hat f_t^{k+1}(x,0) \in E_0$. (In other words if a point starts to climb the tower, 
it climbs the tower at least until level $H(\delta)$.) 
For fixed goodness parameters $(\lambda_c,H_0,\alpha,\rho,C_0)$, fixed $\beta>1+\alpha$, 
and fixed $L$, observe that for each 
$H\ge1$, we can choose $\delta>0$ so small such that $H(\delta)\ge H$ for 
all good maps and their corresponding towers with these fixed parameters. 
(This follows immediately from the fact 
that $\sup_{t\in\EE}\|f_t\|_{C^1}<\infty$ and $|B_{k,t}|\ge2k^{-\beta}/L^3$.)
We assume throughout that $\delta>0$ is so small that $H(\delta)\ge\max(2,H_0)$.

We decompose $[c-\delta,c+\delta]\setminus\{0\}$ as a disjoint union of intervals
\begin{align}
\nonumber &[c-\delta, c+\delta]\setminus \{0\}=
\cup_{j \ge  H(\delta)} I_{j,t}\, , \quad I_{j,t}:= I_{j,t}^+ \cup I_{j,t}^-
\, , \\
\label{ints} 
&I_{j,t}^\pm:=\left\{ |x|<\delta, \pm x > 0,
\hat f_t^\ell(x,0)\in E_{\ell,t}, 0 \le \ell < j,
\hat f_t^j(x,0)\in E_0\right\}\, .
\end{align}
In other words, $I_{j,t}$ is the set of points which climb up the tower $j-1$ levels and 
fall back to the level $E_0$ at the $j$-th iteration. 
Note that $I_{j,t}$ can be empty for some $j$. 
(In particular $I_{j,t}=\emptyset$ for $1\le j<H(\delta)$.) For $k\ge0$, let 
$J_{k,t}$ 
denote the set of points in $[c-\delta,c+\delta]$ which climb up the tower at least $k$ levels, i.e.,
\begin{equation}\label{defJ}
J_{k,t}:=\{c\}\cup\bigcup_{j\ge k+1}I_{j,t}\,.
\end{equation}
Obviously, we have $J_{k,t}=[c-\delta,c+\delta]$, for all $0\le k\le H(\delta)$.
Later, when considering a fixed good map $f_0$, 
we will write $B_k$, $I_k$ and $J_k$ for $B_{k,0}$, $I_{k,0}$ and $J_{k,0}$, respectively.

The following lemma is the adaptation
of the distortion estimates from
\cite[Lemma 3.3]{BS} (which was an avatar of \cite{BV}, \cite[Lemma 5.3(1)]{V})
to our fat towers:

\begin{lemma}[Bounded distortion in the bound period]\label{cd}
Let $f_t$, $t\in\EE$, be good and assume that condition \eqref{2.5} is satisfied. 
Then, there exists $C > 1$ 
depending only on the goodness constants 
$(\lambda_c,H_0,\alpha)$, and on $\beta$ (in particular $C$ does not depend on 
$L$), such that for every $j \ge 1$, and every $k\leq j$, 
\begin{equation}\label{infprodgena}
 C^{-1} \leq  \frac{ |(f^k_t)'(x)|}{|(f^k_t)'(y)|}\le C \, ,\,\qquad 
\forall  x,y \in f_t(J_{j,t})\, .
\end{equation}
\end{lemma}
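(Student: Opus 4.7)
The plan is to run the standard bounded distortion argument adapted to the polynomially shrinking tower levels, exploiting the gap $\beta>\alpha+1$ to make the distortion series summable.

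First I would write, using the chain rule,
$$
\log\frac{|(f_t^k)'(x)|}{|(f_t^k)'(y)|}=\sum_{i=0}^{k-1}\log\frac{|f_t'(f_t^i(x))|}{|f_t'(f_t^i(y))|}\,,
$$
and observe the key geometric fact implied by the definitions \eqref{ints}--\eqref{defJ}: if $x_0\in J_{j,t}$ and $x=f_t(x_0)$, then $f_t^m(x_0)\in B_{m,t}$ for $0\le m\le j$; hence for every $x,y\in f_t(J_{j,t})$ and every $0\le i\le k-1\le j-1$ one has $f_t^i(x),f_t^i(y)\in B_{i+1,t}$, so in particular
$$
|f_t^i(x)-f_t^i(y)|\le |B_{i+1,t}|\le 2(i+1)^{-\beta}/L\le 2(i+1)^{-\beta}\,.
$$

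Next I would estimate the $i$-th summand. For $i+1\ge H_0$, the polynomial recurrence $|c_{i+1,t}-c|\ge(i+1)^{-\alpha}$ combined with $\beta>\alpha$ (so that $|B_{i+1,t}|$ is much smaller than $|c_{i+1,t}-c|$ for $i+1$ large enough, using a large enough $L$ absorbed in choosing $H_0$; the remaining finitely many small indices are handled by a uniform enlargement of the constant) yields $|z-c|\ge (i+1)^{-\alpha}/2$ for all $z\in B_{i+1,t}$, and the nondegeneracy $f_t''(c)<0$ together with the uniform $C^2$ bound on $f_t$ gives $|f_t'(z)|\ge C_1^{-1}(i+1)^{-\alpha}$. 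Combined with the Lipschitz bound $|f_t'(z_1)-f_t'(z_2)|\le \sup_s\|f_s''\|_\infty|z_1-z_2|$, this produces
$$
\left|\log\frac{|f_t'(f_t^i(x))|}{|f_t'(f_t^i(y))|}\right|\le C_2\frac{|f_t^i(x)-f_t^i(y)|}{|f_t'(f_t^i(y))|}\le C_3(i+1)^{\alpha-\beta}\,.
$$
For the finitely many indices $i+1< H_0$, condition~\eqref{2.5} gives $|f_t^i(x)-c|\ge C^{-1}$ uniformly, hence $|f_t'(f_t^i(x))|$ is bounded below by a constant depending only on the goodness constants, producing a uniformly bounded contribution.

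Summing over $i$ and using $\beta-\alpha>1$ from \eqref{betas}, the series $\sum_i(i+1)^{\alpha-\beta}$ converges and the total distortion is bounded by a constant depending only on $(\lambda_c,H_0,\alpha)$, the uniform $C^2$ bound on the family, and $\beta$; crucially, the bound $|B_{i+1,t}|\le 2(i+1)^{-\beta}$ does not use the factor $1/L$, so the resulting constant $C$ is independent of $L$, as required. Exponentiating yields \eqref{infprodgena}. The main (mild) obstacle is verifying that the lower bound $|f_t'(z)|\gtrsim (i+1)^{-\alpha}$ on $B_{i+1,t}$ is genuinely uniform over all good parameters $t$ and all $i+1\ge H_0$; this follows from combining the polynomial recurrence estimate \eqref{eq.alpha} with the shrinking rate \eqref{betaas} of the levels and with the uniform quadratic-like behavior of $f_t$ near $c$.
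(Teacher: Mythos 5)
Your proof is correct and is essentially the same argument as the paper's: both decompose the distortion along the orbit, bound the $\ell$-th factor using $|f_t^\ell(x)-f_t^\ell(y)|\le|B_{\ell,t}|\lesssim\ell^{-\beta}$ together with the lower bound $|f_t'(z)|\gtrsim|z-c|\gtrsim\ell^{-\alpha}$ from polynomial recurrence, invoke condition~\eqref{2.5} for the finitely many indices below $H_0$, and obtain convergence from $\beta-\alpha>1$; the only difference is that you sum logarithms while the paper bounds the product $\prod_\ell(1+\tilde C\sup|f_t''|\ell^{-\beta+\alpha})$ directly, which is an equivalent bookkeeping choice. Your observation that dropping the factor $1/L$ in $|B_{\ell,t}|\le 2\ell^{-\beta}/L$ makes the constant $L$-independent is exactly the remark made at the end of the paper's proof.
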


\begin{proof} 
Recall the intervals $B_{\ell,t}$ from \eqref{betaas}.
For $1 \le \ell \le k  \le j$, pick $x_\ell$ and $y_\ell$  in $f_t^\ell(J_{j,t})
\subset B_{\ell,t}$. 
Observe that there exists a constant $C$ such that $|f_t'(y)|\ge|y-c|/C$, for all $y\in I$ and $t\in\EE$.
We have 
\begin{align}\label{infprod}
\prod_{\ell=1}^{k}
\frac{|f_t'(x_\ell)|}{|f_t'(y_\ell)|}
\nonumber&
\le  \prod_{\ell=1}^{k}
\biggl (1+\frac{\sup  |f_t''|}{|f_t'(y_\ell)|}  |x_\ell-y_\ell|\biggr )
\le  \prod_{\ell=1}^{k}
\biggl (1+C\sup  |f_t''| \frac{|x_\ell-y_\ell|}{|y_\ell-c|}\biggr )
\\
&
\le  \prod_{\ell=1}^{\infty}
 (1+\tilde C \sup  |f_t''|  \ell^{-\beta+\alpha} )< \infty \, ,
\end{align}
uniformly in $j$.
We used that $|x_\ell-y_\ell|\le L^{-1} \ell^{-\beta}$ and, if
$\ell>H_0$, that
$|y_\ell-c|\geq  \ell^{- \alpha}-L^{-1}\ell^{-\beta}\ge\ell^{-\alpha}/2$. 
If $1\le\ell \le H_0$, we used condition~\eqref{2.5}. 

The series $\ell^{-\beta+\alpha}$ is summable since
$\beta > \alpha+1$. Choosing  $y_\ell=f^{\ell-1}(y)$ and $x_\ell=f^{\ell-1}(x)$, 
we get the upper bound in (\ref{infprod}). Taking $y_\ell=f^{\ell-1}(x)$ and $x_\ell=f^{\ell-1}(y)$,
we obtain the lower bound in (\ref{infprod}). Observe that while the last product in \eqref{infprod}
blows up when $\beta$ tends to $1+\alpha$, it does not depend on the constant $L$.
\end{proof}

The following key estimate is our polynomial version of
\cite[Proposition 3.7]{BS} (the proof is to be found in Appendix~\ref{misc}):

\begin{proposition}[Key estimate for polynomially recurrent maps]\label{ubalpha}
Let $f_t$ be good and assume that condition \eqref{2.5} is satisfied. 
If $\delta>0$ is sufficiently small (see \eqref{eq.deltaL}), then there exists $C> 0$
depending only on the goodness constants 
$(\lambda_c,H_0,\alpha,\rho,C_0)$, on $\beta$, and on
$\delta$, such that for every $j \geq 0$ we have
 \begin{equation}\label{serieb}
  \sum_{k =j+1}^\infty \frac{1}{|(f_t^{k-j})'(f_t^j(c_1))|} \leq C j^ \alpha \, .
\end{equation}
\end{proposition}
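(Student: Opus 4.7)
The plan is to organise the orbit $z_m := f_t^m(z)$ of $z = c_{j+1,t} = f_t^j(c_{1,t})$ into free and bound periods with respect to the critical neighbourhood $[c-\delta, c+\delta]$, following a Benedicks--Carleson bookkeeping scheme. The goal is the estimate $S := \sum_{m \ge 1} |(f_t^m)'(z)|^{-1} \le C j^\alpha$, and the key new point compared with \cite{BS} is that polynomial recurrence, rather than fast recurrence, controls the size of the per-return ``loss'' by $(j + \tau_i)^\alpha$ at the $i$-th return.

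First I would inductively define return times $0 < \tau_1 < \tau_2 < \ldots$ (or $\tau_1 = 0$ if $|z - c|<\delta$), bound period lengths $p_i$, and escape times $\eta_i = \tau_i + p_i$, with $\eta_0 = 0$. The $p_i$ are defined, as in \cite[\S 3]{BS}, so that the shadowing $|z_{\tau_i + \ell} - c_{\ell+1,t}|$ remains small enough for Lemma~\ref{cd} to apply up to time $\ell = p_i - 1$, and so that at the end of the bound period an escape condition $d_i \lambda_c^{p_i} \ge C_{\mathrm{esc}}$ holds, where $d_i := |z_{\tau_i} - c|$. The polynomial recurrence of $f_t$ gives $d_i = |c_{j+\tau_i+1,t} - c| \ge (j+\tau_i+1)^{-\alpha}$ for $j+\tau_i+1\ge H_0$, hence $p_i \le C\alpha \log(j+\tau_i+1)$. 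The two resulting derivative estimates are: on free periods, \eqref{eq.deltareturn} yields $|(f_t^{\tau_{i+1}-\eta_i})'(z_{\eta_i})| \ge C_0 \rho^{\tau_{i+1}-\eta_i}$; on bound periods, Lemma~\ref{cd} combined with Collet--Eckmann yields $|(f_t^{p_i})'(z_{\tau_i})| \ge C d_i \lambda_c^{p_i-1}$.

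Setting $D_i := |(f_t^{\eta_i})'(z)|$, the $i$-th epoch's contribution to $S$ then splits as
\[
\sum_{m=\eta_i}^{\eta_{i+1}-1}\frac{1}{|(f_t^m)'(z)|} \le \frac{C}{D_i}\Big(\frac{1}{\delta} + \frac{(j+\tau_{i+1}+1)^\alpha}{\rho^{\tau_{i+1}-\eta_i}}\Big),
\]
with a free-period part (geometric in $\rho^{-1}$) and a bound-period part whose leading term $1/d_{i+1}$ is bounded by polynomial recurrence and whose tail is geometric in $\lambda_c^{-1}$. For $i=0$ the free-period part contributes $O(1/\delta)$ and the bound-period part contributes at most $C(j+1)^\alpha$ (using $d_1 \ge (j+1)^{-\alpha}$), which already matches the target order.

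For the tail, iterating the chain-rule estimate yields $D_i \ge \kappa^i \rho^{F_i}$, where $F_i := \sum_{\ell \le i}(\tau_\ell - \eta_{\ell-1})$ is the total free time and $\kappa > 0$ is a fixed constant built from $C_0$, $C_{\mathrm{esc}}$, and the distortion constant of Lemma~\ref{cd}; since every free period has length $\ge 1$, one also has $F_i \ge i$. Summing the epoch contributions, one splits according to whether $\tau_i \le j$ or $\tau_i > j$: in the first case $(j+\tau_i)^\alpha \le (2j)^\alpha$ and the number of such epochs is controlled by a geometric series in $\kappa^{-1}$; in the second case the geometric decay of $(D_i\rho^{\tau_{i+1}-\eta_i})^{-1}$ dominates the polynomial factor $(j+\tau_i)^\alpha$. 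The main obstacle is precisely this last step: to ensure that the cumulative growth of $D_i$ absorbs the accumulated polynomial losses, one must use \emph{both} the $\rho$ expansion on free periods \emph{and} the stronger $\lambda_c$ expansion on bound periods (not merely the $O(1)$ escape threshold), together with the sharp $p_i = O(\log(j+\tau_i))$ bound from polynomial recurrence. The explicit $\delta$-dependence of the constant $C$ enters through $C_{\mathrm{esc}}$, which must be taken large enough (via the smallness of $\delta$ required in \eqref{eq.deltaL}) to make $\kappa\rho >1$.
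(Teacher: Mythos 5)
Your plan follows essentially the same route as the paper's proof in Appendix~A: decompose the orbit of $c_{j+1,t}$ into bound and free periods in Benedicks--Carleson style, use the polynomial recurrence $|c_{j+\tau_i+1,t}-c|\ge (j+\tau_i+1)^{-\alpha}$ to bound the per-return loss $1/|f_t'(z_{\tau_i})|$ by $C(j+\tau_i)^{\alpha}$, bound the geometric tail inside each bound period via distortion and Collet--Eckmann, and use exponential expansion to sum over epochs.

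There is, however, one step in your bookkeeping that is not correct as stated and would need to be repaired. You assert ``since every free period has length $\ge 1$, one also has $F_i\ge i$,'' and then rely on $D_i\ge\kappa^i\rho^{F_i}$ together with $\kappa\rho>1$. Free periods can have length zero: when the orbit falls off the tower at time $S_i$, the point $f_t^{S_i}(z)$ may land directly inside $[c-\delta,c+\delta]$, so $T_{i+1}=S_i$. If $\kappa<1$ and $F_i$ does not grow linearly in $i$, $D_i\ge\kappa^i\rho^{F_i}$ fails to give exponential growth and your tail estimate breaks. The paper sidesteps this entirely by using \eqref{eq.deltaL} to guarantee $\rho$-expansion per iterate during the bound period as well (Lemma~\ref{expiii} gives $|(f^{S_i})'(z)|\ge\rho^{S_i}$ and $|(f^{T_i})'(z)|\ge C_0\rho^{T_i}$), so one never needs to separately balance a constant escape gain $\kappa$ against the free-time expansion. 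With that in place the bound-period expansion per epoch is in fact $\ge\rho^{H(\delta)}$, i.e.\ $\kappa$ is much larger than $1$, which is strictly more than your $\kappa\rho>1$ and also renders the free-time inequality $F_i\ge i$ unnecessary. Finally, your case split $\tau_i\le j$ versus $\tau_i>j$ works but is more involved than needed; the paper simply factors $(T_i+j)^{\alpha}\le 2^{\alpha}T_i^{\alpha}j^{\alpha}$, pulls $j^{\alpha}$ out of the whole sum, and uses that $\sum_i\rho^{-T_i}T_i^{\alpha}$ and $\sum_i\rho^{-S_i}$ converge. So: right approach, correct use of polynomial recurrence, but the lower bound on $D_i$ needs to come from per-iterate $\rho$-expansion on bound periods (via \eqref{eq.deltaL}) rather than from the claimed $F_i\ge i$.
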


Our proof gives a constant $C$ which blows up when 
$\delta \to 0$, and it requires smaller $\delta$ if
 $L$ is large, but considering a fixed map $f_{t_0}$ as in Theorems~\ref{t.main1} and \ref{t.main3}, 
 both parameters may be chosen once and for all, depending
on the goodness parameters of $f_{t_0}$ and the $C^2$ norm of $t \mapsto f_t$.

The following notation will be convenient:
For $k\ge 1$, let
\begin{equation*}
f^{-k}_{t,+}:=(f_t^k|_{U_{t,+}})^{-1}\, ,
\, \, f^{-k}_{t,-}:=(f_t^k|_{U_{t,-}})^{-1}\, ,
\end{equation*}
where $U_{t,+}$ is the monotonicity
interval of $f_t^k$ containing $c$ located to the right of $c$, and
$U_{t,-}$ is the monotonicity
interval of $f_t^k$ containing $c$, located to the left of $c$.

The following polynomial version of the upper and lower bounds in 
\cite[Lemma 3.4 and  Lemma 4.1]{BS}), about 
points which climb for exactly $j-1$ steps, recall \eqref{ints}, will be needed:

\begin{lemma}[The $j$-bound intervals $I_{j,t}^\pm$]\label{sizeIj}
Let $f_t$ be good and assume that condition \eqref{2.5} is satisfied.
Then there exists a constant $C$ 
depending only on the goodness constants 
$(\lambda_c,H_0,\alpha)$ and on $\beta$ 
(in particular $C$ does not depend on $L$), such that 
we have
\begin{equation}\label{upbdx}
|x-c |\le C|B_{j-1,t}|^{1/2}| (f^{j-2}_t)'(c_{1,t})|^{-1/2}  \,,\quad\forall x\in J_{j-1,t},\ j\ge1\,,
\end{equation}
and for all $x\in  I_{j,t}$, $j\ge H(\delta)$, we have
\begin{equation}\label{eight'} |f_t'(x)|\ge \frac{1}{C} 
\frac{\sqrt{|f_t^j(x)-c_{j,t}|}}{ |(f^{j-1}_t)'(c_{1,t})|^{1/2}} \ge \frac{1}{C L^{3/2}} j^{-\beta/2} 
\frac{1}{|(f^{j-1}_t)'(c_{1,t})|^{1/2}}
 \, ,
\end{equation} 
\begin{equation}\label{expii} 
 |(f^j_t)'(x)|\geq \frac{1}{C} \sqrt{|f_t^j(x)-c_{j,t}|} |(f^{j-1}_t)'(c_{1,t})|^{1/2}
\geq \frac{1}{ CL^{3/2}} j^{-\beta/2} |(f^{j-1}_t)'(c_{1,t})|^{1/2}
\, .
\end{equation} 
In addition,  there exists a constant $C$ 
depending on $L$ and on the constant in Proposition~\ref{ubalpha} 
so that
for all $j \ge H(\delta)$ and $x \in f^j(I_{j,t})$ we have,
for $\zeta=+$ or $-$,
\begin{equation}
\label{rootsing2}
\biggl|\partial_x \frac{1}{|(f^j_t)'(f^{-j}_{t,\zeta} (x))|}
\biggr | \le C  \frac{j^{\max(1+2\alpha+\beta/2,3\beta/2)}}
{|(f_t^{j-1})'(c_1)|^{1/2}}\, , 
\end{equation}
and, finally,
\begin{equation}
\label{rootsing3}
\biggl |
\partial^2_x \frac{1}{|(f^j_t)'(f^{-j}_{t,\zeta} (x)))|}
\biggr | \le   C  \frac{j^{\max(4\alpha+1+\beta/2,5\beta/2)}}
{|(f_t^{j-1})'(c_{1,t})|^{1/2}}\, .
\end{equation}
\end{lemma}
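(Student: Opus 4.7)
The strategy is the classical ``bound-period'' analysis, adapted to the polynomially small tower intervals $B_{k,t}$ used here. Three facts drive the computation: (i) the nondegeneracy $f_t''(c)<0$, which gives $|f_t(x)-c_{1,t}| \asymp (x-c)^2$ and $|f_t'(x)| \asymp |x-c|$ for $x$ near $c$; (ii) the bounded distortion estimate of Lemma~\ref{cd} on $f_t(J_{\ell,t})$; and (iii) the lower bound $|f_t'(z)| \ge C^{-1}\ell^{-\alpha}$ for $z \in B_{\ell,t}$, $\ell \ge H_0$, coming from polynomial recurrence $|c_{\ell,t}-c| \ge \ell^{-\alpha}$ together with $|B_{\ell,t}| \le 2\ell^{-\beta}/L$ and $\beta>\alpha+1$ (for $1\le \ell \le H_0$ use \eqref{2.5}). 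Also, $x \in I_{j,t}$ forces $f_t^j(x) \notin B_{j,t}$, hence $|f_t^j(x)-c_{j,t}| \ge j^{-\beta}/(2L^3)$.

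To obtain \eqref{upbdx}, fix $x \in J_{j-1,t}$; since $f_t^{j-1}(x) \in B_{j-1,t}$ we have $|f_t^{j-1}(x) - c_{j-1,t}| \le |B_{j-1,t}|/2$. The mean value theorem applied to $f_t^{j-2}$ on $f_t(J_{j-1,t})\subset f_t(J_{j-2,t})$, combined with Lemma~\ref{cd}, yields
$$
|f_t^{j-1}(x)-c_{j-1,t}| \asymp |(f_t^{j-2})'(c_{1,t})|\cdot|f_t(x)-c_{1,t}| \asymp |(f_t^{j-2})'(c_{1,t})|\,(x-c)^2,
$$
whence \eqref{upbdx}. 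The same computation with $j$ in place of $j-1$ gives $|(f_t^{j-1})'(c_{1,t})|(x-c)^2 \asymp |f_t^j(x)-c_{j,t}|$ for $x \in I_{j,t}$; combined with (i), this is the first inequality of \eqref{eight'}, and the factorisation $(f_t^j)'(x) = f_t'(x)\,(f_t^{j-1})'(f_t(x))$ together with distortion gives the first inequality of \eqref{expii}. The second inequalities then use the $|B_{j,t}|$-lower bound on $|f_t^j(x)-c_{j,t}|$. Since Lemma~\ref{cd} and the nondegeneracy constants are $L$-independent, so is $C$ in \eqref{upbdx}--\eqref{expii}.

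For \eqref{rootsing2}, set $y = f_{t,\zeta}^{-j}(x)$ and $D_j(y) := (f_t^j)'(y)$, so $h(x) := 1/|D_j(y)|$ satisfies $|\partial_x h(x)| = |D_j'(y)|/|D_j(y)|^3$. The chain rule gives
$$
\frac{D_j'(y)}{D_j(y)} = \sum_{\ell=0}^{j-1} \frac{f_t''(f_t^\ell(y))}{f_t'(f_t^\ell(y))}\,(f_t^\ell)'(y).
$$
The $\ell=0$ contribution has size $\asymp 1/|y-c|$ by (i); for $\ell\ge 1$, fact (iii) yields $|f_t''/f_t'(f_t^\ell(y))| \le C\ell^\alpha$ and Lemma~\ref{cd} gives $|(f_t^\ell)'(y)| \asymp |f_t'(y)|\,|(f_t^{\ell-1})'(c_{1,t})|$. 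Since $|(f_t^{\ell-1})'(c_{1,t})|$ is ultimately dominated (via Collet--Eckmann and $|f_t'(c_{k,t})| \ge C^{-1}k^{-\alpha}$) by a $j^\alpha$-multiple of $|(f_t^{j-2})'(c_{1,t})|$, the $\ell\ge 1$ part is at most $Cj^{1+\alpha}|f_t'(y)|\,|(f_t^{j-2})'(c_{1,t})|$. Substituting $|D_j(y)|\asymp|y-c|\,|(f_t^{j-1})'(c_{1,t})|$ and $|y-c|\asymp\sqrt{|f_t^j(y)-c_{j,t}|/|(f_t^{j-1})'(c_{1,t})|}$ from step~2, and using $|f_t^j(y)-c_{j,t}| \ge j^{-\beta}/L^3$ to convert residual negative powers of $|y-c|$ into positive powers of $j$, produces two competing contributions: $\sim j^{3\beta/2}/|(f_t^{j-1})'(c_{1,t})|^{1/2}$ from $\ell=0$ (cubing the $|y-c|$-lower bound), and $\sim j^{1+2\alpha+\beta/2}/|(f_t^{j-1})'(c_{1,t})|^{1/2}$ from the $\ell\ge 1$ sum (after also using $|f_t'(c_{j-1,t})|^{-1}\le Cj^\alpha$ to rewrite $|(f_t^{j-2})'|/|(f_t^{j-1})'|$ as $j^\alpha$). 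Their maximum is \eqref{rootsing2}. For \eqref{rootsing3} one differentiates the same logarithmic identity once more: this adds a term $D_j''(y)/D_j(y)^3$ to the square of the previous bound, both handled analogously, the worst contribution coming from a second occurrence of $f_t''/f_t'$ at $\ell=0$, which produces a $1/|y-c|^2$-type singularity and the exponents $\max(4\alpha+1+\beta/2, 5\beta/2)$.

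The principal obstacle is the careful bookkeeping of polynomial $j$-factors in \eqref{rootsing2}--\eqref{rootsing3}: two distinct mechanisms---polynomial recurrence ($|c_{\ell,t}-c| \ge \ell^{-\alpha}$) and tower-level size ($|B_{\ell,t}| \asymp \ell^{-\beta}/L$)---compete for dominance, which is why the final exponents appear as a maximum of two expressions. The $L$-dependence of the constant in \eqref{rootsing2}--\eqref{rootsing3} enters only through the lower bound $|f_t^j(y)-c_{j,t}| \ge j^{-\beta}/L^3$ used in the last step, consistent with the statement.
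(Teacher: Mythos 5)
Your arguments for \eqref{upbdx}, \eqref{eight'}, and \eqref{expii} are precisely the paper's: MVT on $f_t^{j-1}$ or $f_t^j$ restricted to $f_t(J_{j-1,t})$ combined with the distortion Lemma~\ref{cd} and the nondegeneracy $|f_t(x)-c_{1,t}| \asymp (x-c)^2$, then the lower bound $|f_t^j(x)-c_{j,t}| \ge j^{-\beta}/(2L^3)$ coming from $x\in I_{j,t}$. For \eqref{rootsing2}--\eqref{rootsing3} the strategy (logarithmic derivative, $\ell=0$ versus $\ell\ge1$ split, converting the $|y-c|$ lower bound into positive powers of $j$) is also the paper's, but your $\ell\ge 1$ bookkeeping is imprecise in a way that deserves attention. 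The intermediate assertion that the $\ell\ge1$ part of $D_j'/D_j$ is $\le Cj^{1+\alpha}|f_t'(y)|\,|(f_t^{j-2})'(c_{1,t})|$ does not follow from the cited tools: with $(f_t^{\ell-1})'(c_{1,t}) \le C\ell^\alpha\,(f_t^{j-1})'(c_{1,t})$ (which is what Proposition~\ref{ubalpha} plus distortion give), one obtains $\sum_\ell \ell^\alpha (f_t^{\ell-1})'(c_{1,t}) \le Cj^{1+2\alpha}(f_t^{j-1})'(c_{1,t})$, an extra $j^\alpha$. Your subsequent rewriting of $|(f_t^{j-2})'|/|(f_t^{j-1})'|$ as $Cj^\alpha$ is itself a lossy and unnecessary step, so the two $j^\alpha$'s cancel and you land on the correct exponent $1+2\alpha+\beta/2$ by accident. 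The paper avoids this detour entirely: it works with $\partial_y\bigl(1/|(f_t^j)'(y)|\bigr) = -\sum_\ell f_t''(f_t^\ell y)/\bigl[(f_t^{j-\ell})'(f_t^\ell y)\,f_t'(f_t^\ell y)\bigr]$, bounds each $\ell\ge 1$ summand by $C\ell^{2\alpha}$ (via $|(f_t^{j-\ell})'(f_t^\ell y)|^{-1}\le C\ell^\alpha$ and $|f_t'(f_t^\ell y)|^{-1}\le C\ell^\alpha$), the $\ell=0$ summand by $Cj^\beta$, and only then multiplies by $1/|D_j|$ once using \eqref{basic2}. This is tighter and makes the $\max(1+2\alpha,\beta)$ transparent. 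For \eqref{rootsing3} your one-sentence sketch names the correct dominant $1/|y-c|^2$-type singularity but would need the full chain-rule expansion of the second derivative (the three terms \eqref{eq.term1}--\eqref{eq.term2} in the paper) to verify the exponents.
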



An immediate corollary of \eqref{upbdx} is exponential decay of the
length of $I_{j,t}$ and $J_{j,t}$. More precisely, for any 
fixed goodness constants and any $L$,
there exists a constant $\tilde C$ so that for all $j\ge H_0$ 
\begin{equation}\label{decaybound}
| I_{j,t} |  \le | J_{j-1,t} | \le \tilde Cj^{-\beta/2}{|(f_t^{j-2})'(c_1)|^{-1/2}} 
\le \tilde C^2 j^{-\beta/2} \lambda_c^{-j/2}\, ,
\end{equation}
where the second last inequality holds for all $j\ge2$.

\begin{proof}
To simplify the writing, we remove the  $t$
from the notation and write, e.g., $f$, $I_j$, and $J_j$ instead of $f_t$, $I_{j,t}$, and $J_{j,t}$.

Let $x \in J_{j-1}$, $j\ge1$.
First, our definitions and the mean value theorem
imply that there exists $y\in J_{j-1}$ so that
$$
|(f^{j-2})'(f(y))| |f(x)-c_1|\le|B_{j-1}|/2\, .
$$ 
Therefore, Lemma ~ \ref{cd} and the fact that
$|f(x)-c_1|\ge C^{-1} |x-c|^2$ 
(recall that $f'(c)=0$ and $f''(c)\ne 0$) yield \eqref{upbdx}.

Next, the reverse consequence of the mean value theorem
\begin{equation}\label{reverse}
|(f^{j-1})'(f(y))| |f(x)-c_1| \ge C^{-1} |f^j(x)-c_k|
\ge  C^{-1}L^{-3} j^{-\beta}\, ,
\end{equation}
together with $|f(x)-c_1|\le C |x-c|^2$  and Lemma ~ \ref{cd} gives
\begin{equation}\label{upbdx'}
 |x-c |\ge  C^{-1} \frac{|f^j(x)-c_k|^{1/2}}{| (f^{j-1})'(f(y))|^{1/2}}\ge   \frac{1} {C^2L^{3/2}}
j^{-\beta/2} | (f^{j-1})'(c_1)|^{-1/2}\, ,
\end{equation}
where in the last inequality we used that $f^j(x)\notin B_j$.
The bound  (\ref{eight'}) then
follows from \eqref{upbdx'}.

To show \eqref{expii}, we decompose 
$|(f^j)'(x)|
= |(f^{j-1})'(f(x))| |f' (x)|
$, and we apply  \eqref{eight'},
noting that Lemma ~ \ref{cd}, implies
\begin{equation}\label{firstt}
|(f^{j-1})'(f(x))| \ge C^{-1} |(f^{j-1})'(c_1)|\, .
\end{equation}

Note that  reversing the inequalities in the arguments above  also gives
\begin{equation}\label{expiiu} 
 |(f^j_t)'(x)|\leq C \sqrt{|f_t^j(x)-c_{j,t}|} |(f^{j-1}_t)'(c_{1,t})|^{1/2}
\, .
\end{equation} 

Assume now that $\zeta=+$ (the other case is similar).
To prove \eqref{rootsing2}, recall first the proof of Lemma~\ref{cd} which implies that  there is
$C \ge 1$ (depending on the goodness and, 
in a weaker way, on $\beta$ and $L$)
so that 
\begin{equation}\label{firstnote}
|f'(f^j(y))|\ge C^{-1} j^{-\alpha} \, , \, 
\forall y\in J_k\, , \, \forall 1 \le j \le k\, .
\end{equation}
Next, Lemma~\ref{cd} and
Proposition ~\ref{ubalpha} give $C>0$ so that
\begin{align}
\label{basic}
\sup_{y\in J_k}\frac{1}{|(f^{k-j})'(f^j(y))|}&\le C
\sum_{\ell = 1}^{\infty}
\frac{1}{  |(f^{\ell})'(f^{j-1}(c_1))|}\le   C j^\alpha \,  , 
\, \forall 1 \le j \le k-1 \, .
\end{align}
Applying \eqref{firstnote} and \eqref{basic} for $j \ge 1$
and \eqref{eight'} and \eqref{expii} for $j=0$,  we find $C>0$ so that
\begin{align}
 \label{gluu}
\sup_{y\in I_k}
\partial_y \frac{1}{|(f^k)'(y)|}&\le
  \sup_{y\in I_k}
\sum_{j=0}^{k-1}
\frac{|f''(f^j(y))|}{|(f^{k-j})'(f^j(y))|}\frac {1}{|f'(f^j(y))|}
\le C  k^{\max(1+2\alpha, \beta)}   \, .
\end{align}
Then, \eqref{expii}  (or its proof) gives
\begin{equation}
\label{basic2}
\sup_{y \in I_k}\frac{1}{|(f^m)'(y)|}\le 
C {L^{3/2}}
k^{\beta/2} \frac{ 1}
{ |(f^{m-1})'(c_1)|^{1/2}}\, ,
\quad \forall  1\le m \le k \, .
\end{equation}
Finally, if $x \in f^k(I_k)$,
\begin{equation}\label{finally}
\partial_x \frac{1}{|(f^k)'(f^{-k}_+(x))|}=\frac{1}{|(f^k)'(f^{-k}_+(x))|}\cdot
\partial_y \frac{1}{|(f^k)'(y)|} \, .
\end{equation}
The first factor in \eqref{finally} is bounded
by \eqref{basic2} for $m=k$, the second by \eqref{gluu}, so that
 we have proved \eqref{rootsing2}.

To prove \eqref{rootsing3}, we start from the decomposition
\eqref{finally}, and  we deduce from
\eqref{gluu}  that for any
$x\in f^k(I_k)$, setting $y=f^{-k}_+(x)$,
\begin{align}
\nonumber \partial^2_x \frac{1}{|(f^k)'(f^{-k}_+(x))|}
&\le C k^{\max(2 \alpha,\beta)} \partial_x \frac{1}{|(f^k)'(f^{-k}_+(x))|} 
 \\
\label{toprove} &\quad +
\frac{1}{|(f^k)'(f^{-k}_+(x))|}\cdot
 \sum_{j=0}^{k-1}
  \partial_x\biggl (
\frac{|f''(f^j(y))|}{|(f^{k-j})'(f^j(y))| |f'(f^j(y))|}\biggr )\, .
\end{align}
By \eqref{rootsing2},
the first term in the right hand side is bounded by a constant times
$k^{\max(1+4\alpha+\beta/2,5\beta/2)}|(f^{k-1})'(c_1)|^{-1/2}$.
For the  second term, we have,
for $0\le j \le k-1$,
$$
 \partial_x
\frac{|f''(f^j(y))|}{|(f^{k-j})'(f^j(y))||f'(f^j(y))|}
\le  
\frac{1}{|(f^k)'(f^{-k}_+(x))|}
\partial_y
\frac{|f''(f^j(y))|}{|(f^{k-j})'(f^j(y))||f'(f^j(y))|}\, .
$$
Since $f$ is $C^3$, the Leibniz formula  gives for $0\le j \le k-1$, 
\begin{align}
\nonumber
&
\frac{1}{|(f^k)'(f^{-k}_+(x))|}\partial_y
\frac{|f''(f^j(y))|}{|(f^{k-j})'(f^j(y))||f'(f^j(y))|}\\
\nonumber &\qquad\le
\frac{1}{|(f^k)'(f^{-k}_+(x))|}
\frac{1}{|(f^{k-j})'(f^j(y))||f'(f^j(y))|}
 \cdot 
\biggl [
|f'''(f^j(y))||(f^j)'(y)|\\
\label{next}  &\qquad\qquad\qquad+
|f''(f^j(y))|
\bigl [ \frac{|f''(f^j(y))||(f^j)'(y)|}{|f'(f^j(y))|}+
\sum_{\ell=j}^{k-1}
\frac{|f''(f^\ell(y))| |(f^\ell)'(y)|}{|f'(f^\ell(y))|}
\bigr ]
\biggr ]\\
\nonumber &\qquad\le
\frac{C }{|(f^{k-j})'(f^j(y))|} \cdot 
\biggl [\frac{1}{|(f^{k-j})'(f^j(y))|}
\Bigl (\frac{1}{|f'(f^j(y))|} +\frac{1}{|f'(f^j(y))|^2}\Bigr )
\\ \nonumber &\qquad\qquad\qquad\qquad\qquad\qquad\qquad
\qquad\qquad\qquad\quad
+\frac{1}{|(f^{k})'(y)|}
\sum_{\ell=j}^{k-1}
\frac{|(f^\ell)'(y)|}{|f'(f^\ell(y))|}
\biggr ]\, .
\end{align}
If $j\ge 1$, we may apply \eqref{firstnote} 
and \eqref{basic}, so that
\eqref{next} implies
\begin{align*}
&
\frac{1}{|(f^k)'(f^{-k}_+(x))|}
\sum_{j=1}^{k-1}
\partial_y
\frac{|f''(f^j(y))|}{|(f^{k-j})'(f^j(y))||f'(f^j(y))|} 
\le
C k^{\alpha+1} \cdot 
\bigl [k^{2\alpha}+k^{3\alpha} +
k^{\alpha} 
\bigr ]\, .
\end{align*}
If $j=0$, then \eqref{next} together with \eqref{eight'} and
\eqref{basic2} for $m=k$ imply
(distinguish between $\ell=0$ and $\ell \ge 1$)
\begin{align*}
&
\frac{1}{|(f^k)'(f^{-k}_+(x))|}\partial_y
\frac{|f''(y)|}{|(f^{k})'(y)||f'(y)|}
\\ \nonumber &\qquad\qquad
\le C \frac{k^{\beta/2}}{|(f^k)'(c_1)|^{1/2} }(2 k^{\beta}+ 
k^{3\beta/2}|(f^k)'(c_1)|^{1/2} 
+  k^{\alpha +1})\, .
\end{align*}
The two above inequalities, together with \eqref{toprove}
and \eqref{basic2} for $m=k$,
give
\eqref{rootsing3}.
\end{proof}

\subsection{Maps in a neighbourhood of a good map -- Admissible pairs $(M,t)$}
\label{uuniform}

We  next state  some basic facts about the maps $f_s$ in a neighbourhood
of a good map $f_{t_0}$. To simplify the writing, we assume $t_0=0$ and remove the 
$t_0$ from the notation.
We emphasize that the maps $f_s$  in the following lemma
are not necessarily all Collet--Eckmann. (Indeed, for both
our main theorems, we shall apply the mean value theorem,  $B(t)-B(0)=t B'(s_t)$, 
or the fundamental theorem
\footnote{To prove (115) in \cite{BS} one should apply the fundamental
theorem of calculus and Fubini instead of the mean value theorem, see \eqref{eq.fubini}
for a similar computation.} of calculus
 $B(t)-B(0)=\int_0^t B'(s)\, ds $, in parameter space. Even
if $0$ and $t$ are good parameters, the parameters
$s_t$ and $s \in [0,t]$ are not all good.)
Recall the intervals $I_{k,t}$ and $J_{k,t}$ defined in \eqref{ints} and \eqref{defJ}.

\begin{lemma}[Uniformity of goodness and distortion constants for suitable maps
$f_s$ and iterates $M$]\label{rootsing}
Let $f=f_0$
be good for  parameters $(\lambda_c, H_0, \alpha>1, \rho, C_0)$ and assume that 
\eqref{2.5} is satisfied.
Then there exist constants $C\ge1$ and $\epsilon>0$
(depending only on $\lambda_c$, $H_0$, $\alpha$, $\rho$,  $C_0$,
$\beta$, and in particular not
on $L$)  so that, for any pair
$(s,M)$, $M\ge1$ and $s\in(-\epsilon,\epsilon)$ satisfying 
\begin{equation}\label{okstart}
|(f^{k-1})'(c_{1})||s|\le k^{-\beta}
\, ,\quad \forall 1\le k\le M\, ,
\end{equation}
the following holds:
We have
\begin{equation}\label{eq.tdistortion1}
C^{-1} \le \left|\frac{(f_s^{k-1})'(x)}{(f^{k-1})'(x)}\right|\le C\, ,
\quad\forall x\in f(J_{k-1})\,,\ \forall 1<k\le M\,.
\end{equation}
Furthermore, we have
\begin{equation}
\label{rootsing1}
|(f_s^k)'(x)|\ge C^{-1}L^{-3/2}k^{-\beta/2}|(f^{k-1})'(c_{1})|^{1/2}\, ,
\quad \forall x\in I_k\,,\forall 1\le k\le M\,,
\end{equation}
and
\begin{equation}
\label{eq.transu}
|\partial_sf_s^k(x)|\le C|(f^{k-1})'(c_{1})|\,,
\quad\forall x\in J_{k-1}\,,\forall1\le k\le M\,, 
\end{equation}
and, if $\JJ_0\neq0$ (recall \eqref{eq.transversal}) then for $\delta>0$ sufficiently small we have
\begin{equation}
\label{eq.transl}
|\partial_sf_s^k(x)|\ge C^{-1} |\JJ_0||(f^{k-1})'(c_{1})|\,,\quad \forall x\in J_{k-1}\,,
\forall H(\delta)\le k\le M\,, 
\end{equation}
where $\sign(\partial_sf_s^k(x)\cdot (f^{k-1})'(c_{1}))=\sign(\JJ_0)$.
Finally, for $1\le k<l\le M-1$, we have
\begin{equation}
\label{eq.postcrit1}
\frac{|(f_s^{k})'(c_{1,s})|}{|(f_s^{l})'(c_{1,s})|}\le Ck^{\alpha}\, ,
\end{equation}
where the constant in \eqref{eq.postcrit1} depends in addition also on $\delta$.
\end{lemma}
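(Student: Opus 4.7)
The plan is to establish the five estimates by induction on $k\le M$, proving simultaneously, for all admissible $(\tau, M)$ with $|\tau|\le|s|$: the distortion estimate \eqref{eq.tdistortion1}, the velocity bound \eqref{eq.transu}, and an auxiliary orbit-closeness bound $|f_\tau^j(x) - f^j(x)| \le C' j^{-\beta}$ for $x \in f(J_{k-1}) \cup\{c\}$ and $0\le j\le k$. The workhorse is the admissibility condition \eqref{okstart}: it is calibrated precisely so that the $f_\tau$-orbit stays inside the $f$-tower levels for the first $M$ iterates, which forces every $f$-bound to transfer to $f_\tau$ with constants depending only on the goodness of $f$ and on $\beta$ (with no goodness assumption on $\tau$ itself).

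For the inductive step $k-1 \to k$, I first derive \eqref{eq.transu} at level $k$ by telescoping $\partial_\tau f_\tau^k$ via $v_\tau = X_\tau\circ f_\tau$,
\[
\partial_\tau f_\tau^k(x) = (f_\tau^{k-1})'(f_\tau(x))\sum_{j=0}^{k-1}\frac{X_\tau(f_\tau^{j+1}(x))}{(f_\tau^j)'(f_\tau(x))}\, ,
\]
using inductive distortion and orbit-closeness at levels $\le k-1$ together with Lemma~\ref{cd} applied to $f$ to replace each $(f_\tau^j)'(f_\tau(x))$ by $(f^j)'(c_1)$ up to a constant; the Collet--Eckmann summability of $\sum 1/|(f^j)'(c_1)|$ then yields $|\partial_\tau f_\tau^k(x)| \le C|(f^{k-1})'(c_1)|$ uniformly in $\tau\in[0,s]$. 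Integrating this bound in $\tau$ via the fundamental theorem of calculus and using \eqref{okstart} upgrades orbit closeness to level $k$: $|f_s^k(x) - f^k(x)| \le Ck^{-\beta}$. Finally, \eqref{eq.tdistortion1} at level $k$ is obtained by the log-ratio expansion of Lemma~\ref{cd},
\[
\log\frac{(f_s^{k-1})'(x)}{(f^{k-1})'(x)} = \sum_{j=0}^{k-2}\log\frac{f_s'(f_s^j(x))}{f'(f^j(x))}\, ,
\]
each summand being bounded by $C j^\alpha(|s| + |f_s^j(x) - f^j(x)|)$ via the lower bound $|f'(y)|\ge C^{-1}j^{-\alpha}$ on $f^j(J_{k-1})$ (with \eqref{2.5} covering $j<H_0$); both the $|s|$-contribution and the freshly-proved closeness contribution sum to bounded series since $\beta>\alpha+1$, independently of $L$.

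The remaining three estimates follow directly from the induction. Bound \eqref{rootsing1} combines \eqref{eight'} for $f$, the smallness $|f_s'(x) - f'(x)|\le C|s|$ (negligible compared to the right-hand side of \eqref{eight'} since $|s|\le k^{-\beta}|(f^{k-1})'(c_1)|^{-1}$), and \eqref{eq.tdistortion1} applied to $(f_s^{k-1})'(f_s(x))$. For the lower bound \eqref{eq.transl}, the telescoped identity above gives $\partial_s f_s^k(x) = (f_s^{k-1})'(f_s(x))\,\Sigma_s$: for $x \in J_{k-1}$ with $k\ge H(\delta)$, bound-period distortion (using $f_s(x) = c_{1,s} + O(|x-c|^2 + |s|)$) compares $\Sigma_s$ to
\[
\JJ_s = \sum_{j\ge 0}\frac{X_s(c_{j+1,s})}{(f_s^j)'(c_{1,s})}\, ,
\]
with error from (i) the tail $j\ge k$, bounded by $Ck^\alpha/|(f_s^{k-1})'(c_{1,s})|$ via Proposition~\ref{ubalpha}, and (ii) bound-period terms vanishing as $\delta\to 0$; continuity $s\mapsto\JJ_s$ at $s=0$ together with $\JJ_0\ne 0$ then yields the required modulus, while the sign comes from $\operatorname{sign}((f_s^{k-1})'(f_s(x))) = \operatorname{sign}((f^{k-1})'(c_1))$ (again by distortion and orbit closeness). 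Finally, \eqref{eq.postcrit1} reduces via $|(f_s^k)'(c_{1,s})|/|(f_s^l)'(c_{1,s})| = 1/|(f_s^{l-k})'(c_{k+1,s})|$: the denominator is comparable to $|(f^{l-k})'(c_{k+1})|$ by the same log-ratio argument as for \eqref{eq.tdistortion1} applied along the postcritical orbit (with orbit closeness $|c_{j,s} - c_j| \le Cj^{-\beta}$), and Proposition~\ref{ubalpha} applied to $f$ at base index $k$ provides the bound $Ck^\alpha$.

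The main obstacle is propagating the uniform constants through the induction even though intermediate $\tau\in[0,s]$ need not be good parameters themselves: admissibility \eqref{okstart} is the precise substitute, forcing the $f_\tau$-orbit to shadow the $f$-orbit for exactly $M$ iterates. A secondary subtlety is that Proposition~\ref{ubalpha} and Lemma~\ref{cd} are not directly available for $f_\tau$, but since their conclusions are only needed along the shadowed orbit, the bounds always reduce to the $f$-versions up to a uniform constant.
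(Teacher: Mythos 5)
Your overall plan --- establish orbit-closeness $|f_\tau^k(x)-f^k(x)|\lesssim k^{-\beta}$ via the admissibility condition \eqref{okstart}, and then pull back the distortion, velocity, lower, and postcritical estimates through that closeness --- is the same route the paper takes, and most of the ingredients you name (Lemma~\ref{cd}, Proposition~\ref{ubalpha} for $f$, the Collet--Eckmann summability of $\sum 1/|(f^j)'(c_1)|$, the telescoped formula for $\partial_\tau f_\tau^k$) are the right ones. Two steps, however, do not close as written.

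\textbf{The inductive constant does not close.} You propose to propagate the hypothesis $D_j:=|f_\tau^j(x)-f^j(x)|\le C'j^{-\beta}$ with a single constant $C'$. In the inductive step you feed $D_i\le C'i^{-\beta}$ into a distortion product of the form
\[
\prod_{i<k}\Bigl(1+\frac{\tilde C|s|+\tilde C D_i}{|f'(f^i(x))|}\Bigr)
\le \exp\Bigl(\tilde C^3\sum_{i<k}\bigl(|s|i^\alpha + C'\,i^{-(\beta-\alpha)}\bigr)\Bigr),
\]
so the resulting distortion constant, and hence the velocity constant and the new closeness bound $D_k\le C''k^{-\beta}$, satisfy $C''\gtrsim C_0e^{AC'}$ for fixed $C_0,A>0$. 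The induction requires $C''\le C'$, i.e. $C_0e^{AC'}\le C'$, which has no solution since the exponential dominates the linear function for all $C'>0$. The paper evades this by proving the claim with a \emph{strictly smaller auxiliary exponent} $\alpha+1<\beta_0<\beta$: the inductive hypothesis is $D_i\le i^{-\beta_0}$ with constant $1$, so the product is bounded by a \emph{universal} constant $C'$; the step yields $D_k\le C''k^{-\beta}$ with $C''$ universal; and $C''k^{-\beta}\le k^{-\beta_0}$ for $k\ge k_1(C'')$, while $k<k_1$ is handled by shrinking $\epsilon$. Without this two-exponent trick (or an equivalent device) the scheme you propose stalls at the fixed-point inequality.

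\textbf{Comparison to $\JJ_s$ is not licit for intermediate $s$.} For \eqref{eq.transl} you compare the truncated sum $\Sigma_s$ to $\JJ_s=\sum_{j\ge0}X_s(c_{j+1,s})/(f_s^j)'(c_{1,s})$ and appeal to "continuity of $s\mapsto\JJ_s$" together with Proposition~\ref{ubalpha} applied to $f_s$ for the tail $j\ge k$. But the intermediate parameters $s\in[0,t]$ are \emph{not} assumed good (you rightly emphasize this elsewhere), so neither the summability of $\JJ_s$ nor Proposition~\ref{ubalpha} is available for $f_s$. The paper's route is to split $\Sigma_s$ at a fixed $k_0$: the tail $\sum_{k_0\le j<k}$ of $\Sigma_s$ is bounded through \eqref{eq.tdistortion1} (valid for intermediate $s$) and the Collet--Eckmann summability of the \emph{reference map} $f$, and the finite head $\sum_{j<k_0}$ is compared to the corresponding \emph{finite} partial sum of $\JJ_0$ by continuity for small $\epsilon,\delta$. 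That avoids ever invoking properties of $f_s$ that would require $s$ to be good; your version as stated does not.

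Both gaps are fixable by adopting the paper's devices, but neither is merely cosmetic --- the first one in particular (the $\beta_0<\beta$ two-exponent trick) is the key new idea needed to make the induction close.
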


\begin{proof}
As a preliminary step, note that \eqref{okstart} implies that
we can assume, up to taking
$\epsilon$ small enough (depending only on $\lambda_c$ and $H_0$) that
\begin{equation}\label{okstartM}
|s|< M^{-\beta}\, .
\end{equation}

Next, for $x\in J_{M-1}$ and $s$ satisfying \eqref{okstart}, 
let 
$$D_k=\sup_{t\in[0,s]}|f^k(x)-f_t^k(x)|\, .
$$ 
Fix $\alpha+1<\beta_0<\beta$. We claim that 
\begin{equation}
\label{eq.claim}
D_k\le k^{-\beta_0}\,,\quad\forall 1\le k\le M-1\,.
\end{equation} 
We show this by induction over $k$. For $\epsilon>0$ sufficiently small, 
 \eqref{eq.claim} obviously holds for all small $k$'s and arbitrary $M$. 
Assume that $1<k\le M-1$ and that the claim holds for $k-1$.
Observe that 
\begin{equation}
\label{eq.partialt}
\partial_tf_t^k(y)=\sum_{j=1}^k(f_t^{k-j})'(f_t^j(y))(\partial_tf_t)(f_t^{j-1}(y))\, ,
\quad\forall y\in I\ \text{and}\ t \in\EE\,,
\end{equation}
and there exists $\tilde C>1$ so that
\begin{equation*}
|\partial_tf_t(y)|,\ |f_t''(y)|,\ |\partial_tf_t'(y)|\le\tilde C\,, \quad
\forall y\in I\ \text{and}\ t\in \EE \,.
\end{equation*}
Hence, by applying twice the mean value theorem 
\begin{align*}
|f_t'(f_t^k(x))-f'(f^k(x))|&\le
|f_t'(f_t^k(x))-f_t'(f^k(x))|+|f_t'(f^k(x))-f'(f^k(x))|\\
&\le\tilde CD_k+\tilde C|t|\,,\quad\forall t\in[0,s]\,,
\end{align*}
and we get $|f_t'(f_t^k(x))|\le|f'(f^k(x))|+CD_k+\tilde C|t|$.
Combined with \eqref{eq.partialt}, it follows
\begin{equation}
\label{eq.221}
\left|\frac{\partial_tf_t^k(x)}{(f^{k-1})'(f(x))}\right|
\le\sum_{j=1}^k\frac{\tilde C}{|(f^{j-1})'(f(x))|}
\prod_{i=j}^{k-1}\left(1+\frac{\tilde C|t|+\tilde CD_i}{|f'(f^i(x))|}\right)\,.
\end{equation}
Since $f^{i+1}(J_{M-1})\subset B_{i+1}$, for $i\le M-2$, it follows that 
$|f^i(x)-c|\ge|f^i(c_{1})-c|-i^{-\beta}/L$. 
By \eqref{eq.alpha} and \eqref{2.5} (maybe increase $\tilde C$ in order 
to control the small $i$'s), we see that 
$|f^i(x)-c|\ge\tilde C^{-1} i^{-\alpha}$, for all $i\le M-2$, and since the critical point is nondegenerate,
we have $|f'(f^i(x))|\ge\tilde C^{-1}|f^i(x)-c|\ge\tilde C^{-2}i^{-\alpha}$. 
Together with the induction assumption on $D_i$, $i\le k-1$, and the assumption on $s$, 
it follows that for all $t\in[0,s]$
\begin{align*}
\frac{\tilde C|t|+\tilde CD_i}{|f'(f^i(x))|}
\le\tilde C^3i^\alpha(|t|+D_i)
\le\tilde C^3i^\alpha(M^{-\beta}+i^{-\beta_0})
\le2\tilde C^3i^{-(\beta_0-\alpha)}\,.
\end{align*}
Since $i^{-(\beta_0-\alpha)}$ is summable, 
the product in \eqref{eq.221} is uniformly bounded by a constant $C'$ and, 
by the mean value theorem and the distortion estimate
Lemma~ \ref{cd} for $t=0$, we conclude
$$
D_k\le C'\tilde CC(\lambda_c-1)^{-1}|(f^{k-1})'(c_{1})||s|
\le C'\tilde CC(\lambda_c-1)^{-1}k^{-\beta}\,,
$$
where in the last inequality we used the assumption
\eqref{okstart}  on $s$.
This shows \eqref{eq.claim}. 

If $f$ is Misiurewicz--Thurston (with $\alpha=\beta=0$), 
then as explained in Section~\ref{MTcase} below the right hand side of \eqref{okstart} is 
replaced by a sufficiently small constant $\eta>0$. Then, we
derive by a similar calculation as the one 
showing \eqref{eq.claim} that there exists a constant $C'$ (depending only 
on the goodness parameters of $f$) such that for all 
sufficiently small $\eta>0$,
\begin{equation}
\label{eq.MTeta}
D_k\le C'\eta|(f^{M-k})'(c_k)|^{-1}\,,\quad\forall\ 1\le k\le M-1\,.
\end{equation}
The $\eta$ in the above bound is then used to show a positive lower bound for \eqref{eq.etalower} 
below. The remaining estimates for the polynomial case and the 
Misiurewicz--Thurston case are the same. For further comments when $f$ is 
Misiurewicz--Thurston see Section~\ref{MTcase}.

We may now proceed to the estimates.
For the distortion estimate \eqref{eq.tdistortion1}, we find, similarly as when deriving \eqref{eq.221},
\begin{equation}
\label{eq.etalower}
\left|\frac{(f_s^{k-1})'(x)}{(f^{k-1})'(x)}\right|
\le\prod_{i=0}^{k-2}\left(1+\frac{\tilde C|s|+\tilde CD_i}{|f'(f^i(x))|}\right)\,, 
\quad\forall x\in f(J_{k-1})\,.
\end{equation}
Using \eqref{eq.claim} and \eqref{okstartM}, we can proceed as in the proof
of Lemma~\ref{cd} to show  that the above product is bounded. 
The lower bound is obtained in a similar way, 
where, without loss of generality, it is enough to consider the case of large $M$. 
(To deal with small $M$, we might decrease $\epsilon$, like when proving
\eqref{okstartM}.)

The estimate \eqref{rootsing1} follows from  
$|(f_s^k)'(x)|\ge C^{-1}|x-c||(f_s^k)'(f_s(x))|$ combined 
with \eqref{eq.tdistortion1}, Lemma~\ref{cd}, and \eqref{upbdx'}.

By a similar calculation as in deriving \eqref{eq.221} 
(whose right hand side is uniformly bounded, as we have shown above), 
there exists a constant $C'\ge1$ so that
\begin{equation}
\label{eq.partialss}
|\partial_sf_s^k(x)|\le C'|(f^{k-1})'(f(x))|\,,\quad \forall x\in J_{k-1}\,.
\end{equation}
By the distortion estimate Lemma~\ref{cd}, this shows \eqref{eq.transu}.

Regarding \eqref{eq.transl}, recall \eqref{eq.tdistortion1} 
and \eqref{infprodgena}, and the fact that $f$ is 
$(\lambda_c,H_0)$-Collet--Eckmann. For $H_0\le k_0\le k\le M-1$ and
$x\in J_{k-1}$, we get
\begin{equation}
\label{eq.trans1l}
\Big|\sum_{j=k_0}^{k}\frac{(\partial_s f_s)(f_s^j(x))}{(f_s^j)'(f_s(x))}\Big|
\le\sum_{j=k_0}^{k}\frac{\tilde C C^2}{|(f^j)'(c_1)|}
\le\frac{\tilde C C^2\lambda_c}{\lambda_c-1}\lambda_c^{-k_0}\,.
\end{equation}
Fix $k_0\ge H_0$ such that the right hand side of \eqref{eq.trans1l} is smaller 
than $\JJ_0/4$. Once $k_0$ is fixed, we can take $\epsilon$ and $\delta$
small enough so that 
$$
\Big|\sum_{j=0}^{k_0-1}\Big(\frac{(\partial_t f_t)(c_j)|_{t=0}}{(f^j)'(c_1)}
-\frac{(\partial_s f_s)(f_s^j(x))}{(f_s^j)'(f_s(x))}\Big)\Big|\le\frac{\JJ_0}4\,.
$$
Recalling \eqref{eq.partialt} and the definition of $\JJ_0$ in \eqref{eq.transversal}, we conclude that 
$$
\Big|\frac{\partial_sf_s^k(x)}{(f_s^{k-1})'(f_s(x))}\Big|
=\Big|\sum_{j=0}^{k-1}\frac{(\partial_s f_s)(f_s^j(x))}{(f_s^j)'(f_s(x))}\Big|\ge\frac{\JJ_0}4\,.
$$
Applying once more \eqref{eq.tdistortion1} 
and \eqref{infprodgena}, this implies \eqref{eq.transl}, provided 
$\delta$ is so small such that $H(\delta)\ge k_0$ (observe that the constants in the estimates 
we used do not depend on $\delta$). The statement about the signs follows immediately.

Finally, by Proposition~\ref{ubalpha}
\footnote{The proof of this fact uses properties~\eqref{eq.noreturn} 
and \eqref{eq.deltareturn} in Proposition~\ref{p.uniformc}.}
there exists a constant $C'$ such that for all $t$ which is 
good for the same parameters $\lambda_c$, $H_0$, $\alpha>1$, $\rho$, $C_0$, we have
$$
|(f_t^k)'(c_{1,t})|/|(f_t^{l})'(c_{1,t})|\le C'k^{\alpha}\,,\quad\forall l>k\ge1\,,
$$
and claim \eqref{eq.postcrit1}  follows immediately 
by \eqref{eq.tdistortion1}.
\end{proof}

 Let $f_t$ be a smooth one-parameter family of smooth nondegenerate unimodal maps.
As usual, we put $f=f_0$.
Adapting  \cite[(107)--(109)]{BS} to the polynomial towers of the present work,
and in view of Lemma ~\ref{bottomok},
we introduce a key definition:

\begin{definition}[Admissible pairs]\label{Adm}
Let $C_a > C$, where $C\ge 1$ 
is given by  Proposition ~\ref{ubalpha}.
Let $\alpha \ge 0$, $\beta \ge 0$, $\epsilon >0$.
A pair $(M,t)$ with  $M \in \integer_+$ 
is called a $(C_a, \alpha, \beta,\epsilon)$-{\it admissible pair}
(or just an admissible pair, if the meaning is clear)  if $0<|t|<\epsilon$ and
\begin{equation}\label{bd2}
|(f^M)'(c_1)||t|\le C_a^{-1}M^{-(\alpha+\beta)}\, , 
\end{equation}
and $M$ is maximal for this property.
\end{definition}

Observe that if $(M,t)$ is an admissible pair then, by the maximality of $M$, we find 
a constant $C$ (only dependent on $\sup|f'|$) such that 
\begin{equation}
\label{eq.admupper}
|(f^M)'(c_1)|^{-1}\le CC_aM^{\alpha+\beta}|t|\,.
\end{equation}

To motivate the definition, 
let  $f=f_0$, be good 
for parameters  $\lambda_c$,  $H_0$, $\alpha >1$,
$\rho$, $C_0$.
Let $\delta>0$  be so small such that all  results in Section~\ref{s.prel} hold. 
Choose $\beta >\alpha +1$.
Let $\epsilon>0$ be given by  Lemma~\ref{rootsing}).
(In our application below, $\epsilon$ may be further reduced
when invoking  Lemma~\ref{bottomok}.)
Then, if  $t$ is good for the
same parameters and $(M,t)$ is a
$(C_a, \alpha,\beta,\epsilon)$-admissible pair, we claim
that the  estimates in Lemma~\ref{rootsing} hold for $M$ and all $|s|\le t$ ,
with constants depending only on $C_a$. (Indeed,  \eqref{eq.postcrit1} holds for $s=0$ by Proposition~\ref{ubalpha}
since $C_a$ is larger than the constant $C$ from that proposition,
so that \eqref{okstart} is satisfied by the admissibility condition.)
In addition, using \eqref{okstart} again,
 we may ensure, by Lemma~\ref{bottomok} below, that
the tower of $f_t$ coincides with that of $f$ up to level $M$.

\section{Banach spaces and transfer operators  on  the tower}
\label{uu}

In this section we define the Banach spaces, transfer operators, and truncated
transfer operators used to prove our theorems, and 
we strengthen the results from \cite{BS} on these objects:
In Sections ~\ref{ss.transfer} and \ref{ss.trunc} we consider $f_t$ good for
parameters $(\lambda_c, H_0, \alpha, \rho, C_0)$, with
$\alpha>1$, using the notation and tower construction from Section~\ref{ss.tower}. 
In Section~\ref{MTcase},
we summarise the changes needed to adapt the constructions of 
Sections ~\ref{s.prel}, ~\ref{uu}, and ~\ref{ss.transfer}, \ref{ss.trunc}
to the Misiurewicz--Thurston case (where we take $\alpha=0$).

\subsection{Banach spaces and transfer operators $\widehat \LL_t$}
\label{ss.transfer}
Just like in \cite{BS}, we shall work with Sobolev spaces. For integer $r\ge0$, recall that the 
generalized Sobolev norm of $\psi:I\to\complex$ is 
$$
\|\psi\|_{W_1^r}=\|\partial_x^r\psi(x)\|_{L^1(I)}\,.
$$
Note that $\|\psi\|_{L^\infty}\le C\|\psi\|_{W_1^1}$ (cf. inequality \eqref{sobeb} below).

Fix $\lambda$ so that
\begin{equation}\label{48}
1 <\lambda  <  \min(\lambda_c^{1/2},\sqrt \rho) \, .
\end{equation}
(The square root in $\lambda<\sqrt \rho$ is used in \eqref{eq.hm} below.)
Let $\Lambda_t\ge \lambda_c$ be so that, for some constant $C=C_t\ge 1$,
\footnote{The supremum of Collet--Eckmann constants $\lambda_{c,t}$ is not
always a Collet--Eckmann constant, this is why we introduce $\Lambda_t$.
See also Lemma~\ref{l.existencemt}.}
\begin{equation}\label{defL}
| (f_t^k)'(c_{1,t})| \ge \frac{\Lambda_t^k}{C_t}\, ,\,\,  \forall k \ge 1 \, .
\end{equation}

We first introduce the Banach space of functions on the
tower on which the transfer operator (to be defined next) will act:

\begin{definition}[Spaces $\BB_t=\BB_t^{W_1^1}$, $\BB^{L^1}_t$, $\BB^{L^p}_t$]\label{Bspace}
Let $\BB_t=\BB_t^{W_1^1}$ be the space of sequences
$\hat \psi=(\psi_k :I \to \complex,\,  k \in \integer_+)$,  so that
each $\psi_k$ is $W_1^1$ and, in addition,
\begin{equation}
\label{defban}
\supp( \psi_0) \subset (0, 1) \, , \quad\text{and}\quad
\supp (\psi_k )\subset
J_{k,t}
\, , \quad \forall k \ge1\, , 
\end{equation}
endowed with the norm 
$$
\| \hat \psi\|_{\BB_t}= 
\sum_{k\ge0}\|\psi_k\|_{W_1^1}\, .
$$
Let $\BB^{L^1}_t$ be the space of
sequences $\hat \psi$ of 
functions $\psi_k \in L^1(I)$ satisfying \eqref{defban}, 
with
\begin{equation}\label{trunc}
\|\hat \psi\|_{\BB^{L^1}_t}=\sum_{k\ge 0} \lambda^k\|\psi_k \|_{L^1(I)}\, .
\end{equation}
For $p > 1$ and $\rr=\rr(t,p)$ so~\footnote{Note that \eqref{48} implies that $r < 1/p<1$.
If $\lambda \to 1$ then $\rr \to -\infty$, but it is instead convenient to 
take $\lambda\sim \min(\lambda_c^{1/4},\sqrt{\rho})$, in view of \eqref{theta0}.} that
\begin{equation}
\label{eq.mu}
\lambda^{1-\rr}=\Lambda_{t}^{\frac{1}{2}(1-\frac{1}{p})} \,,
\end{equation}
let $\BB_t^{L^p}$ be the space\footnote{Defining $\BB_t^{L^p}$
by interpolation instead would not be appropriate, in view
of  \eqref{eqMT}, \eqref{eq.hm'}.} of sequences $\hat\psi$ of functions $\psi_k\in L^p(I)$ 
satisfying \eqref{defban}, with 
$$
\|\hat\psi\|_{\BB_t^{L^{p}}}:=\sum_{k \ge 0}\lambda^{k\rr}\|\psi_k\|_{L^{p}(I)}\,.
$$
\end{definition}

 \smallskip

\begin{remark}[Strong and weak norms]
Generally, $\BB_t^{W_1^1}$ will be the ``strong'' norm and  $\BB^{L^1}_t$
the ``weak'' norm, in the usual Lasota--Yorke meaning, see e.g. \eqref{eq.LaYo}. 
(It is easy to check that $\BB_t^{W_1^1}$ is continuously
embedded in  $\BB^{L^1}_t$ using \eqref{decaybound} and \eqref{defban}, \eqref{48}.) 

The auxiliary weak norms $\BB_t^{L^{p}}$ for $p> 1$
will only be used in
the Misiurewicz--Thurston case (where $\beta=0$), to get the lower bound in
Theorem~\ref{t.main3}.  We have, recalling \eqref{decaybound}, 
\begin{align*}
\|\hat\psi\|_{\BB_t^{L^{p}}}&=\sum_{k \ge 0}\lambda^{k\rr}\left(\int_I|\psi_k|^{p}\,dx\right)^{1/p}
\le\sum_{k \ge 0}\lambda^{k\rr}|\supp(\psi_k)|^{1/p}\|\psi_k\|_{L^\infty}\\
&\le C\sum_{k \ge 0}\lambda^{k\rr}k^{-\beta/2p} \lambda_c^{-k/2p}\|\psi_k\|_{W_1^1}\,.
\end{align*}
Since  
$\rr <1/p$, we get  $\|.\|_{\BB_t^{L^{p}}}\le C\|.\|_{\BB_t}$
 for any $p> 1$ by using $\lambda <\sqrt \lambda_c$ from \eqref{48}.

In addition the embedding 
$\BB_t^{L^{p}}\subset\BB^{L^1}$ is bounded for any $p > 1$:
\begin{align}\label{L1Lp}
\|\hat\psi\|_{\BB^{L^1}}&=\sum_{k\ge0}\lambda^k\|\psi_k\|_{L^1(I)}
\le\sum_{k\ge0}\lambda^k|\supp(\psi_k)|^{(p-1)/p}\|\psi_k\|_{L^{p}(I)}\\
\nonumber &\le C\sum_{k\ge0}\lambda^k k^{-\beta(p-1)/2p} |(f^{k-1})'(c_1)|^{-(p-1)/2p}\|\psi_k\|_{L^{p}(I)}
\le C C_t \|\hat\psi\|_{\BB_t^{L^p}}\,,
\end{align}
by the H\"older inequality, and the definitions \eqref{eq.mu} of $\rr$ and \eqref{defL} of $\Lambda_t$. 
\end{remark}

The {\it projection} $\Pi_t(\hat \psi)$ for a function $\hat \psi \in \BB_t$ is defined by
\begin{align}\label{defproj}
\Pi_t(\hat \psi)(x)&=\sum_{k \ge 0, \varsigma\in \{+,-\}}
 \frac{\lambda^k}{|(f_t^{k})'(f^{-k}_{t,\varsigma}(x))|} \psi_k(f^{-k}_{t,\varsigma}(x)) 
 \chi_{k,t}(x)
   \, ,
\end{align}
where $\chi_{k,t}=1_{[0, c_{k,t}]}$ if
$f_t^k$ has a local maximum at $c$, while $\chi_{k,t}=1_{[ c_{k,t},1]}$ if
$f_t^k$ has a local minimum at $c$ (we set $\chi_{0,t}\equiv 1$, and 
when the meaning is clear we will omit the factor $\chi_{k,t}$ in the formula; also in 
the definition of the transfer operator $\widehat\LL$ in \eqref{a} below we will not write the 
factor $\chi_{k,t}$).
Note that, for $\hat\psi\in\BB_t$, the function $f_t^k:[c,1]\cap\supp(\psi_k)\to I$ is injective, and 
by a change of variables we have
\begin{equation*}
\int_0^1 \frac{|\psi_k(f_{t,+}^{-k}(x))|\chi_{k,t}(x)}{|(f_t^k)'(f_{t,+}^{-k}(x))|}\, dx
=\int_c^1 |\psi_k(x)| \, dx \, ,
\end{equation*}
and a similar formula holds when considering the branch $f_{t,-}^{-k}$ (instead of integrating over 
$[c,1]$ we integrate over $[0,c]$ on the right hand side). 
Thus, we have $\|\Pi_t (\hat \psi)\|_{L^1(I)}\le\|\hat \psi\|_
{\BB_t^{L^1}}$. The case of $\BB_t^{L^p}$ for $p>1$ is a little less trivial:

\begin{lemma}\label{projp}
For any $p>1$ and any $1\le \tilde p < p\frac{2}{p+1}$ there exists $C(p,\tilde p)\ge 1$ so that
$
\|\Pi_t (\hat \psi)\|_{L^{\tilde p}}
\le C(p,\tilde p) C_t \|\hat \psi\|_{\BB_t^{L^p}}
$.
\end{lemma}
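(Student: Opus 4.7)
The plan is to apply Minkowski's inequality to the countable sum defining $\Pi_t(\hat\psi)$ in \eqref{defproj}, and then change variables $y=f_{t,\varsigma}^{-k}(x)$ (so that $dx=|(f_t^k)'(y)|\,dy$) in each term to get
\begin{equation*}
\left\|\frac{\psi_k\circ f_{t,\varsigma}^{-k}}{|(f_t^k)'\circ f_{t,\varsigma}^{-k}|}\,\chi_{k,t}\right\|_{L^{\tilde p}}^{\tilde p}\le\int_{J_{k,t}\cap U_{t,\varsigma}}|\psi_k(y)|^{\tilde p}\,|(f_t^k)'(y)|^{1-\tilde p}\,dy.
\end{equation*}
H\"older's inequality with dual exponents $p/\tilde p$ and $p/(p-\tilde p)$ separates $\psi_k$ from the Jacobian factor, reducing matters to controlling
\begin{equation*}
I_k:=\int_{J_{k,t}}|(f_t^k)'(y)|^{-\sigma}\,dy,\qquad\sigma:=\frac{(\tilde p-1)p}{p-\tilde p}.
\end{equation*}
An elementary computation shows that $\sigma<1$ precisely when $\tilde p<2p/(p+1)$, so the hypothesis of the lemma is exactly what is needed to make the singular integral $I_k$ finite.

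To bound $I_k$, I would first decompose $|(f_t^k)'(y)|=|f_t'(y)|\,|(f_t^{k-1})'(f_t(y))|$ for $y\in J_{k,t}\setminus\{c\}$ and invoke two ingredients from Section~\ref{ss.tower}: the bounded-distortion estimate of Lemma~\ref{cd} applied on $f_t(J_{k-1,t})$ (valid since $J_{k,t}\subset J_{k-1,t}$) yields $|(f_t^{k-1})'(f_t(y))|\asymp|(f_t^{k-1})'(c_1)|$, while nondegeneracy of the critical point gives $|f_t'(y)|\asymp|y-c|$. This turns $I_k$ into $|(f_t^{k-1})'(c_1)|^{-\sigma}$ times a one-dimensional integral $\int_{J_{k,t}}|y-c|^{-\sigma}dy$, which, combined with the diameter bound $\operatorname{diam}(J_{k,t})\le C L^{-1/2}k^{-\beta/2}|(f_t^{k-2})'(c_1)|^{-1/2}$ from \eqref{upbdx} (recall $\sigma<1$, so $|y-c|^{-\sigma}$ is integrable), yields
\begin{equation*}
I_k\le C(p,\tilde p)\,k^{-\beta(1-\sigma)/2}\,|(f_t^{k-1})'(c_1)|^{-(1+\sigma)/2}\le C(p,\tilde p)\,C_t^{(1+\sigma)/2}\,\Lambda_t^{-k(1+\sigma)/2},
\end{equation*}
where the last inequality uses the Collet--Eckmann lower bound \eqref{defL}.

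The argument closes via a clean exponent match: a direct computation gives $(1+\sigma)(p-\tilde p)/(2p\tilde p)=(p-1)/(2p)$, so raising $I_k$ to the power $(p-\tilde p)/(p\tilde p)$ and using the defining identity $\lambda^{1-\rr}=\Lambda_t^{(p-1)/(2p)}$ from \eqref{eq.mu}, one obtains $\lambda^k\,I_k^{(p-\tilde p)/(p\tilde p)}\le C(p,\tilde p)\,C_t\,\lambda^{k\rr}$. Summing Minkowski's bound over $k\ge0$ and $\varsigma\in\{+,-\}$ then produces $\|\Pi_t\hat\psi\|_{L^{\tilde p}}\le 2\,C(p,\tilde p)\,C_t\,\|\hat\psi\|_{\BB_t^{L^p}}$, with the polynomial factor $k^{-\beta(1-\sigma)(p-\tilde p)/(2p\tilde p)}$ harmlessly absorbed since it is bounded by $1$. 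The main subtlety, which I expect to be the key point to watch rather than a genuine obstacle, is that both the integrability of $|y-c|^{-\sigma}$ and the exponent identity degenerate exactly at $\tilde p=2p/(p+1)$, so the strict inequality in the hypothesis is fundamental rather than cosmetic: it reflects the square-root singularity of the SRB density pushed through the tower levels and is the Sobolev/Hardy threshold for $\Pi_t$ on the weighted $L^p$-space $\BB_t^{L^p}$.
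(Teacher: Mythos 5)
Your proof is correct and follows essentially the same route as the paper's: Minkowski's inequality, a change of variables, and H\"older with the dual exponents that make the singular integrand borderline integrable, with the constraint $\tilde p< 2p/(p+1)$ arising exactly as the integrability threshold $\sigma<1$. The only (cosmetic) difference is that the paper changes variables a second time so the singularity lives at $c_{k,t}$ in $x$-space and is handled directly via \eqref{expii}, whereas you keep the integral in $y$-space, re-deriving \eqref{expii} inline and compensating with the diameter bound \eqref{decaybound} on $J_{k,t}$; both give the same exponent $(p-1)/(2p)$ on $|(f_t^{k-1})'(c_{1,t})|$, matched to $\rr$ via \eqref{eq.mu}.
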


\begin{proof} 
By the Minkowski inequality and a change of variable
\begin{align*}
\|\Pi_t(\hat\psi)\|_{L^{\tilde p}}
&\le\sum_{k \ge 0, \varsigma\in \{+,-\}}\lambda^k
\left(\int_I\frac{|\psi_k(f^{-k}_{t,\varsigma}(x))|^{\tilde p}}{|(f_t^{k})'(f^{-k}_{t,\varsigma}(x))|^{\tilde p}}\,dx\right)^{1/\tilde p}\\
&\le\sum_{k \ge 0}\lambda^k
\left(\int_I\frac{|\psi_k(y)|^{\tilde p}}{|(f_t^{k})'(y)|^{\tilde p-1}}\,dy\right)^{1/\tilde p}\,.
\end{align*}
For $q'>1$ and $p'>1$ so that $q'^{-1}+p'^{-1}=1$,  applying the H\"older inequality, changing variables
again, and using the first inequality in \eqref{expii} (which holds for
any $y \in J_{k,t}$)
give
\begin{align*}
&\left(\int_I\frac{|\psi_k(y)|^{\tilde p}}{|(f_t^{k})'(y)|^{\tilde p-1}}\,dy\right)^{1/\tilde p}
\le
\left(\int_{\supp(\psi_k)}\frac1{|(f_t^{k})'(y)|^{(\tilde p-1)q'}}\,dy\right)^{1/\tilde pq'}
\|\psi_k\|_{L^{\tilde pp'}(I)}\\
&\,\,\le \sum_{\zeta=\pm}
\left(\int \frac1{|(f_t^{k})'(f^{-k}_{t,\varsigma}(x))|^{(\tilde p-1)q'+1}}\,dx\right)^{\frac{1}{\tilde pq'}}
\|\psi_k\|_{L^{\tilde pp'}(I)}\\
&\,\, \le 
\frac{C}{(|(f^{k-1})'(c_{1,t})|^{1/2})^{(1-1/\tilde p)+1/(\tilde p q')}} 
\left(\int_I \frac{\chi_{k,t}(x)}{\sqrt{|x-c_{k,t}|}^{(\tilde p-1)q'+1}}\,dx\right)^{\frac{1}{\tilde pq'}}
\|\psi_k\|_{L^{\tilde pp'}(I)} .
\end{align*}
Now, if 
\begin{equation}\label{integg}
(\tilde p-1)q'+1<2
\end{equation} 
then  $|x-c_{k,t}|^{-\frac{(\tilde p-1)q'+1}{2}}$ is integrable, and we find
$$
\left(\int_I\frac{|\psi_k(y)|^{\tilde p}}{|(f_t^{k})'(y)|^{\tilde p-1}}\,dy\right)^{1/\tilde p}
\le C^2 
\frac{1}{(|(f^{k-1})'(c_{1,t})|^{1/2})^{(1-1/\tilde p)+1/(\tilde p q')}} 
\|\psi_k\|_{L^{\tilde pp'}(I)}
$$
Set $p=\tilde p p'$. Then, $\tilde p=p (1-1/q')$ so that
\eqref{integg} amounts to 
$$q'< \frac{p+1}{p-1}\, ,$$ 
and the condition on $\tilde p$ 
is $\tilde p < p\frac{2}{p+1}$, as announced.
Using
that our choices give
$$
1-\frac{1}{p}=1-\frac{1}{\tilde p p'}
= 1-\frac{1}{\tilde p} +\frac{1}{\tilde p q'}
\, ,
$$
we find (by definition of $\rr$ and $\Lambda_t$)
$$
\frac{\lambda^{k(1-\rr)}}
{|(f^{k-1})'(c_{1,t})|^{[(1-1/\tilde p)+1/(\tilde p q')]/2}}
\le C\frac{\Lambda_{t}^{k\frac{p-1}{2p}}}
{|(f^{k-1})'(c_{1,t})|^{[(1-1/\tilde p)+1/(\tilde p q')]/2}}\le C C_t\,,
$$
which concludes the proof of the lemma. 
\end{proof}

In order to define the transfer operator $\widehat \LL_t$, we 
introduce smooth {\it cutoff functions} $\xi_{k,t}$ defined as follows. 
The smoothness of the cutoff function is due to the fact that the functions in $\BB_t$ 
are smooth (we want this smoothness to be preserved when applying the 
transfer operator defined below).
For each $k \ge 0$, let $\xi_{k,t} : I \to [0,1]$ be a $C^\infty$ function, with 
\begin{align*}
\supp(\xi_{0,t})= [c- \delta,c+\delta]\, , \qquad 
\xi_{0,t}|_{[c-\frac{\delta}{2},c+\frac{\delta}{2}]} \equiv 1\, ,
\end{align*}
while  for $k\ge1$ we set $\xi_{k,t}\equiv1$ if $I_{k+2,t}=\emptyset$, 
and,  otherwise we assume
\begin{itemize}
\item $\xi_{k,t}$ is unimodal,
\item $\supp(\xi_{k,t})=J_{k+1,t}$,
\item $\xi_{k,t}|_{\cup_{\varsigma\in\{+,-\}}
f_{t,\varsigma}^{-(k+1)}[c_{k+1,t}-(k+1)^{-\beta}/(2L^3),c_{k+1,t}+(k+1)^{-\beta}/(2L^3)]}\equiv1$,
\item $\sup|\partial_x^j\xi_{k,t}(x)|\le C|J_{k+1,t}|^{-j}$, for $j=1,2,3$.
\end{itemize}
(The last property we assume holds also for $k=0$.)
Note that  $\xi_{k,t}(y)>0$ if and only if $\hat f_t (f_t^{k}(y),k)\in B_{k+1,t}\times(k+1)$. 
Further, observe that if $\xi_{k,t}\not\equiv1$, then 
$f_t^{k+2}(J_{k+2,t})$ is adjacent to the boundary of $B_{k+2,t}$ from which 
follows that $|f_t^{k+2}(J_{k+2,t})|\ge (k+2)^{-\beta}L^{-3}/2$.
Hence, we derive similarly as in the estimate \eqref{decaybound} above 
that for some constant $C\ge1$
\begin{equation}
\label{eq.decbelow}
|J_{k+1,t}|\ge|J_{k+2,t}|\ge C^{-1}k^{-\beta/2}|(f_t^{k+1})'(c_1)|^{-1/2}\,,
\qquad\text{if }I_{k+2,t}\neq\emptyset\,.
\end{equation}
This will give the estimate \eqref{kappa'} below.

\begin{definition}[Transfer operator]\label{opp}
The transfer operator $\widehat \LL_t$ is defined for  $\hat \psi \in \BB_t$ by
\begin{equation}\label{a}
(\widehat \LL_t  \hat \psi )_k (x)=
\begin{cases}
\frac{\xi_{k-1,t}(x)}{\lambda} \cdot\psi_{k-1} (x)& k \ge 1 \,, \\
\sum_{j \ge 0, \varsigma\in \{+,-\}}
\frac{\lambda^j (1- \xi_{j,t}(f^{-(j+1)}_{t,\varsigma}(x)))} {|(f_t^{j+1})'(f^{-(j+1)}_{t,\varsigma}(x))|}\cdot
\psi_j(f^{-(j+1)}_{t,\varsigma}(x))
& k=0 \, .
\end{cases}
\end{equation} 
\end{definition}
 
Note that some $j$-terms in the sum for  $(\widehat \LL_t \hat \psi)_0 (x)$ vanish, in particular, 
for all $1\le j <H_0$, because of our choice of small $\delta$.
If  $0<\xi_{j,t}(y)<1$, 
then $y$ will contribute to  both
$(\widehat \LL_t \hat \psi)_{j+1} (y)$
and 
$(\widehat \LL_t \hat \psi)_0 (f^{j+1}(y))$. In other words,
the transfer operator just defined 
is  associated to a multivalued (probabilistic-type)
tower dynamics.
For this multivalued dynamics, some points may fall from the tower a little
earlier than they would for $\hat f_t$. However, the conditions
on the  functions $\xi_{k,t}$ guarantee that they do not
fall {\it too} early. More precisely, if we define ``fuzzy''
analogues of the intervals $I_{k,t}$ and $J_{k,t}$
from \eqref{ints} as follows
\begin{equation}\label{tildeint}
\widetilde I_{k,t}:=
\{ x \in I \mid \xi_{k,t}(x)<1 \, , \, \xi_{j,t}(x)>0\, , \forall
0\le j < k \}\, ,
\quad
\widetilde J_{k,t}=\{c\}\cup _{j\ge k+1} \widetilde I_{k,t}\, , 
\end{equation}
then we can replace $I_{k,t}$ and $J_{k,t}$ by $\widetilde I_{k,t}$ in the
previous estimates, in particular in Lemma~\ref{rootsing}.
Indeed, just observe that if a point ``falls'' according to our fuzzy
dynamics, it would have fallen for some choice of intervals $\widetilde B_{k,t}$ so that
$$
[c_{k,t}-k^{-\beta}/(2L^3),c_{k,t}+k^{-\beta}/(2L^3)]
\subset \widetilde B_{k,t}\subset B_{k,t}
\, .
$$ 
Since we can apply Lemma~\ref{rootsing} to the fuzzy intervals, we 
can combine \eqref{eq.decbelow} with \eqref{expii}, \eqref{rootsing2}, and 
\eqref{rootsing3}, and it follows immediately from the conditions on $\xi_{k,t}$ that there 
is a constant $\tilde C\ge1$ such that
\begin{align}\label{kappa'}
&\|\xi_{k,t}\circ f^{-(k+1)}_{t,\pm}\|_{C^1}\le \tilde C k^{\beta}\, ,\ \ 
\|\xi_{k,t}\circ f^{-(k+1)}_{t,\pm}\|_{C^2}\le \tilde C k^{\max(1+2\alpha+\beta,2\beta)},\\
\nonumber
&\|\xi_{k,t}\circ f^{-(k+1)}_{t,\pm}\|_{C^3}\le \tilde C k^{\max(1+4\alpha+\beta,3\beta)}
\, , \quad\text{for all}\ \ k\ge1\,.
\end{align}
(This is the polynomial analogue of condition
 \cite[(75)]{BS}; the case $j=3$ is used together with \eqref{rootsing2},
\eqref{rootsing3} in Appendix~\ref{misc2}.)  

\begin{remark}[Overlap control]
\label{overlap}
In contrast to the intervals $I_k$, the
intervals $\widetilde I_k$ do not have pairwise disjoint interiors. 
Nevertheless, it follows from the first paragraph in the proof of Lemma~\ref{bottomok}
(see in particular \eqref{overlap'})
 that if $L$ is large enough (and thus $\delta$ small
 enough), we may choose the cutoff functions
$\xi_k$ so that for each $k$, the cardinality
of those $\widetilde I_j$, $j\neq k$, whose interiors intersect the interior of
$\widetilde I_k$ is bounded  by $2$. In other
words each set $\{x\in \supp \, \xi_k(x) \mid \xi_k(x)\ne 1 \}$ is contained in $I_{k+2}$, and hence these
sets are disjoint. 
(This overlap control is used to get the Lasota--Yorke estimate at the heart
of Proposition~\ref{mainprop}. The fact that the overlap is at most two
is  used to get
a good control in \eqref{eq.eigencom2}, which is
essential for Theorem ~\ref{t.main3}.)
\end{remark}

Now, if we introduce the ordinary (Perron--Frobenius) transfer operator
$$
\LL_t : L^1(I)\to L^1(I)\, ,\qquad
\LL_t \varphi(x)=\sum_{f_t(y)=x} \frac{\varphi(y)}{|f_t'(y)|}\, ,
$$
then we have
\begin{equation}\label{commute}
\LL_t \Pi_t (\hat \psi)=\Pi_t(\widehat \LL_t\hat \psi)\, ,
\, \, \forall \hat \psi \in \BB_t^{L^1}\, .
\end{equation}
(See, e.g., \cite{BS} below equation (78).)
In particular, if $\widehat \LL_t\hat \phi=\hat \phi$
then $\LL_t\Pi_t(\hat \phi)=\Pi_t(\hat \phi)$.

Set $w(x,k)=\lambda^k$, for $x\in I$ and $k\ge0$, and define $\nu$ to be the nonnegative measure on 
$\cup_{k\ge0}I\times\{k\}$ whose 
density with respect to Lebesgue is $w(x,k)$.

\begin{proposition}[Spectral properties of $\widehat \LL_t$]\label{mainprop}
Let $f_t$ be good for  parameters $\lambda_c$, $H_0$, $\alpha>1$, $\rho$, $C_0$.  Choose
$\delta>0$ small,  $\beta>\alpha+1$ and $\lambda>1$  as in \eqref{48}. 
Then the operator $\widehat \LL_t$ is bounded on $\BB_t$, and for any
\begin{equation}\label{theta0}
1 < \Theta_0 <\min( \frac{ \lambda_c^{1/2}}{\lambda}\, , \lambda)\, ,
\end{equation}
the  essential spectral radius of $\widehat \LL_t$ on $\BB_t$  is bounded by $\Theta_0^{-1}$.
The spectral radius of $\widehat \LL_t$ on $\BB_t$
is equal to $1$, where $1$ is a simple eigenvalue  for
a nonnegative eigenvector $\hat \phi_t$. If $f_t$ is mixing, then $1$ is the only
eigenvalue of modulus $1$, otherwise the other eigenvalues of
modulus $1$ are simple and located at
roots of unity $e^{2\imath j\pi/P_t}$, $j=0,\ldots , P_t-1$, for $P_t\ge 2$ the renormalisation period of $f_t$.
The  fixed point of the dual of  $\widehat \LL_t$ is $\nu$.
If  $\nu(\hat \phi_t)=1$, then $\phi_t:=\Pi_t (\hat \phi_t)$
is the density of the unique absolutely continuous $f_t$-invariant probability measure. 
Finally,\footnote{We
use here that $f_t$ is  $C^4$ and not just $C^3$.} $\hat \phi_{t,0} \in W^2_1$, uniformly in the goodness
 (once $\delta$, $\beta$, $L$, and $\lambda$ are fixed).
\end{proposition}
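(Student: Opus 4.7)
The plan is to adapt the argument of Proposition 4.11 in \cite{BS} (with the corrections noted in Remark~\ref{rk.mix}) to the fat polynomial tower. The three standard ingredients are: a Lasota--Yorke inequality with $\BB_t^{W_1^1}$ as strong norm and $\BB_t^{L^1}$ as weak norm, the compactness of the embedding $\BB_t \hookrightarrow \BB_t^{L^1}$, and positivity arguments of Karlin type to control the peripheral spectrum.

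First I would establish boundedness and the Lasota--Yorke inequality by decomposing $(\widehat\LL_t^n\hat\psi)_k$ into an ``upward'' contribution supported at levels $k\ge n$ of the form $\lambda^{-n}\prod_{j=1}^{n}\xi_{k-j,t}\cdot \psi_{k-n}$, and a ``fall'' contribution at the ground floor given by the inverse branches $f_{t,\pm}^{-(j+1)}$ weighted by $\lambda^j(1-\xi_{j,t})/|(f_t^{j+1})'|$. Differentiating and taking $L^1$ norms level by level, the upward part contributes $\lambda^{-n}$ times polynomial factors in $k$ coming from $\|\xi_{k,t}\|_{C^1}$ (bounded via \eqref{kappa'}). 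The fall part uses that $1-\xi_{j,t}$ is supported in $\widetilde J_{j+2,t}$ together with the bounds \eqref{expii}, \eqref{rootsing2} on the inverse branch Jacobians, producing an extra $|(f_t^j)'(c_1)|^{-1/2}\le C_t\Lambda_t^{-j/2}$ factor that when multiplied by $\lambda^j$ and summed gives $(\lambda_c^{1/2}/\lambda)^{-n}$ decay. The two competing rates $\lambda^{-1}$ and $\lambda_c^{1/2}/\lambda$ produce the bound $\Theta_0^{-n}\|\hat\psi\|_{\BB_t}+C_n\|\hat\psi\|_{\BB_t^{L^1}}$ for any $\Theta_0$ satisfying \eqref{theta0}. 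The main obstacle here is taming the polynomial factors from $\|\xi_{k,t}\|_{C^j}$ and from the Leibniz formula applied to the inverse Jacobians: summability relies crucially on $\beta>\alpha+1$ (as in Lemma~\ref{cd} and Proposition~\ref{ubalpha}) together with the overlap control of Remark~\ref{overlap}.

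Once the Lasota--Yorke inequality is established, the compact embedding $\BB_t^{W_1^1}\hookrightarrow \BB_t^{L^1}$ (Rellich at each level, plus summability of $\lambda^k|J_{k,t}|$ from \eqref{decaybound}) combined with Hennion's theorem yields the essential spectral radius bound $\Theta_0^{-1}$. The invariance $\nu(\widehat\LL_t\hat\psi)=\nu(\hat\psi)$ is a direct change-of-variables calculation: the level $k\ge 1$ contribution of $\widehat\LL_t\hat\psi$ integrates to $\lambda^{k-1}\int\xi_{k-1,t}\psi_{k-1}\,dx$ and the ground-floor contribution to $\sum_j\lambda^j\int(1-\xi_{j,t})\psi_j\,dx$; with the weights $\lambda^k$ these telescope to $\sum_k\lambda^k\int\psi_k\,dx=\nu(\hat\psi)$. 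Hence the spectral radius equals $1$, and a nonnegative maximal eigenvector $\hat\phi_t$ is produced via the standard Schauder--Tychonoff / Krein--Rutman argument on the positive cone.

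Simplicity of $1$ and the identification of the peripheral eigenvalues with the $P_t$-th roots of unity (only $1$ in the mixing case) is obtained from Karlin's theory of positive operators as indicated in Remark~\ref{rk.mix} and Appendix~\ref{misc2}, combined with the intertwining relation $\LL_t\Pi_t=\Pi_t\widehat\LL_t$ from \eqref{commute} and the known ergodic structure of $(f_t,\mu_t)$ for Collet--Eckmann unimodal maps. After normalising $\nu(\hat\phi_t)=1$, \eqref{commute} and nonnegativity of $\Pi_t$ immediately give that $\phi_t=\Pi_t(\hat\phi_t)$ is the SRB density. Finally, the $W_1^2$ bound on $\hat\phi_{t,0}$ is obtained by a one-step bootstrap from $\hat\phi_{t,0}=(\widehat\LL_t\hat\phi_t)_0$: differentiating the ground-floor formula in \eqref{a} twice and using $f_t\in C^4$, the $C^2$ bounds on $\xi_{k,t}\circ f_{t,\pm}^{-(k+1)}$ in \eqref{kappa'}, and the second-derivative bound \eqref{rootsing3} on the inverse-branch Jacobians, one controls $\|\partial_x^2\hat\phi_{t,0}\|_{L^1}$ by $\|\hat\phi_t\|_{\BB_t}$, with constants depending only on the goodness constants (again using $\beta>\alpha+1$ for summability). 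Uniformity follows since all constants entering are uniform on the set $\Delta_0$ of Proposition~\ref{p.uniformc}.
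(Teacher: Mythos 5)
Your argument for boundedness, the Lasota--Yorke estimate, the essential spectral radius via Hennion, the conformal measure $\nu$, and the peripheral spectrum via Karlin and the intertwining $\LL_t\Pi_t=\Pi_t\widehat\LL_t$ is essentially the route taken in the paper (Appendix~\ref{misc2}), including the observation that the simplicity at $1$ is deduced from uniqueness and ergodicity of the SRB density rather than from a [BV]-type exactness argument, which is unavailable for the probabilistic tower dynamics.

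However, the final step — the $W^2_1$ regularity of $\hat\phi_{t,0}$ — has a genuine gap. You propose a one-step bootstrap: differentiate the ground-floor component of the fixed-point equation
$\phi_{t,0}=(\widehat\LL_t\hat\phi_t)_0$ twice and conclude that $\|\partial_x^2\phi_{t,0}\|_{L^1}$ is controlled by $\|\hat\phi_t\|_{\BB_t}$. This cannot work as stated, for two related reasons. First, the right-hand side of the ground-floor equation involves $\phi_{t,j}\circ f_{t,\varsigma}^{-(j+1)}$ with $\phi_{t,j}=\lambda^{-j}\bigl(\prod_{i<j}\xi_{i,t}\bigr)\phi_{t,0}$, so after two $x$-derivatives one faces $\phi_{t,0}''$ again on the right, weighted by $|(f_t^{j+1})'|^{-2}$: the quantity you wish to bound reappears, and $\|\hat\phi_t\|_{\BB_t}$ (a $W^1_1$ quantity) simply does not control it. One would need a self-improving inequality of the form
\begin{equation*}
\|\phi_{t,0}''\|_{L^1}\ \le\ \varepsilon\,\|\phi_{t,0}''\|_{L^1}\ +\ C\,\|\hat\phi_t\|_{\BB_t}
\end{equation*}
with $\varepsilon<1$. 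Second, even granting the circularity, the $j=0$ (free-period) term destroys the contraction in a single step: on $\supp(1-\xi_{0,t})$ the Jacobian $|f_t'|$ is only bounded below by $c(\delta)\sim\delta$, so that term contributes $\sim c(\delta)^{-1}\|\phi_{t,0}''\|_{L^1}$, which is large, not small. The one-step argument therefore fails; the expansion that rescues the estimate, namely the $C_0\delta\rho^n$ of \eqref{eq.noreturn}, only becomes effective after iterating $n$ times.

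The paper's actual argument avoids both problems by working not with $\hat\phi_t$ directly but with a smooth seed $\hat\psi$ concentrated on level $0$, proving a uniform-in-$n$ Lasota--Yorke type bound $\|(\widehat\LL_t^n\hat\psi)_0\|_{W^3_1}\le C$ (this is where $f_t\in C^4$ and the $C^3$ bounds \eqref{kappa'} on the cutoffs, together with the third-derivative analogue of \eqref{rootsing3}, are used), and then using that the Cesàro averages $k^{-1}\sum_{n<k}\widehat\LL_t^n\hat\psi$ converge to $\hat\phi_t$ in $\BB_t^{W^1_1}$. The uniform $W^3_1$ bound combined with Rellich--Kondrachov then yields $\phi_{t,0}\in W^2_1$. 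In particular, you need one derivative of slack (uniform $W^3_1$ bounds to conclude $W^2_1$), which your one-step bootstrap cannot produce.
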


\begin{proof}
The proof is an adaptation
of Propositions~4.10 and 4.11 in \cite{BS} to our fat tower, using the polynomial
recurrence condition. We give it in Appendix~\ref{misc2},
mentioning here only that
the key (Lasota--Yorke) estimate is that there 
exists a constant $C>0$, depending only on the goodness
of $f_t$, $\delta$, and $L$, such that 
\begin{equation}
\label{eq.LaYo}
\|\widehat\LL^n_t(\hat\psi)\|_{\BB_t}\le C\Theta_0^{-n}\|\hat\psi\|_{\BB_t}+C\|\hat\psi\|_{\BB_t^{L^1}}\,,\quad\forall n\ge1\,,
\end{equation}
for all $\hat\psi\in\BB_t$. 
\end{proof}


\subsection{Truncated transfer operators $\widehat \LL_{t,M}$ on the tower}
\label{ss.trunc}

We introduce 
for each $M \ge 0$ the {\it truncation operator}
$\TT_M$ defined  by 
\begin{align}\label{truncc}
\TT_M(\hat \psi)_k=
\begin{cases} \psi_k & k \le M\\
0 & k > M  \, .
\end{cases}
\end{align}
By  definition $\TT_M$
is a bounded operator on  $\BB_t$, with $\|\TT_M\|_{\BB_t}\le 1$ for any
$M$. The {\it truncated transfer operator} $\widehat \LL_{t,M}:\BB_t\to\BB_t$ is the bounded
operator defined by
$$
\widehat \LL_{t,M} = \TT_M \widehat \LL_t \TT_M \, .
$$

The following proposition lists the basic spectral properties of the truncated
transfer operator. For the maximal eigenvector $\hat\phi_t$ of $\widehat\LL_t$ 
given by Proposition~\ref{mainprop} 
we assume always that it is normalised by $\nu(\phi_t)=1$. 
 
\begin{proposition}[Spectral properties of the truncated operator $\widehat \LL_{t,M}$]
\label{truncspec}
For any $t$ which is
good for parameters $\lambda_c$, $H_0$, $\alpha>1$, $\rho$, $C_0$, the essential spectral radius of $\widehat \LL_{t,M}$ acting on 
$\BB_t$ 
is not larger than $\Theta_0^{-1} <1$, where $\Theta_0$ satisfies condition~\eqref{theta0}.

There exists $M_0\ge 1$ (depending only on the goodness
 of $f_t$) so that,
for all $M \ge M_0$,  the operator $\widehat \LL_{t,M}$ has a real
nonnegative maximal eigenfunction $\hat \phi_{t,M}$,
for a simple eigenvalue $\Theta_0^{-1}<\kappa_{t,M}\le1$, and
the dual operator of  $\widehat \LL_{t,M}$ has a 
nonnegative maximal eigenfunction $\nu_{t,M}$. 
If we normalise $\nu_{t,M}$ by 
$\nu_{t,M}(\hat \phi_t)=1$, and $\hat\phi_{t,M}$ by
$\nu(\hat \phi_{t,M})$ (recall that $\nu=\nu_t$), then we have the bounds
$\sup_M\|\nu_{t,M} \|_{(\BB_t^{L^1})^*}\le C_1$,
 $\sup_M\|\hat \phi_{t,M}\|_{\BB_t} \le C_1$, and 
$\sup_M\|\hat \phi_{t,M,0}\|_{W^2_1}\le C_1$, for a constant $C_1$ depending
only on the goodness  of $f_t$ and the $C^4$ norm of $f_t$.

Furthermore, fixing $\upsilon <1$, and setting
\begin{equation}\label{tauM}
\tau_{t,M}=  M^{(\alpha-\beta)/2} \lambda^M |(f_t^M)'(c_{1,t})|^{-1 /2}<1\, ,
\end{equation}
there exists $C_t\ge 1$ 
so that for all $M\ge M_0$
\begin{align}\label{klbounds}
\|\hat \phi_t-\hat \phi_{t,M}\|_{\BB^{L^1}} &\le C_t 
\tau_{t,M}^\upsilon \, , \, \, 
\| \nu- \nu_{t,M}\|_{\BB_t^*} \le C_t  \tau_{t,M}^\upsilon \, , 
\, \, |\kappa_{t,M} -1 | \le C_t \tau_{t,M}^\upsilon\, .
\end{align}
In particular, 
\begin{equation}
\label{eq.kappa}
1\le \kappa_{t,M}^{-M} \le C_t \,,\qquad\forall M\ge M_0\, .
\end{equation}
\end{proposition}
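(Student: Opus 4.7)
The plan is to run a Keller--Liverani perturbation argument, treating $\widehat\LL_{t,M}=\TT_M\widehat\LL_t\TT_M$ as a small weak-norm perturbation of $\widehat\LL_t$, adapting Proposition~4.12 of \cite{BS} to the fat tower. I would first establish a Lasota--Yorke inequality for $\widehat\LL_{t,M}$ of the same shape as \eqref{eq.LaYo} with constants uniform in $M$: since $\TT_M$ contracts both $\BB_t$ and $\BB^{L^1}$, the argument proving \eqref{eq.LaYo} transports verbatim, and Hennion's theorem then delivers the claimed essential spectral radius bound $\Theta_0^{-1}$.

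The crucial step is the weak-norm perturbation estimate
\begin{equation*}
\|(\widehat\LL_t-\widehat\LL_{t,M})\hat\phi_t\|_{\BB^{L^1}}\le C_t\,\tau_{t,M}\, .
\end{equation*}
Writing $\widehat\LL_t-\widehat\LL_{t,M}=(I-\TT_M)\widehat\LL_t+\TT_M\widehat\LL_t(I-\TT_M)$ and using $\widehat\LL_t\hat\phi_t=\hat\phi_t$ reduces this (modulo ground-level bookkeeping) to controlling the tail $\sum_{k>M}\lambda^k\|\phi_{t,k}\|_{L^1}$. The higher-level fixed-point relation $\phi_{t,k}=\lambda^{-k}\bigl(\prod_{j<k}\xi_{j,t}\bigr)\phi_{t,0}$, together with the $W^2_1$-bound on $\hat\phi_{t,0}$ from Proposition~\ref{mainprop} and the level-size bound \eqref{decaybound}, gives $\lambda^k\|\phi_{t,k}\|_{L^1}\le C_t k^{-\beta/2}|(f^{k-2})'(c_1)|^{-1/2}$, whose tail beyond $M$ is dominated by $C_t\tau_{t,M}$ (the polynomial prefactor $M^{(\alpha-\beta)/2}$ in \eqref{tauM} is a conservative choice). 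Feeding the Lasota--Yorke inequality and this perturbation bound into the Keller--Liverani theorem \cite{kellerliverani}, I would obtain, for $M\ge M_0$, a simple eigenvalue $\kappa_{t,M}$ with $|\kappa_{t,M}-1|\le C_t\tau_{t,M}^\upsilon$ for any fixed $\upsilon<1$, together with eigenvectors $\hat\phi_{t,M}$, $\nu_{t,M}$ extracted by a contour integral around $1$, and the convergence estimates \eqref{klbounds}. In the renormalisable case, small arcs around the unit-circle eigenvalues $e^{2\pi\imath j/P_t}$ of $\widehat\LL_t$ must be excluded, as in Remark~\ref{rk.mix}.

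Positivity of $\widehat\LL_{t,M}$ (inherited under truncation) combined with quasi-compactness allows me to select $\hat\phi_{t,M}\ge 0$ and $\nu_{t,M}\ge 0$ by the classical positive-operator theory (Karlin; see Appendix~\ref{misc2}). The uniform bounds $\|\nu_{t,M}\|_{(\BB_t^{L^1})^*}\le C_1$ and $\|\hat\phi_{t,M}\|_{\BB_t}\le C_1$ under the stated normalisations follow from $M$-uniformity of the contour-integral representation of the eigenprojection. To obtain the $W^2_1$-bound on $\hat\phi_{t,M,0}$, I would differentiate twice the ground-level identity $\kappa_{t,M}\hat\phi_{t,M,0}=(\widehat\LL_{t,M}\hat\phi_{t,M})_0$, using the $C^4$-smoothness of $f_t$, the derivative bounds \eqref{rootsing2}--\eqref{rootsing3} (applied on the fuzzy intervals $\widetilde I_{k,t}$ via \eqref{kappa'}), and the $\BB_t$-bound on $\hat\phi_{t,M}$. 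Finally, \eqref{eq.kappa} follows from $-\log\kappa_{t,M}\le 2(1-\kappa_{t,M})\le 2C_t\tau_{t,M}^\upsilon$ together with the exponential decay of $\tau_{t,M}$ in $M$ (via \eqref{48} and \eqref{defL}), which makes $M\tau_{t,M}^\upsilon$ uniformly bounded on $[M_0,\infty)$.

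The main obstacle will be settling for the exponent $\upsilon<1$ rather than $\upsilon=1$ in \eqref{klbounds}: this loss is intrinsic to the Keller--Liverani framework, since $\|\widehat\LL_t-\widehat\LL_{t,M}\|_{\BB_t\to\BB^{L^1}}$ is genuinely small only when restricted to the maximal eigenvector, and the contour-integral interpolation needed to transfer this pointwise bound to the full eigenprojection forces an arbitrary loss $\upsilon<1$. A secondary bookkeeping point, absent from the exponential-tower setting of \cite{BS}, is to ensure that all polynomial losses accumulate into precisely the power $M^{(\alpha-\beta)/2}$ built into $\tau_{t,M}$, rather than a worse power coming from combining \eqref{rootsing2}--\eqref{rootsing3} with the summability bounds of Proposition~\ref{ubalpha}.
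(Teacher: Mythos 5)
Your plan has the right broad shape (Keller--Liverani perturbation, uniform Lasota--Yorke, Karlin positivity, contour integral, a separate argument for the $W^2_1$ bound at level~$0$), but there is a genuine gap in the way you set up the perturbation estimate, and a related misdiagnosis of where the loss $\upsilon<1$ comes from.

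You reduce the ``crucial step'' to the bound $\|(\widehat\LL_t-\widehat\LL_{t,M})\hat\phi_t\|_{\BB^{L^1}}\le C_t\tau_{t,M}$, i.e.\ a bound only on the maximal eigenvector, and then propose to ``feed this into the Keller--Liverani theorem.'' But \cite[Theorem 1, Corollary 1]{kellerliverani} require the full operator bound $\|(\widehat\LL_t-\widehat\LL_{t,M})\hat\psi\|_{\BB^{L^1}}\le C\tau_{t,M}\|\hat\psi\|_{\BB_t}$ \emph{for all} $\hat\psi\in\BB_t$, not just on the eigenvector; with an eigenvector-only estimate the contour-integral machinery in \cite{kellerliverani} simply does not run. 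What you have actually proved, via the fixed-point relation $\phi_{t,k}=\lambda^{-k}\bigl(\prod_{j<k}\xi_{j,t}\bigr)\phi_{t,0}$ and \eqref{nicedecay}, is something strictly sharper than $\tau_{t,M}$ (the $\lambda^M$ factor disappears because $\|\phi_{t,k}\|_\infty\le\lambda^{-k}\|\phi_{t,0}\|_\infty$), and that sharpness is precisely the signature that the bound does not extend to general $\hat\psi$. The full operator bound \eqref{addref1} is in fact \emph{easier} than you suspect, and needs no fixed-point structure at all: for any $\hat\psi\in\BB_t$, the constraint $\supp\psi_k\subset J_{k,t}$ from \eqref{defban}, the exponential decay \eqref{decaybound} of $|J_{k,t}|$, and the Sobolev bound \eqref{sobeb} give $\lambda^k\|\psi_k\|_{L^1}\le\lambda^k|J_{k,t}|\|\psi_k\|_\infty\le C\lambda^k k^{-\beta/2}|(f^{k-2})'(c_1)|^{-1/2}\|\psi_k\|_{W^1_1}$, whose tail beyond $M$ is bounded by $C\tau_{t,M}\|\hat\psi\|_{\BB_t}$ using $\lambda<\sqrt\rho$ and Proposition~\ref{ubalpha} (this is exactly the computation \eqref{eq.hm}, where the $\lambda^M$ factor in $\tau_{t,M}$ is unavoidable because a general $\hat\psi$ has no $\lambda^{-k}$ decay). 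This also corrects your closing discussion: the $\upsilon<1$ loss is \emph{not} because ``$\|\widehat\LL_t-\widehat\LL_{t,M}\|_{\BB_t\to\BB^{L^1}}$ is genuinely small only when restricted to the maximal eigenvector'' — it is genuinely small on all of $\BB_t$ — rather, $\upsilon<1$ is the intrinsic price of the Keller--Liverani interpolation between the weak and strong topologies.

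Your instinct that an eigenvector-restricted bound is useful is not wrong, but it belongs to the \emph{next} proposition: in Proposition~\ref{p.trunc} the paper does exactly that, and precisely \emph{because} the standard \cite{kellerliverani} argument cannot exploit such a restricted bound, the authors develop a bespoke variant (inspired by \cite{BY}) that does. For the present statement the vanilla Keller--Liverani route with the full operator bound \eqref{addref1}, together with \eqref{LYM}, \eqref{addref0}, and the dual estimates \eqref{dual1}--\eqref{dual2}, is both necessary and sufficient. One further small omission: you never argue $\kappa_{t,M}\le 1$; the clean way is to note $\|\widehat\LL^n_{t,M}\|_{\BB^{L^1}}\le 1$ from \eqref{addref0}, so the $\BB^{L^1}$ spectral radius of $\widehat\LL_{t,M}$ is at most one, hence so is $\kappa_{t,M}$.
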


Bootstrapping from the estimates above, Proposition \ref{p.trunc}
will give uniformity of $C_t$ as a function of $t$
and the more precise control on  $\|\hat \phi_t-\hat \phi_{t,M}\|_{\BB^{L^1}}$
and estimates on $\|\hat \phi_t-\hat \phi_{t,M}\|_{\BB^{L^p}}$ 
($p>1$) that are needed
for Theorems~\ref{t.main1} and \ref{t.main3}.

\begin{proof}[Proof of Proposition~\ref{truncspec}]
We adapt the proof of \cite[Lemma ~4.12]{BS} to our
polynomial tower setting, i.e., we apply the perturbation
results of Keller and Liverani \cite{kellerliverani}.
As usual, we assume that $t=0$ and set $f=f_0$. Uniformity in $t$ of the constant $C_1$
will follow from uniformity of the goodness.

The claim about the essential spectral radius can be obtained by
going over the proof of Proposition ~ \ref{mainprop}, checking that it applies
to $\widehat \LL_{M}$ and that the constants
are uniform in $M$. More precisely, there exists $C \ge  1$  so that for all
$n$ and all $M$
\begin{equation}\label{LYM}
\max(\|\widehat \LL^n (\hat \psi)\|_{\BB^{W^1_1}}, 
\|\widehat \LL_{M}^n (\hat \psi)\|_{\BB^{W^1_1}})
\le  C \Theta_0^{-n} \| \hat \psi\|_{\BB^{W^1_1}} + 
C  \|\hat \psi\|_{\BB^{L^1}} \, ,
\end{equation}
and  (note that $\nu(|\widehat \LL^n_M(\hat \psi)|)\le\nu(\widehat \LL^n_M( |\hat \psi|))\le
\nu(\widehat \LL^n( |\hat \psi|))=\nu(|\hat\psi|)=\|\hat\psi\|_{\BB^{L^1}}$; see also \eqref{l1bd} below)
\begin{equation}\label{addref0}
\| \widehat \LL^n  \|_{\BB^{L^1}}\le 1\, ,\quad
\|\widehat \LL_{M}^n  \|_{\BB^{L^1}}\le 1 \, , \quad \forall M \, ,
\forall n\, . 
\end{equation}

To prove the other claims, we shall use that
there exists $C$ so that for all large enough $M$ 
\begin{equation}\label{addref1}
\|(\widehat   \LL - \widehat \LL_{M}) (\hat \psi)\|_{\BB^{L^1}}\le C 
\tau_M \|\hat \psi \|_{\BB^{W^1_1}}\, . 
\end{equation}
This inequality is an easy consequence of
\begin{align*}
&\|\ (\id - \TT_ M) \hat \psi\|_{\BB^{L^1}}
\le C \tau_M \|\hat \psi\|_{\BB^{W^1_1}} \, ,
\end{align*} 
which follows from the estimate $\|\psi_k\|_{L^1}\le|\supp(\psi_k)|\sup|\psi_k|$
combined with \eqref{decaybound}, Proposition~\ref{ubalpha}, and \eqref{sobeb}
(and recalling the $\lambda^k$ weight in the $\BB^{L^1}$ norm).
Indeed, recalling \eqref{decaybound}, we  have, 
\begin{align}
\nonumber
\|(\id-\TT_M)(\hat \psi)\|_{\BB^{L^1}}&=
\sum_{k\ge M+1}\lambda^k\int_I|\psi_{k}(x)|\,dx
\le \sum_{k\ge M+1}\lambda^k|\supp(\psi_{k})|\|\psi_{k}\|_{L^\infty}\\
\label{eq.hm} &\le C^2 \|\hat \psi\|_{\BB}\lambda^M \frac{M^{-\beta/2}}{|(f^M)'(c_{1})|^{1/2}}\sum_{k\ge M+1}
\frac{\lambda^{k-M}}{|(f^{k-M-1})'(c_{M+1})|^{1/2}}\,.
\end{align}
Since $\lambda <\sqrt \rho$  we derive, as in the proof
of Proposition \ref{ubalpha}, that the last sum on the right hand side is bounded
by a constant times $M^{\alpha/2}$.

Then, setting $\PPP(\hat \psi)= \hat \phi \nu(\hat \psi)$
and $\PPP_{M}(\hat \psi)= \hat \phi_M \nu_M(\hat \psi)$ for the respective spectral
projectors of $\widehat \LL$ corresponding to their maximal eigenvalue,
 \cite[Theorem 1, Corollary 1]{kellerliverani} give for any $\upsilon <1$
 a constant $C_t$ so that
 $\|(\PPP_{M}-\PPP)(\hat \psi)\|_{\BB^{L^1}}\le C_t \tau_M^\upsilon \|\hat \psi \|_{\BB}$, which gives,
 taking $\hat \psi=\hat \phi$, that $\|  \hat \phi_M-\hat \phi\|\le  C_t \tau_M^\upsilon$.
 
 We cannot claim yet that $C_t$ is uniform in $t$, because we
 have not proved yet that there exists a neighbourhood of $1$ which intersects the spectrum
$\sigma(\widehat\LL_t)$  of  $\widehat \LL_t:\BB_t \to \BB_t$ only at $z=1$, for all  good $t$
close enough to a good $t_0$. 
 Indeed, a priori, the renormalisation period $P_t$ of $f_t$ could be unbounded, and
the constants 
\begin{equation}\label{thetat}
\theta_t=\sup\{z \in \sigma (\widehat \LL_t)\mid
|z|\ne 1\}<1
\end{equation}
could accumulate at $1$ for (good) $t\to t_0$. Uniformity of $P_t$ and $\theta_t$ when
$t\in \Delta_0$ is the last claim of
 Proposition~ \ref{p.strongnorm}  below.
 \footnote{This holds a fortiori in the easier setting
 of \cite{BS} where $P_t\equiv P$, proving the claim on $\theta_t$  in \cite[\S5.2]{BS}.} 
 Note that $\theta_t$ gives an upper bound on the rate of decay of correlations of $f^{P_t}$
 (for $C^1$ functions, e.g.).

We may also apply the results of \cite{kellerliverani}
to the dual operators (exchanging the roles of the weak and strong norms):
Indeed, for any  $\mu \in (\BB^{L^1})^*$, we have
\begin{equation}\label{dual1}
\sup_{\|\hat \psi \|_{\BB^{W^1_1}}\le 1}
|\mu(\widehat \LL(\hat \psi)-\widehat \LL_M(\hat\psi))|\le C \tau_M \sup_{\|\hat \psi \|_{\BB^{L^1}}\le 1}|\mu(\hat \psi)|\, ,
\end{equation}
while the Lasota--Yorke estimates for $\widehat \LL^*$ and $\widehat \LL_M^*$ are
an immediate consequence of those for  $\widehat \LL$ and $\widehat \LL_M$.
 Setting $\PPP^*( \mu)=  \nu \mu(\hat \phi)$
and $\PPP^*_{M}( \mu)=  \nu_M  \mu(\hat \phi_M)$,   \cite[Theorem 1, Corollary 1]{kellerliverani}  give
\begin{equation}\label{dual2}
\sup_{\|\hat \psi\|_{\BB}\le 1}|[(\PPP^*_{M}-\PPP)(\mu)] (\hat \psi)|
\le C_t  \tau_M^\upsilon  \sup_{\|\hat \psi\|_{\BB^{L^1}}\le 1}| \mu (\hat \psi)|
\, ,
\end{equation}
and, recalling our normalisation,  we may apply the above bound to  $ \mu=\nu $.
Altogether, this gives  the bounds \eqref{klbounds} for  
$\upsilon \in (0, 1)$.
 The bound $\kappa_M\le 1$ follows from the fact that $\kappa_M$ is
an eigenvalue for $\hat \phi_M \in \BB^{L^1}$ and since by \eqref{addref0}
 the spectral radius of  $\widehat \LL_M$
on $\BB^{L^1}$ is bounded by $1$.

It follows from what has been done up to now (and using the fact that $\widehat \LL_M$ is
a nonnegative operator to analyse  its maximal eigenvector, which satisfies
$\hat \phi_M =\lim_{n \to \infty} \frac{1}{n} \sum_{k=0}^{n-1}
\kappa_M^{-k} \widehat \LL_M^k (\hat \phi)$, see also \cite[pp. 933-935, Thm~27]{karlin}) that 
$$\sup_M\|\hat \nu_{t,M} \|_{(\BB_t^{L^1})^*}\le C_1\, ,
\quad \sup _M \|\hat \phi_M\|_{\BB^{W^1_1}}\le C_1\, , $$ 
uniformly in $t$.
To show $\sup_M\|\hat \phi_{t,M,0}\|_{W^2_1} \le C_1$,
we proceed  like when proving  the analogous statement
of Proposition ~ \ref{mainprop} in Appendix~\ref{misc2},
and we get uniform bounds in $M$ and $t$. 
\end{proof}

Recall the notion \eqref{bd2} of admissible pairs $(M,t)$.
In the following lemma, we use the freedom  in the choice of 
the intervals $B_{k,t}$ and cutoff functions $\xi_{k,t}$ in order to, loosely speaking, 
identify the towers up to some level $M=M(t)$ for $t$ close to $t_0=0$ .
The result is a counterpart to Proposition~5.9 in \cite{BS}. The difference 
with the horizontal case there
is that, in our present transversal case, the distance $|c_{k,t}-c_{k}|$ grows 
like $|t||(f^k)'(c_1)|$, i.e., exponentially fast.  

\begin{lemma}[Identical $M$-truncated towers for $f$ and $f_t$]\label{bottomok}
Let $f=f_0$ be good for  parameters $\lambda_c$, $H_0$, $\alpha>1$, 
$\rho$, $C_0$.
If the constant $L>1$ in the definitions of the tower and
of the cutoff functions $\xi_k$ is  sufficiently large, then 
we can choose a tower $\hat f : \hat I \to \hat I$, cutoff functions $\xi_k$,  and
a transfer operator $\widehat\LL$ 
for $f$, and $\epsilon>0$  such that for any $M \ge 1$,
and for any $t\in(-\epsilon,\epsilon)$ which is good for the same
parameters and so that
\begin{equation}\label{tcond}
|(f^{k-1})'(c_1)||t|\le k^{-\beta}\,\quad\forall1\le k\le M\,,
\end{equation}
one can construct the tower $\hat f_t:\hat I_t\to\hat I_t$ and the transfer
operator $\widehat \LL_t$ such that \begin{equation}
\label{eq.bottom}
J_{k+1,t}=J_{k+1}\,\quad\text{and}\quad\xi_{k,t}= \xi_{k}\, , \, \, \forall0\le k\le M-1 \, .
\end{equation}
\end{lemma}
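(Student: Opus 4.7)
The strategy is to exploit the freedom left in the tower data for $f_t$ — the radii of the intervals $B_{k,t}$ within the window \eqref{betaas}, and the cutoff functions $\xi_{k,t}$ — so as to match the partition structure of the tower for $f$ up to level $M$. The argument proceeds by induction on $k$, following the blueprint of \cite[Proposition~5.9]{BS} but adapted to the polynomial tower.

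The input ingredient is a uniform orbit estimate obtained by integrating \eqref{eq.transu} of Lemma~\ref{rootsing} against the hypothesis \eqref{tcond}: there is a constant $C_1$, depending only on the goodness of $f$ and on $\sup_t\|\partial_t f_t\|_{C^1}$ and in particular \emph{independent of $L$}, so that
\begin{equation*}
\sup_{x\in J_{k-1}}|f_t^k(x)-f^k(x)|\le C_1\,|t|\,|(f^{k-1})'(c_1)|\le C_1\,k^{-\beta}\,,\qquad 1\le k\le M\,.
\end{equation*}
In particular $|c_{k,t}-c_k|\le C_1 k^{-\beta}$.

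Fix the tower for $f$ by choosing each radius $r_k$ at an intermediate scale in \eqref{betaas}, for instance $r_k=k^{-\beta}/L^2$. Build the tower for $f_t$ inductively so that $J_{k,t}=J_k$: assuming $J_{k-1,t}=J_{k-1}$, the fact that $c$ is the unique critical point of $f^k$ (and of $f_t^k$) inside $J_{k-1}$ forces both maps to be monotone on each branch $J_{k-1}^{\pm}$. The boundary points $x_{\pm}\in\partial J_k$ are pinned by $f^k(x_{\pm})\in\partial B_k$. Choose $B_{k,t}$ as the interval around $c_{k,t}$ whose boundary is determined by the $f_t^k$-images of $x_{\pm}$; by the orbit estimate these images differ from $\partial B_k$ by at most $C_1 k^{-\beta}$, so provided $L$ has been chosen large enough that $C_1<(L^{-1}-L^{-2})/2$, the resulting $B_{k,t}$ still satisfies \eqref{betaas} for $f_t$. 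The local quadratic structure inherited from $f''(c)<0$ makes $f_t^k(x_+)$ and $f_t^k(x_-)$ equidistant from $c_{k,t}$ to leading order, so a single symmetric radius $r_{k,t}$ suffices; the residual cubic asymmetry, of relative size $O(r_k^{1/2})$, is absorbed into the buffer between the inner and outer bounds of \eqref{betaas}. Together with monotonicity of $f_t^k|_{J_{k-1}^{\pm}}$, this gives $J_{k,t}=J_k$.

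Once $J_{k+1,t}=J_{k+1}$ for $0\le k\le M-1$, setting $\xi_{k,t}:=\xi_k$ meets all the defining conditions on $\xi_{k,t}$: the support condition holds automatically, and the prescribed ``$\equiv 1$'' region for $\xi_{k,t}$ is a preimage set under $f_t^{-(k+1)}$ which differs from its $f$-analogue by $O((k+1)^{-\beta}/L^3)$; choosing $\xi_k$ to equal $1$ on the (slightly enlarged) union of the two preimage sets — which is still contained in the $k^{-\beta}/(2L^3)$ window allowed by the cutoff definition once $L$ is large — accommodates both towers. The main technical obstacle is keeping, at every level simultaneously, the inductively dictated $B_{k,t}$ compatible with the window in \eqref{betaas}; this is precisely what forces $L$ to be taken sufficiently large relative to $C_1$, with $\epsilon>0$ chosen correspondingly small in terms of the goodness of $f$.
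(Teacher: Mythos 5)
There is a genuine gap. The core quantitative step in your argument fails: the orbit estimate $\sup_{x\in J_{k-1}}|f_t^k(x)-f^k(x)|\le C_1\,k^{-\beta}$ is an \emph{additive} bound with a constant $C_1$ independent of $L$, while the boundary of $B_{k,t}$ has to be located within a window whose width is only $O(k^{-\beta}/L)$. For $L$ large the drift $C_1 k^{-\beta}$ dwarfs the window, so the additive estimate cannot pin $\partial B_{k,t}$ at the required scale. Worse, your explicit sufficient condition ``$C_1<(L^{-1}-L^{-2})/2$'' is monotone in the \emph{wrong} direction: $L^{-1}-L^{-2}$ decreases as $L>2$ grows, so ``choosing $L$ large enough'' can never make this hold -- it would require $L$ small, which contradicts both the statement's hypothesis and what is needed later (e.g.\ for the overlap control of Remark~\ref{overlap}). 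The paper avoids this by working with the \emph{multiplicative} distortion estimate \eqref{eq.tdistortion1}, which together with Lemma~\ref{cd} gives $C^{-3}\le |f_t^{k+1}(J_k)|/|f^{k+1}(J_k)|\le C^3$ with $C$ independent of $L$; the candidate half-length $b=|f_t^{k+1}(J_{k+1})|$ then automatically lands in $[L^{-3}(k+1)^{-\beta},L^{-1}(k+1)^{-\beta}]$ precisely when $L\ge 2C^3$ -- i.e.\ $L$ large, as desired. That multiplicative comparison encodes exactly the ``correlated drift'' of the two endpoints of $J_{k+1}$ that an orbit estimate on each endpoint separately cannot see.

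Two further points that the proposal glosses over but that the paper's proof actively uses: (i) the reference tower for $f$ is not built with a single radius $r_k=k^{-\beta}/L^2$, but with a dichotomy of two radii depending on $|f^k(J_{k-1})|$; this is what guarantees $|f^k(I_k)|/|f^k(J_{k-1})|\ge 1/2$ and hence the overlap control \eqref{overlap'}, which is needed downstream. (ii) The induction splits into the cases $I_{k+1}=\emptyset$ (where $f^{k+1}(J_k)\subset B_{k+1}$ and one only needs $I_{k+1,t}=\emptyset$ for \emph{some} admissible $B_{k+1,t}$) and $I_{k+1}\neq\emptyset$ (where the pinning argument applies). Your treatment of the cutoff matching is also too cursory: once $J_{k+1,t}=J_{k+1}$ is established, $\xi_{k,t}:=\xi_k$ must be checked against the ``$\equiv 1$'' region condition, whose definition involves $f_t$-preimages rather than $f$-preimages; the paper sidesteps this by fixing one version of $\xi_k$ serving both maps. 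On the other hand, your observation that the two boundary points $x_\pm$ of $J_{k+1}$ satisfy $f_t^{k+1}(x_+)\approx f_t^{k+1}(x_-)$ up to a higher-order (cubic) error is correct and is a point the paper leaves implicit -- but this observation alone does not rescue the additive argument, since the dominant difficulty is the size of the first-order drift relative to $L^{-2}k^{-\beta}$, not the left/right asymmetry.
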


\begin{proof}
Let $\epsilon>0$ be so small as in Lemma~\ref{rootsing}, and take $L>\max(2C^3,4)$ where $C$ is 
given by Lemma~\ref{cd} and 
Lemma~\ref{rootsing}. 
(Observe that the constant $C$ in Lemma~\ref{cd} and 
Lemma~\ref{rootsing}, respectively, does not depend on $L$ and on $\delta$.)
Regarding Remark~\ref{overlap} above, we will be a bit careful in choosing the tower for $f$.
We fix a tower $\hat f:\hat I\to\hat I$ where the levels $E_k$, $k\ge1$, are defined  inductively by 
setting 
$$
B_{k}=\Big[c_{k}-\frac{k^{-\beta}}{2L^2},c_{k}+\frac{k^{-\beta}}{2L^2}\Big]\,,\quad 
\text{if }\frac{k^{-\beta}}{L^2}\le|f^k(J_{k-1})|\le2\frac{k^{-\beta}}{L^2}\,,
$$ 
and
$$
B_{k}=\Big[c_{k}-\frac{k^{-\beta}}{L^2},c_{k}+\frac{k^{-\beta}}{L^2}\Big]\,,\quad\text{otherwise.}
$$ 
This implies
$|f^{k}(I_{k})|/|f^{k}(J_{k-1})|\ge1/2$ whenever $I_k\neq\emptyset$, which in turn implies that 
\begin{equation}
\label{eq.xicut}
\frac{|I_{k}|}{|J_{k-1}|}\ge C^{-2}\sqrt{\frac{|f(I_{k})|}{|f(J_{k-1})|}}\ge C^{-3}/\sqrt{2}\,,
\end{equation}
where in the second inequality we used Lemma~\ref{cd}. 
Since the length of $I_k$ is comparable to the length of $J_{k-1}$, we can now construct 
the cutoff function $\xi_{k-2}$ such that 
\begin{equation}\label{overlap'}
\{x\, |\, 0<\xi_{k-2}(x)<1\}\subset I_k \, ,\qquad \forall k\ge 2\, ,
\end{equation}
and $|\partial_x^r\xi_{k-2}|\le\tilde C|J_{k-1}|^{-r}$, $r=1,2,3$, for some constant $\tilde C$, 
and all the other requirements on cutoff functions are satisfied.

Recall that no point falls from the tower up to level $H(\delta)$. Hence,
the assertion of Lemma~\ref{bottomok} is trivially satisfied for all $0\le k\le H(\delta)-2$
(for arbitrary choices of towers and transfer operators).
Let $H(\delta)-1\le k\le M-1$, and assume that \eqref{eq.bottom} is satisfied for all $0\le j\le k-1$.
Assuming $C$ so large that $C^{-1}|x-c|\le|f_t'(x)|\le C|x-c|$, 
we derive from \eqref{eq.tdistortion1} and Lemma~\ref{cd} that 
\begin{equation}
\label{eq.LLL}
C^{-3}|f^{k+1}(J_{k})|\le |f_t^{k+1}(J_{k})|
\le C^3|f^{k+1}(J_{k})|\,,
\end{equation}
for all $t$ satisfying \eqref{tcond}. 
If $I_{k+1}=\emptyset$ then  
$f^{k+1}(J_{k})\subset B_{k+1}$. Hence, by the upper bound in 
\eqref{eq.LLL}, the choice of $L$, and the definition of $B_{k+1}$,
we have 
$|f_t^{k+1}(J_{k})|\le L^{-1}(k+1)^{-\beta}$. It follows that 
we can choose $B_{k+1,t}$ satisfying \eqref{betaas} and so that  $I_{k+1,t}=\emptyset$.
By definition, $\xi_{k-1,t}= \xi_{k-1}\equiv 1$, which implies \eqref{eq.bottom}. 

If $I_{k+1}\neq\emptyset$ then  the interval
$f^{k+1}(J_{k+1})$ is adjacent to the boundary of $B_{k+1}$. 
Set $B_{k+1,t}=[c_{k+1,t}-b,c_{k+1,t}+b]$, where $b=|f_t^{k+1}(J_{k+1})|$.
By the choice of $L$ and $B_{k+1}$, and using both inequalities in \eqref{eq.LLL}, 
we get
$L^{-3}(k+1)^{-\beta}\le b \le L^{-1}(k+1)^{-\beta}$ from which 
follows that $B_{k+1,t}$ satisfies the condition~\eqref{betaas}. 
By construction $J_{k+1,t}=J_{k+1}$ and $I_{k+1,t}=I_{k+1}$. 
Hence, we can set $\xi_{k-1,t}=\xi_{k-1}$ which concludes the proof of Lemma~\ref{bottomok}.
\end{proof}

\subsection{The  Misiurewicz--Thurston case}
\label{MTcase}

In this section we discuss the modifications in the parameter selection,
tower construction, and transfer operator properties, which will
allow us to get stronger results in the Misiurewicz--Thurston case.

If $f_t$ is Misiurewicz--Thurston,  we shall prove next that
we may take $\alpha=\beta=0$ in the definitions
in Sections ~\ref{s.prel} and ~\ref{uu}. In particular, the sizes of the levels of the tower are uniformly
bounded from below.
(The fact that the size of the levels is bounded
from below will be essential to get the lower bound of Theorem~\ref{t.main3}
 in Section~\ref{s.ndiff}, see e.g. \eqref{largelevel}.)
The first remark is that we can take $\beta=\alpha=0$
in the distortion Lemma ~\ref{cd}, if we
assume that $L$ is large enough so that for all $k\ge 1$, an $L^{-1}$ neighbourhood of $c_k$ 
does not intersect a fixed neighbourhood of $c$. Next, Proposition ~\ref{ubalpha} holds, 
setting $\alpha=0$ (its proof is trivial in the Misiurewicz--Thurston case).
The exponential decay property \eqref{decaybound} also holds, setting $\beta=0$,
and all bounds in Lemma~\ref{sizeIj}  are true, setting $\alpha=\beta=0$, and removing
the remaining factor $j$ in the right hand sides of \eqref{rootsing2} and \eqref{rootsing3}.
All claims in Lemma~\ref{rootsing} are true for $\alpha=\beta=0$, 
up to replacing $k^{-\beta}$ by $\eta$ for some small $\eta>0$ in \eqref{okstart} 
(see also the paragraph containing \eqref{eq.MTeta} in the proof of Lemma~\ref{rootsing}).
If $t$ is Misiurewicz--Thurston, we use the definition \eqref{bd2} of admissible pairs $(M,t)$,
setting $\alpha=\beta=0$. Then,
in the Misiurewicz--Thurston case, if $C_a$ is large enough then 
$|(f^M)'(c_1)||t|\le C_a^{-1}$ implies \eqref{okstart}, and we shall
assume this throughout.

We  now construct the set $\Delta_{MT}$ of sequences $t(n)\to t_0$
which will give Theorem~\ref{t.main3}, by  exhibiting Misiurewicz--Thurston
parameters with uniform average postcritical expansion. 
As usual we assume $t_0=0$ and we remove the $0$ from the notation.
(We shall discuss the
Banach space construction, transfer operator properties, and Lemma~\ref{bottomok} in the Misiurewicz--Thurston case
after the proof of the following lemma.)

\begin{lemma}[Admissible Misiurewicz--Thurston pairs with uniform  postcritical multipliers]
\label{l.existencemt}
Let $f=f_0$ be a mixing Misiurewicz--Thurston map, and assume the family $f_t$ is transversal at
$f_0$.
Let $\ell_0$ denote the  postcritical period of $f$.
If  $C_a$ in \eqref{bd2} defining admissible pairs for
$\alpha=\beta=0$ is large enough then, defining $\Lambda_t$ in \eqref{defL} for
Misiurewicz--Thurston
 maps
by $$\Lambda_t=\lim_{k\to \infty} |(f^k_t)'(c_{1,t})|^{1/k}\, , $$
there exists $C>1$ 
such that 
for each  large enough integer $m$, there exists a Misiurewicz--Thurston map $f_t$
and an integer $M$ with $|M-m|<\ell_0$
such that $(M,t)$ is an admissible pair and 
\begin{equation}
\label{eq.lpt}
C^{-1}\Lambda_t^k\le|(f_t^{k-1})'(c_{1,t})|\le C\Lambda_t^k\,,\qquad \forall\ k\ge1\, ,
\end{equation}
furthermore, setting $\Lambda=\Lambda_0$, we have
\begin{equation}
\label{eq.lptm}
C^{-1}\le\left(\frac{\Lambda_t}{\Lambda}\right)^M\le C\, ,
\end{equation}
and, for $0\le k\le M$, the points
$c_{k,t}$ and $c_k$ are either both local maxima or both local minima for $f_t^k$ and $f^k$, 
respectively, and, 
for $H(\delta)\le k\le M$ and $x\in J_k$, 
if they are local maxima then 
$f_t^k(x)<f^k(x)$, and if they are  local minima then $f^k(x)<f_t^k(x)$.

Finally,   the set $\Delta_{MT}$  of  parameters satisfying the
above properties enjoy uniform goodness constants. 
(This set is infinite countable and accumulates at $t=0$.)
\end{lemma}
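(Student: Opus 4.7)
The plan is to produce Misiurewicz--Thurston parameters $t_M \to 0$ by arranging the critical orbit of $f_{t_M}$ to land after $M$ iterations exactly on the continuation of the postcritical repelling periodic cycle $\mathcal{O}$ of $f_0$. Let $M_0$ be the pre-period of $f_0$, so $\mathcal{O}=\{c_{M_0},\ldots,c_{M_0+\ell_0-1}\}$ with hyperbolic multiplier $|(f^{\ell_0})'(c_{M_0})|=\Lambda^{\ell_0}>1$. The implicit function theorem furnishes a $C^1$ continuation $p_0(t),\ldots,p_{\ell_0-1}(t)$ with $p_j(0)=c_{M_0+j}$ for $|t|$ small, and the mixing assumption on $f_0$ rules out the degeneracy $\mathcal{O}=\{0\}$.

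For each large $M$ and each $j\in\{0,\ldots,\ell_0-1\}$, set
$$
\Psi_M^j(t):=f_t^M(c)-p_j(t)\,,
$$
so $\Psi_M^j(0)=c_M-c_{M_0+j}$ is bounded (nonzero whenever $j\neq (M-M_0)\bmod\ell_0$). By the Misiurewicz--Thurston version of \eqref{eq.transl} in Lemma~\ref{rootsing} (with $\alpha=\beta=0$), valid throughout the admissibility neighborhood,
$$
|\partial_t\Psi_M^j(t)|\ge\tfrac1{2C}|\JJ_0|\,|(f^{M-1})'(c_1)|\,,\qquad|t|\le \bar C\Lambda^{-M}\,,
$$
with fixed sign $\sign(\JJ_0\cdot(f^{M-1})'(c_1))$. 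Since this lower bound grows like $\Lambda^M$ while $|\Psi_M^j(0)|$ is bounded, $\Psi_M^j$ is strictly monotone on the interval and crosses zero at a unique $t_{M,j}$ with $|t_{M,j}|\le C|\JJ_0|^{-1}|(f^{M-1})'(c_1)|^{-1}$. For each $M$ I choose $j=j(M)$ so that additionally $\sign(\Psi_M^j(0))=\sign((f^{M-1})'(c_1))$; this forces $\sign(t_{M,j})=-\sign(\JJ_0)$ and, by \eqref{eq.transl} applied pointwise, yields the stated orientation $f_{t_M}^k(x)<f^k(x)$ at local maxima and $f_{t_M}^k(x)>f^k(x)$ at local minima for $H(\delta)\le k\le M$. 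Such a $j$ exists for every sufficiently large $M$ since the $p_j(0)$ lie on both sides of $c_M$. Setting $t_M:=t_{M,j(M)}$, we have $f_{t_M}^M(c)=p_{j(M)}(t_M)\in\mathcal{O}(t_M)$, so $f_{t_M}$ is Misiurewicz--Thurston; moreover, the distortion estimate \eqref{eq.tdistortion1} gives $\sign((f_{t_M}^{k-1})'(c_1))=\sign((f^{k-1})'(c_1))$ for $1\le k\le M$, so $c_{k,t_M}$ and $c_k$ are simultaneously local maxima or minima.

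From $|t_M|\le C|\JJ_0|^{-1}|(f^{M-1})'(c_1)|^{-1}$ we get $|(f^M)'(c_1)||t_M|\le C|\JJ_0|^{-1}\sup|f'|$, bounded independently of $M$, so choosing $C_a$ above this constant gives admissibility, and the bound $|M-m|<\ell_0$ follows from $|(f^{M+1})'(c_1)|/|(f^M)'(c_1)|\ge\Lambda$ by shifting the index by at most $\ell_0-1$ to match $m$. The Lyapunov estimate \eqref{eq.lpt} holds for $1\le k\le M$ by \eqref{eq.tdistortion1} in the MT setting, while for $k>M$ the orbit of $c_{1,t_M}$ eventually synchronises with $\mathcal{O}(t_M)$, whose $\ell_0$-periodic multiplier defines $\Lambda_{t_M}$; smoothness of the periodic continuation yields $|\Lambda_{t_M}-\Lambda|\le C\Lambda^{-M}$, whence $(\Lambda_{t_M}/\Lambda)^M=\exp(O(M\Lambda^{-M}))\to 1$, proving \eqref{eq.lptm}. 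Accumulation $t_M\to 0$ and infinite countability are immediate. Uniform goodness of $\Delta_{MT}$ follows: Collet--Eckmann with constant $\approx\Lambda$ by \eqref{eq.lpt}; polynomial recurrence of exponent $0$ is automatic since all postcritical orbits lie near $\mathcal{O}$ (bounded away from $c$); the expansion bounds \eqref{eq.noreturn}--\eqref{eq.deltareturn} are uniform by \cite[Lemma~5.1]{tsujii}. The main obstacle is sustaining the lower bound on $|\partial_t\Psi_M^j|$ throughout the $\Lambda^{-M}$-neighborhood of $0$, not just at $t=0$; this is precisely what the uniform transversality \eqref{eq.transl} on admissibility neighborhoods—with constants independent of $M$—delivers, and is the linchpin of the intermediate-value/IFT argument.
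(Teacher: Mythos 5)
Your plan---pick $t_M$ so that $f_{t_M}^M(c)$ lands \emph{directly} on the continuation $p_j(t_M)$ of a postcritical periodic point---does not work, for two related reasons, and this is exactly the obstruction the paper's use of ergodicity and an intermediate target is designed to circumvent.

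First, consider the simplest case $\ell_0=1$ (kneading $RLLR^\infty$, which the paper treats separately in Section~\ref{ss.lower}). Here there is only one point $p_0(t)$ in the continued cycle, and $\Psi^0_M(0)=c_M-c_{M_0}=0$ for all large $M$. Since $\Psi^0_M$ is strictly monotone on the admissibility neighbourhood with a huge derivative, $t=0$ is the only zero there, and your construction produces no nontrivial $t_M$ at all.

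Second, when $\ell_0\ge 2$ and $j\ne j^*:=(M-M_0)\bmod\ell_0$, the offset $|\Psi^j_M(0)|=|c_M-c_{M_0+j}|$ is the distance between two distinct points of the postcritical cycle, a quantity of order $1$ with no reason to be small. The transversality lower bound \eqref{eq.transl} that drives your IFT/monotonicity argument holds only on the restricted set $|(f^{M-1})'(c_1)||s|\le\eta$ (the Misiurewicz--Thurston version of \eqref{okstart}), hence on an interval of size $\approx\eta\,\Lambda^{-M}$. Over such an interval the range of $\Psi^j_M$ has size $\approx\eta|\JJ_0|$, bounded \emph{above} independently of $M$; if $|\Psi^j_M(0)|$ exceeds this, then $\Psi^j_M$ has no zero in the interval, and your $t_{M,j}$ simply does not exist. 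Moreover, your remedy ``choosing $C_a$ above this constant gives admissibility'' has the inequality reversed: admissibility demands $|(f^M)'(c_1)||t|\le C_a^{-1}$, so a \emph{larger} $C_a$ shrinks the window, and the lemma explicitly requires $C_a$ to be \emph{large}; you would instead need $C_a$ small, which is not permitted.

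The fix in the paper is conceptually different: one does \emph{not} require $c_{M,t}$ to hit the continued cycle. Instead, one observes that the range of $t\mapsto c_{M-\ell_1,t}$ over the admissibility interval $\Delta(M)$ has size bounded below \emph{uniformly in $M$} (see \eqref{eq.t1t2} and the lines around $\inf_M|\{c_{M-\ell_1,t}\}|\ge\tilde C^{-2}$), and then uses ergodicity/transitivity of $f_0$ on $[c_2,c_1]$ to find a finite set $\mathcal J$ of intervals and, for each $M$, a preimage $x_{M,t}$ of $d_{M,t}$ (a bounded number $j_M$ of $f_t$-steps away from the continued cycle, staying away from $c$) within that achievable range. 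The intermediate value theorem then produces a genuine Misiurewicz--Thurston parameter $t\in\Delta(M)$, with total pre-period $M-\ell_1+j_M$. This flexibility of the target---not the exact cycle, but one of its many nearby preimages---is the missing idea in your proposal, and it is what makes the construction work for \emph{every} large $M$ rather than just for those rare $M$ where the postcritical cycle happens to cluster near $c_M$.

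Two smaller issues: (i) the claim ``$|M-m|<\ell_0$ follows from $|(f^{M+1})'(c_1)|/|(f^M)'(c_1)|\ge\Lambda$'' is not justified as stated, since individual one-step multipliers along the cycle need not all exceed $1$; the paper instead takes $M\ge m$ minimal with $|(f^{M+i})'(c_1)|>|(f^M)'(c_1)|$ for all $i\ge1$, which necessarily exists within $\ell_0$ steps. (ii) Your argument uses the mixing hypothesis only to ``rule out $\mathcal O=\{0\}$,'' but the paper uses it more substantially, to ensure that the support of the SRB measure is all of $[c_2,c_1]$ so that the transitivity argument finding $x_{M,t}$ goes through.
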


\begin{proof}[Proof of Lemma~\ref{l.existencemt}]
For $m$ large, $c_m$ is in the postcritical periodic orbit of $f$. Let $M\ge m$ be minimal such that 
$|(f^{M+i})'(c_1)|>|(f^{M})'(c_1)|$, for all $i\ge1$. Obviously $M-m<\ell_0$.

Since the family $f_t$ is transversal at $f_0$, the map
$f_0$ is not conjugated to the Ulam--von Neumann map, i.e., $c_2=f^2_0(c)$ is not 
equal to the left fixed point $f(0)=0$. (If it were, by  \eqref{eq.transl}, 
for $k$ large, a neighbourhood of $t=0$ in $\EE$ would be mapped by $t\mapsto f_t^k(c)$ 
to a neighbourhood of $0$ in $\real$.  But since $0$ is the left endpoint of $I$, this is not possible.)
Therefore  $c_M$ is contained in the open interval $(c_2,c_1)$. 
Furthermore, there is at least one point $y$ in the interior of $[c_2,c_1]\setminus\{c_i\ |\ i\ge0\}$ 
which is eventually mapped to $c_M$ but such that $f^i(y)\neq c$, for all $i\ge0$. 
\footnote{If $f_0$ is not mixing, we could apply the argument to its deepest renormalisation
$f_0^R$ on a mixing interval $\RR_{c}$, if we assumed that $f_0^R$
is not conjugated to an Ulam--von Neumann map.}

Let then $t_1=t_1(M)>0$ be minimal 
and $t_2=t_2(M)>t_1$ be maximal such that $(M,t)$ is an admissible pair  for all $t\in(t_1,t_2]$. 
Observe for further use that, by definition of admissible pairs, if $m$ is large enough,
\begin{align*}
\frac{t_2-t_1}{t_2}
&=\frac{C_a^{-1}(|(f^{M})'(c_1)|^{-1}-\max_{i\ge 1}|(f^{M+i})'(c_1)|^{-1})}{C_a^{-1}|(f^{M})'(c_1)|^{-1}}\\
&=1-\max_{1\le i \le \ell_0}|(f^i)'(c_M)|^{-1}\,,
\end{align*}
so that,   by the definition of $M$, and for large enough $m_0$,
\begin{align}
\label{eq.t1t2}\inf_{ m\ge m_0} \inf_{M\ge m}\frac{|t_2(M)-t_1(M)|}{|t_2(M)|}>0\, .
\end{align}
By Lemma~\ref{rootsing}, 
for $k\ge H(\delta)$, the sign of $(f^{k-1})'(c_1)\cdot\partial_tf_t^k(c)|_{t=0}$ 
is independent on $k$.
We are therefore in one of the following two situations:
Either for all $k\ge H(\delta)$, if $c_k$ is a local maxima for $f^k$ 
then $\partial_tf_t^k(c)|_{t=0}<0$, while if $c_k$ is a local minima for $f^k$ 
then $\partial_tf_t^k(c)|_{t=0}>0$;  in this  case we set $\Delta(M)=[-t_2,-t_1)$.
Or for all $k\ge H(\delta)$, if $c_k$ is a local maxima for $f^k$ 
then $\partial_tf_t^k(c)|_{t=0}>0$, while if $c_k$ is a local minima for $f^k$ 
and $\partial_tf_t^k(c)|_{t=0}<0$;
in this case, we set $\Delta(M)=(t_1,t_2]$.  
By \eqref{infprodgena} and by the sign assertion just below \eqref{eq.transl} (applied to $x\in J_k$), 
our choice of $\Delta(M)$ implies the assertion 
below property~\eqref{eq.lptm}.

By \eqref{eq.transu}, $|c_j-c_{j,t}|$ is bounded from above by a constant times 
$|(f^{j-1})'(c_1)||t|$, for all $j\le M$ and all $|t|\le t_2$. 
Since $|(f^M)'(c_1)||t|\le C_a^{-1}$ and since the derivative $|(f^{M-j})'(c_j)|$ 
of the Misiurewicz--Thurston map $f$
grows exponentially  in $M-j$,
there exists an integer $\ell_1$, 
which does not depend on $M$, 
such that $c_{M-\ell_1,t}$ is contained in $(c_2,c_1)$, for all $t\in[-t_2,t_2]$. 
On the other hand, by the uniformity of $\ell_1$, the fact that $|(f^{M+1})'(c_1)||t|\ge C_a^{-1}$, 
and by the transversality property~\eqref{eq.transl}, 
there is a constant $\tilde C>1$ 
such that 
$$
\inf_M |\{c_{M-\ell_1,t}\ |\ t\in\Delta(M)\}|\ge \tilde C^{-1}
\inf_M \frac{t_2(M)-t_1(M)}{ t_2(M)}\ge \frac{1}{\tilde C^{2}}\,,
$$
where in the last inequality we used \eqref{eq.t1t2}.
It easily follows that there is a finite collection $\mathcal J$ of open intervals contained in $(c_2,c_1)$ such that for 
each large enough $m$ the interval $\{c_{M(m)-\ell_1,t}\ |\ t\in\Delta(M(m))\}$ 
contains at least one interval of $\mathcal J$.

Since we assumed that
$f_0$ is mixing, the support of its absolutely continuous invariant measure
is $[c_2,c_1]$.
Recall the point $y\in(c_2,c_1)$ constructed in the beginning
of the proof, and let $k_M\ge1$ be minimal such that $f^{k_M}(y)=c_M$.
Fix $m$ large and
let $J\in\mathcal J$ be covered by $\{c_{M(m)-\ell_1,t}\ |\ t\in\Delta(M(m))\}$. By ergodicity of $f$ on the support of the absolutely continuous
invariant measure, 
there exists $k=k(J)$ so that $f^{k}(J)$ contains  $y$.
Therefore, we find a point $x\in J$ and an iterate $j_M\ge1$ 
(with $j_M\le k(J)+k_M$) such that $f^{j_M}(x)=c_M$ 
and so that the points $f^i(x)$, $0\le i\le j_M-1$, avoid a neighbourhood $V$ of $c$. Hence, 
by the implicit function theorem, if 
$|t_2|$ is sufficiently small, for all $t\in[-t_2,t_2]$, 
we find points $x_{M,t}\in J$ and $d_{M,t}\in (c_2,c_1)$ 
(depending differentiably on $t$) ,
such that $d_{M,0}=c_M$,
and $d_{M,t}$ is a (repelling) periodic point for $f_t$ with period $\ell_0$,
while $f_t^{j_M}(x_{M,t})=d_{M,t}$,
and the points $f_t^i(x_{M,t})$, $0\le i\le j_M-1$, avoid a neighbourhood $V'\subset V$ of $c$. 
Since $\{c_{M-\ell_1,t} \mid t \in \Delta(M)\}$ contains  $J$, and  $J$ contains 
the closure of $\{x_{M,t} \mid t \in \Delta(M)\}$,
it follows by the intermediate value theorem
that there exists $t\in\Delta(M)$ such that $c_{M-\ell_1,t}=x_{M,t}$
(with $f_t^{j_M}(x_{M,t})$ the repelling periodic point $d_{M,t}$). 

Since the number of intervals in $\JJ$ is finite and since $c_M$ can attain maximal $\ell_0$ 
different values, it follows that 
$\sup_{J\in \JJ} k(J)<\infty$ and $\sup_Mk_M<\infty$. 
Hence, there is an integer $j_0$ and a neighbourhood 
$U$ of $c$, such that for every large $m$, defining $M(m)$
as above,  there exist $t=t(M)\in\Delta(M)$ 
and $j\le j_0$ such that $c_{M-\ell_1+j,t}=d_{M,t}$ (the repelling periodic
point constructed above by considering a suitable $J$) and $c_{i,t}\notin U$, for all $i\ge1$. 
By construction, $(M,t(M))$ is an admissible pair. 
By the admissible pair condition, 
 $|f'(f^i(c_m))-f_t'(f_t^i(d_{M,t}))|$, $1\le i\le\ell_0$, is bounded from above 
by a constant times $\Lambda^{-M}$. This immediately implies  \eqref{eq.lptm}. 
It remains to show ~\eqref{eq.lpt}, which easily follows
from the distortion  bounds \eqref{eq.tdistortion1}, property~\eqref{eq.lptm}, and the fact 
that $c_{M,t}$ is iterated at most $j_0$ steps to the postcritical periodic orbit 
while these iterations lie outside the neighbourhood $U$ of $c$.

Let $\Delta_{MT}$ be the sequence of Misiurewicz--Thurston parameters 
$$\{ t(M(m))\mid m \ge m_0\}$$
just defined.
Uniformity of goodness constants for $\Delta_{MT}$ is straightforward: 
Take $\lambda_c<\Lambda$ and assume that  $M$  
is sufficiently large. Then, by \eqref{eq.lpt} and \eqref{eq.lptm}, 
we find an integer $H_0$ such that, for all $t\in\Delta_{MT}$, the map $f_t$ is 
$(\lambda_c,H_0)$-Collet--Eckmann. Properties~\eqref{eq.noreturn} and \eqref{eq.deltareturn} 
can be shown  as in the proof of Proposition~\ref{p.uniformc}
(where we might have to intersect $\Delta_{MT}$ with a $\epsilon(\delta)$-neighbourhood of 
$t_0=0$).

This concludes the proof of Lemma~\ref{l.existencemt}.
\end{proof}

\begin{remark}\label{sameside}
The assertion in Lemma~\ref{l.existencemt} just below \eqref{eq.lptm} holds only on one side of $t_0$. 
Hence, the Misiurewicz--Thurston parameters constructed
in Lemma~\ref{l.existencemt} lie either all to the left or all to the right of $t_0$.
This will simplify the proof of the lower 
bound in Section~\ref{ss.lower} considerably by avoiding potential cancellations. 
However, it is quite likely that, by doing careful estimates and by possibly adapting 
the observable $A_D$ below, one would obtain a similar lower bound in 
Section~\ref{ss.lower}, even if the assertion in Lemma~\ref{l.existencemt} below \eqref{eq.lptm} is 
not satisfied (allowing the construction of Misiurewicz--Thurston parameters on both sides).
\end{remark}

\smallskip

\noindent {\bf Banach spaces, transfer operators, the spectral Propositions ~\ref{mainprop} and \ref{truncspec}.}

\smallskip
In the Misiurewicz--Thurston case, we may now construct the Banach spaces, 
the transfer operators $\widehat \LL_t$ and the truncated
transfer operators $\widehat \LL_{t,M}$ exactly like in Sections~\ref{ss.transfer} and ~\ref{ss.trunc},
setting $\alpha=\beta=0$. (The only difference is that \eqref{kappa'} is replaced by
$\|\xi_k\circ f^{-(k+1)}_\pm\|_{C^3}\le C$ for all $k\ge 1$.)
In the definition of the space $\BB^{L^p}$, we take $\Lambda_t$ in 
\eqref{defL} equal to $\Lambda_t$ in \eqref{eq.lpt}.
We claim that for each $p>1$ there exists $C$ so that
\begin{equation}\label{theclaim}
\sup_n \|\widehat\LL^n_t \|_{\BB_t^{L^p}}\le C \, .
\end{equation}
Indeed, 
for $k\ge 1$,   the definition of $\widehat\LL_t$ gives
$$
\lambda^{k\rr} \|[\widehat\LL_t\hat\psi]_k\|_{L^p(I)}\le 
\lambda^{(k-1)\rr} \lambda^{\rr-1}\|\hat\psi_{k-1}\|_{L^p(I)} \, ,
$$
and we only have to consider  $[\widehat\LL_t\hat\psi]_0$.
Using Minkowski's inequality, a change of variable, 
the bound \eqref{expii}, and the definition of $\Lambda_t$,
we find a constant $C$ such that
\begin{align*}
\|[\widehat\LL_t\hat\psi]_0\|_{L^p(I)}
&\le\sum_{j=0 \mbox{ \small or } j\ge H(\delta)}\lambda^j\Big(\int_I\frac{|(1-\xi_j)\psi_j|^p}
{|(f_t^{j+1})'(y)|^{p-1}}dy\Big)^{1/p}\\
&\le C\sum_{j=0 \mbox{ \small or } j\ge H(\delta)}\lambda^j\Lambda_t^{-\frac{p-1}{2p}j}\|\psi_j\|_{L^p(I)}\,.
\end{align*}
By the definition \eqref{eq.mu} of $\rr$, the right hand side is bounded by 
$C\|\hat\psi\|_{\BB_t^{L^p}}$.
We have proved $\|\widehat\LL_t \|_{\BB_t^{L^p}}\le C$. 
The proof  of
$\sup_n \|\widehat\LL^n_t \|_{\BB_t^{L^p}}\le C$ using the
above remarks is  straightforward under the Misiurewicz--Thurston assumption, 
exploiting the overlap control of fuzzy intervals in Remark~\ref{overlap}
(simplifying greatly Appendix~B of \cite{BS}).
The Lasota--Yorke inequality   
\begin{equation}\label{eq.cond2}
\max(\|\widehat \LL_t^n (\hat \psi)\|_\BB, 
\|\widehat \LL_{t,M}^n (\hat \psi)\|_\BB)
\le  C \Theta_0^{-n} \| \hat \psi\|_\BB + 
C   \|\hat \psi\|_{\BB^{L^p} }\, ,
\end{equation}
 for $p>1$  follows from the
Lasota--Yorke inequality  for $\BB$ and $\BB^{L^1}$ and the
embedding of  $\BB^{L^p}$ in $\BB^{L^1}$ given by \eqref{L1Lp}.
Note  that Rellich--Kondrachov gives that the embedding  $\BB^{W^1_1}\subset \BB^{L^p}$
is compact since $p<\infty$ and using \eqref{decaybound}.
Finally,
\begin{align}
\label{eqMT}\|(\id-\TT_{M})\hat \psi\|_{\BB^{L^p}}&=
\sum_{k\ge M+1}\lambda^{k\rr}\left(\int_I|\psi_{k}(x)|^{p}\,dx\right)^{1/p}\\
\nonumber &\le C\sum_{k\ge M+1}\frac{\lambda^k \sup|\psi_k|}{\lambda^{k(1-\rr)}|(f^{k-1}_t)'(c_{1,t})|^{1/2p}}\\
\nonumber &\le C^2 \lambda^M \sum_{k\ge M+1}\frac1{\Lambda_{t}^{(1-1/p)k/2}\Lambda_{t}^{k/2p}}\\
\nonumber   &\le C^3 \lambda^M |(f^M_t)'(c_1)|^{-1/2}\, .
\end{align}
(In the  second inequality, we used the definition \eqref{eq.mu} of 
 $\rr$.  In the last inequality we used 
 the {\it upper bound}
$|(f^{k-1}_t)'(c_{1,t})|\le C \Lambda_t^k$ for all $k$.)

Therefore, the spectral properties stated in Propositions~\ref{mainprop} and ~\ref{truncspec}
(setting $\alpha=\beta=0$)  hold, with the same proofs. In fact,
 by \eqref{theclaim} and \eqref{eq.cond2}, \eqref{eqMT},
we may use the norm of $\BB^{L^p}$ for all $p\ge 1$
as a weak norm.

Finally, if $(M,t)$ is an admissible pair furnished by Lemma~\ref{l.existencemt},
then by \eqref{eq.lpt} and \eqref{eq.lptm}, we have 
$$
C^{-3}\Lambda^k\le|(f_t^{k-1})'(c_{1,t})|\le C^3\Lambda^k\,,\qquad \forall\ 1\le k\le 2M\,.
$$
Using this estimate one can easily adapt the proof of Lemma~\ref{bottomok} 
to ensure that the tower of 
$f_t$ coincides with the tower of $f$ 
up to level $2M$ (instead of $M$). Thus, in the 
Misiurewicz--Thurston case we have 
\begin{equation}\label{bottomokMT}
J_{k+1,t}=J_{k+1}\,\quad\text{and}\quad\xi_{k,t}= \xi_{k}\, , \qquad \forall\ 0\le k\le 2M-1 \, .
\end{equation}

\section{Whitney--H\"older upper bounds --- Proof of Theorem ~ \ref{t.main1}}
\label{main1}

\subsection{The main decomposition -- Upper bounds}

For    $f_t$ either good with $\alpha>1$ or
Misiurewicz--Thurston,
we proved in Sections~\ref{ss.transfer} and \ref{MTcase}, respectively,
that the invariant density of $f_t$  can  be written as
$\phi_t =\Pi_t( \hat \phi_t)$, where $\hat \phi_t$ is the nonnegative
and normalised fixed point of $\widehat \LL_t$ on $\BB_{t}$. 
Writing $f=f_{t_0}$ (as usual we assume $t_0=0$ and we 
remove the $0$ from the notation),
as in \cite[\S 6]{BS}, our starting point is the decomposition
\begin{align}\label{decc}
\phi_t - \phi=&\bigl [\Pi_t (\hat \phi_t - \hat \phi_{t, M}) +
 \Pi (\hat \phi_{M}-\hat \phi)\bigr ] +
[\Pi_t ( \hat \phi_{t,M} -\hat \phi_{M}  )]
+[ (\Pi_t -\Pi) (\hat \phi_{M})] \, .
\end{align}
We next state three propositions giving  upper bounds on the three terms
in the  right hand side of  the above
decomposition. The 
proof of Theorem~\ref{t.main1} will easily follow. The upper bounds for
the first two square brackets in \eqref{decc} have
a stronger form in the Misiurewicz--Thurston case, and they will be used 
in combination with Proposition ~\ref{l.lower} below (which gives a lower
bound for the third square bracket in the decomposition \eqref{decc}) to show
Theorem~\ref{t.main3} in Section~\ref{s.ndiff}.

We first discuss the effect of parameter change on the truncated
eigenvalues, i.e., the contribution of the second square bracket in the right hand side of \eqref{decc}.
The following proposition shows
that our choice of admissible pairs $(M,t)$ was indeed optimal:

\begin{proposition}[Strong norm control of  $t \mapsto \hat \phi_{t,M}$ ]\label{p.strongnorm}
If $f=f_0$ is good, with $\alpha>1$,  constructing the tower and transfer
operators as in Sections~\ref{ss.tower}
and ~\ref{ss.transfer}--\ref{ss.trunc} (in particular $\beta >\alpha+1$, and we use
Lemma~\ref{bottomok}),  there exists  $C$ such that for each admissible pair $(M,t)$,
with $t$ good for the same parameters and $|t|$ sufficiently small, 
we have
\begin{equation}
\label{eq.strongnormp1}
\|\hat\phi_{t,M}-\hat\phi_{M}\|_{\BB} \le CM^{\max(2+2\alpha,1+\beta)}|t|^{1/2}\,.
\end{equation}

If $f=f_0$ is Misiurewicz--Thurston, mixing, and transversal, recalling   the tower and transfer
operator construction in
Section~\ref{MTcase}  ($\alpha=\beta=0$, recalling also \eqref{bottomokMT}) and 
the set $\Delta_{MT}$ of Misiurewicz--Thurston parameters 
accumulating at
$0$ given by Lemma~\ref{l.existencemt},
there exists  a
constant $C$ such that for each admissible pair $(M,t)$
with  $t \in  \Delta_{MT}$ and $|t|$ sufficiently small, we have 
\footnote{The proof of \eqref{eq.pmttl} shows that truncating at $M+C\log M$ would be sufficient; 
but  the proof for $2M$ in \eqref{eq.strongnormp2} is not harder.}
for each $p\ge 1$
\begin{equation}
\label{eq.strongnormp2}
\|\hat\phi_{t,2M}-\hat\phi_{2M}\|_{\BB^{L^p}}\le \|\hat\phi_{t,2M}-\hat\phi_{2M}\|_{\BB} \le C|t|^{1/2}\,.
\end{equation}

Finally, in both cases above, the renormalisation period $P_t$ of $f_t$ is not larger
than $P_0$ 
 for all
small enough $t$, good with the same parameters, the constant
$C_t$ from Proposition~\ref{truncspec} is bounded uniformly
in such $t$, and there exists $\Theta_1>1$ so that
for any such $t$ satisfying in addition $P_t=P_0$,  recalling \eqref{thetat},
 we have
$\theta_t<\Theta_1^{-1}$.
\end{proposition}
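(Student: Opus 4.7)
The proof has three pieces that share a common backbone: an enhanced Keller--Liverani bootstrap that converts a \emph{strong-norm} estimate on $(\widehat\LL_{t,M}-\widehat\LL_M)\hat\phi_M$ into a strong-norm estimate on $\hat\phi_{t,M}-\hat\phi_M$. Concretely, I will pick a small circle $\gamma$ around $\kappa_M$ that separates both $\kappa_M$ and $\kappa_{t,M}$ from the rest of the spectra of $\widehat\LL_M$ and $\widehat\LL_{t,M}$ (possible because Proposition~\ref{truncspec} locates the remainder inside $\{|z|\le\Theta_0^{-1}\}$ and because $|\kappa_{t,M}-\kappa_M|=O(\tau^{\upsilon})=o(1)$), and use the resolvent identity
\begin{equation*}
\hat\phi_{t,M}-\hat\phi_M=\frac{1}{2\pi\imath}\oint_{\gamma}\frac{(z-\widehat\LL_{t,M})^{-1}(\widehat\LL_{t,M}-\widehat\LL_M)\hat\phi_M}{z-\kappa_M}\,dz,
\end{equation*}
valid because $\hat\phi_M$ is an eigenvector of $\widehat\LL_M$. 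The uniform $\BB$-resolvent bound on $\gamma$ (also a consequence of the uniform Lasota--Yorke estimate behind Proposition~\ref{truncspec}) then reduces both \eqref{eq.strongnormp1} and \eqref{eq.strongnormp2} to estimating $\|(\widehat\LL_{t,M}-\widehat\LL_M)\hat\phi_M\|_{\BB}$ (resp.~with $2M$ in place of $M$ for the MT case).

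To control this perturbation in the strong norm, I exploit that by Lemma~\ref{bottomok} (resp.~\eqref{bottomokMT}) the tower levels $B_{k,t}$ and cutoffs $\xi_{k,t}$ agree with $B_k$ and $\xi_k$ for all $0\le k\le M-1$ (resp.~$2M-1$). Consequently the components $[(\widehat\LL_{t,M}-\widehat\LL_M)\hat\phi_M]_k$ vanish identically for $k\ge 1$, and only the base level survives as a finite sum over $j=0$ or $H(\delta)\le j\le M$ of terms
\begin{equation*}
\lambda^j(1-\xi_j)\Big[\frac{\phi_{M,j}\circ f_{t,\varsigma}^{-(j+1)}}{|(f_t^{j+1})'\circ f_{t,\varsigma}^{-(j+1)}|}-\frac{\phi_{M,j}\circ f_{\varsigma}^{-(j+1)}}{|(f^{j+1})'\circ f_{\varsigma}^{-(j+1)}|}\Big].
\end{equation*}
I will decompose each such term into a horizontal ``spike-displacement'' piece (tracking the motion $c_{j+1,t}-c_{j+1}=O(|t||(f^j)'(c_1)|)$ from \eqref{eq.transu}) and a ``derivative-ratio'' piece (controlled by the distortion bound \eqref{eq.tdistortion1}). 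The crucial $|t|^{1/2}$ emerges from the spike piece because the quadratic tangency at $c$ turns the vertical displacement $|t||(f^j)'(c_1)|$ into a horizontal displacement carrying a square root of $|c_{j+1,t}-c_{j+1}|$, in the same way that fuels the $|t|^{1/2}$ lower bound analysed in Section~\ref{s.ndiff}. Differentiating once to pass to the $W^1_1$ norm requires \eqref{rootsing2}--\eqref{rootsing3} (at the highest derivative order where the uniform $W^2_1$ control on $\phi_{M,0}$ from Proposition~\ref{truncspec} is used) together with Proposition~\ref{ubalpha} for the iterated-distortion accumulation. Summing over $j$ using the admissibility condition \eqref{bd2} (and its saturation \eqref{eq.admupper} at the top term) produces the polynomial factor $M^{\max(2+2\alpha,1+\beta)}$ in the good case. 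The main obstacle is to execute this $W^1_1$ differentiation without losing the $|t|^{1/2}$; the admissibility condition for the top level and polynomial recurrence for intermediate levels are precisely what allow this. In the Misiurewicz--Thurston case, $\alpha=\beta=0$ makes the growth factors $O(1)$ and \eqref{bottomokMT} provides the extra margin needed to truncate at $2M$, yielding \eqref{eq.strongnormp2} in $\BB$; the extension to $\BB^{L^p}$ for all $p\ge 1$ is then immediate from the continuous embedding $\BB\hookrightarrow\BB^{L^p}$ established just after Definition~\ref{Bspace}.

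For the final claim, I choose $M=M(t)$ to be the largest admissible integer for $|t|$, so $M(t)\to\infty$ and $\tau_{t,M(t)}\to 0$ as $t\to 0$. Combining \eqref{eq.strongnormp1} with the weak-norm stability \eqref{klbounds} and identifying lower tower levels via Lemma~\ref{bottomok}, I obtain $\|(\widehat\LL_t-\widehat\LL)\hat\psi\|_{\BB^{L^1}}\to 0$ uniformly for $\hat\psi$ in bounded sets of the common lower-level subspace and good $t\to 0$. Applying the Keller--Liverani spectral stability theorem directly to the pair $(\widehat\LL,\widehat\LL_t)$ with the uniform Lasota--Yorke \eqref{eq.LaYo} from Proposition~\ref{mainprop}, the total rank of the spectral projector of $\widehat\LL_t$ inside any closed annulus $\{|z|\ge\epsilon\}$ with $\epsilon\in(\Theta_0^{-1},1)$ equals that of $\widehat\LL$, namely $P_0$, for all good $t$ sufficiently close to $0$. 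Since $P_t$ of these eigenvalues are forced to lie exactly on the unit circle by Proposition~\ref{mainprop}, this rank-count yields $P_t\le P_0$. When $P_t=P_0$, all eigenvalues inside $\{|z|\ge\epsilon\}$ are already accounted for on the unit circle, so the open annulus $\{\epsilon<|z|<1\}$ contains no spectrum of $\widehat\LL_t$: taking $\Theta_1=\epsilon^{-1}$ gives the uniform gap $\theta_t<\Theta_1^{-1}$. Uniformity of $C_t$ from Proposition~\ref{truncspec} then follows from the uniform Lasota--Yorke constants, the uniform goodness on $\Delta_0$, and (in the case $P_t=P_0$) the uniform peripheral gap just obtained.
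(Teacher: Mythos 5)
Your overall shape is right: a Keller--Liverani-type contour integral reduces the statement to a strong-norm estimate on $(\widehat\LL_{t,\bar M}-\widehat\LL_{\bar M})\hat\phi_{\bar M}$ (the paper's Lemma~\ref{l.strongnorm}), which vanishes on all levels $k\ge 1$ by Lemma~\ref{bottomok} / \eqref{bottomokMT}, and the final spectral claims are obtained by perturbation. But there are three genuine gaps in your argument.

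First, your resolvent formula is wrong as written. Since $(z-\widehat\LL_{\bar M})^{-1}\hat\phi_{\bar M}=\hat\phi_{\bar M}/(z-\kappa_{\bar M})$, the contour integral
\begin{equation*}
- \frac{1}{2 \imath \pi} \int_{\gamma}  \frac{\widehat \QQ_{t,\bar M}^{-1}(z)} {z-\kappa_{\bar M}} (\widehat \LL_{t,\bar M} -\widehat \LL_{\bar M}) \hat \phi_{\bar M}\, dz
\end{equation*}
evaluates to $\PPP_{t,\bar M}(\hat\phi_{\bar M})-\PPP_{\bar M}(\hat\phi_{\bar M})=\hat\phi_{t,\bar M}\,\nu_{t,\bar M}(\hat\phi_{\bar M})-\nu_{\bar M}(\hat\phi_{\bar M})\,\hat\phi_{\bar M}$, \emph{not} $\hat\phi_{t,\bar M}-\hat\phi_{\bar M}$. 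The difference is the scalar coefficients $\nu_{t,\bar M}(\hat\phi_{\bar M})$ and $\nu_{\bar M}(\hat\phi_{\bar M})$, which must be shown to lie within the target error of $1$. The paper does this using the weak-norm bounds $\|\nu_{t,\bar M}-\nu_t\|_{\BB^*}$ and $\|\hat\phi_{t,\bar M}-\hat\phi_t\|_{\BB^{L^1}}$ from Proposition~\ref{p.trunc}; your proposal never accounts for these coefficients and so does not actually prove \eqref{eq.strongnormp1}.

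Second, you assert that the uniform $\BB$-resolvent bound on $\gamma$ is ``a consequence of the uniform Lasota--Yorke estimate behind Proposition~\ref{truncspec}.'' That is not enough: the Lasota--Yorke inequality only bounds the essential spectral radius and gives no information on where the discrete eigenvalues of $\widehat\LL_{t,M}$ sit relative to $\gamma$. One must rule out eigenvalues near $\gamma$ and establish \eqref{eq.resolventb}, and this is exactly what the paper's first part of the proof does via a moving-target Keller--Liverani argument, resting on the weak-norm comparison
\begin{equation*}
\|(\widehat \LL_{t,\bar M}-\widehat\LL_{\bar M})\hat\psi\|_{\BB^{L^1}}\le C|t|^\eta\|\hat\psi\|_{\BB},
\end{equation*}
which replaces condition~(5) of \cite{kellerliverani}. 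You need to prove something like this; it is not free.

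Third, your argument for $P_t\le P_0$ and the uniform peripheral gap is stated as a Keller--Liverani comparison of $(\widehat\LL,\widehat\LL_t)$, but these two operators act on \emph{different} Banach spaces $\BB$ and $\BB_t$, which only agree on the first $M$ tower levels. Keller--Liverani requires a single ambient space; the paper therefore routes through the truncated operators $\widehat\LL_{\bar M}$ and $\widehat\LL_{t,\bar M}$, which by Lemma~\ref{bottomok} / \eqref{bottomokMT} genuinely act on the same finite-level subspace of $\BB$, and derives the $P_t$, $\theta_t$, $C_t$ claims from \eqref{eq.resolventb} applied with contours $\gamma_j$ around the various roots of unity. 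Relatedly, your proof order is backwards: the paper establishes the $P_t$/$\theta_t$/$C_t$ uniformity in the \emph{first} part, uses it as an input to Proposition~\ref{p.trunc}, and only then finishes \eqref{eq.strongnormp1} and \eqref{eq.strongnormp2} (precisely because the coefficient control in the contour integral needs Proposition~\ref{p.trunc}). Your proposal instead derives the $P_t$ claims \emph{from} \eqref{eq.strongnormp1}, which would reverse the dependency and leaves you without the tools to complete the first step.

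A minor point: the $|t|^{1/2}$ does not ``emerge from the quadratic tangency turning a vertical into a horizontal displacement'' here. In the proof of Lemma~\ref{l.strongnorm} one bounds the dominant contribution by $C\,k^{\gamma}|(f^{k-1})'(c_1)|^{1/2}|t|$ and then applies the admissibility condition $|(f^M)'(c_1)||t|\le C_a^{-1}M^{-(\alpha+\beta)}$ to trade one factor of $|(f^{k-1})'(c_1)|^{1/2}$ against $|t|^{1/2}$; the square root of the derivative enters via the distortion bounds \eqref{rootsing1}, \eqref{eq.tdistortion1} (which do reflect the nondegeneracy of the critical point), but the scaling $|t|^{1/2}$ itself is a direct consequence of admissibility.
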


The proof of Proposition ~\ref{p.strongnorm} is given in Section \ref{s.mtm}.

We next control the effect of truncation, i.e., the contribution of the
terms in the first square bracket of \eqref{decc}.
For any $\upsilon <1$, Proposition ~\ref{truncspec} and the definition of admissible pairs
$(M,t)$ give a constant $C$ so that
$$
\|\hat\phi_{t,M}-\hat\phi_t\|_{\BB^{L^1}}\le C|t|^{\upsilon/2} \, .
$$
The following proposition gives the improvement of the above
bound needed for both our main theorems:

\begin{proposition}[Weak norm control of $M \mapsto \hat \phi_{t,M}$]
\label{p.trunc}
If $f=f_0$ is good, with $\alpha>1$, 
constructing the tower and transfer
operators as in Sections~\ref{ss.tower}
and ~\ref{ss.transfer}--\ref{ss.trunc},
there exists $C$ such that for each admissible pair $(M,t)$,
with $t$ good for the same parameters and $|t|$ sufficiently small, 
we have
\begin{equation}
\label{eq.pmtt}
\max\{ \|\hat\phi_{t,M}-\hat\phi_t\|_{\BB^{L^1}},
\|\nu_{t,M}-\nu_t\|_{\BB^*} \} \le C|t|^{1/2}M^{2+\alpha}
 \, . 
\end{equation}

If $f=f_0$ is Misiurewicz--Thurston, mixing, and transversal, recalling the construction in
Section~\ref{MTcase} (in particular, $\alpha=\beta=0$) and 
the set $\Delta_{MT}$ of Misiurewicz--Thurston parameters 
accumulating at
$0$ given by Lemma~\ref{l.existencemt},
there exists a constant $C$ such that for each admissible pair $(M,t)$
with $t\in \Delta_{MT}$, we have  for any $p>1$,
\begin{equation}
\label{eq.pmttl}
\max\{ \|\hat\phi_{t,2M}-\hat\phi_t\|_{\BB^{L^p}}, \|\nu_{t,2M}-\nu_t\|_{\BB^*}|\}\le C|t|^{1/2}\,,
\end{equation}
\end{proposition}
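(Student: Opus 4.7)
\emph{Strategy.} The plan is to sharpen the Keller--Liverani perturbation estimate from Proposition~\ref{truncspec} by applying the operator difference $\widehat\LL_t-\widehat\LL_{t,M}$ directly to the specific eigenvector $\hat\phi_{t,M}$, rather than to an arbitrary element of $\BB$. The bound $\|\hat\phi_t-\hat\phi_{t,M}\|_{\BB^{L^1}}\le C_t\tau_{t,M}^\upsilon$ given there, for $\upsilon<1$, would only produce $C_t|t|^{\upsilon/2}$ multiplied by the harmful exponential factor $\lambda^{M\upsilon}$ coming from $\tau_{t,M}$; the sharpened version replaces the interpolation step by an explicit tower-level computation.

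\emph{The key computation.} Starting from the algebraic identity
$$(\id-\widehat\LL_t)(\hat\phi_t-\hat\phi_{t,M})=(\kappa_{t,M}-1)\hat\phi_{t,M}+(\widehat\LL_t-\widehat\LL_{t,M})\hat\phi_{t,M},$$
and using that $\hat\phi_{t,M}$ is supported on levels $\le M$, we have $(\widehat\LL_t-\widehat\LL_{t,M})\hat\phi_{t,M}=(\id-\TT_M)\widehat\LL_t\hat\phi_{t,M}$, a function supported only on level $M+1$, equal there to $\lambda^{-1}\xi_{M,t}\hat\phi_{t,M,M}$. Iterating the eigenvalue equation $\kappa_{t,M}\hat\phi_{t,M,k}=\lambda^{-1}\xi_{k-1,t}\hat\phi_{t,M,k-1}$ from $k=1$ up to $k=M$, using the uniform $W^2_1$-bound on $\hat\phi_{t,M,0}$ from Proposition~\ref{truncspec} and $\kappa_{t,M}^{-M}\le C$ from \eqref{eq.kappa}, yields $\|\hat\phi_{t,M,M}\|_{L^1(I)}\le C\lambda^{-M}|J_{M,t}|$. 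Hence
$$\|(\widehat\LL_t-\widehat\LL_{t,M})\hat\phi_{t,M}\|_{\BB^{L^1}}\le C|J_{M,t}|\le CM^{\alpha/2}|t|^{1/2},$$
by the exponential decay~\eqref{decaybound} and the admissibility bound~\eqref{eq.admupper}. Evaluating $\nu_t$ at the algebraic identity kills the left-hand side (since $\nu_t\widehat\LL_t=\nu_t$); since the normalisation of Proposition~\ref{truncspec} gives $\nu_t(\hat\phi_{t,M})=1$, this produces directly $|\kappa_{t,M}-1|\le CM^{\alpha/2}|t|^{1/2}$.

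\emph{Resolvent inversion, dual bound, and the Misiurewicz--Thurston case.} To pass from these two key estimates to a bound on $\hat\phi_t-\hat\phi_{t,M}$, one inverts $\id-\widehat\LL_t$ on the complement of its peripheral spectrum, using the spectral projectors at the $P_t$-th roots of unity provided by Proposition~\ref{mainprop}; the components of $\hat\phi_t-\hat\phi_{t,M}$ on those peripheral eigenspaces are themselves controlled by the normalisation constraints via the same mechanism as for $\kappa_{t,M}-1$. Uniform boundedness of $(\id-\widehat\LL_t)^{-1}$ off the peripheral spectrum follows from the uniform spectral gap given by the last claim of Proposition~\ref{p.strongnorm} (whose proof does not use the present proposition), combined with Lemma~\ref{bottomok} and the uniform goodness of $t$. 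A generous bookkeeping of polynomial factors in $M$ arising from the resolvent step then yields the stated $C|t|^{1/2}M^{2+\alpha}$. The dual bound on $\nu_{t,M}-\nu_t$ is obtained by the symmetric argument applied to the adjoint operators $\widehat\LL_t^*$ and $\widehat\LL_{t,M}^*$, testing the analogous identity against $\hat\phi_t$ in place of evaluating $\nu_t$ on a vector, and using \eqref{dual1}--\eqref{dual2} with the roles of the weak and strong norms exchanged. In the Misiurewicz--Thurston case, truncating at $2M$ and using the extended identification of towers \eqref{bottomokMT} together with the uniform derivative control \eqref{eq.lpt}--\eqref{eq.lptm} (with $\alpha=\beta=0$), the same mechanism in the $\BB^{L^p}$-topology gives $\|(\widehat\LL_t-\widehat\LL_{t,2M})\hat\phi_{t,2M}\|_{\BB^{L^p}}\le C\lambda^{2M(\rr-1)}\Lambda_t^{-M/p}\le C\Lambda_t^{-M}\le C|t|$, via the cancellation of exponents from~\eqref{eq.mu} exactly as in the computation preceding~\eqref{eqMT}; the resolvent step then produces the announced $C|t|^{1/2}$ bound in $\BB^{L^p}$. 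The principal obstacle throughout is maintaining uniformity of constants as $t$ varies over the admissible parameters, for which we rely on the uniform spectral gap from Proposition~\ref{p.strongnorm} and, in the Misiurewicz--Thurston case, on the uniform goodness of $\Delta_{MT}$ from Lemma~\ref{l.existencemt}.
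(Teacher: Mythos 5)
Your ``key computation'' is genuinely nice: the algebraic identity
$(\id-\widehat\LL_t)(\hat\phi_t-\hat\phi_{t,M})=(\kappa_{t,M}-1)\hat\phi_{t,M}+(\widehat\LL_t-\widehat\LL_{t,M})\hat\phi_{t,M}$
is correct, the observation that $(\widehat\LL_t-\widehat\LL_{t,M})\hat\phi_{t,M}=(\id-\TT_M)\widehat\LL_t\hat\phi_{t,M}$ is concentrated at level $M+1$ and equals $\lambda^{-1}\xi_{M,t}\hat\phi_{t,M,M}$ there is correct, the iteration of the eigenvalue equation is correct (the crucial point being, via Remark~\ref{overlap} and \eqref{overlap'}, that at most one cutoff factor is $\ne1$, so no $3^M$ blow-up), and the resulting weak-norm bound $\|(\widehat\LL_t-\widehat\LL_{t,M})\hat\phi_{t,M}\|_{\BB^{L^1}}\le C M^{\alpha/2}|t|^{1/2}$, together with the consequence $|\kappa_{t,M}-1|\le CM^{\alpha/2}|t|^{1/2}$ obtained by testing against $\nu_t$, is also correct. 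This is a clean alternative way to get the two inputs that drive the proof.

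The gap is in the step you call ``resolvent inversion, \dots generous bookkeeping of polynomial factors.'' You propose to invert $\id-\widehat\LL_t$ on the complement of the peripheral eigenspaces. But think about which norm you do this in. If you work in the strong norm $\BB=\BB^{W^1_1}$, where $(\id-\widehat\LL_t)^{-1}(\id-\PPP_t)$ \emph{is} uniformly bounded (via the uniform spectral gap from Proposition~\ref{p.strongnorm}), you must then feed in the right-hand side measured in $\BB$-norm. But $\|(\widehat\LL_t-\widehat\LL_{t,M})\hat\phi_{t,M}\|_\BB$ is the $W^1_1$-norm at level $M+1$, which by the same iteration (without the small support factor $|J_{M,t}|$, since $\|\cdot\|_\BB$ carries no $\lambda^k$ weight and no $L^1$ smallness) is of order $\lambda^{-M}$. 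With $1<\lambda<\lambda_c^{1/2}$ and $|t|^{1/2}\lesssim M^{-(\alpha+\beta)/2}\lambda_c^{-M/2}$ from admissibility, the ratio $\lambda^{-M}/|t|^{1/2}\gtrsim(\lambda_c^{1/2}/\lambda)^M$ grows \emph{exponentially} in $M$, so the strong-norm route cannot possibly give $C|t|^{1/2}M^{2+\alpha}$; no amount of polynomial bookkeeping saves it. If instead you try to work directly in the weak norm $\BB^{L^1}$ (or $\BB^{L^p}$), where your right-hand side is genuinely small, then $(\id-\widehat\LL_t)^{-1}(\id-\PPP_t)$ is \emph{not} bounded: the Lasota--Yorke inequality \eqref{eq.LaYo}/\eqref{LYM} gives a spectral gap of $\widehat\LL_t$ only on $\BB^{W^1_1}$ (with $\BB^{L^1}$ as the weak space), not an essential spectral radius bound on $\BB^{L^1}$ itself, so there is no uniform bound on that inverse in the weak topology.

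The paper's actual proof is precisely designed to bridge this strong/weak dichotomy, and that bridge is the content you are missing. It represents $\hat\phi_M-\hat\phi$ as a contour integral over a small circle $\gamma_M$ centred at $1$ of radius comparable to $\log\log(1/\tau_M)/\log(1/\tau_M)$, and crucially expands the resolvent $\widehat\QQ_M^{-1}(z)$ (for the \emph{truncated} operator, which is uniformly invertible on $\gamma_M$ in the strong norm by \eqref{defHH}) into a rank-one projector term plus a finite Neumann sum $\sum_{j=0}^{n-1}z^{-j-1}(\id-\PPP_M)\widehat\LL_M^j$ plus a remainder $z^{-n}\widehat\QQ_M^{-1}(z)(\id-\PPP_M)\widehat\LL_M^n$. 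The first $n$ terms are estimated purely in the weak norm (each $\widehat\LL_M^j$ is a weak-norm contraction), while the remainder uses the Lasota--Yorke inequality so that its strong-norm contribution is multiplied by $\Theta_0^{-n}$; choosing $n\asymp\log(1/\tau_M)$ then loses only the logarithmic (hence polynomial-in-$M$) factors that appear in \eqref{red1}. Your proposal, as written, collapses this balancing act into a single ``invert and multiply'' step, and that step is precisely where the argument fails. The fix is not cosmetic: you need to reinstate the Keller--Liverani/Baladi--Young style resolvent decomposition (or an equivalent $n$-step Neumann argument carefully controlling both norms at once), because a direct inversion in either norm alone is blocked by the mechanisms above.
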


Proposition~\ref{p.trunc} is proved in Section~\ref{s.mtt}. 
\noindent The last ingredient for the
proof of Theorem ~ \ref{t.main1} is the following elementary but crucial
lemma, which takes
care of the last contribution in \eqref{decc}, i.e., the displacement
of the ``spikes'' (the square root singularities $1/\sqrt{|x-c_{k,t}|}$ in the invariant
densities):

\begin{proposition}[Upper  bounds on spike displacement]
\label{l.upper}
If $f=f_0$ is good, with $\alpha>1$, taking
$\beta >\alpha +1$ and constructing the tower and transfer
operators as in Sections~\ref{ss.tower}
and ~\ref{ss.transfer}--\ref{ss.trunc},
there exists a constant $C$ such that for each admissible pair $(M,t)$,
with $t$ good for the same parameters and $|t|$ sufficiently small, 
and for all $A\in C^{1/2}$, 
$$
|\int_IA(x)(\Pi_t-\Pi)(\hat \phi_M)(x)dx|
\le C|t|^{1/2}\|A\|_{C^{1/2}}\, .
$$
\end{proposition}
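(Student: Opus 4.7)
The natural idea is to transfer the perturbation $\Pi_t-\Pi$ onto the observable $A$ by changing variables on the tower. Substituting $y=f_{t,\varsigma}^{-k}(x)$ (respectively $y=f_{0,\varsigma}^{-k}(x)$) in the defining sum \eqref{defproj} for $\Pi_t(\hat\phi_M)$ (respectively $\Pi(\hat\phi_M)$), and using the crucial identification $J_{k,t}=J_k$ for $0\le k\le M-1$ provided by Lemma~\ref{bottomok} (applicable since $(M,t)$ is admissible), both pull-backs integrate over the same one-sided domain $J_k^\varsigma\subset J_k$. Since $f_t^0=\id$, the $k=0$ contribution cancels, leaving
$$
\int_I A\,(\Pi_t-\Pi)(\hat\phi_M)\,dx=\sum_{k=1}^M\lambda^k\sum_{\varsigma=\pm}\int_{J_k^\varsigma}\bigl[A(f_t^k(y))-A(f^k(y))\bigr]\phi_{M,k}(y)\,dy.
$$

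To estimate the integrand, combine $1/2$-H\"older regularity of $A$ with the parameter-displacement bound \eqref{eq.transu} from Lemma~\ref{rootsing}: for any $s\in[0,t]$ and $y\in J_{k-1}\supset J_k$, admissibility ensures $|\partial_s f_s^k(y)|\le C|(f^{k-1})'(c_1)|$, hence $|f_t^k(y)-f^k(y)|\le C|(f^{k-1})'(c_1)||t|$, and therefore
$$
|A(f_t^k(y))-A(f^k(y))|\le C\|A\|_{C^{1/2}}\,|(f^{k-1})'(c_1)|^{1/2}|t|^{1/2}.
$$
The proof then reduces to showing that $\sum_{k=1}^M\lambda^k|(f^{k-1})'(c_1)|^{1/2}\|\phi_{M,k}\|_{L^1}$ is bounded uniformly in $M$. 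For this, I would exploit the eigenvalue equation $\widehat\LL_M\hat\phi_M=\kappa_M\hat\phi_M$: the upper-level formula in \eqref{a} gives, by iteration for $1\le k\le M$,
$$
\phi_{M,k}(y)=(\lambda\kappa_M)^{-k}\Bigl(\prod_{j=0}^{k-1}\xi_j(y)\Bigr)\phi_{M,0}(y),
$$
so that $\|\phi_{M,k}\|_{L^1}\le(\lambda\kappa_M)^{-k}|J_k|\,\|\phi_{M,0}\|_{L^\infty}$. Using $\kappa_M^{-k}\le\kappa_M^{-M}\le C$ from \eqref{eq.kappa}, the Sobolev bound $\|\phi_{M,0}\|_{L^\infty}\le C\|\phi_{M,0}\|_{W^1_1}$, and the uniform control $\sup_M\|\phi_{M,0}\|_{W^1_1}\le C_1$ from Proposition~\ref{truncspec}, this becomes $\|\phi_{M,k}\|_{L^1}\le C\lambda^{-k}|J_k|$.

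Combining with the polynomial level-size bound \eqref{decaybound}, $|J_k|\le Ck^{-\beta/2}|(f^{k-1})'(c_1)|^{-1/2}$, every $\lambda^k$ and $|(f^{k-1})'(c_1)|^{1/2}$ factor cancels, yielding
$$
\lambda^k|(f^{k-1})'(c_1)|^{1/2}\|\phi_{M,k}\|_{L^1}\le Ck^{-\beta/2},
$$
and since $\beta>\alpha+1>2$ the series $\sum_{k\ge1}k^{-\beta/2}$ converges uniformly in $M$. The announced bound $C\|A\|_{C^{1/2}}|t|^{1/2}$ follows. The only delicate point is precisely this cancellation: the exponential weight $\lambda^k$ from $\Pi_t$, the $(\lambda\kappa_M)^{-k}$ produced by the truncated eigenvalue equation, and the $|(f^{k-1})'(c_1)|^{\pm1/2}$ factors coming respectively from the H\"older estimate and from the level sizes, must exactly conspire. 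This is where the \emph{polynomial} (``fat'') tower is essential: with exponentially shrinking levels as in \cite{BS}, the $|J_k|^{-1/2}$ in the level-size bound would instead grow exponentially, destroying the summation and limiting the exponent to $\eta<1/2$.
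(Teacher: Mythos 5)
Your proof is correct and takes essentially the same approach as the paper's: change variables $y=f_{t,\varsigma}^{-k}(x)$ on each level (using $J_{k,t}=J_k$ from Lemma~\ref{bottomok}), apply $1/2$-H\"older continuity of $A$ together with the parameter-displacement bound $|f_t^k(y)-f^k(y)|\le C|(f^{k-1})'(c_1)||t|$ from Lemma~\ref{rootsing}, and balance the resulting $|(f^{k-1})'(c_1)|^{1/2}$ against $\|\phi_{M,k}\|_{L^1}\le C\lambda^{-k}|J_k|$ (via the eigenvalue equation and Proposition~\ref{truncspec}) and the polynomial level-size bound \eqref{decaybound}, reducing everything to $\sum k^{-\beta/2}<\infty$. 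Your argument is slightly more explicit than the paper's (you iterate the eigenvalue equation to display $\phi_{M,k}$, and you invoke \eqref{eq.transu} rather than the paper's \eqref{eq.tdistortion1} for the displacement, which is arguably the cleaner reference), but there is no genuine difference in strategy.
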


\begin{remark}
The proof of Proposition~\ref{l.upper} applied to the Misiurewicz--Thurston setting 
$\alpha=\beta=0$
would give an additional factor $|\log|t||$ in the upper bound, since the size of the $B_k$'s 
does not converge to $0$ when $k\to\infty$. If the observable $A$ is $C^1$, 
this $\log$-factor vanishes.
(See the upper 
bound in Proposition~\ref{l.lower} and its proof.) \end{remark}

\begin{proof}[Proof of Theorem ~ \ref{t.main1}]
Let $\Delta\subset\EE$ be the set given by Proposition~\ref{p.uniformc}. 
For given $\Gamma>4$, we can choose $\alpha>1$ and $\beta>1+\alpha$, 
so that $4<\max(2+2\alpha,1+\beta)\le\Gamma$. For this choice of $\alpha$, 
by Proposition~\ref{p.uniformc}, we find for each $t_0\in\Delta$ a set 
$\Delta_0\subset\Delta$  of good 
parameters having the same goodness constants as $t_0$, and $\Delta_0$ contains $t_0$
as a Lebesgue density point. Recall the constant $\epsilon(\delta)$ in Proposition~\ref{p.uniformc}, 
and the choice of $\epsilon>0$ in Lemmas~\ref{bottomok} and \ref{rootsing}.
Now we can choose $\delta>0$ and $\epsilon(\delta)\ge\epsilon>0$ small enough (and $L>1$ 
in Section~\ref{ss.tower} 
large enough) 
so that all corresponding assertions in Sections~\ref{s.prel}--\ref{uu} and in 
Proposition~\ref{p.strongnorm}--\ref{l.upper} 
hold (with uniform constants) for all $t\in\Delta_0\cap(t_0-\epsilon,t_0+\epsilon)$. 
(In Lemma~\ref{rootsing}, we allow of course $t\in(t_0-\epsilon,t_0+\epsilon)$.) 
Redefining the set $\Delta_0$ as
$\Delta_0\cap(t_0-\epsilon,t_0+\epsilon)$, 
we obtain the set $\Delta_0$ in Theorem~\ref{t.main1}.

In the following assume $t\in\Delta_0$. 
As remarked after the definition \eqref{defproj} of the projection $\Pi_t$, 
we have $\|\Pi_t(\hat \phi_t - \hat \phi_{t, M})\|_{L^1(I)}
\le\|\hat \phi_t - \hat \phi_{t, M}\|_{\BB^{L^1}}$. Thus, we can apply 
Propositions ~\ref{p.trunc} and \ref{p.strongnorm} to
bound the $L^1(I)$ norm of the two first square brackets on the right hand side of \eqref{decc}. 
Regarding the last square bracket we apply Proposition~\ref{l.upper}.
Altogether, we derive
$$
\Big|\int_I A(x)\phi_t(x)dx-\int_I A(x)\phi_{t_0}(x)dx\Big|\le C|t-t_0|^{1/2}
M^{\max(2+\alpha,1+\beta)}\|A\|_{C^{1/2}}\,.
$$
Since, by \eqref{bd2}, $M$ is bounded from above by a constant times $|\log|t-t_0||$ and
since $\max(2+2\alpha,1+\beta)\le\Gamma$, this concludes 
the proof of Theorem~\ref{t.main1}.
\end{proof}

It remains to prove Proposition~\ref{l.upper}.

\begin{proof}[Proof of Proposition~\ref{l.upper}]
\label{elemm}
Let $1\le k\le M$, and focus on the branch $f^{-k}_+$ (the other one is handled in a similar way).
Assume that $c_k$ and $c_{k,t}$ are local maxima for $f^k$ and $f^k_t$, respectively
(the other possibility is treated similarly and left
to the reader).

For $A\in C^{1/2}$, we need to consider
\begin{align}
\label{eq.step2}
\Big|\int_{0}^{c_{k,t}}&\lambda^k A(x)\frac{ \phi_{M,k}(f^{-k}_{t,+}(x))}{|(f^k_t)'(f^{-k}_{t,+}(x))|}\,dx
-\int_{0}^{c_k}\lambda^k A(x)\frac{ \phi_{M,k}(f^{-k}_+(x))}{|(f^k)'(f^{-k}_+(x))|}\,dx\Big|\\
\nonumber
&=\lambda^k\Big|\int_c^1(A(f_t^k(x))-A(f^k(x)))\phi_{M,k}(x)\,dx\Big|\\
\nonumber
&\le\lambda^k\|A\|_{C^{1/2}}\int_c^1|f_t^k(x)-f^k(x)|^{1/2}\phi_{M,k}(x)\,dx
\nonumber
\end{align}
By Proposition~\ref{truncspec} it follows that 
$\sup|\phi_{M,k}|\le\lambda^{-k}\kappa_M^k\sup|\phi_{M,0}|\le C\lambda^{-k}$. 
By the second inequality in \eqref{decaybound}, we have
$|\supp(\phi_{M,k})|\le C|(f^{k-1})'(c_1)|^{-1/2}k^{-\beta/2}$, and for $|t|$ sufficiently small
 we have, by \eqref{eq.tdistortion1},
\begin{align*}
\sup_{x\in\supp(\phi_{M,k})}|f_t^k(x)-f^k(x)|\le C|(f^{k-1})'(c_1)||t|\, .
\end{align*}
It follows that 
$$
\Big|\int_0^1A(x)(\Pi_t-\Pi)(\hat \phi_M)(x)dx\Big|
\le C^3\|A\|_{C^{1/2}}|t|^{1/2}\sum_{k=0}^Mk^{-\beta/2}\,.
$$
Since $\beta/2>1$ this concludes the proof of Proposition~\ref{l.upper}.
\end{proof}


\subsection{The effect of parameter change on $\hat \phi_{t,M}$: Proof of Proposition~\ref{p.strongnorm}}
\label{s.mtm}

The next lemma is the key to  Proposition~\ref{p.strongnorm}:

\begin{lemma}[Strong norm estimates for the maximal eigenvector]
\label{l.strongnorm}
In the setting of
Proposition~\ref{p.strongnorm}, there exists a constant $C$ such that the following holds. If $f_t$, $f$, and $M$ are as in 
\eqref{eq.strongnormp1} then
\begin{equation}
\label{eq.strongnorm1}
\|(\widehat\LL_{t,M}-\widehat\LL_{M})\hat\phi_{M}\|_{\BB} \le CM^{\max(2+2\alpha,1+\beta)}|t|^{1/2}\,.
\end{equation}

If $f_t$, $f$, and $M$ are as in 
\eqref{eq.strongnormp2} then
\begin{equation}
\label{eq.strongnorm2}
\|(\widehat\LL_{t,2M}-\widehat\LL_{2M})\hat\phi_{2M}\|_{\BB} \le C|t|^{1/2}\,.
\end{equation}
\end{lemma}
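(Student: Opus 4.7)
The key observation is that by Lemma~\ref{bottomok} (resp.~\eqref{bottomokMT} in the Misiurewicz--Thurston case), the cutoff functions and supports of the towers for $f_t$ and $f$ agree up to level $M-1$ (resp.~$2M-1$). Since formula~\eqref{a} shows that the level-$k$ component of $\widehat\LL_t\hat\psi$ for $k\ge 1$ involves only $\xi_{k-1,t}$ and $\psi_{k-1}$, we deduce $(\widehat\LL_{t,M}\hat\phi_M)_k=(\widehat\LL_M\hat\phi_M)_k$ for every $k=1,\ldots,M$ (resp.~$2M$). Thus the entire difference is concentrated at the ground floor, and it suffices to bound the $W_1^1$-norm of
$$\bigl[(\widehat\LL_{t,M}-\widehat\LL_M)\hat\phi_M\bigr]_0(x)=\sum_{j=0}^{M}\sum_{\varsigma\in\{+,-\}}\lambda^j\bigl(\Psi^t_{j,\varsigma}(x)-\Psi_{j,\varsigma}(x)\bigr),$$
where $\Psi^t_{j,\varsigma}(x):=(1-\xi_{j,t})(f^{-(j+1)}_{t,\varsigma}(x))\,\phi_{M,j}(f^{-(j+1)}_{t,\varsigma}(x))/|(f^{j+1}_t)'(f^{-(j+1)}_{t,\varsigma}(x))|$. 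For $0\le j\le M-1$ the cutoff $\xi_{j,t}=\xi_j$ is common, while for $j=M$ the extra difference $\xi_M-\xi_{M,t}$ will be controlled separately by a mean-value argument in $t$.

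Using the eigenvector relation $\widehat\LL_M\hat\phi_M=\kappa_M\hat\phi_M$ iterated from level $0$, we get $\phi_{M,j}(y)=\kappa_M^{-j}\lambda^{-j}\prod_{i<j}\xi_i(y)\,\phi_{M,0}(y)$. This cancels the $\lambda^j$-weight and, together with the uniform bounds $\kappa_M^{-j}\le C$ (see~\eqref{eq.kappa}) and $\sup_M\|\phi_{M,0}\|_{W_1^2}\le C_1$ from Proposition~\ref{truncspec}, reduces each $j$-summand to a normalised shifted-spike comparison. By Lemma~\ref{rootsing}, the spike-centre displacement satisfies $\delta_j:=|c_{j+1,t}-c_{j+1}|\le C|t||(f^j)'(c_1)|$ (via~\eqref{eq.transu}), and the Jacobians $|(f^{j+1}_t)'|$ and $|(f^{j+1})'|$ are uniformly comparable on the common support of $(1-\xi_j)$ (via~\eqref{eq.tdistortion1}). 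Moreover, the first inequality in~\eqref{expii} shows that $\Psi_{j,\varsigma}$ behaves like an $A_j/\sqrt{|x-c_{j+1}|}$-spike of amplitude $A_j\sim |(f^j)'(c_1)|^{-1/2}$, cut off at radius $r_j\sim j^{-\beta}/L^3$.

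The standard $L^1$-comparison of translated square-root spikes then yields $\|\Psi^t_{j,\varsigma}-\Psi_{j,\varsigma}\|_{L^1}\le CA_j\sqrt{\delta_j}\sim|t|^{1/2}$. For the derivative term $\|\partial_x(\Psi^t_{j,\varsigma}-\Psi_{j,\varsigma})\|_{L^1}$ I invoke the higher-derivative estimates~\eqref{rootsing2}--\eqref{rootsing3} on the Jacobian factor together with the cutoff bounds~\eqref{kappa'}, extracting polynomial-in-$j$ factors controlled by $j^{\max(2+2\alpha,1+\beta)}$. Summing over $j=0,\ldots,M$ and using admissibility~\eqref{eq.admupper}, i.e.~$|(f^M)'(c_1)|^{-1}\le CC_a M^{\alpha+\beta}|t|$, to absorb the remaining exponential factor $|(f^M)'(c_1)|^{1/2}$ into $|t|^{-1/2}M^{(\alpha+\beta)/2}$, delivers~\eqref{eq.strongnorm1}. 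The Misiurewicz--Thurston statement~\eqref{eq.strongnorm2} follows by the same strategy: the construction with $\alpha=\beta=0$ and the stronger tower identification~\eqref{bottomokMT} up to level $2M-1$ remove the polynomial factors and yield the cleaner bound $C|t|^{1/2}$.

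\textbf{Main obstacle.} The technical heart is the $W_1^1$-control of the spike-shift difference at each $j$. The derivative $\partial_x\Psi_{j,\varsigma}$ inherits a $|x-c_{j+1}|^{-3/2}$-singularity, and only the vanishing of $(1-\xi_j)$ on a neighbourhood of $c_{j+1}$ of radius $\sim j^{-\beta}$ keeps the integral finite. Propagating this cutoff carefully through the perturbation, while ensuring that the exponential growth $|(f^j)'(c_1)|$ of the spike displacement is balanced by the amplitude decay from the eigenvector expansion and by the admissibility condition, is what dictates the precise polynomial exponent $\max(2+2\alpha,1+\beta)$; any naive bound would leave an unacceptable $\lambda^M$-factor.
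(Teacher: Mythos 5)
Your skeleton is right: the tower identification (Lemma~\ref{bottomok}, resp.~\eqref{bottomokMT}) does kill the difference at all levels $k\ge 1$, the eigenvector relation correctly yields $\phi_{M,j}=\kappa_M^{-j}\lambda^{-j}\prod_{i<j}\xi_i\cdot\phi_{M,0}$, and the spike displacement $\delta_j\le C|t||(f^j)'(c_1)|$ via~\eqref{eq.transu} is the right quantity to track. (Incidentally, your worry about $\xi_M-\xi_{M,t}$ is unnecessary: with $C_a$ chosen large in the admissibility condition~\eqref{bd2}, the hypothesis~\eqref{tcond} of Lemma~\ref{bottomok} holds through index $M+1$, so $\xi_{M,t}=\xi_M$ too; no separate mean-value argument is needed.)

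The gap is in the $W^1_1$ estimate, which is the entire point of calling this a \emph{strong}-norm lemma. You invoke \eqref{rootsing2}--\eqref{rootsing3} "on the Jacobian factor together with the cutoff bounds~\eqref{kappa'}", but those estimates bound $\partial_x^r\bigl(1/|(f^k)'\circ f^{-k}_\pm|\bigr)$ for a \emph{single} map; applying them separately to $\Psi^t_{j,\varsigma}$ and $\Psi_{j,\varsigma}$ and taking absolute values gives roughly $\|\partial_x\Psi_{j,\varsigma}\|_{L^1}\sim j^{\beta/2}$ with no $|t|$-smallness at all, since the cancellation between the two pull-backs is lost. To make the cancellation quantitative one must differentiate in the parameter: write the difference as $\int_0^t\partial_s(\cdots)\,ds$ (as in~\eqref{eq.fubini}), expand $\partial_x\partial_s\bigl[\varphi_k(f^{-k}_{s,+})/(f^k_s)'(f^{-k}_{s,+})\bigr]$ explicitly (equation~\eqref{eq.txderivative}), and then estimate $\partial_y^r\bigl(\partial_sf_s^k/(f_s^k)'\bigr)$ for $r=0,1,2$ on the support (equations~\eqref{eq.tx}, \eqref{eq.txx}, and the double sum \eqref{eq.term1}--\eqref{eq.term2}), using the transversality/recurrence inputs \eqref{eq.transu}, \eqref{rootsing1}, \eqref{eq.mks}, and~\eqref{eq.postcrit1}. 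None of this machinery appears in your proposal, and without it neither the $|t|^{1/2}$ nor the precise exponent $\max(2+2\alpha,1+\beta)$ (which you quote but do not derive) is obtained. In the Misiurewicz--Thurston case you also suppress the separate, non-parametric treatment of the ``high'' levels $M+2\le k\le 2M+1$, where the two terms must be bounded individually using the admissibility consequence~\eqref{eq.admupper} rather than compared by a parameter derivative.
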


Note that we cannot apply directly the results of Galatolo and Nisoli \cite{gal}
to deduce Proposition~\ref{p.strongnorm} from the above lemma,
because the eigenvectors are not fixed points. 

\begin{proof}[Proof of Lemma~\ref{l.strongnorm}]
This argument is similar to \cite[App C]{BS}, with the very important difference that
we must now deal with a much larger dominant  term which arises from the
transversality assumption, recall ~\eqref{eq.transl}.  (See also Lemma~\ref{rootsing}
and the comment above it.) 
We provide a detailed proof:

The main part of the proofs of \eqref{eq.strongnorm1} and \eqref{eq.strongnorm2} can be done simultaneously. 
When there is a difference, we shall refer as usual to the setting in \eqref{eq.strongnorm1} as the {\em polynomial case} and 
to the setting in \eqref{eq.strongnorm2} as the {\em Misiurewicz--Thurston case}. 
In the estimates below, the constants $\alpha$ and $\beta$ will appear, and
in the Misiurewicz--Thurston case,
except if otherwise mentioned,  these estimates are to be read by setting $\alpha=\beta=0$. 
Henceforth, let $\bar M:=M$ in the polynomial case and  $\bar M:=2M$ in the 
Misiurewicz--Thurston case. Observe first that 
$$
[(\widehat \LL_{t,\bar M}-\widehat \LL_{\bar M})\hat\phi_{\bar M}]_j\equiv0\,,\quad\text{for all }j\ge1\,.
$$
For $j>\bar M$ this follows immediately by the truncation. 
For $1 \le j\le\bar M$, this follows from the fact that we constructed
the tower for $f_t$ to coincide with the tower for $f$ up to level $\bar M$, 
i.e., $\xi_{j,t}\equiv\xi_j$, for $1\le j\le\bar M$ (see Lemma~\ref{bottomok}) and
recall that in the Misiurewicz--Thurston case we used very special properties of $f_t$ and $f$ in 
order to  obtain identical towers up to the higher level $2M$ (see also \eqref{bottomokMT}).
Henceforth we consider only  level $0$.

For $1\le k\le\bar M+1$, set $\varphi_k=(1-\xi_{k-1})\phi_{\bar M,k-1}$.
In order to prove Lemma~\ref{l.strongnorm}, we have to show that 
the term
\begin{equation}
\label{eq.level0}
\Big\|\partial_x\sum_{\begin{subarray}{c}k=1\\ \varsigma\in\{+,-\}\end{subarray}}^{\bar M+1}\lambda^{k-1}\Big[
\frac{\varphi_k(f^{-k}_{t,\varsigma}(x))}{|(f^k_t)'(f^{-k}_{t,\varsigma}(x))|}
-\frac{\varphi_k(f^{-k}_{\varsigma}(x))}{|(f^k)'(f^{-k}_{\varsigma}(x))|}
\Big]\Big\|_{L^1}\,.
\end{equation}
is bounded above (up to a constant) by $M^{\max(2+2\alpha,1+\beta)}|t|^{1/2}$ in the 
polynomial case and by $|t|^{1/2}$ in the Misiurewicz--Thurston case.
We consider first the indices $k \le M+1$, i.e., 
the terms which correspond to a fall from a level below $M$. 
For the polynomial case, this includes 
 all terms we have to study. Regarding the Misiurewicz--Thurston case, 
the terms corresponding to a fall from levels between $M+1$ and 
$2M$ are easier to deal with, and they are treated at 
the end of this proof. 

Since, for $k\le M+1$ the signs of $(f^k_t)'$ and $(f^k)'$ are identical in the domains we are 
interested in, we can skip writing the absolute values in the following estimates.

Henceforth, let $k\le M$, and consider only 
$\varphi_k$ such that $\varphi_k\not\equiv0$.
In order to estimate \eqref{eq.level0}, recall that $\phi_{\bar M,0}\in W_1^2$ and note 
that, by Fubini and the fundamental theorem of calculus, we have 
\begin{equation}
\label{eq.fubini}
\int_I \partial_x\Big[\frac{\varphi_k(f^{-k}_{t,\varsigma}(x))}{(f^k_t)'(f^{-k}_{t,\varsigma}(x))}
-\frac{\varphi_k(f^{-k}_{\varsigma}(x))}{(f^k)'(f^{-k}_{\varsigma}(x))}\Big] dx
=\int_0^t\int_I \partial_x\partial_s\frac{\varphi_k(f^{-k}_{s,\varsigma}(x))}
{(f^k_s)'(f^{-k}_{s,\varsigma}(x))}dxds\,.
\end{equation}

Observe that if $(x,t)\mapsto \Phi_t(x)\in I$ is a $C^1$ map on $I\times \EE$
so that $x\mapsto \Phi_t(x)$ is invertible, then we have
\begin{equation*}
\partial_t \Phi^{-1}_t(x)|_{t=s}
= -\frac{(\partial_t \Phi_t|_{t=s}) \circ \Phi_{s}^{-1}(x)}
{(\partial_x \Phi_{s}) \circ \Phi_{s}^{-1}(x)}\, .
\end{equation*}
We consider only the branch $f_+^{-k}$. The branch $f^{-k}_-$ is handled similarly. 
For $s \in [0,t]$, we derive
\begin{multline}\label{eq.stars}
\partial_s\frac{\varphi_k(f^{-k}_{s,+}(x))}{(f^k_s)'(f^{-k}_{s,+}(x))}\\
=
\frac{(\partial_sf_s^k)(f^{-k}_{s,+}(x))}{(f^k_s)'(f^{-k}_{s,+}(x))}\partial_x(\varphi_k(f^{-k}_{s,+}(x)))
+\partial_x\frac{(\partial_sf_s^k)(f_{s,+}^{-k}(x))}{(f^k_s)'(f^{-k}_{s,+}(x))}\varphi_k(f^{-k}_{s,+}(x))
\, .
\end{multline}
Setting $y=f^{-k}_{s,+}(x)$, and taking the $x$-derivative we get
\begin{align}
\label{eq.txderivative}
\partial_x\partial_s&\frac{\varphi_k(f^{-k}_{s,+}(x))}{(f^k_s)'(f^{-k}_{s,+}(x))}\\
\nonumber &=
\frac{\partial_sf_s^k(y)}{(f_s^k)'(y)}\partial_x^2(\varphi_k(f^{-k}_{s,+}(x)))
+\frac2{(f_s^k)'(y)}\partial_y\frac{\partial_sf_s^k(y)}{(f_s^k)'(y)}\partial_x(\varphi_k(f^{-k}_{s,+}(x)))\\
\nonumber &\quad\quad+\frac{\varphi_k(y)}{[(f_s^k)'(y)]^2}\partial_y^2\frac{\partial_sf_s^k(y)}{(f_s^k)'(y)}
\end{align}
By the assumptions on the  cutoff
functions $\xi$, including Remark~\ref{overlap},
and since $\hat\phi_{\bar M}$ is the eigenfunction of $\widehat\LL_{\bar M}$ for
the eigenvalue $\kappa_{\bar M}$, we have 
\begin{equation}
\label{eq.eigencom1}
\varphi_k(y)=\lambda^{-k}\kappa_{\bar M}^k(1-\xi_{k-1}(y))\xi_{k'-1}(y)\phi_{\bar M,0}(y)\, ,
\end{equation}
where $1\le k'<k$ is maximal such that $\xi_{k'-1}\not\equiv1$. 
Recall that, by Proposition~\ref{truncspec}, there is 
$C\ge1$ such that for all $\bar M$, we have 
$\max\{\|\phi_{\bar M,0}\|_{L^\infty},\|\phi_{\bar M,0}'\|_{L^\infty}\}
\le\|\phi_{\bar M,0}\|_{W_1^2}\le C$. 
Recall also the property \eqref{kappa'} of the cutoff functions.
(Observe that in order to derive \eqref{kappa'}, we used the estimates 
\eqref{expii}, \eqref{rootsing2}, and \eqref{rootsing3} in Lemma~\ref{sizeIj} 
which we a priori cannot apply for the map $f_s$ since $s$ might not be good. However, 
using the estimates provided by Lemma~\ref{rootsing}, we deduce that these estimates 
still hold for $f_s$, and, thus, the property \eqref{kappa'} holds also 
for $\xi_{k-1}\circ f_{s,\pm}^{-k}$.)
We obtain 
\begin{align}
\label{eq.eigencom2}
\|\partial_x^r(\varphi_k(f^{-k}_{s,+}(x)))\|_{L^1}&\le 
C^2 \lambda^{-k}k^{(r-1)\beta}\,,\quad\text{for } r=0,1\,,\ \text{and}\\
\nonumber
\|\partial_x^2(\varphi_k(f^{-k}_{s,+}(x)))\|_{L^1}&\le 
C^2 \lambda^{-k}k^{\max(1+2\alpha,\beta)}\,,
\end{align}
where the appearance of the factor $k^{-\beta}$ is explained as follows:
The size of the support of $\varphi_k(f^{-k}_{s,+}(x))$ is bounded above by 
a constant times $k^{-\beta}$. 
Hence, all terms in the derivative in \eqref{eq.eigencom2} not containing 
$\phi_{\bar M,0}''$ can be estimated by taking the supremum times the size of the 
support. It is easy to see that the term containing $\phi_{\bar M,0}''$ 
 is bounded by a constant times $\lambda^{-k}$ 
(there is no $k^{\beta}$ factor here). 
In the Misiurewicz--Thurston case, there is only a constant times $\lambda^{-k}$ on 
the right hand sides in \eqref{eq.eigencom2}.

To estimate the $L^1$ norm of \eqref{eq.txderivative}, we must next
consider the factor $\partial_sf_s^k(y)/(f_s^k)'(y)$ and its $y$-derivatives. 
By  \eqref{eq.transu} and \eqref{rootsing1} in Lemma~\ref{rootsing}, we get 
\begin{equation}\label{eq.tx}
\left|\frac{\partial_sf_s^k(y)}{(f^k_s)'(y)}\right|\le C^2 k^{\beta/2}|(f^{k-1})'(c_1)|^{1/2}\,,
\quad\forall\ y\in\supp(\varphi_k)\,.
\end{equation}
Observe that \eqref{eq.tx} and \eqref{eq.eigencom2} give 
a polynomial factor $k^{\max(1+2\alpha+\beta/2,3\beta/2)}$ in the upper bound of the 
$L^1$ norm of the first term on the 
right hand side of \eqref{eq.txderivative} (this explains the corresponding factor in 
the bound of \eqref{eq.gamma4} below).
Regarding the  $y$-derivative, observe first that, by \eqref{eq.partialt}, we have
$$
\frac{\partial_sf_s^k(y)}{(f_s^k)'(y)}=\sum_{j=1}^k\frac{(\partial_sf_s)(f_s^{j-1}(y))}{(f_s^j)'(y)}
=\sum_{j=1}^k\frac{X_s(f_s^j(y))}{(f_s^j)'(y)}\,.
$$
Therefore, we obtain
\begin{equation}
\label{eq.secondline}
\partial_y \frac{\partial_sf_s^k(y)}{(f_s^k)'(y)} =
\sum_{j=1}^{k} X_s'(f_s^j(y))
-\sum_{j=1}^{k}X_s(f_s^j(y))
\sum_{\ell=0}^{j-1}\frac{f_s''(f_s^\ell(y))}{(f_s^{j-\ell})'(f_s^\ell(y))f_s'(f_s^\ell(y))}\, .
\end{equation}
We shall use again the estimates in Lemma~ \ref{rootsing}. For $1\le m\le k$ and $y\in\supp(\varphi_k)$, 
we get, by \eqref{eq.tdistortion1} and \eqref{rootsing1},
\begin{equation}
\label{eq.mks}
\frac1{|(f_s^m)'(y)|}=\left|\frac{(f_s^{k-1})'(f_s(y))}{(f_s^k)'(y)(f_s^{m-1})'(f_s(y))}\right|
\le C^3\frac{k^{\beta/2}|(f^{k-1})'(c_1)|^{1/2}}{|(f^{m-1})'(c_1)|}\,,
\end{equation}
and, by \eqref{infprodgena}, \eqref{eq.tdistortion1},  and \eqref{eq.postcrit1}, 
$$
\frac1{|(f_s^{k-m})'(f_s^m(y))|}=\left|\frac{(f_s^{m-1})'(f_s(y))}{(f_s^{k-1})'(f_s(y))}\right|\le C^5 m^\alpha\,.
$$
The dangerous factors in estimating \eqref{eq.secondline} (even more dangerous when estimating 
the terms \eqref{eq.term1} and \eqref{eq.term2} below) 
are powers of $f_s'(y)$ in the denominator
and factors $(f_s^m)'(y)$ for large $m$ in the numerator. 
The dominant terms on the right hand side of \eqref{eq.secondline} appear when $\ell=0$ 
(summing over $j$ when $\ell=0$ gives a geometrical series).
We derive that there is a constant $\tilde C$ so that 
\begin{equation}
\label{eq.txx}
\left|\partial_y \frac{\partial_sf_s^k(y)}{(f_s^k)'(y)}\right|
\le\tilde Ck^{\beta}|(f^{k-1})'(c_1)|\,.
\end{equation}

Regarding the
$y$-derivative of order two (appearing in the last term in \eqref{eq.txderivative}), observe that 
\begin{align}
\nonumber
&\partial_y^2\frac{\partial_sf_s^k(y)}{(f_s^k)'(y)}=\\
\label{eq.term1}
&\sum_{j=1}^{k} X_s''(f_s^j(y))(f_s^j)'(y)
-\sum_{j=1}^{k} X_s'(f_s^j(y))
\sum_{\ell=0}^{j-1}\frac{f_s''(f_s^\ell(y)) (f_s^\ell)'(y)}{f_s'(f_s^\ell(y))}\\
\label{eq.term2}
&-\sum_{j=1}^{k} X_s(f_s^j(y))
\sum_{\ell=0}^{j-1}\Bigg[
\frac{f_s'''(f_s^\ell(y)) (f_s^\ell)'(y)}
{(f_s^{j-\ell})'(f_s^\ell(y)) f_s'(f_s^\ell(y))}\\
\nonumber
&\quad-\frac{f_s''(f_s^\ell(y))^2 (f_s^\ell)'(y)}
{(f_s^{j-\ell})'(f_s^\ell(y))[f_s'(f_s^\ell(y))]^2}
-\frac{f_s''(f_s^\ell(y))}{f_s'(f_s^\ell(y))}
\sum_{i=\ell}^{j-1}
\frac{f_s''(f_s^i(y)) }{(f_s^{j-i})'(f_s^{i}(y)) f_s'(f_s^{i}(y))}
\Bigg]
\, .
\end{align}
The dominant terms in \eqref{eq.term1} 
appear when $\ell=0$. Summing over $j$ gives the upper bound
$k^{1+\beta/2}|(f^{k-1})'(c_1)|^{1/2}$ (up to a constant). The last expression \eqref{eq.term2} 
contains the largest terms.
The dominant terms appear in the last line when $i=\ell=0$. 
Summing over $j$, which gives a geometrical series, we derive that \eqref{eq.term2} is bounded from 
above by a constant times $k^{3\beta/2}|(f^{k-1})'(c_1)|^{3/2}$. It follows that there is a constant $\tilde C$ so that
$$
\left|\partial_y^2\frac{\partial_sf_s^k(y)}{(f_s^k)'(y)}\right|
\le\tilde Ck^{3\beta/2}|(f^{k-1})'(c_1)|^{3/2}\,.
$$
We have bounded all terms regarding the $L^1$ norm of
\eqref{eq.txderivative} (observe that $|(f_s^k)'(y)|^{-1}\le C\frac{k^{\beta/2}}{|(f^{k-1})'(c_1)|^{1/2}}$, 
by setting $m=k$ in \eqref{eq.mks}).
Recalling \eqref{eq.fubini}, we conclude that 
\begin{equation}
\label{eq.gamma4}
\left\| \lambda^{k-1}\partial_x\left(\frac{\varphi_k(f^{-k}_{t,+}(x))}{(f^k_t)'(f^{-k}_{t,+}(x))}
-\frac{\varphi_k(f^{-k}_{+}(x))}{(f^k)'(f^{-k}_{+}(x))}\right)\right\|_{L^1}
\end{equation}
is bounded above by a constant times 
$k^{\max(1+2\alpha+\beta/2,3\beta/2)}|(f^{k-1})'(c_1)|^{1/2}|t|$, where in the
Misiurewicz--Thurston case the polynomial factor vanishes.
Thus, in the polynomial case, by \eqref{eq.postcrit1}, we get
$$
\|\partial_x[(\widehat\LL_{t,M}-\widehat\LL_M)\hat\phi_M]_0(x)\|_{L^1}
\le2\tilde CM^{\max(2+(5\alpha+\beta)/2,1+(\alpha+3\beta)/2)}|(f^M)'(c_1)|^{1/2}|t|\,.
$$
Applying the admissible pair condition~\eqref{bd2}, it follows that 
$\|(\widehat\LL_{t,M}-\widehat\LL_M)\hat\phi_M\|_{\BB}$ is 
bounded from above by a constant times $|t|^{1/2}M^{\max(2+2\alpha,1+\beta)}$. This proves 
inequality~\eqref{eq.strongnorm1}, i.e., the polynomial case of Lemma~\ref{l.strongnorm}.

In the Misiurewicz--Thurston case, instead of applying \eqref{eq.postcrit1}, 
we can use \eqref{eq.lpt}, and we derive
\begin{align*}
\|\partial_x[(\widehat\LL_{t,M}-\widehat\LL_M)\hat\phi_{2M}]_0(x)\|_{L^1}
&\le C|(f^M)'(c_1)|^{1/2}|t|\sum_{k=0}^{M}|(f^{M-k})'(c_{k+1})|^{-1/2}\\
&\le C^2|(f^M)'(c_1)|^{1/2}|t|\sum_{k=0}^{M}\Lambda^{-(M-k)/2}\le C^4|t|^{1/2}\,,
\end{align*}
where in the last inequality we used the admissible pair condition \eqref{bd2}
for $\alpha=\beta=0$.

It only remains to consider 
the terms involving $\varphi_k$'s for
$M+2\le k\le2M+1$ in the Misiurewicz--Thurston case, i.e., terms in the  level $0$
which correspond to a fall from a level between $M+1$ and $2M$. 
The bounds here are similar but easier than those above.  
We do not look at differences as above, but estimate each term individually.
More precisely,  
\begin{multline*}
\|\partial_x[(\widehat \LL_{t,2M}-\widehat \LL_{2M})(\id-\TT_M)(\hat\phi_{2M})]_0(x)\|_{L^1}\\
\le\|\partial_x[\widehat \LL_{t,2M}(\id-\TT_M)(\hat\phi_{2M})]_0(x)\|_{L^1}
+\|\partial_x[\widehat \LL_{2M}(\id-\TT_M)(\hat\phi_{2M})]_0(x)\|_{L^1}\,.
\end{multline*}
Consider the second term on the right hand side. (The first term is estimated similarly.)
Observe that 
$$
\|\partial_x[\widehat \LL_{2M}(\id-\TT_M)(\hat\phi_{2M})]_0(x)\|_{L^1}
\le \sum_{\begin{subarray}{c} k=M+2\\ \varsigma\in\{+,-\}\end{subarray}}^{2M+1} 
\lambda^{k-1}\|\partial_x\frac{\varphi_k(f_\varsigma^{-k}(x))}{|(f^k)'(f_\varsigma^{-k}(x))|}\|_{L^\infty}\,.
$$
By \eqref{eq.eigencom1} and \eqref{eq.eigencom2}, and observing that, in the 
Misiurewicz--Thurston case, we have
$$
|\partial_x((f^k)'(f_\varsigma^{-k}(x)))^{-1})|\le C|(f^k)'(f_\varsigma^{-k}(x)))|^{-1}\le C^2|(f^{k-1}(c_1)|^{-1/2}\,,
$$ 
(see  for example the computation for the last term in \eqref{eq.secondline} when $j=k$ and $\ell=0$), 
we easily deduce that this sum is bounded from above by a constant times $|(f^M)'(c_1)|^{-1/2}$. 
This, in turn, is bounded by a constant times $|t|^{1/2}$,
by the consequence \eqref{eq.admupper} of the admissible pair condition. This concludes the 
proof of \eqref{eq.strongnorm2}, and hence the proof of Lemma~\ref{l.strongnorm}.
\end{proof}

We will deduce  Proposition~\ref{p.strongnorm} from Lemma ~\ref{l.strongnorm}. The proof is
divided in two parts: We first show the uniform bounds on $P_t$ and  $C_t$, which will be used in the
proof of Proposition~ \ref{p.trunc} in  Section \ref{s.mtt}. Then, 
exploiting Proposition~ \ref{p.trunc} (as we may), we  end the proof of Proposition~\ref{p.strongnorm}
by applying Lemma ~\ref{l.strongnorm}.

\begin{proof}[Proof of Proposition~\ref{p.strongnorm}: First part]
We are going to use again the arguments in \cite{kellerliverani}.  
Recall that there exist $\epsilon >0$, $C\ge 1$ so that for all good $|t|\le \epsilon$ 
(with the same goodness parameters) and all
$M$ we have \eqref{LYM}, \eqref{addref0}, and \eqref{addref1},
in particular the essential spectral radius of $\widehat\LL_{t,M}$ acting on $\BB$ is not larger than 
$\Theta_0^{-1}$ for some $\Theta_0>1$.

We are going to prove \eqref{eq.strongnormp1} and \eqref{eq.strongnormp2} in 
Proposition~\ref{p.strongnorm} simultaneously. 
Fix $(M,t)$ as in the statement of the proposition and let $\bar M:=M$ in the setting of \eqref{eq.strongnormp1} (polynomial recurrence), 
and  $\bar M:=2M$ in 
the setting of \eqref{eq.strongnormp2} (Misiurewicz--Thurston). 
Set
\begin{equation}\label{Qnot}
\widehat \QQ_{t,\bar M}=\widehat \QQ_{t,\bar M}(z)= z-\widehat \LL_{t,\bar M}\, .
\end{equation}
(If $t=0$ we remove $t$ from the notation as usual, writing $\QQ_{\bar M}$ instead of 
$\QQ_{0,\bar M}$.)
We claim that there exist a small circle $\gamma$ centered at 
$1$ and a constant $C\ge1$ 
such that for all  sufficiently large $\bar M$  (or equivalently $|t|\le\epsilon$ sufficiently small)
we have the strong norm control
\begin{equation}
\label{eq.resolventb}
\sup_{z\in\gamma}\|\widehat\QQ_{t,\bar M}(z)^{-1}\|_{\BB}\le C\,,
\end{equation}
and the intersection of the disc $D_\gamma$ bordered by $\gamma$
with the spectrum of $\widehat\LL_{t,\bar M}$  is reduced to $\kappa_{t,\bar M}$,
which is a simple eigenvalue.
To see this, we apply first 
\cite[Theorem~1, Corollary~1]{kellerliverani} (just like  in Proposition~\ref{truncspec}) to the operators
$\widehat \LL$ and $\widehat \LL_M$.
Let $V_{r,\Theta_0}=\{z\in\complex \mid |z|\le\Theta_0^{-1}\text{ or }
\dist(z,\sigma(\widehat\LL))<r\}$. 
For any $r>0$, we find an integer $M_0\ge1$ and a constant $\HH\ge1$ 
such that 
\begin{equation}\label{defHH}
\|\widehat\QQ_M (z)^{-1}\|_{\BB}\le \HH
\, ,\qquad
\forall M\ge M_0\, ,
\forall z\in\complex\setminus V_{r,\Theta_0}\, .
\end{equation}
In particular there exists  a small circle $\gamma$ centered at 
$1$ 
and $M_1\ge 1$ so that
the intersection of  the disc $D_\gamma$ bordered by $\gamma$ with the spectrum of $\widehat\LL_{M}$  is reduced to
the simple eigenvalue $\kappa_{M}$ for all $M \ge M_1$. Then,  if $r$ is small enough,
we have $\gamma\subset\complex\setminus V_{r,\Theta_0}$.

To get \eqref{eq.resolventb}, we will apply \cite[Theorem~1, Corollary~1]{kellerliverani} 
to the operators $\widehat \LL_{t,\bar M}$ and $\widehat\LL_{\bar M}$. 
Since we have a ``moving target'' (just like in \cite{BS}), we must be  careful. 
We shall use  that there are constants $C\ge1$ and $0<\eta<1/2$ such that 
\begin{equation}
\label{eq.movingtarget}
\|(\widehat \LL_{t,\bar M}-\widehat\LL_{\bar M})\hat\psi\|_{\BB^{L^1}}
\le C|t|^\eta\|\hat\psi\|_{\BB}\,\quad\forall\ \hat\psi\in\BB\,.
\end{equation}
(We show \eqref{eq.movingtarget}  at the end of the first part of the proof of this proposition.)
The estimate~\eqref{eq.movingtarget} replaces condition~(5) in \cite{kellerliverani}. 
Recalling \eqref{defHH}, if we assume that $z\in\complex\setminus V_{r,\Theta_0}$
when applying the proof of \cite[Theorem~1]{kellerliverani}, 
the constant $H$ in \cite[Equality (13)]{kellerliverani} is bounded from above by $\HH$. 
Since all other constants are uniform in $t$ and $M$, this implies \eqref{eq.resolventb}.

The uniformity claims on the 
\footnote{The renormalisation period can drop a priori, because
eigenvalues $\ne 1$ on the unit circle could move inside
the open unit disc by perturbation.} renormalisation period $P_t$ of $f_t$ and on $\theta_t$ follow from \eqref{eq.resolventb}, 
using appropriate curves $\gamma_j$. Uniformity of $C_t$ then
follows from the proof of Proposition~\ref{truncspec}.

It remains to prove \eqref{eq.movingtarget}.
We can use the estimates in the proof of Lemma~\ref{l.strongnorm}.
For $1\le k\le\bar M +1$, set $\varphi_k=(1-\xi_{k-1})\psi_{k-1}$. 
We have to estimate the term \eqref{eq.level0}, but without taking 
the $x$-derivative,
since on the left hand side of \eqref{eq.movingtarget}, 
we are only considering the weak norm $\|.\|_{\BB^{L^1}}$. 
For $k\le M+1$, recall \eqref{eq.fubini} (without the $x$-derivative). 
Hence, it is enough to estimate the $L^1$ norm of \eqref{eq.stars}.
Similarly as in \eqref{eq.eigencom2}, and using \eqref{sobeb} below, note that 
$$
\|\partial_x^r(\varphi_k(f^{-k}_{s,+}(x)))\|_{L^1}\le 
C k^{(r-1)\beta}\|\psi_{k-1}\|_{W_1^1}\,,\quad\text{for } r=0,1\,,
$$
where $C\lambda^{-k}$ in \eqref{eq.eigencom2} is replaced by 
$\|\psi_{k-1}\|_{W_1^1}$ (since $\hat\psi$ is not necessarily an eigenvector).
By \eqref{eq.tx} and \eqref{eq.txx}, and by the estimate \eqref{eq.mks} when $m=k$, 
we derive that 
$\|\text{\eqref{eq.stars}}\|_{L^1}$ is bounded from above by a constant times 
$k^{\beta/2}|(f^{k-1})'(c_1)|^{1/2}$. 
In the polynomial case, combined with the consequence \eqref{eq.admupper} 
of the admissible pair condition, 
this gives the bound $M\lambda^M|t|^{1/2}\|\hat\psi\|_{\BB}$ 
(up to some constant) 
of the term \eqref{eq.level0} (without the $x$-derivative). 
In the Misiurewicz--Thurston case, for $M+2\le k\le2M+1$, 
we can apply the same comments as in the last paragraph of the 
proof of Lemma~\ref{l.strongnorm}, and we get the 
upper bound $\lambda^M|t|^{1/2}\|\hat\psi\|_{\BB}$ of the term \eqref{eq.level0} 
(without the $x$-derivative). By \eqref{48} and \eqref{bd2}, we find constants 
$C$ and $0<\vartheta<1/2$ such that $M\lambda^M\le C|t|^{-\vartheta}$
which concludes the proof of \eqref{eq.movingtarget}, and, hence, the proof of the first part of Proposition~\ref{p.strongnorm}.
\end{proof}

\begin{proof}[Proof of Proposition~\ref{p.strongnorm}: Second part]
We can now use the assertions of Proposition~\ref{p.trunc} in order to prove 
\eqref{eq.strongnormp1} and \eqref{eq.strongnormp2}.
Denote by
$\PPP_{t,\bar M}(\hat \psi)=\hat \phi_{t,\bar M}  \nu_{t,\bar M}(\hat \psi)$ 
and $\PPP_t(\hat \psi)=\hat \phi_t \nu(\hat \psi)$
the rank-one spectral projectors corresponding to the (simple) eigenvalues 
$\kappa_{t,\bar M}$ and $1$
of $\widehat \LL_{t,\bar M}$ and $\widehat\LL_t$, respectively (recall 
Propositions~\ref{mainprop} and ~\ref{truncspec}). Since
$
\widehat \QQ_{t,\bar M}^{-1}-\widehat \QQ_{\bar M}^{-1}
= \widehat \QQ_{t,\bar M}^{-1} (\widehat \LL_{t,\bar M} -\widehat \LL_{\bar M})
 \widehat \QQ_{\bar M}^{-1}$ and
 $\widehat \QQ_{\bar M}^{-1} (\hat \phi_{\bar M})= 
\frac{\hat \phi_{\bar M}}{z-\kappa_{\bar M}}$, we have
\begin{align}
\label{idea}
(\PPP_{t, \bar M}-\PPP_{\bar M})(\hat \phi_{\bar M})&=
\hat \phi_{t,\bar M}\nu_{t,\bar M}(\hat \phi_{\bar M})-
\nu_{\bar M}(\hat\phi_{\bar M})\hat \phi_{\bar M} \\
\nonumber &=
- \frac{1}{2 i \pi} \int_{\gamma}  \frac{\widehat \QQ_{t,\bar M}^{-1}(z)} {z-\kappa_{\bar M}} 
(\widehat \LL_{t,\bar M} -\widehat \LL_{\bar M}) \hat \phi_{\bar M}\, dz\,.
\end{align}
Recall that $\kappa_{\bar M}$ tends to $1$ as $\bar M\to\infty$.
Hence, if $|t|$ is sufficiently small, by \eqref{eq.resolventb}, 
we find a constant $\tilde C\ge1$ such that
$$
\|\hat \phi_{t,\bar M}\nu_{t,\bar M}(\hat \phi_{\bar M})-\nu_{\bar M}(\hat\phi_{\bar M}) \hat \phi_{\bar M}\|_{\BB}
\le\tilde C\|(\widehat \LL_{t,\bar M} -\widehat \LL_{\bar M}) \hat \phi_{\bar M}\|_{\BB}\,.
$$
By Lemma~\ref{l.strongnorm}, the right hand side of this inequality 
is bounded (up to a constant) by  
$M^{\max(2+2\alpha,1+\beta)}|t|^{1/2}$ in the polynomial case and by $|t|^{1/2}$ in the 
Misiurewicz--Thurston case. Hence, it is only left to estimate the coefficients 
$\nu_{t,\bar M}(\hat \phi_{\bar M})$ and $\nu_{\bar M}(\hat\phi_{\bar M})$, which can 
be done by Proposition~\ref{p.trunc}: 
Recall the normalisation $\nu_{\bar M}(\hat\phi)=1$ in Proposition~\ref{truncspec}.
Since $|1-\nu_{t,\bar M}(\hat \phi_{\bar M})|\le|1-\nu_{\bar M}(\hat\phi_{\bar M})|
+|\nu_{\bar M}(\hat\phi_{\bar M})-\nu_{t,\bar M}(\hat \phi_{\bar M})|$, 
it is sufficient to estimate
$$
\max(|\nu_{\bar M}(\hat\phi)-\nu_{\bar M}(\hat\phi_{\bar M})|,
|\nu_{\bar M}(\hat\phi_{\bar M})-\nu_{t,\bar M}(\hat \phi_{\bar M})|).
$$
By Proposition~\ref{truncspec} and Proposition~\ref{p.trunc}, the first term is bounded 
(up to a constant) by $M^{2+\alpha}|t|^{1/2}$ in the polynomial case and by 
$|t|^{1/2}$ in the Misiurewicz--Thurston case. 
Regarding the second term (using that $\nu_t=\nu_0$), we have
\begin{align*}
 |\nu_{\bar M}(\hat\phi_{\bar M})-\nu_{t,\bar M}(\hat\phi_{\bar M})|  
&\le |\nu_{t}(\hat\phi_{\bar M})-\nu_{t,\bar M}(\hat\phi_{\bar M})| +
|\nu_{\bar M}(\hat\phi_{\bar M})-\nu_{0}(\hat\phi_{\bar M})| \\
&\le (\|\nu_{t}-\nu_{t,\bar M}\|_{\BB^*}+\|\nu_{\bar M}-\nu_0\|_{\BB^*}) \|\hat\phi_{\bar M}\|_{\BB},
\end{align*}
which is, by Proposition~\ref{p.trunc}, bounded (up to a constant) by  
$M^{2+\alpha}|t|^{1/2}$ in the polynomial case and by 
$|t|^{1/2}$ in the Misiurewicz--Thurston case. 
This concludes the proof 
of Proposition~\ref{p.strongnorm}.
\end{proof}

\subsection{The effect of truncation on $\hat \phi_{t}$: Proof of  Proposition~\ref{p.trunc}}
\label{s.mtt}

To obtain \eqref{eq.pmtt} in Proposition \ref{p.trunc}, we cannot apply the Keller--Liverani perturbation \cite{kellerliverani}
result directly,
 because, in the weak norm of $\BB^{L^1}$ the difference between $\widehat  \LL_t$ and $\widehat  \LL_{t,M}$
is not $M^\Gamma |(f^M)' (c_{1,t})|^{-1/2}$ but $e^{ M/\Gamma} |(f^M)' (c_{1,t})|^{-1/2}$
(due to  the $\lambda^k$ factor
in the definition of the weak norm; cf. \eqref{addref1}). 
In \cite{BS} there were other exponential
losses, but in the polynomially recurrent case
of Theorem ~\ref{t.main1}, we can afford to lose (at most) powers (which give the
logarithmic factor there).
\footnote{In Theorem~\ref{t.main3} we cannot afford to lose a logarithmic factor, however
in this setting we have the flexibility of using a cutting time a bit higher
than $M$ in \eqref{eq.pmttl}.} In view of \eqref{idea},
and inspired by Baladi--Young \cite[\S 2]{BY}, which only requires to control 
the difference between the operator and its perturbation {\it applied to the maximal eigenvector,} we develop a variant
\footnote{C. Liverani has explained to us a simpler but less general variant,
which would give a slightly better bound, where
$\tau |\log \tau|^2$ after \eqref{eq.zequal1} would be replaced by $\tau |\log \tau|$. This would
however not improve our final statement  and it only applies to the mixing case.}
of the Keller--Liverani argument to show 
Proposition \ref{p.trunc}.  (In our application, the maximal eigenvectors are weighted by negative powers
of $\lambda$, which ensures that the $\lambda^k$ factor
in the definition of the weak norm does not create problems.)

\begin{proof}[Proof of Proposition~\ref{p.trunc} ]
 We shall consider $t=0$ and arbitrary
$M$ and show
\begin{equation}\label{red1}
\max(\|\hat  \phi_M - \hat \phi\|_{\BB^{L^1}},
\|\nu_{M} - \nu \|_{\BB^*})
\le C M^2 M^{(\alpha-\beta)/2} |(f^M)'(c_1)|^{-1/2}\, . 
\end{equation}
In the polynomial case, the proof  of    \eqref{eq.pmtt} for $(M,t)$ satisfying the admissible pair condition~\eqref{bd2} 
then follows  from \eqref{eq.admupper} and the fact that the distortion estimates in Lemma~\ref{rootsing} hold 
for 
$(M,t)$ so that 
$|(f_t^M)'(c_{1,t})|$ is comparable to $|(f^M)'(c_1)|$ (see inequality~\eqref{eq.tdistortion1}).
We also use the uniformity in $t$ of 
$P_t\le P$, and 
the constant $C_t<C$ in Proposition~\ref{truncspec} given by Proposition~\ref{p.strongnorm}.
In particular, it follows from the uniform bounds on $P_t$ and $C_t$ and
the proof of Proposition~\ref{truncspec} that the distance between
$\sigma(\widehat\LL_{t,M})\setminus \kappa_{t,M}$ and the point $z=1$ is bounded away from
zero, uniformly in $M$ and $t$. 

In the Misiurewicz--Thurston case, we consider $(N,t)$ admissible, and take
$M=2N$  (with $\alpha=\beta=0$) in \eqref{red1}.
Therefore, there is a constant $C$ such that 
$$
\|\hat\phi_{t,2N}-\hat\phi_t\|_{\BB_t^{L^{p}}}
\le C(2N)^2|(f^{2N}_t)'(c_{1,t})|^{-1/2}
\le C^2 (2N)^2 |(f^{2N-N})'(c_N)|^{-1/2}|t|^{1/2}\,,
$$
where in the last inequality we used \eqref{eq.admupper} for $\alpha=0=\beta$.
Since  $|(f^{2N-N})'(c_N)|^{-1/2}$ is bounded from 
above by a constant times $\Lambda^{-N/2}$ by \eqref{eq.lpt},
this gives  \eqref{eq.pmttl}.


We start with some preliminary bounds. 
Set 
$$
\tau_M=\tau_M(\lambda)= \lambda^M M^{(\alpha-\beta)/2}|(f^M)'(c_1)|^{-1/2}
$$
(recalling that $\alpha=\beta=0$
in the Misiurewicz--Thurston case).
Let $\hat \phi_t$ be the fixed point of $\widehat \LL_t$. 
Clearly, there exists $C\ge1$, which by Proposition~\ref{mainprop} 
depends only on the goodness\footnote{Note that
uniformity in the goodness   holds.} of $t$ (once $\delta$, $\beta$, $L$, and $\lambda$ are fixed), such that
for all good $t$ close enough to $0$
\begin{equation}\label{nicedecay}
\|\phi_{t,k}\|_{L^\infty}\le\lambda^{-k}\|\phi_{t,0}\|_{L^\infty}\le C\lambda^{-k} \|\phi_{0}\|_{L^\infty} \, .
\end{equation} 

 In the polynomial case, injecting \eqref{nicedecay} into \eqref{eq.hm}
for $\hat \psi=\hat \phi_t$, we get
\begin{equation}\label{eq.hmw}
\|(\id-\TT_{M})\hat \phi_t\|_{\BB^{L^1}}\le C \lambda^{-M}  \tau_M \, .
\end{equation}
In the Misiurewicz--Thurston case, 
injecting \eqref{nicedecay} into \eqref{eqMT}
for $\hat \psi=\hat \phi_t$, 
from \eqref{decaybound} and \eqref{nicedecay}, we derive, for
any $p\ge 1$
\begin{align}
\label{eq.hm'}\|(\id-\TT_{M})\hat \phi_t\|_{\BB^{L^p}}
\le  C \lambda^{-M}   \tau_M\, .
\end{align}

Next, using again the fact that $\hat\phi_t$ is the fixed point,  
we derive (recalling Remark ~\ref{overlap} and \eqref{overlap'})
\begin{align}\label{check}
\|\phi_{t,M+k}\|_{W_1^1}&=\lambda^{-M}\|\xi_{k',t}\phi_{t,k}\|_{W_1^1}
=\lambda^{-M}\|\xi_{k',t}'\phi_{t,k}+\xi_{k',t}\phi_{t,k'}\|_{L^1}\\
\nonumber &\le\lambda^{-M}(2\sup|\phi_k|+\|\phi_{t,k'}\|_{L^1})\le
3\lambda^{-M}\|\phi_{t,k}\|_{W_1^1}\, ,
\end{align}
where $k'<M+k$ is maximal such that $\xi_{k',t}\not\equiv1$. 
(Regarding the use of Remark ~\ref{overlap} and \eqref{overlap'}, 
observe that in the proof of Lemma~\ref{bottomok}, property \eqref{overlap'} 
is only guaranteed for the functionms $\xi_{k,0}$, i.e., when $t=0$. However, it is straightforward 
to adapt the construction behind \eqref{overlap'} so that this property holds for all good $t$ 
sufficiently close to $0$.)
Hence,
we get 
\begin{equation}\label{strong}
\|(\widehat \LL_{t,M} -\widehat \LL_t)(\hat\phi_t)\|_\BB
\le C\lambda^{-M}\|\hat\phi_t\|_\BB\,.
\end{equation}
(The estimate for the level $0$ gives much smaller contributions.)

Set $\|.\|:=\|.\|_{\BB^{W_1^1}}$, and 
let 
$|.|:=\|.\|_{\BB^{L^1}}$ in the polynomial case, and $|.|:=\|.\|_{\BB^{L^p}}$  
in the Misiurewicz--Thurston case.
We now move to the main part of the proof. 
For $\Theta_0>1$  given by  Proposition ~\ref{mainprop},
set $C_\theta=1/\log\Theta_0$, which implies that $\Theta_0^{-C_\theta \log(1/\tau_M)}\le \tau_M$.
By Proposition~\ref{truncspec},  for all
large enough $M$, the circle
$\gamma_M$ centered at $1$ 
and\footnote{Use that $ \log (1/\delta)<1/\delta$ for
small $\delta>0$.} of radius 
$$
\frac {\log \log (1/\tau_M)}{C_\theta \log (1/\tau_M)}
$$ 
contains exactly one  (simple) eigenvalue
of $\widehat \LL$ (at $z=1$) and  one  (simple) eigenvalue of $\widehat \LL_M$ (at $z=\kappa_M$).
Recall that $\nu_M$ and $\nu$ are normalised so that $\nu_M(\hat\phi)=\nu(\hat\phi)=1$.
Since $\widehat \QQ^{-1} (\hat \phi)=\frac{\hat \phi}{z-1}$,
we have,  like in \eqref{idea},
\begin{equation}
\label{idea'}
(\PPP_M-\PPP)(\hat \phi)=\hat \phi_M-
\hat \phi=
- \frac{1}{2 i \pi} \int_{\gamma_M}  \frac{\widehat \QQ_M^{-1}(z)} {z-1} 
(\widehat \LL_M -\widehat \LL) (\hat \phi)\, dz\, .
\end{equation}
Then,  for $n\ge 1$ to be chosen later,
inspired by \cite{Ke}, see also \cite[p.147]{kellerliverani}, we write
\begin{align}
\nonumber \widehat \QQ^{-1}_M(z)&=
(z-\widehat \LL_M )^{-1}\PPP_M
+(z-\widehat \LL_M)^{-1} (\id -\PPP_M)
\\
&= \frac{\PPP_M}{z-\kappa_M }+
z^{-n} \widehat \QQ^{-1}_M(z)(\id -\PPP_M )\widehat  \LL_M^n 
+\sum_{j=0}^{n-1}z^{-j-1} (\id -\PPP_M)\widehat \LL_M^j\, .
\end{align}
Recalling the spectral observation made after \eqref{red1}, Proposition ~\ref{truncspec}   implies that
the distance between  $z \in \gamma_M$ and  $\kappa_M$ is 
$\ge (\log \log (1/\tau_M)/(2C_\theta\log (1/\tau_M))$, while the distance between $z \in \gamma_M$  and
the rest of the spectrum of $\widehat \LL_M$ is bounded from below uniformly in $M$.
Therefore,  for $z\in \gamma_M$,  recalling  \eqref{strong},  the Lasota--Yorke estimate \eqref{LYM}  and the uniform
weak-norm bounds \eqref{addref0} and \eqref{theclaim} (using also
$|\widehat \LL_M |\le |\widehat \LL|$) give
\begin{align*}
\nonumber &|\widehat \QQ_M^{-1}(z)
(\widehat \LL_M -\widehat \LL) \hat \phi|\\
\nonumber &\quad \le  |\frac{1}{z-\kappa_M}\PPP_M ((\widehat \LL_M -\widehat \LL) (\hat \phi))|+
|z|^{-n} \| \widehat \QQ^{-1}_M(z)(\id -\PPP_M)\| \| \widehat \LL_M^n  (\widehat \LL_M -\widehat \LL) (\hat \phi)\|\\
&\qquad\qquad
+\sum_{j=0}^{n-1}|z|^{-j-1}|\widehat \LL_M^j (\widehat \LL_M -\widehat \LL) (\hat \phi )|\\
\nonumber
&\quad \le  
\frac{|\hat \phi_M|}{|z-\kappa_M|}
|\nu_M((\widehat \LL_M -\widehat \LL) \hat \phi)|+
\\
\nonumber &\qquad\qquad+ C\biggl  |1-\frac{\log \log (1/\tau_M)}{C_\theta\log (1/\tau_M)}\biggr |^{-n} 
 \biggl [\Theta_0^{-n}  \| (\widehat \LL_M -\widehat \LL) (\hat \phi)\|
 +C | (\widehat \LL_M -\widehat \LL) (\hat \phi)|\biggr ]\\\
 \nonumber&\qquad\qquad\qquad\qquad
 + \sum_{j=0}^{n-1} \biggl |1-\frac{\log \log (1/\tau_M)}{C_\theta\log (1/\tau_M)}\biggr |^{-j-1}  \lambda^{-M} \tau_M \\
  \nonumber &\quad \le2 C \lambda^{-M}\tau_M \frac { C_\theta\log (1/\tau_M)}{\log \log (1/\tau_M)} +
 C \biggl  |1-\frac {\log \log (1/\tau_M)}{C_\theta\log (1/\tau_M)}\biggr |^{-n} \lambda^{-M}  ( \Theta_0^{-n} + 2 \tau_M) \,.
\end{align*}
(We used that $|\nu_{t,M}(\hat \psi)|\le C |\hat \psi|$, uniformly in $t$ and $M$, from Proposition~\ref{truncspec}.)
Then,  taking $n=C_\theta \log(1/\tau_M)$, 
and using  $\lim_{n \to \infty}(1-x/n)^{-n}=e^{x}$,
we find  $C\ge1$ such that for any $z\in \gamma_M$
\begin{equation}
\label{eq.zequal1}
|\widehat \QQ_M^{-1}(z)
(\widehat \LL_M -\widehat \LL) \hat \phi|\le  C C_\theta \lambda^{-M}\tau_M\log (1/\tau_M)\, .
\end{equation}
Multiplying  by $|z-1|^{-1}\le\frac{ \log (1/\tau_M)}{\log \log (1/\tau_M)}$, 
and applying  \eqref{idea'}, 
we have shown  that $|(\PPP_M-\PPP)(\hat \phi)|\le   C\lambda^{-M}|\log\tau_M|^2\tau_M/|\log\log \tau_M|$.

We have proved
\begin{equation}\label{red1"}
|\hat \phi_M  - \hat \phi |\le \lambda^{-M}|\log\tau_M|^2  \tau_M\le C M^2 M^{(\alpha-\beta)/2} |(f^M)'(c_1)|^{-1/2} \, .
\end{equation}
We can apply the same argument to the dual operators
$\widehat \LL_M^*$ and $\widehat \LL^*$, up to exchanging
the role of the weak and the strong norm. Then,
 just like after \eqref{dual2}, 
specialising to   $\mu=\nu$, we get, in the polynomial case, 
 \eqref{red1}, and thus \eqref{eq.pmtt} of Proposition~\ref{p.trunc}.
The Misiurewicz--Thurston case is parallel.
\end{proof}


\section{Whitney--H\"older lower bounds in the Misiurewicz--Thurston case --- Proof of Theorem~\ref{t.main3}}
\label{s.ndiff}

To prove Theorem~\ref{t.main3} using the decomposition \eqref{decc},
 we shall combine the $L^p$ version of the Misiurewicz--Thurston upper bounds
in Propositions  ~\ref{p.strongnorm} and  ~\ref{p.trunc}  with the following statement (the proof 
of which is to be found in Section~\ref{ss.lower}):

\begin{proposition}[Upper and lower bounds on spike displacement in the Misiurewicz--Thurston case]
\label{l.lower}
If $f=f_0$ is transversal, mixing, and
Misiurewicz--Thurs\-ton, then, recalling the set $\Delta_{MT}$ from Lemma~\ref{l.existencemt},
there exists a constant $C>1$ such that for each sufficiently small
$D>0$  and  each admissible pair $(M,t)$, 
where $M$ is sufficiently large and $t\in \Delta_{MT}$, the following holds. There is  $A_D\in C^\infty(I)$, such that 
$\|A_D\|_{L^{\tilde q}(I)}\le2D^{1/\tilde q}$, for all $\tilde q\ge1$, and 
$$
C^{-1}|t|^{1/2}\le\Big|\int_IA_D(x)(\Pi_t-\Pi)(\hat \phi_{2M})(x)\,dx\Big|\le C|t|^{1/2}\,,
$$
where $\hat\phi_{2M}$ is the maximal eigenvector of $\widehat\LL_{2M}$ from Proposition~ \ref{truncspec}
(see Section~ \ref{MTcase}).
\end{proposition}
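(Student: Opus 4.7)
The plan is to choose $A_D$ as a smooth bump of width $D$ around a well-chosen point of the postcritical periodic orbit of $f=f_0$, and to exploit transversality together with the Misiurewicz--Thurston structure to extract the leading $|t|^{1/2}$ contribution from an arithmetic progression of iterates.

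More precisely, let $\ell_0$ denote the postcritical period of $f$ and pick $d=c_{k_0}$ on the periodic postcritical orbit. Fix $\psi\in C^\infty(\real)$ with $0\le\psi\le 1$, $\psi\equiv 1$ on $[-1/2,1/2]$, $\supp\psi\subset[-1,1]$, and set $A_D(y):=\psi((y-d)/D)$. Then $|\supp A_D|\le 2D$ and $\|A_D\|_{L^{\tilde q}}\le 2D^{1/\tilde q}$ for every $\tilde q\ge 1$. Since the postcritical orbit is finite, for all sufficiently small $D$ the $2D$-neighborhood of $d$ is disjoint from fixed neighborhoods of the remaining postcritical points. Following \eqref{eq.step2} applied to $A_D$ and $\hat\phi_{2M}$, and using \eqref{bottomokMT} to identify the towers of $f$ and $f_t$ up to level $2M$, we obtain
\begin{equation*}
\int_I A_D(x)(\Pi_t-\Pi)(\hat\phi_{2M})(x)\,dx=\sum_{k=0}^{2M}\sum_{\varsigma=\pm}\lambda^k\int(A_D(f^k_t(y))-A_D(f^k(y)))\,\phi_{2M,k}(y)\,dy.
\end{equation*}
For $k$ with $c_k\ne d$, the corresponding summand vanishes once $D$ is small enough, so only the progression $k=k_0+j\ell_0$, $j\ge 0$, intersected with $[H(\delta),2M]$, contributes.

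For each such $k$ we change variables $z=f^k(y)$ on each branch and apply the MT ($\alpha=\beta=0$) forms of the distortion estimate \eqref{eq.tdistortion1} and the spike bound \eqref{expii}, together with the uniform positivity $\phi_{2M,0}(c)\ge c_0>0$ (from Proposition \ref{truncspec} and the mixing hypothesis which gives $\phi_0>0$ on $[c_2,c_1]\ni c$). This reduces the $k$-th summand, up to a multiplicative factor $1+o(1)$ as $M\to\infty$, to
\begin{equation*}
\frac{C_f\,\kappa_{2M}^{-k}\,\phi_{2M,0}(c)}{|(f^{k-1})'(c_1)|^{1/2}}\int A_D(z)\Bigl[\frac{1}{\sqrt{|z-c_{k,t}|}}-\frac{1}{\sqrt{|z-d|}}\Bigr]\chi_k(z)\,dz,
\end{equation*}
with $C_f>0$ depending on $|f''(c)|$ and $\chi_k$ enforcing the one-sided nature of the spike dictated by whether $c_k$ is a local maximum or minimum of $f^k$. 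Writing $\eta_k:=c_{k,t}-d$, the sign assertion below \eqref{eq.lptm} guarantees $\eta_k$ has constant sign across all contributing $k$, while \eqref{eq.transu}--\eqref{eq.transl} combined with \eqref{eq.lpt} give $|\eta_k|\asymp|\JJ_0||t|\Lambda^k$. The rescaling $v=(z-d)/D$ then shows that the bracket integral equals $\sqrt{D}\,F(\eta_k/D)$, where the explicit function $F(\mu)$ is $O(\mu)$ as $\mu\to 0$ and tends to a nonzero constant as $|\mu|\to\infty$, always with a definite sign.

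Multiplying by $|(f^{k-1})'(c_1)|^{-1/2}\asymp\Lambda^{-k/2}$, the $k$-th summand has magnitude $\asymp\min\bigl(|t|\Lambda^{k/2}/\sqrt{D},\sqrt{D}\,\Lambda^{-k/2}\bigr)$. The two regimes match at the crossover $k^*$ defined by $|t|\Lambda^{k^*}\asymp D$, at which both give magnitude $\asymp|t|^{1/2}$; by the admissibility \eqref{eq.admupper}, $k^*\in[H(\delta),2M]$ once $|t|$ is small and $D$ is small but fixed. Summing the two geometric tails on either side of $k^*$ and using sign consistency to rule out cancellation yields the lower bound $C^{-1}|t|^{1/2}$, where $C$ depends on $\psi$, $\JJ_0$, $c_0$ and the constants in \eqref{eq.lpt} but \emph{not} on $D$ once $D$ is small enough; the matching upper bound follows by bounding each summand in absolute value. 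The principal obstacle is controlling the multiplicative factor $1+o(1)$: one must track the variation of $\phi_{2M,0}$ and of the Jacobian factors across $\supp\phi_{2M,k}$, verify that $\prod_{j<k}\xi_j\equiv 1$ on the core of the integration domain, and ensure that $f^k_t(y)-f^k(y)$ is well approximated by the constant $\eta_k$ uniformly on $J_{k,t}$. All these errors can be made $o(1)$ uniformly in the admissible range using Lemmas \ref{cd} and \ref{rootsing} in their MT forms together with the uniform goodness of $\Delta_{MT}$ provided by Lemma \ref{l.existencemt}.
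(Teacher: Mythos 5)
Your overall strategy is the same as the paper's: a bump $A_D$ of width $D$ concentrated around a point $d=c_{k_0}$ of the postcritical periodic orbit, the transversality estimates giving $|\eta_k|\asymp|t|\Lambda^k$, the constant-size tower levels letting you replace $\phi_{2M,k}$ by $\lambda^{-k}\kappa_{2M}^k\phi_{2M,0}$, and the crossover scale $k^*$ with $|t|\Lambda^{k^*}\asymp D$ producing the $|t|^{1/2}$. But the crucial step of your lower bound is asserted, not proved, and the justification that would rescue it is precisely the device the paper uses.

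Concretely: you reduce the $k$-th term to $\sqrt D\,F(\eta_k/D)$ with $F(\mu)=\int\psi(v)\bigl[|v-\mu|^{-1/2}-|v|^{-1/2}\bigr]\chi(v)\,dv$ and assert that $F$ ``always'' has a definite sign. That claim is the entire content of the lower bound (otherwise your geometric sum over $k$ could cancel), and it is not self-evident for a symmetric bump: the integrand $|v-\mu|^{-1/2}-|v|^{-1/2}$ genuinely changes sign on the support. It does turn out to be true for your specific $\psi$ (constant on $[-1/2,1/2]$, monotone on each wing), but verifying it from the explicit formula is delicate — one needs $\psi'\equiv 0$ near $0$ and a careful case analysis in $\mu$. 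The cleanest way to see it is the paper's route: don't pass to the spike picture at all, but stay on the preimage side,
\begin{equation*}
(\Pi_t-\Pi)(\hat\phi_{2M})\text{ integrated against }A_D\ \longleftrightarrow\ \lambda^k\int_c^1\bigl(A_D(f_t^k(x))-A_D(f^k(x))\bigr)\phi_{2M,k}(x)\,dx ,
\end{equation*}
and observe that, by the sign assertion below \eqref{eq.lptm}, $f_t^k(x)<f^k(x)\le c_j$ on the support when $c_k$ is a local max, so monotonicity of $A_D$ on $(-\infty,c_j]$ makes the integrand nonnegative pointwise. Your proposal mentions the sign assertion but uses it only to conclude that $\eta_k$ has a fixed sign, not that the integrand does; the missing step is exactly the monotonicity argument. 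You also write $\chi_k(z)$ outside the bracket, but the $c_{k,t}$ spike carries $\chi_{k,t}$, not $\chi_k$, and the difference is an interval of length $\asymp|\eta_k|$ — i.e., of the same order as the effect you are computing — so this cannot be dropped.

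There is a second gap concerning the range of $k$. The sign assertion of Lemma~\ref{l.existencemt} holds only for $H(\delta)\le k\le M$, and the local-max/local-min matching only for $k\le M$. Your statement that the sign is constant ``across all contributing $k$'' in $[H(\delta),2M]$ is therefore not justified, and for $M<k\le 2M$ the terms could have the wrong sign. The paper gets around this by estimating $M<k\le 2M$ in absolute value via a crude support bound (\eqref{eq.restaboves}), showing these are $O(D^{1/2}|t|^{1/2})$ and hence negligible for $D$ small; similarly the $k<H(\delta)$ terms are controlled by \eqref{eq.m0m}. You will need both of these separate estimates. Finally, a remark on packaging: once positivity of each contributing $k$-term is secured, the paper does not sum a geometric tail at all; it picks the single index $\widetilde M$ (minimal with $|c_{\widetilde M}-c_{\widetilde M,t}|>D$) and shows that this one term already delivers $\gtrsim|t|^{1/2}$, which is simpler and avoids any need for a matching lower bound on $|F(\mu)|$ over the whole range of $\mu$.
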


\begin{proof}[Proof of Theorem~\ref{t.main3}]
Let $p>1$. 
Fix $1 <\tilde p < p\frac{2}{p+1}$, and let $1\le \tilde q<\infty$ be such that $\tilde p^{-1}+\tilde q^{-1}=1$.
By Lemma~\ref{projp} and the H\"older inequality, there is a constant $C=C(p,\tilde p)>1$ such that 
\begin{equation}
\label{eq.2holder}
\Big|\int_IA(x)\Pi_t(\hat\psi)(x)\,dx\big|\le C\|A\|_{L^{\tilde q}(I)}\|\hat\psi\|_{\BB_t^{L^{p}}}\,,\quad\forall\ \hat\psi\in
\BB_t^{L^{p}}\,.
\end{equation}

By the decomposition \eqref{decc} (replacing $M$ by $2M$), 
the estimates \eqref{eq.strongnormp2} in Proposition ~\ref{p.strongnorm} and  \eqref{eq.pmttl}  
in Proposition~\ref{p.trunc}
combined with \eqref{eq.2holder}, 
and Proposition~\ref{l.lower}, we conclude that there is a constant $C>1$ and, for each 
sufficiently small $D>0$, a  $C^\infty$ function $A_D$ satisfying
$$
\Big|\int_I A_D(x)\phi_t(x)\, dx-\int_I A_D(x)\phi(x)\, dx\Big|\ge C^{-1}|t|^{1/2}-2CD^{1/\tilde q}|t|^{1/2}\,,
$$
for all $t\in\Delta_{MT}$ sufficiently close to $0$. 
Since the constant $C$ does not depend on $D$, this implies the 
lower bound in Theorem~\ref{t.main3}.

The upper bound  in Proposition~\ref{l.lower} and the same reasoning as for the lower bound
give $|\int_I A_D\phi_t \, dx-\int_I A_D \phi\, dx |\le C|t|^{1/2}+CD^{1/\tilde q}|t|^{1/2}$, and thus the
upper bound in Theorem~\ref{t.main3}.
\end{proof}


\subsection{Proof of Proposition~\ref{l.lower}}
\label{ss.lower}

We shall need the following property of the eigenvector $\hat\phi_{\bar M}$ 
of the truncated operator $\widehat\LL_{\bar M}$.

\begin{lemma}[Lower bound for truncated maximal eigenvectors]
\label{l.lowerC}
Let $f$ be a  Mi\-siu\-re\-wicz--Thurston  map. Then
there exist a neighbourhood $V$ of $c$ and
constants $M_0\ge 1$ and $C_1\ge1$ such that 
$$
\inf_{x\in V}\phi_{M,0}(x)\ge C_1^{-1}\,,\quad\text{for all $ M \ge M_0$}.
$$
\end{lemma}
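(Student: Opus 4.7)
The strategy is to pass from the uniform $W^2_1$ bound on the ground floor $\phi_{M,0}$ provided by Proposition~\ref{truncspec} to uniform $C^0$ convergence $\phi_{M,0}\to\phi_0$, and then to reduce the lemma to the pointwise strict positivity $\phi_0(c)>0$, which is established using mixing of $f$ and the Misiurewicz--Thurston assumption.

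First I would observe that Proposition~\ref{truncspec}, in its Misiurewicz--Thurston form (Section~\ref{MTcase}), furnishes $\sup_{M\ge M_0}\|\phi_{M,0}\|_{W^2_1}\le C_1$. In one variable, $W^2_1([0,1])$ embeds continuously in $C^1([0,1])$, so the family $\{\phi_{M,0}\}$ is uniformly bounded in $C^1$ and, by Arzel\`a--Ascoli, precompact in $C^0([0,1])$. Simultaneously the bound $\|\hat\phi-\hat\phi_M\|_{\BB^{L^1}}\to 0$ from \eqref{klbounds} gives $\|\phi_{M,0}-\phi_0\|_{L^1}\to 0$. Combining the two forces $\phi_{M,0}\to\phi_0$ uniformly on $[0,1]$, and the limit $\phi_0$ is in $C^1$. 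It then suffices to show $\phi_0(c)>0$: by continuity, one then finds an open neighbourhood $V$ of $c$ on which $\phi_0\ge 2 C_1^{-1}$, and the uniform convergence gives $\phi_{M,0}\ge C_1^{-1}$ on $V$ for all $M$ sufficiently large (replacing $M_0$ by a larger value if needed).

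The hard part will be showing $\phi_0(c)>0$. My plan is to exploit the fixed point equation $\hat\phi=\widehat\LL^n\hat\phi$ for $n$ large and exhibit a single strictly positive summand in the expansion of the zeroth level at $c$. Since $f=f_0$ is mixing on $[c_2,c_1]$ and Misiurewicz--Thurston (the postcritical orbit is a finite repelling set disjoint from $c$), the preimages of $c$ under $f^{j+1}$ accumulate densely in $[c_2,c_1]$ as $j$ ranges over a suitable infinite set. For some such $j\le n-1$ one can therefore select a preimage $y$ of $c$ whose first $j$ iterates stay away from $c$ by a definite amount, so that $y\in\widetilde I_{j+1}$ and $1-\xi_j(y)>0$. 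The corresponding summand
\[
\frac{\lambda^j(1-\xi_j(y))}{|(f^{j+1})'(y)|}\,\phi_j(y)
\]
in the expansion of $(\widehat\LL^n\hat\phi)_0(c)$ is strictly positive provided $\phi_j(y)>0$, and this last positivity is inherited from the ground floor via the climbing rule $\psi_k(x)=\lambda^{-1}\xi_{k-1,t}(x)\psi_{k-1}(x)$ in \eqref{a}, using that $\phi_0$ does not vanish on a set of positive Lebesgue measure (consequence of $\nu(\hat\phi)=1$ together with the fact that, under $\widehat\LL$, mass outside a null set eventually falls back to level~$0$). All other terms in the expansion being nonnegative, $\phi_0(c)>0$ follows, completing the argument. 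The main delicate point is this last positivity propagation: it will require carefully locating a preimage $y$ inside the support of $\phi_j$, which is where mixing and the Misiurewicz--Thurston repelling structure combine to give an explicit choice.
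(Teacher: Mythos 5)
Your first step — passing from the uniform $W^2_1$ bound of Proposition~\ref{truncspec} plus $L^1$ convergence $\hat\phi_M\to\hat\phi$ to locally uniform convergence $\phi_{M,0}\to\phi_0$, and thereby reducing the lemma to $\inf_V\phi_0>0$ for some neighbourhood $V$ of $c$ — is exactly what the paper does in its last sentence (phrased via $\sup_M\|\partial_x\phi_{M,0}\|_{L^\infty}<\infty$ rather than Arzel\`a--Ascoli, but the content is the same). The genuine issue is in your second step, where you diverge from the paper and where, as written, the argument is circular.

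You try to show $\phi_0(c)>0$ by expanding $\hat\phi=\widehat\LL^n\hat\phi$ at level $0$ and exhibiting a strictly positive summand of the form $\lambda^j(1-\xi_j(y))\,\phi_j(y)/|(f^{j+1})'(y)|$ with $f^{j+1}(y)=c$ and $y\in\widetilde I_{j+1}$. The trouble is that $\widetilde I_{j+1}\subset[c-\delta,c+\delta]$, so $y$ is itself a point near $c$, and the climbing rule $\phi_k=\lambda^{-1}\xi_{k-1}\phi_{k-1}$ gives $\phi_j(y)=\lambda^{-j}\prod_{i<j}\xi_i(y)\cdot\phi_0(y)$; hence the positivity you need, $\phi_j(y)>0$, reduces to $\phi_0(y)>0$ at a specific point $y$ near $c$ — which is precisely the statement you are trying to prove. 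Your fallback, ``$\phi_0$ does not vanish on a set of positive Lebesgue measure,'' does not give pointwise positivity at the particular preimage $y$, so it does not close the circle. (You could try instead to use repeated level-$0$ terms of $\widehat\LL$ — i.e., $j=0$ at each stage — which amounts to iterating a scalar transfer operator with weight $1-\xi_0$ and would require a preimage orbit of $c$ avoiding $[c-\delta,c+\delta]$ entirely and starting where $\phi_0$ is known positive. That could work, but it is a different and more delicate argument than what you wrote.)

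The paper avoids the tower entirely at this step: it works with the scalar Perron--Frobenius operator $\LL$ on $I$ and the genuine invariant density $\phi=\Pi(\hat\phi)$. Since $\phi_0\in W^2_1\subset C^1$ and the $k\ge1$ contributions to $\Pi(\hat\phi)$ are supported near the postcritical orbit, $\phi$ is $C^1$ away from $\{c_k\}$, and $\int\phi=1$ forces $\inf_J\phi>0$ on some interval $J$. Ergodicity supplies $\ell$ with $c\in\operatorname{int}f^\ell(J)$, and $\LL^\ell\phi=\phi$ then gives $\inf_V\phi>0$ near $c$. Finally, for a Misiurewicz--Thurston map the $c_k$, $k\ge1$, are uniformly bounded away from $c$, so the levels $B_k$ with $k\ge1$ are disjoint from a neighbourhood of $c$, whence $\phi=\phi_0$ there and $\inf_V\phi_0>0$. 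This is both complete and simpler than arguing on the tower, because $\LL$ freely transports positivity from anywhere in $[c_2,c_1]$ to $c$, with no constraint that the preimage orbit avoid the cutoff region. You should adopt this route (or fully carry out the level-$0$-only chain) to replace your step 2.
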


\begin{proof}[Proof of Lemma~\ref{l.lowerC}]
Since the density $\phi$ of the absolutely continuous probability
measure is $C^1$ 
away from the (finite) postcritical orbit of $f$, we find an interval $J$  such that 
$\phi|_J\in C^1$ and $\inf_{J}\phi >0$. By ergodicity of $f$ on the
support of the absolutely continuous probability
measure, there exists  $\ell\ge 0$ such that 
$c$ lies in the interior of $f^{\ell}(J)$ (we use that $c$ lies
in the interior of the support of the absolutely continuous probability
measure). Thus, using 
$(\LL^{\ell}\phi)(c)=\phi(c)$,   we find a neighbourhood $V$ around $c$ such that 
$\inf_{x\in V}\phi(x)>0$. Since $\phi=\Pi(\hat\phi)$, and the union over
$k\ge 1$ of the supports of $\phi_k\circ f_{\pm}^{-k}$ 
is disjoint from a neighbourhood of $c$,  this implies  $\inf_{x\in V}\phi_0(x)>0$, 
up to shrinking  $V$.
We conclude by using that  $\phi_{\bar M,0}\in W^2_1$ converges to $\phi_0$  in the $L^1$ topology, and 
$\sup_{\bar M}\|\partial_x\phi_{\bar M,0}(x)\|_{L^\infty}<\infty$.
\end{proof}

\begin{proof} [Proof of the lower bound in Proposition~\ref{l.lower} (simplest case)]
As a warmup, we consider the case
where $f$ has the kneading sequence $RLLR^\infty$, i.e., 
the critical point is mapped after $4$ iterations to the fixed point $c_4$ at the right hand side. 
This is the simplest possible combinatorics. (If the critical point were
mapped after $3$ iterations into the fixed point at the right hand side, the map $f$ would be 
renormalisable 
which is excluded by assumption. In fact, in this case the renormalisation of $f$ would be 
conjugated to the Ulam--von Neumann map which is an obstruction to construct the 
set $\Delta_{MT}$ in Lemma~\ref{l.existencemt}; see also remark below Theorem~\ref{t.main3}.)
In order to have a good mental picture, note that for $f$ with the combinatorics 
as above the construction of the set $\Delta_{MT}$ 
in the proof of Lemma~\ref{l.existencemt} could 
be done so that, for all $t\in\Delta_{MT}$,
the Misiurewicz--Thurston map $f_t$ has the kneading sequence 
$RLLR...RLR^\infty$ (where the middle block of $R$'s has odd length). 
In other words the fourth iteration $c_{4,t}$ lies close to the fixed point of $f_t$, 
where we repel until we are mapped to the left of $c$, whereafter we are immediately 
mapped to the fixed point of $f_t$. 

The observable which will give us a lower bound is concentrated around the fixed point 
$c_4$ of $f$.
For $D>0$ small, let $A_D\in C^\infty([0,1])$ satisfy the following properties. 
\begin{itemize}
\item
$\supp(A_D)\subset[c_4-D,c_4+D]$ and $\|A_D'\|_{L^\infty}\le D^{-1}$;
\item
$A_D$ is monotonously increasing in $[c_4-D,c_4]$ 
and monotonously decreasing in $[c_4,c_4+D]$;
\item
$A_D(x)\ge1/3$ if $x\in[c_4-D/2,c_4+D/2]$.
\end{itemize}

It follows immediately from the construction that 
$\|A_D\|_{L^{\tilde q}(I)}\le2D^{1/\tilde q}$, for all $\tilde q\ge1$.
Let $(M,t)$ be an admissible pair with $|t|$  sufficiently small. 
For simplicity assume that \eqref{eq.transl} holds for all $4\le k\le M$. 
This is for example the case when $f_t$ is the logistic family \eqref{logistic} 
(and $f$ is the map in this family with kneading sequence $RLLR^\infty$). 
This assumption  implies that
the assertion below \eqref{eq.lptm} in Lemma~\ref{l.existencemt} is 
satisfied for all $4\le k\le M$ (this follows immediately from the proof 
of Lemma~\ref{l.existencemt}). 

Recall the definition \eqref{defproj} for
$\Pi_t$ and $\Pi=\Pi_0$. By definition, $\hat \phi_{2M}$ is supported in
levels $k \le 2M$.
We consider first the terms which come from levels of the tower not higher than $M$
(they will give the dominant term for the lower bound), and we show at the end of the argument
that the levels between $M+1 $ and $2M$ give a smaller contribution. 
Since the observable $A_D$ is concentrated around $c_4$, we only have 
to consider iterations $4\le k\le M$. 
Recall that, by Lemma~\ref{l.existencemt}, the iterates
$c_{k,t}$ and $c_k$ are either both local maxima or both local minima for $f_t^k$ and $f^k$,
respectively (observe that this is only true if $k\le M$).

Consider first the case when both are local maxima and focus on the branch $f_+^{-k}$. 
By a simple change of variables, we obtain
\begin{align}
\label{eq.diffs}
\int_{0}^{c_4}&\lambda^k A_D(x)\frac{ \phi_{2M,k}(f^{-k}_+(x))}{|(f^k)'(f^{-k}_+(x))|}\,dx
-\int_{0}^{c_{k,t}}\lambda^k A_D(x)\frac{ \phi_{2M,k}(f^{-k}_{t,+}(x))}{|(f^k_t)'(f^{-k}_{t,+}(x))|}\,dx\\
\nonumber
&=\lambda^k\int_c^1(A_D(f^k(x))-A_D(f_t^k(x)))\phi_{2M,k}(x)\,dx\,.
\end{align}
The assertion just after \eqref{eq.lptm} in Lemma~\ref{l.existencemt} implies that 
$ f_t^k(x)<f^k(x)\le c_4$, for $x\in\supp(\phi_{2M,k})(\subset J_k)$, and since 
$A_D$ is monotonously increasing to the left of $c_4$, it follows that 
the integrand in the right hand side of \eqref{eq.diffs} is everywhere nonnegative. 
Recall the constant $C_a$ in the admissible pair condition~\eqref{bd2}.
For $0<D\ll C_a^{-1}$ let $\widetilde M=\widetilde M(D)\ge 4$ be minimal such that 
$$
|c_4-c_{\widetilde M,t}|>D\,.
$$
By the mean value theorem, \eqref{eq.transu}, the transversality estimate
\eqref{eq.transl}, \eqref{eq.tdistortion1}, \eqref{eq.lpt} (for $t=0$), and the admissibility 
condition~\eqref{bd2}, we derive that $\widetilde M$ exists and $\widetilde M <M$. 
(Observe that $M-\widetilde M$ is of the order $|\log D|$;
we shall not need this fact.)
We claim that there is a constant $C\ge1$ so that
$$
C^ {-1}|(f^{\widetilde M-1})'(c_1)||t|\le|c_4-c_{\widetilde M,t}|\le CD\,.
$$
Indeed, we can argue similarly as just above. By the mean value theorem
the inequality on the left hand side follows by 
transversality estimate~\eqref{eq.transl}, 
while the inequality on the right hand side follows 
essentially from the minimality of $\widetilde M$.
We derive that
\begin{equation}
\label{eq.MMs}
|(f^{\widetilde M-1})'(c_1)|^{-1/2}\ge C^{-1}D^{-1/2}|t|^{1/2}\,.
\end{equation}
 
Since the sizes of the levels $E_k$ of the tower for $f$ are uniformly bounded away from $0$, 
we can choose $D$ so small so that 
$\phi_{2M,k}\circ f_{\pm}^{-k}|_{[c_4-D,c_4]}\equiv
\lambda^{-k}\kappa_{2M}^k(\phi_{2M,0}\circ f_{\pm}^{-k})$, 
for all $k\le2M$.
Hence, by Lemma~\ref{l.lowerC}, we derive that 
$$
\phi_{2M,\widetilde M}(f_{\pm}^{-\widetilde M}(x))
\ge \lambda^{-\widetilde M}\kappa_{2M}^{\widetilde M}C_1^{-1}
\ge \lambda^{-\widetilde M}C^{-1}C_1^{-1}
\,,\qquad\forall x\in[c_4-D,c_4]\,,
$$
where in the last inequality we used \eqref{eq.kappa} combined with the
last claim of Proposition~\ref{p.strongnorm}.
Observe that, by \eqref{expiiu} and \eqref{eq.MMs}, we have 
$$
|f_+^{-\widetilde M}([c_4-D/2,c_4])|
\ge C^{-1}\sqrt{D/2}|(f^{\widetilde M-1})'(c_1)|^{-1/2}
\ge C^{-2}2^{-1/2}|t|^{1/2}\,.
$$
By the definition of $\widetilde M$, 
it follows that $A_D(f_t^{\widetilde M}(x))=0$, 
for all $x\in\supp(\phi_{2M,\widetilde M})$, while $A_D(f^{\widetilde M}(x))\ge 1/3$,
for $x \in f_+^{-\widetilde M}([c_4-D/2,c_4])$.
Therefore, there is a constant $\tilde C>1$, 
so that the following lower bound for \eqref{eq.diffs} holds when $k=\widetilde M$:
\begin{align}
\label{starstar}
&\lambda^{\widetilde M}\int_c^1A_D(f^{\widetilde M}(x))\phi_{2M,\widetilde M}(x)\,dx
\ge\lambda^{\widetilde M}\int_{f_+^{-\widetilde M}([c_4-D/2,c_4])}\frac13\phi_{2M,\widetilde M}(x)\,dx
\\
\nonumber &\ge\tilde C^{-1}|t|^{1/2}\,.
\end{align}
Observe that the constant 
$\tilde C$ does not depend on $D$ by our choice of $\widetilde M$. 

If we consider the 
branches $f_-^{-k}$, or the case 
when $c_{k,t}$ and $c_k$ are both local minima for $f_t^k$ and $f^k$,
respectively, then we derive, similarly as above, that the  term corresponding to 
 \eqref{eq.diffs} is still nonnegative, 
for all $4\le k\le M$.

It remains to show that the terms corresponding to 
$M<k\le2M$ can be neglected. 
Recall that $\sup|\phi_{2M,k}|\le\lambda^{-k}\sup|\phi_{2M,0}|\le C\lambda^{-k}$ 
(see Proposition~\ref{truncspec} and Section~\ref{MTcase}).
We can estimate each term separately, and we get 
\begin{multline}
\label{eq.restaboves}
\Big|\int_IA_D(x)(\Pi_t-\Pi)(\id-\TT_M)(\hat \phi_{2M})(x)dx\Big|\\
\le 2C\sum_{k=M+1}^{2M}\|A_D\|_{L^\infty}
\max_{s\in\{0,t\}}|\supp((A_D\circ f_s^k)\cdot \phi_{2M,k})|\,.
\end{multline}
Since, by \eqref{expii} and \eqref{eq.lptm} (note that
we use the Misiurewicz--Thurston assumption here), $|\supp((A_D\circ f_s^k)\cdot \phi_{2M,k})|$ 
is not larger than a constant multiple of $D^{1/2}\Lambda^{-k/2}$, 
we derive that 
\eqref{eq.restaboves} is bounded from above by a constant times 
$D^{1/2}|(f^M)'(c_1)|^{-1/2}$ which is in turn, by \eqref{eq.admupper}, 
bounded from above by a constant $C\ge1$ times $D^{1/2}|t|^{1/2}$. 

Hence, for $D$ sufficiently small, we conclude
from \eqref{eq.restaboves} and \eqref{starstar} that
$$
-\int_IA_D(x)(\Pi_t-\Pi)(\hat \phi_{2M})(x)dx
\ge\tilde C^{-1}|t|^{1/2}-CD^{1/2}|t|^{1/2}\ge\tilde C^{-1}|t|^{-1/2}/2\,,
$$
whenever $|t|$ is sufficiently small.
\end{proof}

\begin{proof} [Proof of Proposition~\ref{l.lower} (general case)]
The first and main part of the proof  is dedicated to the lower bound, 
the upper bound is given at the end of the proof.
Fix a periodic point $c_j$ in the postcritical orbit of $f$ and, 
for $D>0$ small, let $A_D\in C^\infty([0,1])$  satisfy the following properties:
\begin{itemize}
\item
$\supp(A_D)\subset[c_j-D,c_j+D]$ and $\|A_D'\|_{L^\infty}\le D^{-1}$;
\item
$A_D$ is monotonously increasing in $[c_j-D,c_j]$ and monotonously decreasing in $[c_j,c_j+D]$;
\item
$A_D(x)\ge1/3$ if $x\in[c_j-D/2,c_j+D/2]$.
\end{itemize}

The construction  immediately implies
$\|A_D\|_{L^{\tilde q}(I)}\le2D^{1/\tilde q}$.
Let $(M,t)$ be an admissible pair such that $|t|$ is sufficiently small. 
We can assume that $1\le j\le M$.
As in the argument for the simplest case,
we consider only the terms which come from levels of the tower not higher than $M$ 
(the terms coming from levels between $M$ and $2M$ can be handled
just like around \eqref{eq.restaboves} above).
Since the observable $A_D$ is concentrated around $c_j$, and since, by the admissible 
pair condition~\eqref{bd2}, $c_{k,t}$ stays very close to $c_k$, for $1\le k\le M$, 
by possibly increasing the constant $C_a$ in the admissible pair condition, 
we have 
only to consider iterations $1\le k\le M$ when $c_k=c_j$ (i.e., 
if $c_k\neq c_j$ we shall have no contributions). 
Given such an iteration $k$, 
note that, by the admissible pair condition, 
$c_{k,t}$ and $c_k$ are either both local maxima or both local minima for $f_t^k$ and $f^k$,
respectively (observe that this is only true if $k\le M$).
Consider first the case when both are local maxima and focus on the branch $f_+^{-k}$. 
Recall \eqref{eq.diffs}.
If $H(\delta)\le k\le M$ then, by Lemma~\ref{l.existencemt}, we have 
$ f_t^k(x)<f^k(x)\le c_j$, for $x\in\supp(\phi_{2M,k})$, and since 
$A_D$ is monotonously increasing to the left of $c_j$, it follows that 
the integrand in the right hand side of \eqref{eq.diffs} is nonnegative. 
For each $D>0$ sufficiently small let $\widetilde M=\widetilde M(D)$ be minimal such that 
$$
c_{\widetilde M}=c_j\qquad\text{and}\qquad |c_{\widetilde M}-c_{\widetilde M,t}|>D\,.
$$
The admissible pair condition ensures 
that $\widetilde M<M$ (as in the simplest case), and we can assume that $M$ is large enough 
(making $|t|$ smaller)
so that $\widetilde M> H(\delta)$. (In fact, $M-\widetilde M$ is of the order $|\log D|$.)
Let $\ell_0$ be the period of $c_j$. 
By the mean value theorem and by \eqref{eq.transu} and \eqref{eq.transl} combined with the fact that 
$|(f_t^{\ell_0})'(c_{\widetilde M-\ell_0,t})|\approx \Lambda^{\ell_0}$ 
(see \eqref{eq.lpt} and \eqref{eq.lptm}),
we find 
a constant $C$ such that 
\begin{equation*}
|c_{\widetilde M}-c_{\widetilde M,t}|\le C|c_{\widetilde M-\ell_0}-c_{\widetilde M-\ell_0,t}|\le CD\,,
\end{equation*}
where the last inequality follows from the minimality of $\widetilde M$. 
By the transversality estimate~\eqref{eq.transl}, we have that 
$|c_{\widetilde M}-c_{\widetilde M,t}|$ is bounded from below by a constant times 
$|(f^{\widetilde M-1})'(c_1)||t|$. Hence, we find a constant $\tilde C>1$ such that 
\begin{equation}
\label{eq.MM}
|(f^{\widetilde M-1})'(c_1)|^{-1/2}\ge\tilde C^{-1}D^{-1/2}|t|^{1/2}\,.
\end{equation}
By Lemma~\ref{l.existencemt} and the definition of $\widetilde M$, 
it follows immediately that $A_D(f_t^{\widetilde M}(x))=0$, 
for all $x\in\supp(\phi_{2M,\widetilde M})$. 
Since the sizes of the levels $E_k$ of the tower for $f$ are uniformly bounded away from $0$, 
we can choose $D$ so small so that 
\begin{equation}
\label{largelevel}
\phi_{2M,k}\circ f_{\pm}^{-k}|_{[c_j-D,c_j]}\equiv\lambda^{-k}\kappa_{2M}^k\phi_{2M,0}\circ f_{\pm}^{-k}
\, , \quad
\forall \, 1\le k\le2M\, .
\end{equation}
By Lemma~\ref{l.lowerC}, if $M$ in the admissible pair $(M,t)$ is sufficiently large, it follows that 
$$
\phi_{2M,\widetilde M}\circ f_{\pm}^{-\widetilde M}(x)
\ge \lambda^{-\widetilde M}\kappa_{2M}^{\widetilde M}C_1^{-1}\,,\qquad\forall x\in[c_j-D,c_j]\,.
$$
Thus, by possibly slightly increasing the constant $\tilde C>1$, 
by the construction of $A_D$, we get the following lower bound for \eqref{eq.diffs} when $k=\widetilde M$:
\begin{align*}
&\lambda^{\widetilde M}\int_c^1A_D(f^{\widetilde M}(x))\phi_{2M,\widetilde M}(x)\,dx
\ge\lambda^{\widetilde M}\int_{f_+^{-\widetilde M}([c_j-D/2,c_j])}\frac13\phi_{2M,\widetilde M}(x)\,dx
\\
&\ge\frac13\kappa_{2M}^{\widetilde M}C_1^{-1}|f_+^{-\widetilde M}([c_j-D/2,c_j])|
\ge\frac13\kappa_{2M}^{\widetilde M}C_1^{-1}C^{-1}\sqrt{D/2}|(f^{\widetilde M-1})'(c_1)|^{-1/2}\\
&\ge\tilde C^{-2}|t|^{1/2}\,,
\end{align*}
where in the last inequality we used the lower bounds \eqref{eq.MM} and \eqref{eq.kappa}. 
Observe that the constant 
$\tilde C$ does not depend on $D$. 

Next, consider the case when $c_{k,t}$ and $c_k$ are both local minima for $f_t^k$ and $f^k$,
respectively (and $1\le k\le M$ satisfies $c_k=c_j$). 
By a similar reasoning as  above, we see that \eqref{eq.diffs} is nonnegative, 
for all $H(\delta)\le k\le M$, and we find $H(\delta)<\widetilde M<M$ such that \eqref{eq.diffs}, 
when $k=\widetilde M$,
is bounded from below by a constant (independent on $D$) times $|t|^{1/2}$.
If we consider the 
branches $f_-^{-k}$ then,  we see that 
the terms corresponding  to \eqref{eq.diffs}  are 
still nonnegative for all $H(\delta)\le k\le M$. 

For the lower bound,
it only remains to show that the terms corresponding to 
$0\le k<H(\delta)$ can be neglected.
For $0\le k<H(\delta)$, we see immediately that the absolute values 
of \eqref{eq.diffs}  are bounded from above by 
a constant times
\begin{multline}
\label{eq.m0m}
\lambda^k\|A_D'\|_{L^\infty}|c_k-c_{k,t}|\|\phi_{2M,k}\|_{L^{\infty}}
|\cup_{s\in\{0,t\}}\supp(A_D\circ f_s^k\phi_{2M,k})|\\
\le C\Lambda^{H(\delta)/2}\|\phi_{2M,0}\|_{L^\infty}D^{-1/2}|t|\,.
\end{multline}
Hence, if $|t|$ is sufficiently small, the terms corresponding to $0\le k<H(\delta)$ 
can be neglected.

Regarding the upper bound in Proposition~\ref{l.lower}, we need only to consider the 
terms when $H(\delta)\le k\le M$. For $H(\delta)\le k\le\widetilde M$, 
doing a similar estimate as in \eqref{eq.m0m}, 
we derive that the absolute values of \eqref{eq.diffs}  are 
bounded from above by a constant times
$D^{-1/2}\Lambda^{k/2}|t|$. Thus, by \eqref{eq.MM} and \eqref{eq.lpt}, the sum of these terms is
bounded above by a constant times $|t|^{1/2}$. If $\widetilde M\le k\le M$ (and $c_k=c_j$), 
then only the first term on the left hand side of \eqref{eq.diffs} 
is non-zero. As in the estimate~\eqref{eq.restaboves}, the 
sum over these terms can be estimated from above by a constant times 
$\Lambda^{-M/2}D^{1/2}\Lambda^{(M-\widetilde M)/2}$. By 
the definitions of $\widetilde M$ and $M$, we see that $D\Lambda^{M-\widetilde M}$ is 
bounded from above by a constant. Thus, the contribution of these last terms 
is also of the order $|t|^{1/2}$.
\end{proof}

\begin{appendix}
\section{Proof of the key  estimate Proposition~\ref{ubalpha}}
\label{misc}

In order to prove Proposition~\ref{ubalpha}, we recall useful
notations  from \cite{BS} needed to show
variants of tower estimates in \cite{BS}. Let $f_t$ be a good map, and let $\hat f_t$ be the associated
tower map
as defined in \S~\ref{ss.tower}. To simplify the writing we assume $t=0$ and remove $t$ from the 
notation.
For each $x \in I$ we  define inductively an infinite non decreasing sequence
$$
0=S_0(x) \leq T_1(x) < S_1(x) \leq  \dots < S_i(x) \leq T_{i+1}(x) < S_{i+1}(x) \leq \dots \, , 
$$
with $S_i(x)$ and $T_i(x) \in \mathbb{N}\cup\{\infty\}$ as follows: Put $T_0(x)=S_0(x)=0$ for every $x \in I$. 
Let $i \ge 1$ and assume recursively that $S_j(x)$ 
and $T_j(x)$ have been defined for $j\le i-1$.
Then, we set (as usual, we put $\inf \emptyset = \infty$)
$$
T_i(x)= \inf \{ j\geq S_{i-1}(x) \mid \ |f^j(x)|\le  \delta \} \, .
$$
If $T_i(x)=\infty$ for some $i \ge 1$, then we set  $S_i(x)=\infty$. 
Otherwise, either $f^{T_i(x)}(x)=c$, and then we put $S_i(x)=\infty$,
or  $f^{T_i(x)}(x)\in I_j$ for some $j \ge H(\delta)$, and  we put $S_i(x)=T_i(x)+j$.

Note that if $T_i(x) < \infty$ for some $i\geq 1$ then
\begin{align*}
&\hat f^{j}(x,0) \notin E_0\, , \, \, T_{i}(x)+1\le j \le S_{i}(x)-1 \, , \\ 
&
\hat f^{\ell}(x,0) \in E_0 \, , \,  S_{i-1}(x) \le \ell \le T_{i}(x) \, .
\end{align*}
If $T_{i_0}(x)=\infty$ for  $i_0\ge 1$,  minimal
with this property, then   $\hat f^\ell(x,0) \in E_0$ for all $\ell \ge S_{i_0-1}$
(that is, $|f^\ell(x)|> \delta$ for all $\ell \ge S_{i_0-1}$).

In other words, $T_i$ is the beginning of the $i$-th {\it bound period}
and $S_i-1$ is the end of the $i$-th bound period,
\footnote{Bound period refers to the fact that the orbit is
bound to the postcritical orbit.} and if
$S_i < T_{i+1}$ then
$S_i$ is the beginning of the $i+1$-th {\it free period}
(which ends when the $i+1$-th bound  period starts).

For consistency, we also set
$S_i-T_i=0$ if 
$S_i=T_i=\infty$, and
$T_i-S_{i-1}=0$  if $S_{i-1}=T_i=\infty$,
and, for all $x \in I$, we  put
$
(f^\infty)'(x):=\infty$ and  
$f^\infty(x):=c_1$.

The following lemma gives expansion at the end 
of the free period $T_i-1$ (just before climbing 
the tower), at the end $S_i-1$  of the
bound period (after falling from the tower), and  during the free period
(when staying at level zero).

\begin{lemma}[Tower expansion for good maps]\label{expiii}
Let $f$ be a $(\lambda_c,H_0)$-Collet--Eck\-mann map, polynomially
recurrent of exponent $\alpha>0$ for the same $H_0$.
For every small enough $\delta_0>0$,  if $\delta < \delta_0$,   
$\rho >1$, and $C_0$ are so that
\eqref{eq.noreturn} and \eqref{eq.deltareturn} hold, letting
$S_i(x)$ and $T_i(x)$ be the times associated to the tower for
$\delta$ and $L$,  then
\begin{equation}\label{star'}
|(f^{S_i(x)})'(x)| \geq  \rho^{S_i(x)}\, , \qquad  |(f^{T_i(x)})'(x)| \geq  C_0 \rho^{T_i(x)} 
\, , \qquad \forall x \in I \, , \ \forall i \ge 0\, ,
\end{equation}
and 
$$
|(f^{S_i(x)+j})'(x)| \geq  C_0\delta \rho^{S_i(x)}\rho^j\, , 
\qquad \forall x \in I \, ,\
\forall i \ge 0 \, , \ \forall 0 \le  j < T_{i+1}(x)-S_i(x) \, .
$$
\end{lemma}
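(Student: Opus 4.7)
The plan is to proceed by induction on $i \geq 0$, decomposing the orbit of $x$ into alternating free periods (where $|f^k(x)-c|>\delta$) and bound periods (the tower-climb segments $[T_j, S_j]$) and applying the chain rule on each piece. The base case $i=0$ is trivial as $S_0 = T_0 = 0$, so $(f^0)'(x)=1$; without loss of generality I assume $C_0\leq 1$ (replace $C_0$ by $\min(C_0,1)$).

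For the inductive step, assume $|(f^{S_{i-1}})'(x)|\geq\rho^{S_{i-1}}$. On the free period $[S_{i-1}, T_i]$ the iterates lie outside $[c-\delta, c+\delta]$ and $f^{T_i}(x)$ returns to this neighbourhood; if $T_i > S_{i-1}$, property~\eqref{eq.deltareturn} applied at $f^{S_{i-1}}(x)$ gives a factor $C_0\rho^{T_i-S_{i-1}}$, while the case $T_i=S_{i-1}$ is absorbed by $C_0\leq 1$. Multiplying by the induction hypothesis yields the second estimate, $|(f^{T_i})'(x)|\geq C_0\rho^{T_i}$. On the subsequent bound period of length $p:=S_i-T_i\geq H(\delta)$, the point $f^{T_i}(x)$ lies in $I_p$, so~\eqref{expii} combined with the Collet--Eckmann condition~\eqref{00} gives
$$
|(f^p)'(f^{T_i}(x))| \;\geq\; C^{-1}L^{-3/2}\,p^{-\beta/2}\,\lambda_c^{(p-1)/2} \;=\; \rho^p\cdot\bigl[C^{-1}L^{-3/2}\lambda_c^{-1/2}\,p^{-\beta/2}\,(\lambda_c/\rho^2)^{p/2}\bigr].
$$
Since Proposition~\ref{p.uniformc} supplies $\rho<\sqrt{\lambda_c}$, the ratio $\lambda_c/\rho^2$ exceeds $1$; choosing $\delta$ small enough that $H(\delta)$ is correspondingly large, the bracketed factor exceeds $C_0^{-1}$ for every $p\geq H(\delta)$. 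Combining with the second estimate gives $|(f^{S_i})'(x)|\geq C_0^{-1}\rho^p\cdot C_0\rho^{T_i}=\rho^{S_i}$, closing the induction on the first estimate.

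The third estimate then follows immediately: for $0\leq j<T_{i+1}-S_i$ the orbit from $f^{S_i}(x)$ to $f^{S_i+j}(x)$ stays outside $[c-\delta, c+\delta]$, so~\eqref{eq.noreturn} applied at $f^{S_i}(x)$ with $n=j$ gives a factor $C_0\delta\rho^j$; multiplication by the just-established $|(f^{S_i})'(x)|\geq\rho^{S_i}$ yields the claim.

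The only delicate point is the bound-period estimate: the exponential excess coming from $\lambda_c>\rho^2$ must dominate the polynomial loss $p^{-\beta/2}$, the constant $L^{-3/2}$, and the constant $C_0^{-1}$ needed to cancel the $C_0$ from the free-period factor. This is made possible by the fact that every bound period has length at least $H(\delta)$, which grows as $\delta$ shrinks and can therefore be taken as large as necessary.
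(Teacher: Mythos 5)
Your proof is correct and follows essentially the same route as the paper: both decompose the orbit into alternating free and bound periods, apply \eqref{eq.noreturn}/\eqref{eq.deltareturn} on the free pieces and \eqref{expii} plus the Collet--Eckmann condition on the bound pieces, and use the smallness of $\delta$ (hence largeness of $H(\delta)$) so that the bound-period excess $(\lambda_c/\rho^2)^{p/2}$ dominates the polynomial loss $p^{-\beta/2}$ and absorbs $C_0^{-1}$ — your bracketed condition is precisely the paper's inequality \eqref{eq.deltaL} rearranged. The only cosmetic difference is that you phrase the argument as an explicit induction on $i$, while the paper writes the same bound as a telescoping product over $\ell=1,\dots,i$.
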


\begin{proof}  [Proof of Lemma \ref{expiii}]
Choose $\delta_0>0$ small enough so  that $H(\delta)$, for all $\delta<\delta_0$, 
is large enough so that 
\begin{equation}
\label{eq.deltaL}
C_0 \frac1{CL^{3/2}} j^{-\beta/2} \lambda_c^{\frac{j-1}{2}} \geq \rho^{j} \, ,
\quad \forall j \ge H(\delta) \, ,
\end{equation}
where $C$ is the constant in \eqref{expii}.
The rest of the proof is exactly like for \cite[Lemma~3.5]{BS}, we recall it
for the convenience of the reader:
Let $x \in I$. For any $\ell\ge 1$, the definitions imply
$f^{S_{\ell-1}(x)+k}(x) \in I\setminus [-\delta,\delta]$
for all  $0\le k < T_\ell(x) - S_{\ell-1}(x)$ and
$f^{T_\ell(x)}(x)\in I_j$ with  $j=S_\ell(x)-T_\ell(x)\ge H(\delta)$.
Therefore, \eqref{expii}, combined with \eqref{eq.deltaL}, and \eqref{eq.deltareturn}
  give for all $i\ge 0$
$$
|(f^{S_i})'(x)|= \prod_{\ell=1}^{i} |(f^{S_\ell(x)-T_{\ell}(x)})'(f^{T_\ell(x)}x)| |(f^{T_\ell(x)-S_{\ell-1}(x)})'(f^{S_{\ell-1}(x)}x)|\geq  \rho^{S_i(x)} \, ,
$$
and 
$$
|(f^{T_i})'(x)|= |(f^{T_i(x)-S_{i-1}(x)})'(f^{S_{i-1}(x)}x)|
|(f^{S_i})'(x)| \geq C_0  \rho^{T_i(x)-S_i(x)}\rho^{S_i(x)} \, .
$$
Using in addition \eqref{eq.noreturn}, we get, for $0 \le  j \le T_{i+1}(x)-S_i(x)$, 
$$
|(f^{S_i(x)+j})'(x)|=| (f^j)'(f^{S_i(x)}(x))||(f^{S_i})'(x)|
\geq C_0\delta \rho^j  \rho^{S_i(x)} \, .
$$
\end{proof}

\begin{proof}[Proof of Proposition~\ref{ubalpha}]
This proof is very similar to that of  \cite[Proposition~3.7]{BS}, we give it
for the convenience of the reader.
Fix $j\ge 1$. Since the summands are all positive, we may (and shall) group them
in a convenient way, using the
times $T_i:=T_i(c_{j+1})$  and $S_i:=S_i(c_{j+1})$
for a small enough $\delta$.
We have
\begin{align*}
 &\sum_{k =j+1}^\infty \frac{1}{|(f^{k-j})'(f^j(c_1))|}\\
&\quad =\sum_{i=0}^{\infty} \frac{1}{|(f^{S_i})'(c_{j+1})|} u_{T_{i+1}-S_i}(f^{S_i}(c_{j+1})) +\sum_{i=1}^{\infty} \frac{1}{|(f^{T_i})'(c_{j+1})|} u_{S_{i}-T_i}(f^{T_i}(c_{j+1})) \, ,
\end{align*}
where we use the notation
$
u_n(y)= \sum_{\ell=1}^{n} \frac{1}{|(f^{\ell})'(y)|} 
$.
(In particular $u_0\equiv 0$.)
Since $T_{i+1}-S_i=T_1(f^{S_i}(c_{j+1}))$,  Lemma \ref{expiii} implies 
$$
u_{T_{i+1}-S_i}(f^{S_i}(c_{j+1}))\leq \frac{C }{C_0\delta(1-\rho^{-1})} \, ,
$$
(in particular the series converges if  $n=T_{i+1}-S_i=\infty$).
Since $f''(c)\ne 0$, the polynomial recurrence assumption \eqref{eq.alpha}
implies for all $i$
$$|f'(f^{T_i}(c_{j+1}))|= |f'(f^{T_i+j}(c_1))|\geq  C^{-1} (T_i+j)^{-\alpha}\, .
$$
Therefore, the   bounded distortion estimate \eqref{infprod} in the proof of Lemma ~\ref{cd} gives, together with the Collet--Eckmann assumption,
\footnote{The constant $C$ above depends on
$[\sup_{1\le j < H_0} \lambda_c/| (f^j)'(c_1)|^{1/j}]^{H_0}$.
By Proposition~\ref{p.uniformc}, this expression
is uniform for suitable families $f_t$.}
$$
u_{S_{i}-T_i}(f^{T_i}(c_{j+1}))\leq \frac{1}{|f'(f^{T_i}(c_{j+1}))|} \sum_{\ell=0}^{\infty} \frac{C}{|(f^\ell)'(c_1)|}\leq \frac{C^3 }{ (1-\lambda_c^{-1})} (T_i+j)^{\alpha} \, . 
$$

By Lemma \ref{expiii} we have 
$|(f^{S_i})'(c_{j+1})|\geq \rho^{S_i}$
and
$|(f^{T_i})'(c_{j+1})|\geq C_0\rho^{T_i}$.
Therefore, there exists constants $C_1$, $C_2$ so that
$$ 
 \sum_{k =j+1}^\infty \frac{1}{|(f^{k-j})'(f^j(c_1))|}
\leq C_1\delta^{-1} j^\alpha \biggl [  \sum_{i=0}^{\infty} \rho^{-S_i}   +  \sum_{i=1}^{\infty} \rho^{-T_i}T_i^\alpha    \biggl ]\leq C_2\delta^{-1} j^\alpha \, .
$$
\end{proof}

\section{Spectral properties of the transfer operators $\widehat  \LL_t$}
\label{misc2}

Recall that $f_t$ is assumed good. We prove Proposition~\ref{mainprop}:
\begin{proof}
Let $c(\delta)$  be as in \eqref{eq.noreturn}.
For $\hat \psi \in \BB$, our assumptions on the 
$\xi_{j}$ ensure that   $(\widehat \LL (\hat \psi))_k\in W^1_1$ for all $k\ge 1$,
with $(\widehat \LL (\hat \psi))_k$  supported in the desired interval, 
and that  $(\widehat \LL (\hat \psi))_0$  is  supported in the desired interval.

Note that for any interval $U$ (not necessarily containing
the support of $\psi_j$), using the Sobolev embedding,
\begin{equation}\label{sobeb}
\sup_U |\psi_j|\le \min (C \|\psi'_j\|_{L^1}, \int_{U}|\psi'_j|\, dx + |U|^{-1} \int_{U} |\psi_j|\, dx)\, .
\end{equation}
Since $\xi_\ell$ is unimodal if it is not $\equiv 1$, for each $\ell \ge 1$ there exist
$v_\ell >u_\ell$ in $B_\ell$ so that, setting $J_\ell=\{x \in \supp(\psi_j)\mid x\le u_\ell\}\cup \{x \in \supp(\psi_\ell)\mid
x \ge v_\ell\}$,
\begin{align}\label{dirtytrick}
\int_{B_\ell} |\xi_\ell'\psi_\ell|\, dx&= \int_{x \le u_\ell} \xi_\ell' |\psi_\ell|\, dx
- \int_{x \ge v_\ell} \xi_\ell' |\psi_\ell|\, dx
\le 2 \sup_{J_\ell} |\psi_\ell|
\end{align}
Therefore,
for all $k\ge 1$, using also \eqref{sobeb},
\begin{align}
\label{der0}&
\| (\widehat \LL (\hat \psi))_k'\|_{L^1} \le \frac{3C}{\lambda} \| \psi'_{k-1}\|_{L^1} \, . 
\end{align}
More generally, for $1\le n\le k$,
\begin{align}
\label{der0N}&
\| (\widehat \LL^n (\hat \psi))_k'\|_{L^1} \le \frac{3C n}{\lambda^n} \| \psi'_{k-n}\|_{L^1} \, . 
\end{align}

If $|\psi_k(y)|>0$ then $|f^{j}(y)-c_{j}|\le  j^{-\beta} /L$
for all $j\le k$.
If $\xi_k(f^{-(k+1)}_\pm(x))<1$ then $|x-c_{k+1}|\ge L^{-3} (k+1)^{-\beta}$.
Thus,  changing  variables in the integrals, using \eqref{dirtytrick}
for the terms involving $\xi'_k$ for $k \ge 0$, and
recalling   \eqref{expii}, \eqref{rootsing2}, 
\eqref{gluu} and \eqref{finally} from Lemma~\ref{sizeIj} and its proof, 
we see
that $(\widehat \LL (\hat \psi))_0$ belongs to  $W^1_1$ and
\begin{align}
\label{der00}\| (\widehat \LL (\hat \psi))'_0\|_{L^1} 
&\le   Cc(\delta)^{-1} (\|\psi_0'\|_{L^1}+  \|\psi_0\|_{L^1}+ \sup|\psi_0|)\\
\nonumber &\qquad + C
\sum_{k\ge H(\delta)}   \frac{\lambda^k k^{1+2\alpha+\beta/2} }{|(f^{k+1})'(c_1)|^{1/2}}
(\|\psi'_k\|_{L^1} + \sup |\psi_k| + \|\psi_k\|_{L^1} )\, .
\end{align}
In view  of \eqref{48} and \eqref{sobeb}, we have proved that $\widehat \LL$ is bounded
on $\BB$.  

\smallskip
Observe that $\nu(\hat\psi)=\sum_k \int_{B_k}|\psi_k(x) |w(x,k)\, dx$
is finite if $\hat \psi \in \BB$: Indeed,
$\psi_k$ is supported in $J_k$ which decays exponentially
according to  \eqref{decaybound}, 
and we may use the bound $\lambda  < \lambda_c^{1/2}$
from \eqref{48}. So $\nu$ is an element of the dual
of $\BB$.
The fact that  $\widehat \LL^*(\nu)=\nu$  can easily be  proved using the change of variables
formula (see \cite[(85)]{BS}).
Note for further use that $\widehat \LL^* (\nu)=\nu$ implies
\begin{equation}\label{l1bd}
\nu(|\widehat \LL^N(\hat \psi)|)\le \nu(\widehat \LL^N( |\hat \psi|))=\nu(|\hat \psi|)\, .
\end{equation}
Furthermore, note that $\nu(|\hat\psi|)=\|\hat\psi\|_{\BB^{L^1}}$.
\smallskip

We next estimate the spectral and essential spectral radii
of  $\widehat \LL$ on $\BB$.
Using \eqref{dirtytrick} and the overlap control of fuzzy intervals
(see Remark~\ref{overlap}), it is  not very difficult 
(this may be done exactly like in Appendix~B of \cite{BS}, since our
overlap control is in fact much better) 
to adapt the proof of \cite[Sublemma in Section~4]{BV}
to show inductively that for any $\Theta<\Theta_0$,
there exists $C$, and for all $n$ there exists $C(n)$, so that 
\begin{equation}
\|(\widehat \LL^n( \hat \psi))'_0(x)\|_{L^1(I)} \le
C \Theta^{-n}
\|\hat \psi\|_{\BB} + C(n) \|\hat \psi\|_{\BB^{L^1}}\, .
\end{equation}
Recalling \eqref{der0N}, and using \eqref{l1bd}, up to slightly decreasing
$\Theta$, one then finds
$C'$ so that for all $n\ge 1$
(see the proof of \cite[Variation Lemma in Section~4]{BV})
\begin{equation}\label{keyLM}
\|\widehat \LL^n( \hat \psi)\|_{\BB} \le
C' \Theta^{-n}
\|\hat \psi\|_{\BB} + C' \|\hat \psi\|_{\BB^{L^1}}
\, .
\end{equation}
Since \eqref{defban}, \eqref{decaybound}, and \eqref{48} imply that the length
of the support of $\psi_k$ is (much) smaller than 
$\lambda^{-2k}$, we find $\|\hat \psi\|_{\BB^{L^1}}\le   \|\hat \psi\|_{\BB^{L^1}}
+C \lambda^{-M}\|\hat \psi\|_{\BB^{W^1_1}}$
for all $M\ge 1$ (we used again the Sobolev embedding to estimate
the supremum by the $W^1_1$ norm).

Finally,  since
Rellich--Kondrachov implies that $\BB^{W^1_1}$ 
is compactly included in $\BB^{L^1}$ (the total length 
is bounded, even up to $\lambda^k$-expansion at
kevel $k$, by \eqref{decaybound}), the Lasota--Yorke estimate \eqref{keyLM} together with
Hennion's theorem \cite{Hen}  give the claimed bound on 
essential spectral radius of $\widehat \LL$ on $\BB=\BB^{W^1_1}$.
This ends the proof
of the claims on the  essential
spectral radius in Proposition ~ \ref{mainprop}.

We now describe the eigenvalues of modulus $1$:

The bound \eqref{keyLM} implies that the spectral radius of $\widehat \LL$ on $\BB$
is at most one, and thus equal to one
(since the essential spectral radius is strictly smaller than one
while
$1$ is an eigenvalue of the dual operator). 
Since $\widehat \LL_t$ is a nonnegative operator
with spectral radius equal to $1$ and essential spectral radius strictly smaller
than $1$, classical results of Karlin  \cite[pp. 933-935, Thm~27]{karlin} (using 
for $\KK$ the lattice of continuous
functions, with $\hat \psi_1 \ge \hat \psi_2$ if $\psi_{1,k}(x) \ge \psi_{2,k}(x)$
for all $x$ and $k$), imply that its eigenvalues of modulus one are a
finite set of roots of unity $e^{2\imath j \pi/P}$, $j=0,\ldots,P-1$ for some $P\ge 1$.
In addition, the eigenfunctions for the eigenvalue $1$ are nonnegative.
If $\hat \phi$ is a (nonnegative) fixed point
normalised by $\nu(\hat \phi)=1$, we have $\int_I \phi\, dx=\int_I \Pi(\hat \phi)\, dx=1$.
Recalling \eqref{commute}, we get that $\LL(\Pi(\hat \phi))= \phi$,
so that  $\Pi(\hat \phi)\in L^1$ is indeed the invariant density
of $f$. Since this density is  unique and ergodic, the eigenvalue at $1$ is simple,
and therefore also the  eigenvalues of modulus $1$ are also simple. 
If $f_t$ is mixing, then $f_t^j$ is ergodic for all $j\ge 1$, so the only
eigenvalue of modulus one is $1$.
Otherwise, let $P_t\ge 2$
be the renormalisation period and let $\phi_{j,t}$ and $\nu_{j,t}$ be  the eigenvectors
of $\widehat \LL_t$ and $\widehat \LL_t^*$, respectively for $e^{2\imath j\pi/P_t}$.
It is not difficult to
check that $\max_j \|\hat \phi_{j,t}\|_{\BB^{W^1_1}}$
and $\max_j \|\nu_{j,t}\|_{(\BB^{L^1})^*}$ are bounded uniformly
in $t$ (using for example the special structure of maximal eigenvectors
of a positive operator \cite[p. 933--934]{karlin}). 

It only remains to show that $\hat \phi_0 \in W^2_1$
if $f$ is $C^4$.  For this, take $\hat\psi$ so that $\psi_k=0$
for all $k\ge 1$ and $\psi_0$ is $C^\infty$, of Lebesgue
average $1$ (we can even take $\psi_0$ constant
in a neighbourhood of $[c_2,c_1]$), and use
that  \begin{equation}
\label{rrref}
\frac{1}{k} \sum_{n=0}^{k-1}\widehat \LL^{n}(\hat \psi)
\end{equation}
 converges to $\hat \phi$
in the $\BB^{W^1_1}$ norm as $n\to \infty$. 
We claim that
$\|(\widehat \LL^n(\hat \psi))_0\|_{W^3_1}\le C$ for all $n$.
Adapting the proof of \eqref{rootsing3}, one shows
$$\sup_{x \in f^k(I_k)}\biggl |
\partial^3_x \frac{1}{|(f^k)'(f^{-k}_\pm(x))|}
\biggr | \le   C  \frac{k^{8 \alpha}}{|(f^{k-1})'(c_1)|^{1/2}}$$ 
if
$\alpha >1$ and $\beta <2\alpha$.
Then, in view of \eqref{eq.alpha}, one
can exploit (in addition to the properties already used in
the proof of the Lasota--Yorke estimates
for the $W^1_1$ norm in Proposition~\ref{mainprop})
the properties \eqref{kappa'} of $\xi''_k$, $\xi'''_k$
to adapt (152) in \cite[Appendix B]{BS} (noting also
that $\hat\psi|_\omega=0$
if the interval $\omega $ is in some level $E_k$ with $k>0$). 
Note that \eqref{dirtytrick} is not needed, since we only look at
the component of $\widehat \LL^n( \hat \psi)$ at level $0$.
 Details are straightforward and left to the reader.
To conclude, use that if a sequence  converging
to $\hat \phi_0$ in $W^1_1(I)$ has bounded $W^3_1(I)$ norms, 
then $\hat \phi_0\in W^2_1(I)$ by Rellich--Kondrachov
(using \eqref{decaybound} again).
\end{proof}


\end{appendix}

\bibliographystyle{plain}

\bibliography{transv}

\end{document}